\documentclass[11pt,a4paper,reqno]{amsart}
\usepackage{setspace}
\usepackage{fullpage}
\usepackage{datetime}
\usepackage{indentfirst}
\usepackage{xcolor}
\usepackage{array}
\usepackage{enumitem}
\usepackage{amsmath}
\usepackage{amsfonts}
\usepackage{amssymb}
\usepackage{graphicx}
\usepackage{mathrsfs}
\usepackage{hyperref}
\usepackage[margin=2.5cm]{geometry}
\usepackage[normalem]{ulem}
\usepackage{tasks}
\usepackage{multicol}   
\usepackage{amsthm}
\usepackage{etoolbox}

\usepackage{subfig}
\usepackage[]{natbib} 

\usepackage{makecell}
\usepackage{float}
\usepackage{caption}
\usepackage{tikz}
\usetikzlibrary{patterns}
\usetikzlibrary{decorations.markings}

\usetikzlibrary{backgrounds}
\usetikzlibrary{arrows.meta}
\usetikzlibrary{shapes, arrows, calc, arrows.meta, fit, positioning}
\usetikzlibrary{%
	arrows,%
	arrows.meta,positioning,
	calc,%
	fit,%
	patterns,%
	plotmarks,%
	shapes.geometric,%
	shapes.misc,%
	shapes.symbols,%
	shapes.arrows,%
	shapes.callouts,%
	backgrounds,%
	chains,%
	topaths,%
	trees,%
	petri,%
	mindmap,%
	matrix,%
	folding,%
	fadings,%
	through,%
	positioning,%
	scopes,%
	decorations.fractals,%
	decorations.shapes,%
	decorations.text,%
	decorations.pathmorphing,%
	decorations.pathreplacing,%
	decorations.footprints,%
	decorations.markings,%
	shadows}

\usepackage{lipsum}
\makeatletter
\renewcommand\section{\@startsection{section}{1}%
  \z@{-1.0\linespacing\@plus-.7\linespacing}{1\linespacing}%
  {\normalfont\bfseries\large\centering}}
\renewcommand\subsection{\@startsection{subsection}{2}%
  \z@{-1\linespacing\@plus-.7\linespacing}{1\linespacing}%
  {\normalfont\large\bfseries\raggedright\hangindent=2em  \hangafter=1}}
\makeatother

\usepackage{hyperref}
\hypersetup{  
	colorlinks = true,    
	linkcolor = black,      
	urlcolor = black,        
	citecolor = black,     
	linkbordercolor = red,  
}

\tikzset{middlearrow/.style={
		decoration={markings,
			mark= at position 0.5 with {\arrow[very thick]{#1}},
		},
		postaction={decorate}
	}
}

\tikzset{midddlearrow/.style={
		decoration={markings,
			mark= at position 0.6 with {\arrow[very thick]{#1}},
		},
		postaction={decorate}
	}
}
\tikzset{lefttlearrow/.style={
		decoration={markings,
			mark= at position 0.7 with {\arrow[very thick]{#1}},
		},
		postaction={decorate}
	}
}
\tikzset{leftlearrow/.style={
		decoration={markings,
			mark= at position 0.55 with {\arrow[very thick]{#1}},
		},
		postaction={decorate}
	}
}
\newtheorem{theorem}{Theorem}[section]
\newtheorem{conjecture}{Conjecture}[section]

\newtheorem{definition}{Definition}
\newtheorem{lemma}[theorem]{Lemma}
\newtheorem{claim}{Claim}
\newtheorem{proposition}[theorem]{Proposition}

\newtheorem{question}{Question}

\theoremstyle{definition} 
\newtheorem{remark}{Remark}

\AtBeginEnvironment{theorem}{\setlength{\parskip}{0pt}}
\AtBeginEnvironment{conjecture}{\setlength{\parskip}{0pt}}
\AtBeginEnvironment{corollary}{\setlength{\parskip}{0pt}}
\AtBeginEnvironment{definition}{\setlength{\parskip}{0pt}}
\AtBeginEnvironment{lemma}{\setlength{\parskip}{0pt}}
\AtBeginEnvironment{claim}{\setlength{\parskip}{0pt}}
\AtBeginEnvironment{proposition}{\setlength{\parskip}{0pt}}
\AtBeginEnvironment{observation}{\setlength{\parskip}{0pt}}
\AtBeginEnvironment{case}{\setlength{\parskip}{0pt}}
\AtBeginEnvironment{problem}{\setlength{\parskip}{0pt}}
\AtBeginEnvironment{fact}{\setlength{\parskip}{0pt}}
\AtBeginEnvironment{question}{\setlength{\parskip}{0pt}}
\AtBeginEnvironment{remark}{\setlength{\parskip}{0pt}}

\title{Tur\'an-type and tiling problems in oriented graphs}

	\author{Ming Chen}
	\thanks{MC: School of Mathematics and Statistics, Jiangsu Normal University, Xuzhou 221000, China. Funded by Basic Research Program of Jiangsu (No.BK20251044), National Key Research and Development Program of China (No.2024YFA1013900), National Natural Science Foundation of China (No.12501483), and Natural Science Foundation of the Jiangsu Higher Education Institutions of China (No.25KJB110003).   \texttt{chenming314@jsnu.edu.cn}.}

    \author{Wenxu Lu}
    \thanks{WXL: School of Mathematics,  Shandong University, Jinan 250100, China. \texttt{wenxulu@mail.sdu.edu.cn}. }
	
	\author{Yun Wang}
	\thanks{YW: Data Science Institute, Shandong University, Jinan 250100, China. Funded by National Natural Science Foundation of China (No.12501489) and China Postdoctoral Science Foundation (No.2025M773101). \texttt{yunwang@sdu.edu.cn}.}
	
	\author{Zhiwei Zhang}
	\thanks{ZWZ: Interdisciplinary Center, Shandong University, Jinan 250100, China.  \texttt{zhiweizh@mail.sdu.edu.cn}.}

\begin{document} 

	\begin{abstract}

Given $a,b,c\in\mathbb N$, let $D_{a,b,c}$ be the tournament on $a+b+c$ vertices obtained by replacing the vertices of the directed triangle $C_3$ with transitive tournaments $TT_a$, $TT_b$, and $TT_c$, respectively. Keevash and Sudakov (2009) showed that every sufficiently large oriented graph $G$ on $n$ vertices with $\delta^{0}(G)\geqslant (1/2-o(1))n$ contains a $C_3$-tiling, equivalently a $D_{1,1,1}$-tiling, covering all but at most three vertices. We generalize this result to arbitrary blow-ups $D_{a,b,c}$. Specifically, for any fixed $a,b,c$, every sufficiently large oriented graph $G$ on $n$ vertices with $\delta^{0}(G)\geqslant (1/2-o(1))n$ contains a $D_{a,b,c}$-tiling covering all but at most $2(a+b+c)-3$ vertices. Moreover, this bound is essentially sharp. We also establish a stronger stability result: if $(a+b+c)\mid n$, then either $G$ contains a $D_{a,b,c}$-factor, or $G$ is close to an extremal graph.

Our interest in $D_{a,b,c}$ is also motivated by oriented Tur\'an theory: a seminal theorem of Bollob\'as and H\"aggkvist (1990) shows that a tournament $T$ is Tur\'anable (i.e., contained in every sufficiently large regular tournament) if and only if $T\subseteq D_{s,s,s}$ for some $s$.  Complementing our tiling results, we also investigate related semi-degree thresholds for powers of directed cycles and paths.  In particular, we present two $n$-vertex constructions that give lower bounds, showing that the minimum semi-degree thresholds for $C^2_l$ with $l\not\equiv 0\pmod 6$ and for $P^2_l$ with $l\geqslant 7$ are at least $4n/9$ and $3n/8$, respectively.
	\end{abstract}
	\maketitle
	 \noindent {2010 Mathematics Subject Classification:}  \texttt{05C20, 05C35, 05C38}

\section{Introduction}\label{SEC-introduction}
 Notation follows \cite{bang2009}, so we only repeat a few definitions here (also see Section \ref{SEC:notation-sketch}). A \emph{digraph} is not allowed to have parallel edges or loops and an \emph{oriented graph} is a digraph with no 2-cycle. A \emph{tournament} is an oriented graph in which every two vertices are adjacent. The \emph{minimum semi-degree} $\delta^0(G)$ of a digraph $G$ is the minimum of all the in-degrees and out-degrees of the vertices in $G$.  The \emph{minimum total degree} $\delta(G)$ of $G$ is the minimum number of edges incident to a  vertex in $G$.  A tournament $G$ on $n$ vertices with $\delta^0(G) = \lfloor \frac{n-1}{2} \rfloor$ is called a \emph{semi-regular tournament}. 
When $n$ is odd, an $n$-vertex semi-regular tournament is precisely a \emph{regular tournament}. Let $TT_k$ be the transitive tournament on $k$ vertices.  Given integers $a,b,c$ with  $1\leqslant a\leqslant b\leqslant  c$, we use $D_{a,b,c}$ to denote the tournament on $a+b+c$ vertices obtained from the directed triangle $C_3$ by replacing its vertices with $TT_a, TT_b$ and $TT_c$, respectively. 
For convenience, we use $D_s$ to denote $D_{s,s,s}$ for any $s\in \mathbb{N}$. Clearly, $D_1=D_{1,1,1}=C_3$.

For digraphs $H$ and $G$, an \emph{$H$-tiling} in $G$ is a family of pairwise disjoint copies of $H$ in $G$. 
An \emph{$H$-factor} is an $H$-tiling that covers all vertices of $G$. 
Two fundamental problems in graph theory are to find sufficient conditions on $G$ guaranteeing that $G$ contains a copy of $H$ (\textbf{Tur\'an-type problem}) or an $H$-factor (\textbf{tiling problem}). 
In this paper, we consider these two problems for oriented graphs.
We begin by reviewing some related results.

\medskip

\noindent\textbf{1. Tur\'an-type problem} 
\medskip

We say that an oriented graph $H$ is \emph{Tur\'anable} if there exists $n_0 \in \mathbb{N}$ such that every \emph{regular} tournament on $n \geqslant n_0$ vertices contains a copy of $H$.  Bollob\'as and  H\"{a}ggkvist \cite{bollobasJCTB50} considered the Tur\'anability of tournaments more than three decades ago by proving the following result.

\begin{theorem}[Bollob\'as, H\"{a}ggkvist~\cite{bollobasJCTB50}] \label{THM:bollobasJCTB50}
    A tournament $T$ is Tur\'anable if and only if  $T \subseteq D_s$ for some $s \in \mathbb{N}$.
\end{theorem}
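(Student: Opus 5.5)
The theorem has two directions, and I would prove them separately.

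\emph{Necessity.} I would exhibit a family of regular tournaments whose every subtournament embeds in some $D_s$; then any Tur\'anable $T$ lies in one of them and hence in $D_s$. The natural choice is an iterated blow-up of $C_3$. Let $R_1=C_3$, and let $R_{k+1}$ be obtained from $C_3$ by replacing each vertex with a copy of $R_k$, all edges between parts following the orientation of $C_3$. Each $R_k$ is regular on $3^k$ vertices, since each vertex has out-degree $(3^{k-1}-1)/2+3^{k-1}=(3^k-1)/2$.

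To cover every large $n$, not just powers of $3$, I would instead blow up $C_3$ into three regular tournaments of odd orders $n_1,n_2,n_3$ with $n_1+n_2+n_3=n$. The result is regular only when $n_1=n_2=n_3$, so the plan is to take $n_1=n_2=n_3=m$ when $3\mid n$ and otherwise use an ad hoc regular construction. The cleaner route is to use the blow-ups $R_k$, valid whenever $n=3^k$, and check that the paper's definition only needs infinitely many $n$ — but it says ``every regular tournament on $n\ge n_0$'', so counterexamples along a sequence $n=3^k$ suffice.

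The key claim, proved by induction on $k$, is that every subtournament $T\subseteq R_k$ satisfies $T\subseteq D_s$ for some $s$. Intersect $T$ with the three parts to get $T_1,T_2,T_3$, each by induction contained in some $D_{s'}$. Then show that a $C_3$-blow-up of three tournaments each embeddable in some $D_s$ is again embeddable in a larger $D_{s''}$. This embedding step is the main obstacle, and I do not yet see the clean argument.

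Here is the approach I would try first. One might hope $D_s$ contains a $C_3$-blow-up of three copies of $D_{s'}$ for $s$ huge, but I am not sure $D_s$ is closed under this operation. Characterize instead the subtournaments of $D_s$ for some $s$: these are the tournaments whose vertices split into three (possibly empty) transitive sets $A\to B\to C\to A$. Each $T_i$ has such a partition $(A_i,B_i,C_i)$. Plausibly the blow-up fails to have one, so I would pick the counterexample family more carefully. The most promising choice is a \emph{random-like} or \emph{Paley-type} regular tournament, for example the quadratic residue tournament, and show that any $T$ embedding in all large such tournaments must admit the three-transitive-set partition.

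Concretely: take $T$ not in any $D_s$. Then every partition of $V(T)$ into three sets fails, either because some part is not transitive or because some cross edge goes the wrong way. The hardest step is to construct, for each such $T$, a regular tournament avoiding $T$. I would do this via $R_k$, since a tournament $T$ embeds in $R_k$ exactly when it has the recursive structure above. This yields the condition that $T$ lies in an iterated blow-up of $C_3$. The paper's class $D_s$ is then characterized by the fact that iterated blow-ups embed in $D_s$ only when the recursion is shallow in a suitable sense, and that is the obstacle to resolve.

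\emph{Sufficiency.} It suffices to show that $D_s$ is contained in every large regular tournament $G$. Since $D_s$ is contained in the complete blow-up of $C_3$ with parts of size $s$, whose parts span transitive tournaments, I would proceed as follows. First, use a supersaturation and counting argument to show that a regular tournament contains $\Omega(n^3)$ cyclic triangles. Exactly, the count is $\binom n3-n\binom{(n-1)/2}{2}\sim n^3/24$. Second, apply the hypergraph K\H{o}v\'ari--S\'os--Tur\'an theorem (Erd\H{o}s) to the $3$-partite $3$-graph of cyclic triangles, with respect to a random ordered tripartition. This yields parts $X,Y,Z$ of size $N$ with all triples in $X\times Y\times Z$ cyclic and oriented $X\to Y\to Z\to X$, where $N\to\infty$ as $n\to\infty$. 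Third, take $N\ge 2^{s}$ and apply Erd\H{o}s--Moser, which states that every tournament on $2^{s-1}$ vertices contains $TT_s$, inside each of $X,Y,Z$. This produces $D_s$.
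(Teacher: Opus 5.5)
\textbf{Sufficiency is fine; necessity has a genuine gap.} The paper does not prove this theorem at all; it only quotes it from Bollob\'as and H\"aggkvist, so I am judging your proposal against the statement itself.

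Your sufficiency half is sound. A regular tournament has $\binom n3-n\binom{(n-1)/2}{2}\sim n^3/24$ cyclic triangles. A random ordered tripartition keeps, in expectation, a $1/9$ fraction of them oriented $X_0\to Y_0\to Z_0\to X_0$. Erd\H{o}s's theorem on complete $3$-partite $3$-graphs then gives $X\Rightarrow Y\Rightarrow Z\Rightarrow X$ with $|X|=|Y|=|Z|=N\geqslant 2^{s-1}$, and Erd\H{o}s--Moser finds $TT_s$ inside each part.

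The gap is that the claim you plan to prove by induction is false: not every subtournament of $R_k$ lies in some $D_s$. In $D_s$ with transitive parts $A\Rightarrow B\Rightarrow C\Rightarrow A$, every cyclic triangle meets each part exactly once. Now $R_2=C_3[C_3,C_3,C_3]$ sits inside every $R_k$ with $k\geqslant 2$. Suppose it embedded in $D_s$, and fix $z$ in the third copy. Every triangle $xyz$ with $x$ in the first copy and $y$ in the second is cyclic, so every such $y$ must lie in a part different from every such $x$. But the first copy is itself a cyclic triangle and already occupies all three parts, so $y$ has nowhere to go. Hence the family $R_k$ cannot separate Tur\'anable tournaments from the rest.

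Your Paley fallback fails in the opposite direction. Quadratic residue tournaments are quasirandom and contain every fixed tournament once they are large, so they exclude nothing.

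The missing idea is local transitivity. Consider the rotational regular tournament on $2m+1$ vertices, with $i\to j$ iff $j-i\in\{1,\dots,m\}\pmod{2m+1}$. Every in- and out-neighbourhood in it is transitive, and this property passes to subtournaments. So a Tur\'anable $T$ must be locally transitive, and it must also embed in $R_k$ for all large $k$.

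Combine the two as follows. Take the first level of the recursive decomposition of $R_k$ at which $V(T)$ meets at least two of the parts $P_1\Rightarrow P_2\Rightarrow P_3\Rightarrow P_1$. Such a level exists because the parts of $R_1=C_3$ are single vertices. Let $S_i=V(T)\cap P_i$. Each nonempty $S_i$ lies in the out- or in-neighbourhood of a vertex of another nonempty $S_j$, so it is transitive.
\begin{itemize}
\item If all three $S_i$ are nonempty, this is exactly an embedding of $T$ into $D_s$ with $s=\max_i|S_i|$.
\item If only two are nonempty, say $S_1\Rightarrow S_2$, then $T$ is transitive, and again $T\subseteq D_s$.
\end{itemize}
This resolves the obstacle you left open and completes the necessity direction.
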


Let $C_l$ and $P_l$ be the directed cycle and path of length $l$. For a directed path or cycle $L$, the \emph{$k$th power} of $L$, denoted by $L^{k}$, is the digraph obtained from $L$ by adding a
directed edge from $x$ to $y$ if there is a directed path of length at most $k$ from $x$ to $y$ on $L$. We usually call the 2nd power of $L$ the \emph{square} of $L$. 

DeBiasio et al. \cite{debiasioCPC35} showed that Theorem \ref{THM:bollobasJCTB50} also holds for powers of directed cycles.  
More precisely, they proved that given $l,k\in \mathbb{N}$ with $l\geqslant 2k+1$, $C_l^k$ is Tur\'anable if and only if $C_l^k \subseteq D_s$ for some $s\in \mathbb{N}$, or equivalently,  $C_l^k$ is Tur\'anable if and only if $l\geqslant3k$. 
DeBiasio et al. \cite{debiasioCPC35} also conjectured that  Theorem \ref{THM:bollobasJCTB50} can be extended to all oriented graphs. 
Very recently, Araujo and Xiang \cite{araujoARXIV2025} disproved this conjecture by building a $5$-vertex counterexample. Our first result shows that this conjecture has many counterexamples.

\begin{proposition}\label{PROP:manyF} 
There exists an infinite family of Tur\'anable oriented graphs, none of which is a subgraph of $D_s$ for any $s\in\mathbb{N}$.
\end{proposition}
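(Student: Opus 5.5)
The plan is to start from the $5$-vertex counterexample $F$ of Araujo and Xiang \cite{araujoARXIV2025} and generate infinitely many further examples by attaching pendant vertices. Two properties of $F$ are used. First, $F$ is Tur\'anable, i.e.\ there is $n_0$ such that every regular tournament on $n\geqslant n_0$ vertices contains a copy of $F$. Second, $F\not\subseteq D_s$ for every $s\in\mathbb{N}$. The second property is inherited upward: if $F\subseteq H$ and $H\subseteq D_s$ for some $s$, then $F\subseteq D_s$, a contradiction. So every oriented graph containing $F$ is also not a subgraph of any $D_s$. It therefore suffices to build infinitely many pairwise non-isomorphic Tur\'anable oriented graphs containing $F$.

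For $k\geqslant 0$, let $F_k$ be obtained from $F$ by attaching a directed path of length $k$ at a fixed vertex $x\in V(F)$. Concretely, add new vertices $y_1,\dots,y_k$ and the edges $xy_1,y_1y_2,\dots,y_{k-1}y_k$. Then $F_k$ has $5+k$ vertices, so the $F_k$ are pairwise non-isomorphic. Any other rule of repeatedly attaching a vertex of degree one, in either orientation, works equally well. The simplest alternative is to add $k$ isolated vertices. I would present the pendant version so that the examples are connected, as long as $F$ is connected.

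To show that $F_k$ is Tur\'anable, let $T$ be a regular tournament on $n\geqslant\max\{n_0,\,2(k+5)+1\}$ vertices. Every vertex of $T$ has out-degree and in-degree exactly $(n-1)/2$. By the choice of $n_0$, $T$ contains a copy of $F$; let $x'$ be the image of $x$. We then embed $y_1,\dots,y_k$ greedily in order. When embedding $y_i$, its predecessor ($x'$ for $i=1$, otherwise the image of $y_{i-1}$) has $(n-1)/2>k+5$ out-neighbours, while fewer than $k+5$ vertices have been used. Hence some unused out-neighbour can serve as the image of $y_i$. If an in-pendant vertex is attached instead, the same argument works with in-neighbours. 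This produces a copy of $F_k$ in $T$.

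There is no real obstacle in this plan; the only substantive input is the existence of one Tur\'anable $F$ with $F\not\subseteq D_s$ for all $s$, which is exactly what \cite{araujoARXIV2025} supplies. The one point to check is the bookkeeping in the greedy step: $n$ must be large enough that $(n-1)/2$ exceeds $|V(F_k)|$, and this is built into the choice of $n$ above.
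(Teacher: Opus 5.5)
Your argument is correct, but it proves the proposition by a different and much cheaper route than the paper. You take one known counterexample $F$ and pad it. Containment of $F$ is inherited upward, so $F_k\not\subseteq D_s$, and Tur\'anability survives attaching pendant paths (or even isolated vertices) by a trivial greedy step.

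Two small points. First, make explicit why the Araujo--Xiang graph has both properties you use, since the paper only says it ``disproves the conjecture''. By Theorem~\ref{THM:bollobasJCTB50}, $D_s$ is Tur\'anable, hence so is every subgraph of $D_s$. So any counterexample to the DeBiasio et al.\ conjecture must be Tur\'anable and lie in no $D_s$. Second, with $n=2(k+5)+1$ you have $(n-1)/2=k+5$, not $>k+5$. This is harmless, because at most $k+4$ vertices are already used when $y_i$ is embedded.

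The paper, by contrast, is self-contained modulo Theorem~\ref{THM:bollobasJCTB50}:
\begin{itemize}
\item It takes a blow-up $H'$ of $C_3$ with acyclic parts, which is Tur\'anable as a subgraph of $D_{a,b,c}$.
\item It adds a vertex $w$ with edges $uw,wv$, closing a directed cycle with a path $P$ from $v$ to $u$ inside one part.
\item It finds $w$ in a regular tournament by a degree count. Out-neighbours of $u$ and in-neighbours of $v$ in the acyclic part are disjoint, so $N^+(u_G)\cap N^-(v_G)$ meets $U$.
\item It rules out $H\subseteq D_s$ as follows. The triangles $xvy$ and $xuy$ put $u,v$ in one class of $D_s$, while the cycle $P+w$ forces $w$ into another class. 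This gives edges in both directions between two classes.
\end{itemize}

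Your approach buys brevity, and it shows that ``infinitely many'' is automatic once a single example exists. What it gives up is any new information. Every member of your family contains the same $5$-vertex obstruction, and all the real difficulty is outsourced to \cite{araujoARXIV2025}. The paper's family instead exhibits structurally new obstructions: arbitrary acyclic parts and directed cycles of any length through one part. That is what supports its claim that the conjecture has ``many'' counterexamples.
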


As mentioned above, an oriented graph is  Tur\'anable if it is contained in every sufficiently large regular tournament $G$. 
It is then natural to ask  for the smallest number $g(n,H)$ such that $\delta^{0}(G)\geqslant g(n, H)$ guarantees the existence of a copy of $H$ in an $n$-vertex oriented graph $G$. We will consider the asymptotic normalized version of the minimum semi-degree threshold. We define
$$\kappa^0(H)=\lim_{n\rightarrow\infty}\frac{g(n,H)}{n}.$$
When $H$ is an oriented cycle, $\kappa^0(H)$ is a special case of the minimum semi-degree version of  the famous Caccetta--H\"aggkvist Conjecture   \cite{Caccetta1978}, and was studied extensively by K\"{u}hn, Osthus and Piguet \cite{kuhnEJC34}. Determining $\kappa^0(H)$ is highly non-trivial even when $H$ is of constant order. DeBiasio et al. \cite{debiasioCPC35} asked the following question.

\begin{question}[\cite{debiasioCPC35}]
Is it true that $\kappa^0(C_6^2)=2/5$?
\end{question}
In \cite{araujoARXIV2025}, Araujo and Xiang presented a construction which shows that $\kappa^0(C_6^2)\geqslant 3/7$.  In this paper, we give two lower bound constructions for $C_l^2$ and $P_l^2$, respectively, whose minimum semi-degrees are surprisingly large.

\begin{proposition}\label{Prop:Cl2-lowerbound}
$\kappa^0(C_l^2)\geqslant4/9$ for all $l\not\equiv 0 \pmod 6$, and $\kappa^0(P_l^2)\geqslant3/8$ for all $l\geqslant 7$.

Moreover, there exists a $4n/9$-regular $($resp., $3n/8$-regular$)$ oriented graph $G$ containing no $C_l^2$ $($resp., $P_l^2$$)$.
\end{proposition}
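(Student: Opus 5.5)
Both bounds come from balanced blow-ups of small regular oriented graphs. Let $R$ be an oriented graph on $[m]$ in which every vertex has in- and out-degree $r$. Let $G=R(n/m)$ be obtained by replacing each vertex $i$ with an independent set $V_i$ of size $n/m$ and each edge $ij$ with all edges from $V_i$ to $V_j$. Then $G$ is an oriented graph and is $rn/m$-regular. Since the $V_i$ are independent, any copy of a digraph $H$ in $G$ projects to a homomorphism $H\to R$, which need not be injective. So it suffices to pick $R$ so that no homomorphism from $C_l^2$ (resp.\ $P_l^2$) into $R$ exists; then $g(n,H)>rn/m$ along a sequence of $n$, which gives the bound on $\kappa^0$.

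\textbf{The cycle bound.} Take $m=9$, $r=4$ and the Cayley oriented graph $R=\mathrm{Cay}(\mathbb Z_9,S)$ with $S=\{1,3,4,7\}$. This is oriented because $S\cap(-S)=\emptyset$. A homomorphism $x_0,\dots,x_{l-1}$ of $C_l^2$ into $R$ gives differences $d_i=x_{i+1}-x_i\in S$, indices mod $l$, with $d_i+d_{i+1}\in S$ and $\sum_i d_i\equiv 0\pmod 9$.

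Checking all $16$ ordered pairs, the only pairs with $d+d'\in S$ are $\{1,3\},\{3,4\},\{3,7\}$, in either order. Hence the cyclic sequence $(d_i)$ alternates between $3$ and elements of $\{1,4,7\}$, so $l=2k$ is even. Each element of $\{1,4,7\}$ is $\equiv1\pmod 3$, so
\[
\sum_i d_i=3k+(k+3t)\quad\text{for some integer } t .
\]
Reducing mod $3$ forces $k\equiv0\pmod 3$, that is, $l\equiv0\pmod 6$. So for $l\not\equiv0\pmod 6$ there is no homomorphism, and $R(n/9)$ is a $4n/9$-regular $C_l^2$-free oriented graph. For $l\equiv 0\pmod 6$ homomorphisms do exist, which is consistent with the hypothesis of the statement.

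\textbf{The path bound.} Take $m=8$ and $r=3$. A homomorphism of $P_l^2$ into $R$ is a walk $x_0x_1\cdots x_l$ in $R$ with each $x_ix_{i+2}\in E(R)$. Equivalently, it is a walk with $l-1$ steps in the auxiliary digraph $A(R)$, whose vertices are the edges of $R$ and which has an arc $uv\to vw$ whenever $uw\in E(R)$. So I need a $3$-regular oriented graph $R$ on $8$ vertices such that $A(R)$ is acyclic with longest walk of at most $5$ arcs; then $P_l^2$ is excluded for all $l\geqslant7$.

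Finding such an $R$ is the main obstacle. A quick check shows that no Cayley digraph on $\mathbb Z_8$ works. Up to the symmetry $d\mapsto -d$, the choices with no loop in $A(R)$ (a loop occurs when $d,2d\in S$) are $\{1,5,6\}$ and $\{2,3,7\}$, and both contain a $2$-cycle in $A(R)$, namely $1\to5\to1$ and $3\to7\to3$ respectively. So I would search among non-Cayley $3$-regular oriented graphs on $8$ vertices. I would first try an ordered structure, for instance two $4$-vertex blocks $X,Y$ with most edges directed "forward" and a few backward edges arranged so that every transitive triple $u\to v\to w$ with $u\to w$ moves strictly forward in some potential function. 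Acyclicity of $A(R)$ and the length bound would then be checked by a finite case analysis, or verified by computer. Once such an $R$ is found, $R(n/8)$ is a $3n/8$-regular $P_l^2$-free oriented graph for all $l\geqslant 7$.
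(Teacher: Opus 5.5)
Your treatment of $C_l^2$ is correct and uses the paper's construction. $\mathrm{Cay}(\mathbb{Z}_9,\{1,3,4,7\})$ consists of three directed triangles on the cosets of $\{0,3,6\}$, with each coset completely joined to the next; this is exactly the paper's $F$. Your alternation argument is equivalent to, and tidier than, the paper's claim that the square cycle takes two consecutive vertices in $H_j(t)$ before moving to $H_{j+1}(t)$.

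The gap is the $P_l^2$ half. You reduce it to the existence of a $3$-regular oriented graph $R$ on $8$ vertices whose auxiliary digraph $A(R)$ is acyclic with short walks. You then rule out Cayley graphs and only describe a search. The whole content of $\kappa^0(P_l^2)\geqslant 3/8$ is the existence of such an $R$, so nothing has been proved there.

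The missing idea is a cyclic three-part structure rather than a forward ordering. The paper's graph is as follows.
\begin{itemize}
\item $V_1=\{x_2,x_3,x_4\}$ and $V_2=\{x_5,x_6,x_7\}$ induce the directed triangles $x_2x_3x_4$ and $x_5x_6x_7$.
\item $V_3=\{x_1,x_8\}$ is independent.
\item All edges go from $V_3$ to $V_1$ and from $V_2$ to $V_3$.
\item Between $V_1$ and $V_2$ the edges are exactly $x_2x_5,x_2x_6,x_3x_6,x_3x_7,x_4x_7,x_4x_5$.
\end{itemize}
This graph is $3$-regular. Edges only go from $V_3$ to $V_1$, inside $V_1$, from $V_1$ to $V_2$, inside $V_2$, and from $V_2$ to $V_3$. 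Also $V_1,V_2$ contain no $TT_3$. Hence every transitive triple $u\to v\to w$ with $u\to w$ has part pattern $(V_3,V_1,V_1)$, $(V_1,V_1,V_2)$, $(V_1,V_2,V_2)$ or $(V_2,V_2,V_3)$.

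Consecutive windows of a homomorphic image of a square path overlap in two positions. So its part sequence is a subword of $V_3,V_1,V_1,V_2,V_2,V_3$, which has at most $6$ vertices; equivalently, every walk in $A(R)$ has at most $4$ arcs. This argument is about homomorphisms, so it applies directly to the blow-up $R(n/8)$ in your framework.

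One small point on conventions: the paper's proof indexes $P_s^2$ by its $s$ vertices. Under that reading your target of at most $5$ arcs only excludes squares of paths on at least $8$ vertices. You would need the bound of $4$ arcs, which this graph achieves.
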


\medskip
\noindent\textbf{2. Tiling problem} 
\medskip

The tiling problem for oriented graphs has attracted considerable attention in recent years. In \cite{keevashJCTB99}, Keevash and Sudakov proved that, for sufficiently large $n$, every $n$-vertex oriented graph $G$ with $\delta^{0}(G)\geqslant (1/2-o(1))n$ contains a $C_3$-tiling covering all but at most three vertices. Subsequently, Li and Molla \cite{liEJC26} confirmed a conjecture of Cuckler \cite{cuckler2008} and Yuster \cite{yusterCSR1} by proving that every sufficiently large semi-regular tournament on $n\in 3\mathbb{N}$ vertices contains a $C_3$-factor. In \cite{wangJGT106}, Wang, Yan, and Zhang extended the result of Keevash and Sudakov \cite{keevashJCTB99} by showing that, for every integer $l\geqslant 4$ and sufficiently large $n\in l\mathbb{N}$, every $n$-vertex oriented graph with $\delta^{0}(G)\geqslant (1/2-o(1))n$ contains a $C_l$-factor.

Transitive tournament factors have also been studied extensively in recent years. Yuster \cite{yusterorder20} showed that every sufficiently large oriented graph $G$ on $n\in 3\mathbb{N}$ vertices with $\delta(G)\geqslant 5n/6$ contains a $TT_3$-factor. Treglown \cite{treglownJGT69} conjectured that the condition $\delta^0(G)\geqslant 7n/18$ guarantees a $TT_3$-factor, and he also provided an extremal example showing that this semi-degree threshold is best possible. This conjecture was later resolved for sufficiently large oriented graphs by Balogh, Lo, and Molla \cite{baloghJCTB124}. Furthermore, DeBiasio, Lo, Molla, and Treglown \cite{debiasioSJDM35} proved that $\delta(G)\geqslant (11/12+o(1))n$ suffices to guarantee a $TT_4$-factor in an oriented graph on $n\in 4\mathbb{N}$ vertices, and that this condition is asymptotically tight. In the same paper \cite{debiasioSJDM35}, they also established the best currently known general upper bound on the minimum total degree that guarantees a $TT_k$-factor for all $k\geqslant 5$. We remark that the oriented Ramsey number of $TT_k$ plays an important role in the study of the $TT_k$-factor problem, see \cite{debiasioSJDM35}.

The definition of $D_{a, b, c}$ suggests that $D_{a,b,c}$ (with $1\leqslant a\leqslant b\leqslant c$) contains a directed cycle of length $a+b+c$ and contains a $TT_a$-factor when $a=b=c$.
Hence, $D_{a, b, c}$ serves as a natural bridge between the tiling problems for cycles and those for transitive tournaments. In \cite{araujoARXIV2025}, Araujo and Xiang proved that every sufficiently large semi-regular tournament of order $n$, with $n\in 4\mathbb{N}$, contains a $D_{1,1,2}$-factor. They further asked for which integers $1\leqslant a \leqslant b\leqslant c$ every sufficiently large semi-regular tournament satisfying the necessary divisibility conditions admits a $D_{a,b,c}$-factor. In this paper, we answer this question completely.

Given $p, q\in \mathbb{N}$, let $\gcd(p,q)$ be the greatest common divisor between $p$ and $q$. In particular, $\gcd(p,0)=p$ for any positive integer $p$ and $\gcd(0,0)$ is undefined.

\begin{theorem}\label{THM-abc}
Given $a,b,c\in \mathbb{N}$ with $2\leqslant a\leqslant b\leqslant c$, there exists \(\eta>0\) such that the following holds for all sufficiently large \(n\in (a+b+c)\mathbb{N}\). If \(\gcd(a+b+c,c^2-ab)=1\), then every \(n\)-vertex oriented graph \(G\) with \(\delta^0(G)\geqslant (1/2-\eta)n\) contains a \(D_{a,b,c}\)-factor.
\end{theorem}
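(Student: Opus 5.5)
We use the absorbing method combined with stability. The abstract indicates a stability result: if $(a+b+c)\mid n$ and $\delta^0(G)\geqslant(1/2-\eta)n$, then either $G$ has a $D_{a,b,c}$-factor or $G$ is $\varepsilon$-close to an extremal graph. So the proof splits into a non-extremal case and an extremal case, and we plan to show that the arithmetic condition $\gcd(a+b+c,c^2-ab)=1$ rules out the only obstruction in the extremal case.

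\emph{Non-extremal case.} Here we follow the standard absorbing scheme, in four steps.
\begin{enumerate}
\item Show that every $(a+b+c)$-set $S$ has $\Omega(n^{t})$ absorbers of some bounded size $t$. We would build these by iteratively exchanging single vertices, using that any two vertices $u,v$ have many ``switching'' structures: copies of $D_{a,b,c}-x$ that extend by both $u$ and $v$. These exist because $\delta^0\approx n/2$ and, by supersaturation, regular-tournament-like oriented graphs contain many copies of $D_s$ (Theorem~\ref{THM:bollobasJCTB50} style counting).
\item Take a random family of absorbers to obtain a small absorbing set $A$.
\item Apply the regularity lemma to $G-A$. In the reduced oriented graph, which has semi-degree about $1/2$, find an almost perfect fractional $D_{a,b,c}$-tiling. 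We would do this via the Keevash--Sudakov almost $C_3$-tiling, blowing each $C_3$ in the reduced graph up into copies of $D_{a,b,c}$ with parts balanced suitably inside the regular triples.
\item Absorb the leftover vertices into $A$.
\end{enumerate}
Non-extremality is exactly what makes the absorbing lemma hold. When absorption fails, the ``reachability'' partition of $V(G)$ has several classes, and with semi-degree near $n/2$ this forces closeness to an extremal structure.

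\emph{Extremal case.} The extremal graphs should be close to a blow-up of $C_3$, or to the similar structure underlying Keevash--Sudakov's three uncovered vertices, with classes $V_1,V_2,V_3$ of size about $n/3$. Any copy of $D_{a,b,c}$ placed ``canonically'' uses $(a,b,c)$ vertices in the classes up to cyclic rotation. Since the $C_3$-blow-up is $3$-partite, edges inside the classes are sparse, but the minimum semi-degree forces some. The tiling must therefore fix the class sizes modulo a lattice generated by the vectors $(a,b,c)$, $(b,c,a)$, $(c,a,b)$ together with a few ``crossing'' configurations. The determinant of the circulant matrix on $(a,b,c)$ is $(a+b+c)(a^2+b^2+c^2-ab-bc-ca)$. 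We expect the relevant index computation, after restricting to the sum-zero sublattice, to reduce to $\gcd(a+b+c,c^2-ab)$, noting that $c^2-ab\equiv a^2-bc\equiv b^2-ca \pmod{a+b+c}$. The coprimality hypothesis then means the lattice contains every divisibility-compatible size vector.

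The steps of the extremal case are as follows. First, clean the partition by moving atypical vertices and covering them greedily with copies of $D_{a,b,c}$. Second, use a bounded number of non-canonical copies, exploiting the few edges inside classes (using $a\geqslant2$ to find $TT_2$ within classes where needed), so that the remaining class sizes lie in the canonical cone with correct residues. Third, finish by a blow-up-lemma-type argument on the near-complete $C_3$-blow-up.

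We expect the main obstacle to be the extremal case, specifically the number-theoretic balancing step: showing that the available local modifications generate all residues exactly when $\gcd(a+b+c,c^2-ab)=1$, while only boundedly many vertices are used. We would first try to compute the lattice explicitly from canonical copies plus one copy using a single intra-class edge, and verify that its index equals $\gcd(a+b+c,c^2-ab)$.
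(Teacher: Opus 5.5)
\textbf{Verdict: the overall architecture matches the paper, but the extremal case rests on a wrong picture of the extremal structure, and the balancing step fails as planned.}

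Your plan has the paper's shape:
\begin{itemize}
\item in the non-extremal case, lattice-based absorption via reachability and closed partitions, with Keevash--Sudakov applied to the reduced graph;
\item stability towards a $C_3$ blow-up;
\item a balancing step modulo $h=a+b+c$ governed by $\gcd(h,c^2-ab)$.
\end{itemize}
The implication ``absorption fails $\Rightarrow$ extremal'' is only asserted. In the paper this is the bulk of the work: merge closed parts along $2$-transferrals, and show that a $2$-transferral-free partition forbids cross $K_4^-$ in the reduced graph, which forces extremality. The claim itself is correct.

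\textbf{The extremal structure.} The classes are not sparse. Under $\delta^0(G)\geqslant(1/2-\eta)n$, a typical $v\in V_i$ sends only $O(\gamma n)$ edges to $V_{i-1}$ and receives only $O(\gamma n)$ from $V_{i+1}$. So $v$ has about $n/6$ out-neighbours and $n/6$ in-neighbours inside $V_i$, and each $G[V_i]$ is close to a regular tournament on about $n/3$ vertices. With sparse classes you could not even find the $TT_c$ of a canonical copy. The resource you need for balancing is therefore copies of $D_{a,b,c}$ lying entirely inside one class, with index vector $h\mathbf{e}_i$, and these are abundant. ``Crossing configurations'' built from backward edges are not what you need, and they may not exist at all.

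\textbf{Why the balancing fails as stated.} Your expected index computation is false for the lattice you describe. The three rotations of $(a,b,c)$ generate a sublattice of index $a^2+b^2+c^2-ab-bc-ca$ in $\{v\in\mathbb{Z}^3: h\mid \sum_i v_i\}$, and this index can exceed $1$ even when $\gcd(h,c^2-ab)=1$.

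Here is a concrete instance.
\begin{itemize}
\item Take $(a,b,c)=(2,3,4)$, so $h=9$ and $\gcd(9,10)=1$.
\item Let $G$ be the exact blow-up of $C_3$ with semi-regular tournaments on $9k+9$, $9k$, $9k$ vertices. Then $n=27k+9$ and $\delta^0(G)\geqslant n/2-O(1)$.
\item There are no backward edges, and every copy of $D_{2,3,4}$ is of type-$(i)$ or type-$(i,i+1,i+2)$.
\item Suppose only the rotation types were used, with multiplicities $x,y,z$. Reducing the three class counts modulo $3$ gives $2x+y\equiv 2y+z\equiv x+2z\equiv 0$. Hence $x\equiv y\equiv z\pmod 3$, so $n=9(x+y+z)\equiv 0\pmod{27}$, which contradicts $n=27k+9$.
\end{itemize}
So any factor of this graph must use copies lying inside a single class, which your lattice omits.

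\textbf{The fix, as in the paper.} The correct lattice is generated by the rotations together with $h\mathbf{e}_1,h\mathbf{e}_2,h\mathbf{e}_3$. Equivalently, you solve the congruence system modulo $h$. It is solvable for every right-hand side with $r_1+r_2+r_3\equiv 0$ if and only if $\gcd(h,c^2-ab)=1$. The paper proceeds in three steps:
\begin{enumerate}
\item fix the residues using at most $h-1$ copies of each rotation type;
\item use copies inside each $V_i$ to make the three class sizes exactly equal;
\item finish with Yuster's $TT_k$-factors and Johansson's tripartite triangle-factor theorem. Your blow-up-lemma finish would also work at this stage.
\end{enumerate}
Incidentally, the hypothesis $a\geqslant 2$ plays no role in this direction of the argument; it is needed only for the converse construction.
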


The remaining case \(a=1\) is settled by the following theorem.

\begin{theorem}\label{THM-1bc}
Given \(b,c\in \mathbb{N}\) with \(1\leqslant b\leqslant c\). If either \(3\mid (1+b+c)\) or \(\gcd(1+b+c,c^2-b)=1\), then every sufficiently large semi-regular tournament on \(n\in (1+b+c)\mathbb{N}\) vertices contains a \(D_{1,b,c}\)-factor.
\end{theorem}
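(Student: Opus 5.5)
We follow the absorbing method together with a stability dichotomy, as for Theorem \ref{THM-abc}, but now exploit that $G$ is a semi-regular tournament. This is needed because for $a=1$ the extremal constructions for oriented graphs with $\delta^0(G)\geqslant(1/2-\eta)n$ genuinely block a $D_{1,b,c}$-factor, whereas semi-regular tournaments are far more rigid. Set $h=1+b+c$. The plan has three ingredients.

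\emph{Almost-tiling.} A $D_{1,b,c}$-tiling covering all but $\varepsilon n$ vertices exists in any oriented graph with $\delta^0\geqslant(1/2-\eta)n$. We obtain it through regularity: in the reduced oriented graph, locate a fractional tiling by triangles $C_3$, using the Keevash--Sudakov almost $C_3$-tiling. Any regular triple oriented as a $C_3$ contains disjoint copies of $D_{1,b,c}$ covering almost all of it, by blowing up and then taking transitive subtournaments inside each part. This step does not need the tournament hypothesis.

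\emph{Absorbing lemma.} We would like that, for every $h$-set $S$, there are $\Omega(n^{h\cdot t})$ absorbers. The natural local tool is "switching". A vertex $v$ can replace the apex $x$ (the $TT_1$ class) of a copy of $D_{1,b,c}$ when $v$ dominates the $TT_b$ part and is dominated by the $TT_c$ part. Similarly, $v$ can swap into the $TT_b$ or $TT_c$ classes. Because $\delta^0\approx n/2$ and $G$ is a tournament, supersaturation (counting via the Bollob\'as--H\"aggkvist argument behind Theorem \ref{THM:bollobasJCTB50}, which gives many copies of $D_s$) should give $\Omega(n^{h-1})$ copies through which any pair $u,v$ is \emph{reachable}. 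Here $u\sim v$ means that many $(h-1)$-sets $W$ exist with both $W+u$ and $W+v$ spanning $D_{1,b,c}$. We then use the lattice-based absorbing method (Han's framework). First, partition $V(G)$ into a bounded number of classes that are closed under reachability. Second, show that the only obstruction is a lattice obstruction. A $D_{1,b,c}$ meeting the classes gives vectors in $\mathbb{Z}^k$. The three parts $1,b,c$, placed in various class-combinations, generate differences such as $c-1$, $b-1$, and so on. When either $3\mid h$ or $\gcd(h,c^2-b)=1$ holds, these differences generate the full transferral lattice modulo the required divisibility. The quantity $c^2-b$ should arise as a determinant of the lattice coming from the two-class configurations of the cyclic $1\to b\to c\to 1$ structure, in the same way as $c^2-ab$ in Theorem \ref{THM-abc}. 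The condition $3\mid h$ should handle the near-extremal structure in which $G$ looks like a blow-up of $C_3$ with three nearly equal parts.

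\emph{Extremal case.} If the reachability partition has at least two classes, then $G$ is close to an extremal configuration. For semi-regular tournaments this should force $G$ to be close to either a balanced blow-up of $C_3$, or the "two halves" structure. We handle each case directly. Exact semi-regularity lets us count the edges between parts precisely: for instance, the number of backward edges in a near-$C_3$-blow-up is controlled, so $|V_1|,|V_2|,|V_3|$ are nearly equal and their residues can be corrected by using a few atypical copies of $D_{1,b,c}$ crossing classes in a different pattern. We then finish with a greedy and matching argument that tiles the parts in proportion $1:b:c$ cyclically, using the divisibility hypothesis to fix the sizes.

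The main obstacle is the extremal analysis. One must show that in a semi-regular tournament the divisibility hypotheses ($3\mid h$ or $\gcd(h,c^2-b)=1$) suffice to balance the part sizes modulo $h$, and that atypical crossing copies needed for the correction always exist. I would first attempt a precise count of the edges between parts, using in-degree equal to out-degree, to guarantee linearly many such crossing configurations.
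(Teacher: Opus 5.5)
Your architecture matches the paper's: regularity plus Keevash--Sudakov for the almost-tiling, lattice-based absorption, ``not closed $\Rightarrow$ extremal'', then correcting part sizes modulo $h$. For $\gcd(h,c^2-b)=1$ your lattice heuristic is essentially the paper's congruence-system lemma in Appendix~A, although the paper uses it in the extremal balancing step rather than the closedness step. In an exact three-part $C_3$ blow-up the robust index vectors are $h\mathbf{e}_i$ and the cyclic shifts of $(1,b,c)$, and $\mathbf{e}_1-\mathbf{e}_2$ lies in their span iff $\gcd(h,c^2-b)=1$. So $c^2-b$ comes from the three-part structure, not a two-class one.

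The gap is the case that is new here: $3\mid h$ with $\gcd(h,c^2-b)>1$. Your claim that $3\mid h$ also makes the transferral lattice full is false. For $b=c=1$ the lattice is $\{(x,y,z): x\equiv y\equiv z \pmod 3\}$, which contains no transferral. Indeed, $C_3$ blow-ups whose part sizes are not all congruent modulo $3$ satisfy $\delta^0\geqslant(1/2-\eta)n$ and have no $C_3$-factor. So only exact semi-regularity can beat this obstruction. The estimate you need is a \emph{lower} bound on \emph{disjoint} backward edges in terms of the imbalance, not ``backward edges are controlled, so the parts are nearly equal''. Take a superextremal partition with $|V_1|=n/3+g$, $|V_2|=n/3+f$, $|V_3|=n/3-g-f$ and $g\geqslant f\geqslant -g-f$. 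Summing exact out-degrees over $V_2$ gives $e(V_2,V_1)\geqslant |V_2|(g+f/2-1/2)$. Each vertex of $V_2$ has at most $n/6+O(\gamma^{1/3}n)$ out-neighbours in $V_1$, so K\"onig's theorem gives a matching of $2g+f-1$ backward edges, and similarly $g-f-1$ in $E(V_1,V_3)$. This guarantee is only of the order of the imbalance; it equals $1$ when $g=1$, $f=0$. So ``linearly many crossing configurations'' is neither what you get nor what you need. The matching must cover exactly $R_h(g)+R_h(f)$, or $R_h(g+f)+R_h(-f)$, transfers.

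Next, each such edge must be a \emph{robust transfer gadget}. That means: for every forbidden set of size $O(\gamma n)$, there is a bounded vertex set avoiding it, spanning a $D_{1,b,c}$-factor, that shifts the residues of two specified parts by $\pm1$. This needs a case analysis for $xy\in E(V_2,V_1)$:
\begin{itemize}
\item If $d^+(y,V_3)$ is large, use $y$ as apex with $TT_b,TT_c$ inside $V_3$.
\item Otherwise, use $x$ as apex, with $TT_c$ in $N^-(x)\cap N^+(y)\cap V_1$ and $TT_b$ formed by $y$ and $b-1$ vertices of $V_3$. Pad this with one copy of type $(1,2,3)$ and one of type $(3,1,2)$.
\item If that common neighbourhood is small, replace $y$ by a vertex $y^*$ of low out-degree inside $N^+(x,V_1)$.
\end{itemize}
Only after all residues have been equalised, necessarily to $kh/3$, does $3\mid h$ enter. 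It lets you remove $kh/3$ from every part using suitable numbers of the three cyclic types, chosen according to $b \bmod 3$.

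Separately, ``not closed $\Rightarrow$ extremal'' is asserted without proof, and with a spurious ``two halves'' alternative. In the paper this is a substantial step, showing the only obstruction is the three-part $C_3$ blow-up. It uses the absence of a cross $K_4^-$ in the reduced graph, a case analysis around one edge, and the Li--Molla lemma.

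Also, not every pair of vertices is reachable, as you first state. Only each vertex is reachable to linearly many others, and that is what produces the closed partition.
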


Recall that Li and Molla \cite{liEJC26} showed that every sufficiently large semi-regular tournament on $n\in 3\mathbb{N}$ vertices contains a $C_3$-factor, i.e., $D_{1,1,1}$-factor. Theorem \ref{THM-1bc} clearly contains this result as a special case. Moreover, Theorem \ref{THM-1bc} generalizes the result of Araujo and Xiang on $D_{1,1,2}$-factors. Indeed, when $a=b=1$ and $c=2$, it holds that $\gcd(a+b+c,c^2-ab)=1$.

As mentioned in \cite{araujoARXIV2025}, for every $s\geqslant 2$, there exists a semi-regular tournament $T$ on $n\in s\mathbb{N}$ vertices without $D_s$-factors.  
The next two propositions extend this result to all $a,b,c\in \mathbb{N}$ and their proofs  will be presented in Appendix \ref{APP:B}.

\begin{proposition}\label{PRO-abcexamplegraph}
Let $a,b,c\in \mathbb{N}$ with $2\leqslant a\leqslant b\leqslant c$. If $\gcd(a+b+c,c^2-ab)>1$, then there exists a semi-regular tournament on \(n\in (a+b+c)\mathbb{N}\) vertices that does not contain a \(D_{a,b,c}\)-factor.
\end{proposition}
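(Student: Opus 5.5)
The plan is to reduce the hypothesis $\gcd(a+b+c,c^2-ab)>1$ to an algebraic statement modulo a prime, and then build a near-regular blow-up of $C_3$ whose vertex weights obstruct a $D_{a,b,c}$-factor. One step, the classification of copies that meet several parts, is not yet justified and may force a change of construction.

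\textbf{Step 1: arithmetic.} Fix a prime $p$ dividing both $a+b+c$ and $c^2-ab$. Then $c\equiv -(a+b)\pmod p$, so
\[
c^2-ab\equiv a^2+ab+b^2\equiv 0 \pmod p.
\]
If $p\nmid a$, put $\omega\equiv b a^{-1}\pmod p$. Then $\omega^2+\omega+1\equiv 0$, and hence, using $c\equiv-(a+b)$,
\[
(a,b,c)\equiv a(1,\omega,\omega^2)\pmod p .
\]
So $\omega$ is a cube root of unity modulo $p$. It is primitive when $p\neq 3$; when $p=3$ we get $\omega\equiv 1$, which forces $a\equiv b\equiv c\pmod 3$. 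The degenerate case $p\mid a$ gives $p\mid b$ and $p\mid c$, and I would handle it separately by a plain divisibility argument.

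\textbf{Step 2: construction.} Take $n\in(a+b+c)\mathbb N$ and split $V$ into parts $V_0,V_1,V_2$ of sizes $m_0,m_1,m_2$, all close to $n/3$. Orient all edges from $V_i$ to $V_{i+1}$ (indices mod $3$), and put a semi-regular tournament inside each $V_i$. A vertex of $V_i$ then has about $m_{i+1}+m_i/2$ out-neighbours and $m_{i-1}+m_i/2$ in-neighbours. The sizes must therefore differ by at most one or two, with the few parity defects repaired by reversing a small matching of cross edges so that the result is semi-regular.

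Give each vertex $v\in V_i$ the weight $w(v)=\omega^i\in\mathbb Z_p$. The invariant to aim for is
\[
\sum_{v\in V(D)} w(v)\equiv 0 \pmod p \quad\text{for every copy $D$ of $D_{a,b,c}$.}
\]
With $n/(a+b+c)$ copies in a factor, a factor would give $\sum_v w(v)=m_0+m_1\omega+m_2\omega^2\equiv0$. Choosing, say, $(m_0,m_1,m_2)=(m+1,m,m-1)$ gives $1-\omega^2\not\equiv0$ when $p\ne3$, so no factor can exist. For $p=3$, where $\omega=1$, I would instead use the count modulo $3$ of vertices in $V_0$, with sizes chosen so that this count is incompatible with every copy.

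\textbf{Step 3 (main obstacle): classifying copies.} A copy lying inside one part has weight sum $(a+b+c)\omega^i\equiv0$. A copy with $X,Y,Z$ placed in $V_i,V_{i+1},V_{i+2}$ in cyclic order has weight sum $\omega^i a(1+\omega\cdot\omega+\omega^2\cdot\omega^2)=\omega^i a(1+\omega^2+\omega)\equiv0$. This works because $b\equiv a\omega$ and $c\equiv a\omega^2$, so every such sum collapses to a multiple of $1+\omega+\omega^2$; the arithmetic of Step 1 is exactly what makes these copies balanced.

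The real difficulty is copies in which a transitive class $TT_a$, $TT_b$ or $TT_c$ is split between parts, or in which the three classes sit in parts in a non-cyclic pattern. Because every cross edge points from $V_i$ to $V_{i+1}$, the parts induce a homomorphism from the copy to $C_3$ with loops. I would enumerate the possible patterns of how $X$, $Y$ and $Z$ meet $V_0,V_1,V_2$ and check the invariant in each. If some pattern violates it, the first thing to try is replacing the random-like semi-regular tournaments inside the parts by tournaments that are themselves blow-ups of this structure at a finer scale. The second is refining the partition into $3p$ parts indexed by $\mathbb Z_3\times\mathbb Z_p$, so that mixed copies are also forced to be balanced. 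I expect this case analysis, together with the degree repair in Step 2, to make up the bulk of the proof.
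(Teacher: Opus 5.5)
Your Step 1 and the functional $w=(1,\omega,\omega^2)$ are fine. For $p\neq 3$, together with your divisibility argument when $p\mid a$, they give a clean dual replacement for the paper's Lemma~\ref{PRO-abc system}. That lemma instead reduces the counting system modulo $g=\gcd(h,\Delta)$ and derives $a\equiv b\equiv c\equiv 0\pmod g$, which also covers $p=3$.

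The genuine gap is the one you flag. You never show that every copy of $D_{a,b,c}$ in your tournament is of type-$(i)$ or of type-$(i,i+1,i+2)$, and without that the invariant proves nothing. This cannot be deferred as bookkeeping:
\begin{itemize}
\item The reversed cross edges needed for semi-regularity (a linear number of them, not a few) break your homomorphism to the looped $C_3$. They are exactly what creates mixed copies.
\item This step is the only place the hypothesis $a\geqslant 2$ can enter, and your argument never uses it. For $a=1$ the statement is false: $D_{1,1,1}=C_3$ has $\gcd(3,0)=3>1$, yet large semi-regular tournaments on $n\in 3\mathbb N$ vertices have $C_3$-factors (Li and Molla; compare Theorem~\ref{THM-1bc}).
\item Concretely, take a reversed edge $x\to z$ from $V_{i+1}$ to $V_i$ and any in-neighbour $y\in V_{i+1}$ of $x$. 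Then $xzy$ is a directed triangle with two vertices in $V_{i+1}$ and one in $V_i$. Such copies destroy the modular obstruction.
\end{itemize}

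No change of construction is needed. What is missing is that the reversed edges form a \emph{matching}, so every vertex has at most one backward cross edge. Use the paper's choice: semi-regular tournaments on $2hk+1,2hk-1,2hk$ vertices, and a reversed perfect matching between $V_3^+$ and an $hk$-subset of $V_1$.
\begin{itemize}
\item Suppose the $TT_a$-class $A$ has $a_1\in V_i$ and $a_2\in V_{i+1}$. Then $C\subseteq N^-(a_1)\cap N^-(a_2)$ lies in $V_i$ apart from at most one vertex. Likewise $B\subseteq N^+(a_1)\cap N^+(a_2)$ lies in $V_{i+1}$ apart from at most one vertex.
\item Since $c\geqslant 3$ and $b\geqslant 2$, some $b_0\in B\cap V_{i+1}$ has two out-neighbours in $C\cap V_i$. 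That is two backward edges at one vertex, a contradiction.
\item The same argument applies to $B$. Once $A$ and $B$ each lie in a single part, a split $C$ or any non-cyclic placement forces two classes of size at least $2$ with all edges between them backward. Again some vertex has two backward edges.
\end{itemize}
This uses $c\geqslant 3$, so the case $a=b=c=2$ must be handled separately; the paper cites Araujo and Xiang.

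Finally, your $p=3$ fallback is wrong as stated. When $3\nmid a$, every cyclic copy puts $a\equiv b\equiv c\not\equiv 0\pmod 3$ vertices into $V_0$, so $|V_0|\bmod 3$ is not invariant. Use $|V_0|-|V_1|\bmod 3$ instead, or the uniform modulo-$g$ argument.
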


\begin{proposition}\label{PRO-1bcexamplegraph}
Let $b,c\in \mathbb{N}$ with $1\leqslant b\leqslant c$. If \(3\nmid (1+b+c)\) and \(\gcd(1+b+c,c^2-b)>1\), then there exists a semi-regular tournament on \(n\in (1+b+c)\mathbb{N}\) vertices that does not contain a \(D_{1,b,c}\)-factor.
\end{proposition}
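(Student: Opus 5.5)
The plan is to build the tournament as a near-balanced blow-up of $C_3$ and then show, through a divisibility invariant, that the vertex counts in the three parts cannot be matched by copies of $D_{1,b,c}$. Let $s=1+b+c$ and $p>1$ a prime divisor of $\gcd(s,c^2-b)$. Take $n\in s\mathbb N$ large and partition the vertex set into $V_1,V_2,V_3$ with sizes $n_1,n_2,n_3$, all within $O(1)$ of $n/3$. All edges between parts are oriented $V_1\to V_2\to V_3\to V_1$. Inside each $V_i$ we place a (near-)regular tournament. The sizes $n_i$ are to be tuned so that every vertex has in- and out-degree $\lfloor (n-1)/2\rfloor$, or within the semi-regular slack; this requires the $n_i$ to differ by at most a small constant. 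Such inner tournaments exist, since a regular or almost-regular tournament exists on any number of vertices, and by choosing which inner tournament is "out-heavy" we can balance the degrees. We will verify semi-regularity by a direct degree count: a vertex of $V_1$ has out-degree $n_2+d^+_{V_1}(v)$ and in-degree $n_3+d^-_{V_1}(v)$.

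The main step is to classify the possible intersection vectors $(x_1,x_2,x_3)$, with $x_i=|V(D)\cap V_i|$, of a copy $D$ of $D_{1,b,c}$. Label its blocks $X$ ($=TT_1$), $Y$ ($=TT_b$), $Z$ ($=TT_c$), with $X\to Y\to Z\to X$. An edge inside the copy that goes "backwards" between parts, for example from $V_2$ to $V_1$, is impossible. So for any two blocks, the parts they meet must be compatible with the cyclic orientation. We will use this to show that either the copy lies inside one part, giving $(s,0,0)$ and its rotations, or the three blocks are assigned to the parts in a way that respects the cyclic order up to how the internal transitive blocks can be split.

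The splits need care. A transitive block may straddle two consecutive parts, because its internal edges can point $V_i\to V_{i+1}$. The constraints from the other two blocks then restrict where the remaining vertices can go. We then exhibit a linear functional $f(x_1,x_2,x_3)=x_1+kx_2+k^2x_3 \pmod p$, with $k$ a suitable root chosen using $p\mid s$ and $b\equiv c^2 \pmod p$. The goal is to show that $f$ takes the same value, or a value in a fixed proper subgroup, on every admissible vector. For example, $f(s,0,0)\equiv 0$ and $f(1,b,c)\equiv 1+kb+k^2c$. Choosing $k\equiv c^{-1}$-type values makes these vanish using $c^2\equiv b$.

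The hypothesis $3\nmid s$ should be exactly what prevents the balanced vector $(n/3,n/3,n/3)$-type sizes from trivially lying in the lattice. It therefore lets us choose $n_1,n_2,n_3$, differing by $O(1)$, with $f(n_1,n_2,n_3)\not\equiv 0 \pmod p$ while keeping semi-regularity. A $D_{1,b,c}$-factor would have intersection vectors summing to $(n_1,n_2,n_3)$, and $f$ of this sum would be $\equiv 0$, a contradiction.

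I expect the main obstacle to be the classification of mixed copies, those in which a transitive block is split across two parts, and checking that $f$ vanishes on all of them. If some split vector escapes the invariant, the fallback is to modify the inner tournaments, for instance by making them nearly bipartite or "cyclically layered" at a finer level, so that such splits are forbidden. Alternatively, one can refine the partition into more parts arranged cyclically so that only rotations of $(1,b,c)$ and $(s,0,0)$-type vectors survive.
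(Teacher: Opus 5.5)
Your skeleton matches the paper's: a blow-up of $C_3$, a classification of copies into type-$(i)$ and type-$(i,i+1,i+2)$, and an obstruction modulo a prime $p\mid\gcd(s,c^2-b)$. Since $b^2\equiv c$ and $bc\equiv 1 \pmod p$, your functional with $k=c^{-1}$ is just $f\equiv x_1+bx_2+cx_3$. That is a tidy single-invariant version of the paper's relations $br_1\equiv r_2$, $br_2\equiv r_3$, $br_3\equiv r_1$.

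However, the step you call the main obstacle is left open, and it is the heart of the proof. It is short, and no block can be split. Write $x$ for the vertex of $X$, and note that any two of the three parts are cyclically consecutive. Suppose $Y$ meets $V_i$ and $V_{i+1}$ in $y,y'$. The common out-neighbourhood of $y,y'$ lies in $V_{i+1}$ and their common in-neighbourhood lies in $V_i$. So $Z\subseteq V_{i+1}$ and $x\in V_i$, and the edges from $Z$ to $x$ would go from $V_{i+1}$ to $V_i$, which is impossible. Symmetrically, if $Z$ meets $V_i$ and $V_{i+1}$, then $Y\subseteq V_i$ and $x\in V_{i+1}$, contradicting $x\to Y$. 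With every block inside one part, a direct check leaves only types $(i)$ and $(i,i+1,i+2)$, i.e. rotations of $(s,0,0)$ and $(1,b,c)$.

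You also only checked $f(1,b,c)$. Vanishing on $(c,1,b)$ and $(b,c,1)$ needs $bc\equiv 1\pmod p$, which follows from $b^2+b+1\equiv 0$. Your fallbacks (altering the inner tournaments, refining the partition) are unnecessary, and they would force you to re-verify semi-regularity.

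The second gap is the choice of sizes. They cannot be tuned within a ``small constant'', and an arbitrary large $n\in s\mathbb{N}$ fails. Summing $d^+(v)-d^-(v)$ over $v\in V_i$ gives $n_i(n_{i+1}-n_{i-1})$, because the inner terms cancel; so no inner tournament can be made ``out-heavy''. Semi-regularity therefore forces $|n_{i+1}-n_{i-1}|\leqslant 1$ if $n$ is even, and $n_1=n_2=n_3$ if $n$ is odd. In particular, if $3\mid n$ the parts are balanced and $f(n/3,n/3,n/3)\equiv (n/3)s\equiv 0$, so there is no obstruction at all.

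You must take $n\in s\mathbb{N}$ even with $3\nmid n$, e.g. $n=2s$; this is exactly where $3\nmid s$ is used. The sizes are then $(m+1,m,m)$ or $(m+1,m+1,m)$, with regular tournaments on the odd parts and a semi-regular one on the even part. This gives $f\equiv 1$ or $f\equiv 1+b\equiv -c\pmod p$. Both are nonzero because $p\nmid bc$: if $p\mid b$ then $p\mid c$, and $s\equiv 1\pmod p$. (The same parity requirement is tacit in the paper: its tournament on $(3k+1)h$ vertices is semi-regular only when that number is even.)
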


Motivated by Propositions \ref{PRO-abcexamplegraph} and \ref{PRO-1bcexamplegraph}, it is natural to ask how large a \(D_{a,b,c}\)-tiling one can guarantee in oriented graphs that are close to regular tournaments. 
Our next result shows that, for any \(a,b,c\in \mathbb{N}\), every \(n\)-vertex oriented graph \(G\) with \(\delta^0(G)\geqslant (1/2-o(1))n\) contains an almost \(D_{a,b,c}\)-factor. 
Moreover, under the same minimum semi-degree condition, if \(G\) does not contain a \(D_{a,b,c}\)-factor, then \(G\) must be very close to a specific extremal structure, which will be defined later.

\begin{theorem}\label{THM:factor-extremal}
Given \(a,b,c\in \mathbb{N}\) with \(1\leqslant a \leqslant b \leqslant c\), there exist \(\eta,\gamma>0\) such that the following holds for all sufficiently large \(n\). Let \(G\) be an \(n\)-vertex oriented graph with \(\delta^0(G)\geqslant (1/2-\eta)n\). Then \(G\) contains a \(D_{a,b,c}\)-tiling covering all but at most \(2(a+b+c)-3\) vertices. In particular, if \(\gcd(a+b+c,c^2-ab)=1\), then \(G\) contains a \(D_{a,b,c}\)-tiling covering all but at most \(a+b+c-1\) vertices.

Moreover, if $n\in (a+b+c)\mathbb{N}$ but $G$ has no $D_{a,b,c}$-factor, then $V(G)$ has a partition $(V_1,V_2,V_3)$ satisfying  $|V_1|,|V_2|,|V_3| = n/3 \pm O(\gamma n)$ and $e(V_1,V_3),e(V_3,V_2),e(V_2,V_1) = O(\gamma n^2)$.
\end{theorem}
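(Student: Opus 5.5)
We will prove Theorem~\ref{THM:factor-extremal} by the absorbing method combined with a stability analysis. Write $h=a+b+c$. The plan has three stages: an almost-tiling lemma, an absorbing lemma that works whenever $G$ is far from the extremal structure, and a direct analysis of the near-extremal case.

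\textbf{Stage 1: almost tiling.} Apply the directed regularity lemma to $G$ to obtain a reduced oriented graph $R$ with $\delta^0(R)\geqslant(1/2-2\eta)|R|$. By the Keevash--Sudakov theorem, $R$ has a $C_3$-tiling covering all but at most three clusters. Each directed triangle of clusters $(X,Y,Z)$ with $\varepsilon$-regular dense pairs $X\to Y\to Z\to X$ can be split so that the cluster sizes are in ratio $a:b:c$, rotating the roles among the three clusters so that the proportions balance. The blow-up lemma, or simply greedy embedding via counting, then yields a $D_{h}$-tiling inside each triangle. Here we use that $TT_a$ is found inside any large oriented set by Ramsey-type arguments, and that $D_{a,b,c}$ embeds in a regular triple together with dense tournaments inside the parts. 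Since $G$ is oriented with large semi-degree, each cluster contains many transitive subtournaments. More precisely, I would embed copies of $D_{a,b,c}$ greedily: repeatedly choose $TT_a\subseteq X$, $TT_b\subseteq Y$, $TT_c\subseteq Z$ in common neighbourhoods, which is possible as long as a linear fraction of each cluster remains. This covers all but $\mu n$ vertices.

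\textbf{Stage 2: absorption in the non-extremal case.} Say $G$ is \emph{$\gamma$-extremal} if it has a partition $(V_1,V_2,V_3)$ as in the statement. We show that if $G$ is not $\gamma$-extremal, then every $h$-set $S$ has $\Omega(n^{h^2})$ absorbing $h^2$-sets. We prove this via the lattice-based absorbing method of Han: we show that any two vertices $u,v$ are ``$1$-reachable'', i.e.\ there are $\Omega(n^{h-1})$ sets $T$ with both $G[T\cup\{u\}]$ and $G[T\cup\{v\}]$ containing $D_{a,b,c}$. We show reachability classes have linear size and there are at most three of them. Any partition into three classes with no cross reachability forces almost no edges $V_1\to V_3$, $V_3\to V_2$, $V_2\to V_1$, which is exactly extremality. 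With at most two classes, we use a transferral argument: an edge pattern giving copies of $D_{a,b,c}$ that meet classes in different index vectors merges the classes, since in a triangle structure a copy meeting $V_i$ with vector $(a,b,c)$ and its rotations differ by multiples of vectors whose lattice behaviour is governed by $c^2-ab$ and $h$. The absorbing set plus Stage~1 then gives a factor when $h\mid n$, and in general a tiling missing fewer than $h$ vertices, which covers all statements when $G$ is non-extremal.

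\textbf{Stage 3: extremal case and leftover bound.} If $G$ is $\gamma$-extremal, $G$ looks like a blow-up of $C_3$ with nearly-regular tournaments inside parts. We first clean the structure by moving atypical vertices and covering them by $D_{a,b,c}$ copies. The remaining task is then arithmetic: copies of $D_{a,b,c}$ placed along the triangle use $(a,b,c)$, $(b,c,a)$, $(c,a,b)$ vertices from $(V_1,V_2,V_3)$, or other distributions using edges inside parts. A vector $(n_1,n_2,n_3)$ of part sizes is realisable iff it lies in the lattice generated by these. The index of this lattice in $\{x:\sum x_i\equiv 0 \bmod h\}$ is $\gcd(h,c^2-ab)$ up to the cyclic structure, since the determinant of the circulant is $h(a^2+b^2+c^2-ab-bc-ca)$. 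Hence when the gcd is $1$ we obtain a factor after discarding $<h$ vertices. In general, adjusting with at most $h-1$ extra discarded vertices, chosen to fix the residue class, gives the $2h-3$ bound. This proves the ``moreover'' part by contraposition: no factor forces extremality, because the non-extremal case always yields a factor.

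\textbf{Main obstacle.} The main obstacle is Stage~2's lattice/transferral analysis: showing that non-extremal $G$ has a single reachability class, or a lattice with full index, with $\delta^0$ only $(1/2-\eta)n$. I would first try to handle the two-class case by finding, via the semi-degree condition, a copy of $D_{a,b,c}$ using one extra vertex from the other class, building it around a directed triangle crossing the classes.
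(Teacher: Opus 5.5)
Your overall architecture is the paper's. Stage~1 is essentially its Almost Covering Lemma, and ``no factor $\Rightarrow$ extremal'' does follow by contraposition once ``non-extremal $\Rightarrow$ closed'' is known. But that implication (Lemma~\ref{LEM:nonext}: if $V(G)$ is not $(D_{a,b,c},\beta,t)$-closed for any $t$, then $G$ is $\gamma$-extremal) is exactly what you leave unproved, and the mechanism you sketch cannot deliver it.

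\textbf{The key stability step is missing.}
\begin{itemize}
\item Copies of $D_{a,b,c}$ on a single crossing triangle of clusters realise only the rotations of $(a,b,c)$. Their differences are in general not unit transferrals, and never when $a=b=c$. The paper can use them only when $a+1=b=c$ or $a+1=b+1=c$ (Lemma~\ref{LEM:transferral}(i)).
\item $\gcd(h,c^2-ab)$ plays no role at this stage. The $2h-3$ bound and the ``moreover'' part are claimed without any gcd hypothesis, so the implication must hold for all $a,b,c$. A lattice argument driven by this gcd gives nothing when it exceeds $1$.
\item You assert that there are at most three reachability classes, and that three mutually non-reachable classes force $e(V_{i+1},V_i)=O(\gamma n^2)$. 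That is the hard content, stated without argument.
\item A vertex-swap configuration cannot be produced from the semi-degree condition alone. Near-extremal blow-ups of $C_3$ satisfy that condition but contain no such configuration across $V_1,V_2,V_3$.
\end{itemize}

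The missing idea is the \emph{cross} $K_4^-$: directed triangles $v_1v_2v_3$ and $v_2v_3v_4$ in the reduced graph with $v_1,v_4$ in different parts of the closed partition. In copies of type $(X_2,X_3,X_1)$, a typical vertex of $X_4$ can replace one vertex of the $TT_c$ lying in $X_1$. This gives two robust index vectors differing by $\mathbf e_1-\mathbf e_2$, for every $a,b,c$.

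The real work is then showing that the absence of a cross $K_4^-$ (which $2$-transferral-freeness guarantees) forces extremality:
\begin{itemize}
\item A case analysis of the sets $N_R^{\pm,\pm}(uv)$ around a suitable edge $uv$ shows that $\operatorname{cyc}(A,A,\overline A)$ or $\operatorname{cyc}(\overline A,\overline A,A)$ is small for some part $A$.
\item The Li--Molla lemma on strong in/out-neighbourhoods, combined once more with $K_4^-$-freeness, then gives three parts of size $n/3\pm O(\gamma n)$ with $O(\gamma n^2)$ backward edges.
\end{itemize}
No bound on the number of classes is needed.

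\textbf{Stage~3 does not yield $2h-3$.} The number of uncovered vertices is forced to be $\equiv n\pmod h$. So after discarding $R_h(n)$ vertices, any further discarding comes in blocks of $h$. ``At most $h-1$ extra'' is therefore not meaningful, and read literally it gives $2h-2$. The actual content of the bound is that when $R_h(n)\in\{h-2,h-1\}$, no extra block is ever needed, whatever the lattice index.

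The paper proves this in Lemma~\ref{LEM:abcbalance} by a direct greedy on residues in a superextremal partition $(U_1,U_2,U_3)$, using only type-$(1,2,3)$ and type-$(2,3,1)$ copies:
\begin{itemize}
\item Remove type-$(1,2,3)$ copies until $R_h(|U_1|)\leqslant a-1$.
\item If one of the other two residues is at most $b+c-1$, discard all residues. That is at most $(a-1)+(b+c-1)+(h-1)=2h-3$ vertices.
\item Otherwise, one more copy of each type leaves residue at most $a+b-1$ in $U_1$, and at most $a+c-2$ in $U_2$ and $U_3$ combined.
\end{itemize}

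Your lattice-index remark is adequate for the $\gcd=1$ refinement, where the paper solves the congruence system explicitly, but it says nothing about this bound.

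Turning a cleaned, residue-balanced partition into an actual factor also needs an argument, though it is routine by comparison. The paper equalises part sizes with copies inside parts, then applies Yuster's $TT_k$-factor theorem within parts and Johansson's tripartite triangle-factor theorem on an auxiliary graph.
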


We remark that the constants $2(a+b+c)-3$ and $a+b+c-1$ cannot be improved; see Remark \ref{RE:2h-3}.
Meanwhile, the case $a=b=c=1$ of Theorem \ref{THM:factor-extremal} coincides with the main result in \cite{keevashJCTB99} by Keevash and Sudakov.

\medskip

\textbf{The rest of the paper is organized as follows.}   We begin with Section \ref{SEC:notation-sketch}, which contains some extra notation and terminology.  The important tools and some extra results are presented in Section \ref{SEC-pre}.  In Section \ref{SEC-main results}, we first present all necessary lemmas. Then we utilize them to finish the proofs of Theorems \ref{THM-abc}, \ref{THM-1bc} and \ref{THM:factor-extremal}.  The proofs of those necessary lemmas will be presented in the subsequent sections.  In the concluding remark section, we summarize our main results and discuss several related problems. Finally, proofs of the lower bound constructions in Propositions \ref{PROP:manyF}, \ref{Prop:Cl2-lowerbound}, \ref{PRO-abcexamplegraph}, and \ref{PRO-1bcexamplegraph} are given in the appendix.

\medskip

We remark that throughout the paper we may  assume w.l.o.g. that $1\leqslant a\leqslant b\leqslant c$. Indeed, by relabeling the three sets $TT_a,TT_b,TT_c$ of $D_{a,b,c}$ if necessary, we may  assume w.l.o.g. that $c\geqslant a,b$. Moreover, we may assume $b\geqslant a$ by reversing all edges of $D_{a,b,c}$ and $G$ if necessary.

\section{Notation}\label{SEC:notation-sketch}

 We now introduce some notation throughout the paper.
For integers $a<b$, $[a]$ will denote the set $\{1,2,\ldots, a\}$ and we will often write $t= b\pm a$ which means that $b-a\leqslant t\leqslant b+a$.  Given $x,h\in\mathbb{N}$, let $R_h(x)$ be the remainder of $x$ modulo $h$.   
We often denote a bipartite graph $G$ with partite sets $V_1,V_2$ by $G[V_1,V_2]$.

For a digraph $G$, we write $V(G)$ for its vertex set and $E(G)$ for its edge set. The order of $G$ is the number of vertices in $G$,  denoted by $|V(G)|$ or $|G|$ for simplicity.  We will denote an edge oriented from $u$ to $v$ by $uv$. We use $G[X]$ to denote the subdigraph induced by a vertex set $X$. Let $G-X = G[V(G)\backslash X]$.   For $v\in V(G)$, we denote the set of out-neighbors and in-neighbors of $v$ in $G$ by $N^+(v,G)$ and $N^-(v,G)$ respectively.  When $G$ is clear from the context, we abbreviate these to $N^+(v)$ and $N^-(v)$. Similarly, we write $d^+(v,G)=|N^+(v,G)|$ and $d^-(v,G)=|N^-(v,G)|$, and abbreviate them to $d^+(v)$ and $d^-(v)$ when no confusion can arise. For $S\subseteq V(G)$, we write $N^-(v,S)=N^-(v)\cap S$, $N^+(v,S)=N^+(v)\cap S$. Set $d^-(v,S)=|N^-(v,S)|$, $d^+(v,S)=|N^+(v, S)|$ and $d(v,S)=d^+(v,S)+d^-(v,S)$. 

Let each of $A$ and $B$ be a subset of $V(G)$ or a subdigraph of $G$. Let $N^+(A,B)$ denote the common out-neighbors of  vertices in $A$ that lie in $B$. $N^-(A,B)$ is defined analogously.   We use $E(A,B)$ to denote the set of edges from $A$ to $B$ and write $E(A)$ for $E(A,A)$. Set $e(A,B) = |E(A,B)|$ and $e(A) = |E(A)|$.  We write $N^{\pm}$ to indicate that a particular property holds for both in- and out-neighborhoods. We use $d^{\pm}$ similarly. For example, $d^{\pm}(v,V_{i\pm 1})\geqslant |V_{i\pm 1}|$ means that $d^{\ast}(v,V_{i\ast 1})\geqslant |V_{i\ast 1}|$ for each $\ast\in\{+,-\}$. 

Let $G$ be an oriented graph, and let $V_1,V_2,V_3$ be three pairwise disjoint subsets of $V(G)$. We say that a copy of $D_{a,b,c}$ in $G$ is of type-$(V_i,V_{i+1},V_{i+2})$ if the corresponding copies of $TT_a$, $TT_b$, and $TT_c$ lie in $G[V_i]$, $G[V_{i+1}]$ and $G[V_{i+2}]$, respectively. Moreover, we say that such a copy is of type-$(V_i)$ if all of its vertices lie in $V_i$. When no ambiguity arises, we abbreviate type-$(V_i,V_{i+1},V_{i+2})$ and type-$(V_i)$ to type-$(i,i+1,i+2)$ and type-$(i)$, respectively.

A cycle or path in digraphs always means a directed cycle or path.  It should be noted that we often identify $H$ with its vertex set $V(H)$, for example, we write $G-H$ and $v\in H$ instead of $G-V(H)$ and $v\in V(H)$.  We interchangeably think of a tiling $\mathcal{H}=\{H_1,H_2,\ldots,H_t\}$ as a set of disjoint  digraphs in $G$ and as a subgraph of $G$ with vertex set $V(\mathcal{H}) = \bigcup_{i} V(H_i)$ and edge set $E(\mathcal{H}) = \bigcup_{i} E(H_i)$. When an ordering is convenient, we also view $\mathcal{H}$ as the ordered $t$-tuple $\mathcal{H}=(H_1,H_2,\ldots,H_t)$.


\section{Preliminaries}\label{SEC-pre}

An important ingredient in this paper is Szemer\'{e}di's Regularity Lemma.  We first introduce some necessary notation. The \emph{density} of a bipartite graph $G[X,Y]$ with vertex classes $X,Y$ is defined to be $$\rho(X,Y)=\frac{e(X,Y)}{|X||Y|},$$ where $e(X,Y)$ is the number of edges between $X$ and $Y$ in the graph $G$. 
\begin{definition}
    Given $\varepsilon>0, d\in[0,1]$, we say that a bipartite graph $G[X,Y]$ is
	
	$\bullet$ \emph{$\varepsilon$-regular} if for all sets $A\subseteq X$ and $B\subseteq Y$ with $|A|\geqslant  \varepsilon|X|, |B|\geqslant  \varepsilon|Y|$ we have $$|\rho(A,B)-\rho(X,Y)|<\varepsilon;$$
	
	$\bullet$ \emph{$(\varepsilon,d)$-regular} if it is $\varepsilon$-regular with density $\rho(X,Y)\geqslant  d$;

	$\bullet$ \emph{$(\varepsilon,d)$-superregular} if it is $(\varepsilon,d)$-regular and additionally the degree $d_G(x,Y)\geqslant  d|Y|$ for every $x\in X$ and $d_G(y,X)\geqslant  d|X|$ for every $y\in Y$.
\end{definition}

 For a digraph $G$ and two disjoint sets $X,Y\subseteq V(G)$, let $G[X,Y]$ be a bipartite digraph where all edges are oriented from $X$ to $Y$. We define the pair $(X,Y)$ as (super)regular if it satisfies the (super)regularity condition in the underlying graph of $G$.

The next two lemmas are well-known and their proofs can be found  in \cite{taylor2013}.
	\begin{lemma}[\cite{taylor2013}]
		\label{LEM-fewvxswithsmalld}
		Let $(X,Y)$ be an $(\varepsilon,d)$-regular pair. Suppose that $A\subseteq X, B\subseteq Y$ with $|A|\geqslant  \varepsilon|X|, |B|\geqslant  \varepsilon|Y|$. 
        Then all but at most $\varepsilon|Y|$ vertices of $Y$ have at least $(d-\varepsilon)|A|$ neighbors in $A$, and all but at most $\varepsilon|X|$ vertices of $X$ have at least $(d-\varepsilon)|B|$ neighbors in $B$.
	\end{lemma}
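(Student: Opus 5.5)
The statement is the standard regularity fact: for an $(\varepsilon,d)$-regular pair $(X,Y)$ and large subsets $A\subseteq X$, $B\subseteq Y$, almost every vertex has many neighbours in the subset. I will prove it directly from the definition of $\varepsilon$-regularity, by contradiction. By the symmetry of the definition in $X$ and $Y$, it suffices to prove the first assertion.

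\textbf{Setting up the contradiction.} Let $Y'\subseteq Y$ be the set of vertices of $Y$ having fewer than $(d-\varepsilon)|A|$ neighbours in $A$. Suppose, for a contradiction, that $|Y'|>\varepsilon|Y|$.

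\textbf{Applying regularity.} Since $|A|\geqslant\varepsilon|X|$ and $|Y'|\geqslant\varepsilon|Y|$, the pair $(A,Y')$ is large enough for $\varepsilon$-regularity to apply. Hence
\[
|\rho(A,Y')-\rho(X,Y)|<\varepsilon .
\]
Combined with $\rho(X,Y)\geqslant d$, this gives $\rho(A,Y')>d-\varepsilon$.

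\textbf{Counting edges the other way.} Count the edges between $A$ and $Y'$ from the $Y'$ side. Each $y\in Y'$ sends fewer than $(d-\varepsilon)|A|$ edges into $A$, and $Y'$ is nonempty. Therefore
\[
e(A,Y')<(d-\varepsilon)|A||Y'|, \qquad\text{that is,}\qquad \rho(A,Y')<d-\varepsilon .
\]
This contradicts the previous step, so $|Y'|\leqslant\varepsilon|Y|$. Exchanging the roles of $X$ and $Y$ (with $B$ in place of $A$) gives the second assertion.

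\textbf{Conventions to check.} The pair is directed but regularity refers to the underlying graph, so "neighbours" means neighbours in the underlying bipartite graph and $e(\cdot,\cdot)$ counts edges of that graph. I also need the degenerate situations to be harmless. If $d\leqslant\varepsilon$, the claim is trivial. If $Y'$ is nonempty, the strict inequalities above hold, and the case $Y'=\emptyset$ is excluded by the assumption $|Y'|>\varepsilon|Y|$. Beyond these bookkeeping points there is no real obstacle.
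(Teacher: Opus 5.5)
Your proof is correct: applying $\varepsilon$-regularity to $(A,Y')$, where $Y'$ is the set of low-degree vertices, contradicts the direct edge count $e(A,Y')<(d-\varepsilon)|A||Y'|$. This is the standard argument. The paper gives no proof of its own for this lemma and only refers to the cited source, where the proof is this same contradiction argument.
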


\begin{lemma}[\cite{taylor2013}]
		\label{LEM-regulartosuper}
		Suppose  that $0<\varepsilon\leqslant  \alpha\leqslant  1/2$. Let $(X,Y)$ be an $(\varepsilon,d)$-regular pair. If $A\subseteq X, B\subseteq Y$ with $|A|\geqslant  \alpha|X|$ and $|B|\geqslant  \alpha|Y|$, then $(A,B)$ is $\varepsilon/\alpha$-regular and has density at least $d-\varepsilon$.
	\end{lemma}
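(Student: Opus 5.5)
This is a standard regularity slicing lemma; I would prove it directly from the definition of $\varepsilon$-regularity rather than through Lemma~\ref{LEM-fewvxswithsmalld}.

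\emph{Density bound.} Note that $|A|\geqslant \alpha|X|$ and $|B|\geqslant\alpha|Y|$, and $\alpha\geqslant\varepsilon$. Hence $(A,B)$ is itself a pair of subsets of size at least $\varepsilon|X|$ and $\varepsilon|Y|$. The definition of $\varepsilon$-regularity of $(X,Y)$ therefore gives
\[
\rho(A,B)>\rho(X,Y)-\varepsilon\geqslant d-\varepsilon .
\]

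\emph{Regularity.} Set $\varepsilon'=\varepsilon/\alpha$ and take arbitrary $A'\subseteq A$, $B'\subseteq B$ with $|A'|\geqslant\varepsilon'|A|$ and $|B'|\geqslant \varepsilon'|B|$. Then
\[
|A'|\geqslant (\varepsilon/\alpha)\,\alpha|X|=\varepsilon|X|,
\]
and similarly $|B'|\geqslant\varepsilon|Y|$. So $\varepsilon$-regularity of $(X,Y)$ gives $|\rho(A',B')-\rho(X,Y)|<\varepsilon$, and, as shown above, also $|\rho(A,B)-\rho(X,Y)|<\varepsilon$. The triangle inequality yields
\[
|\rho(A',B')-\rho(A,B)|<2\varepsilon\leqslant \varepsilon/\alpha,
\]
where the last step uses $\alpha\leqslant1/2$. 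This is exactly $\varepsilon'$-regularity of $(A,B)$. The definition also requires $\varepsilon'$ to make sense as a regularity parameter; if $\varepsilon/\alpha\geqslant 1$ the condition is only harder to trigger, so nothing breaks.

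\emph{Main obstacle.} There is no real obstacle. The only point needing care is that the comparison must go through $\rho(X,Y)$, which costs a factor of $2$ in the error. That factor is exactly what the hypothesis $\alpha\leqslant 1/2$ absorbs. The orientation convention (edges from $X$ to $Y$, measured in the underlying graph) plays no role.
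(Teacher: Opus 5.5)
Your proof is correct: the density bound and the $\varepsilon/\alpha$-regularity both follow as you say, by comparing $\rho(A',B')$ and $\rho(A,B)$ with $\rho(X,Y)$ and using $2\varepsilon\leqslant\varepsilon/\alpha$, which holds because $\alpha\leqslant 1/2$. The paper does not prove this lemma itself; it only cites it as well known, and your triangle-inequality argument through $\rho(X,Y)$ is the standard proof of this slicing lemma.
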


The Diregularity Lemma is a version of the Regularity Lemma for digraphs due to Alon and
Shapira \cite{alon2003testing}.
Its proof is quite similar to the undirected version.
We will use the degree form of the Diregularity Lemma which can be easily derived (for example,  see \cite{young2007extremal}) from the standard version, in exactly the same manner as the undirected degree form.

\begin{lemma}[Diregularity Lemma, \cite{keevashJLMS79}]
		\label{LEM:reguler}
		For every $\varepsilon \in (0,1)$ and all numbers $M^{\prime}$, $M^{\prime\prime}$ there are numbers $M$ and $n_0$ such that if $G$ is a digraph on $n \geqslant  n_0$ vertices, $U_1,\ldots,U_{M^{\prime\prime}}$ is a partition of $V(G)$, and $d \in [0,1]$ is any real number, then there is a partition $V_0,V_1,\ldots,V_k$ 
		of $V(G)$ and a spanning subdigraph $G^\prime$ of $G$ such that the following statements hold:
		
		$\bullet$ $M^{\prime}\leqslant  k \leqslant  M$,
		
		$\bullet$ $|V_0| \leqslant  \varepsilon n$ and $|V_i|=m$ for each $i\in[k]$,
		
		$\bullet$ for each $V_i$ with $i\in[k]$ there exists some $U_j$ containing $V_i$,
		
		$\bullet$  for each $x \in V(G)$, $d^\pm(x,G^{\prime}) > d^\pm(x,G) - (d+\varepsilon)n$,
		
		$\bullet$ for all $i\in[k]$ the digraph $G^{\prime}[V_i]$ is empty,
		
		$\bullet$ for all $i\neq j\in[k]$ the bipartite graph $G^{\prime}[V_i,V_j]$ whose vertex classes are $V_i$ and $V_j$ and whose edges are all from $V_i$ to $V_j$ in $G^{\prime}$ is $\varepsilon$-regular and has density either $0$ or density at least $d$.
	\end{lemma}

The vertex sets $V_1,\ldots,V_k$ are called \emph{clusters}. 
The last condition of the lemma says that all pairs of clusters are $\varepsilon$-regular in both directions (but possibly with different densities).
Given clusters $V_1,\ldots,V_k$ and the  digraph $G^{\prime}$, the \emph{reduced digraph} $R^{\prime}$ with parameters $(\varepsilon, d)$ is the digraph
whose vertex set is $[k]$ and in which $ij\in E(R^{\prime})$ if and only if $(V_i, V_j)_{G^{\prime}}$ is $\varepsilon$-regular with density at least $d$.  Note that  $R^{\prime}$ is not necessarily oriented even if the original graph $G$ is. 
The following result of Kelly, K\"{u}hn, and Osthus shows that by discarding edges with appropriate probabilities one can obtain a spanning \emph{oriented} subgraph $R\subseteq R^{\prime}$ which still inherits the minimum (semi-)degree of $G$.

\begin{lemma}[\cite{keevashJLMS79, kellyCPC17}]\label{LEM-reducedori}
For every $\varepsilon \in (0,1)$, there exist integers $M^{\prime}$ and $n_0$
such that the following holds. Let $0\leqslant d\leqslant \gamma \leqslant 1$ and let $G$ be an   oriented graph on $n\geqslant n_0$ vertices and let $R^{\prime}$ be the reduced digraph 
obtained by applying the Diregularity Lemma to $G$ with parameters $\varepsilon$, $d$ and $M^{\prime}$.
Then $R^{\prime}$ has a spanning oriented subgraph $R$ satisfying 
\begin{itemize}
\item[\rm{(i)}]$\delta^0(R)\geqslant  (\delta^0(G)/|G|-(d+3\varepsilon))|R|$,
\item[\rm{(ii)}] $\delta(R)\geqslant (\delta(G)/|G|-(2d+3\varepsilon))|R|$.
\item [\rm{(iii)}] for all disjoint sets $S,T\subset V(R)$ with $e_G(S^\ast,T^\ast)\geqslant 3\gamma n^2$ we have $e_R(S,T)>\gamma|R|^2$, where $S^\ast= \bigcup_{i\in S}V_i$ and $T^\ast = \bigcup _{i\in T} V_i$.
\end{itemize}
\end{lemma}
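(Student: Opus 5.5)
The plan is to obtain $R$ from $R'$ by a random orientation of the double edges, where each direction is kept with probability equal to its share of the density. Then show that in expectation $R$ inherits every weighted degree count of $G'$, and finish with Chernoff bounds and a union bound, using that $k\geqslant M'$ is large.

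\textbf{Setup and choice of $R$.} Write $d_{ij}$ for the density of $G'[V_i,V_j]$ (edges from $V_i$ to $V_j$). Because $G$ is oriented, each pair of vertices carries at most one edge, so $d_{ij}+d_{ji}\leqslant 1$. Each density is $0$ or at least $d$, and $ij\in E(R')$ if and only if $d_{ij}\geqslant d$.
\begin{itemize}
\item For each unordered pair $\{i,j\}$ with both $ij,ji\in E(R')$, independently keep $ij$ with probability $d_{ij}/(d_{ij}+d_{ji})$ and otherwise keep $ji$.
\item All single edges of $R'$ are kept.
\end{itemize}
Then $R$ is oriented and spanning. In every case edge $ij$ survives with probability at least $d_{ij}$: a single edge survives with probability $1\geqslant d_{ij}$, and a double edge with probability $d_{ij}/(d_{ij}+d_{ji})\geqslant d_{ij}$. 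Hence $\mathbb{E}[d^+_R(i)]\geqslant \sum_j d_{ij}$, and similarly for in-degrees and for $e_R(S,T)$.

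\textbf{Degree bounds (i) and (ii).} Summing out-degrees in $G'$ over $x\in V_i$, and using $|V_0|\leqslant\varepsilon n$ and that $G'[V_i]$ is empty, gives
$$m\sum_j d_{ij}m\geqslant m\bigl(\delta^0(G)-(d+\varepsilon)n-\varepsilon n\bigr).$$
Since $km\leqslant n$, this yields $\sum_j d_{ij}\geqslant (\delta^0(G)/n-d-2\varepsilon)k$.

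For (i), $d^+_R(i)$ is a sum of independent indicators over the pairs $\{i,j\}$ plus a deterministic part. Chernoff gives deviation at most $\varepsilon k$ with failure probability $e^{-\Omega(\varepsilon^2 k)}$, and a union bound over the $2k$ degree conditions succeeds since $k\geqslant M'$ is large.

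Part (ii) is in fact deterministic, because adjacency in the underlying graph is preserved. We have $d_R(i)=\#\{j:\ d_{ij}>0\text{ or }d_{ji}>0\}\geqslant \sum_j (d_{ij}+d_{ji})$, using $d_{ij}+d_{ji}\leqslant 1$. The same averaging with total degree gives $\sum_j(d_{ij}+d_{ji})\geqslant (\delta(G)/n-2d-3\varepsilon)k$.

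\textbf{Density transfer (iii).} The sets $S^\ast,T^\ast$ avoid $V_0$, and passing to $G'$ deletes at most $(d+\varepsilon)n\cdot n\leqslant (\gamma+\varepsilon)n^2$ edges since $d\leqslant\gamma$. Hence $e_{G'}(S^\ast,T^\ast)\geqslant (2\gamma-\varepsilon)n^2$. This gives $\sum_{i\in S,j\in T}d_{ij}m^2\geqslant(2\gamma-\varepsilon)n^2$, so $\sum_{i\in S, j\in T} d_{ij}\geqslant (2\gamma-\varepsilon)k^2$, and therefore $\mathbb{E}[e_R(S,T)]\geqslant(2\gamma-\varepsilon)k^2$.

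Since $S,T$ are disjoint, $e_R(S,T)$ is a sum of independent indicators over distinct pairs, plus a deterministic part. Chernoff gives $e_R(S,T)>\gamma k^2$ except with probability $e^{-\Omega(\gamma^2k^2)}$. This is the step I expect to need the most care: the union bound runs over up to $4^k$ pairs $(S,T)$. It works because the failure probability decays in $k^2$ rather than $k$, provided $k\geqslant M'$ is chosen large relative to $\varepsilon$. Note $\gamma\geqslant d$ may be small, so the quantitative bound should be checked or a mild lower bound on $\gamma$ in terms of $\varepsilon$ assumed; otherwise one can deduce (iii) directly from the expectation argument applied only to sets with nontrivial weight.

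With positive probability all events hold simultaneously, which yields the required $R$.
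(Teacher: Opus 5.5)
Your proof is correct and is essentially the argument of the sources the paper cites; the paper itself gives no proof. That argument orients each double edge of $R'$ at random in proportion to the two densities, gets the first-moment bounds from $d_{ij}+d_{ji}\leqslant 1$, and finishes with Chernoff plus union bounds over the $2k$ degree events and the at most $3^k$ pairs $(S,T)$. Your observation that (ii) is deterministic, because $R$ and $R'$ have the same underlying graph, is also right.

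The caveat you raise for (iii) is a defect of the statement itself, so your fallback ``applied only to sets with nontrivial weight'' cannot work. For $\gamma=d=0$ the hypothesis of (iii) always holds, so the conclusion would force $e_R(S,T)>0$ for all disjoint $S,T$. This is impossible in an oriented $R$, since one of $e_R(\{i\},\{j\})$ and $e_R(\{j\},\{i\})$ is $0$. For $0<\gamma\lesssim\varepsilon$ the only available control, $e_{G'}(S^\ast,T^\ast)\geqslant e_G(S^\ast,T^\ast)-(d+\varepsilon)n^2$, is vacuous.

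Instead, assume $\gamma\geqslant 2\varepsilon$. I believe the original formulation imposes $\varepsilon\leqslant d/2$ for this reason, and in the paper's application $\varepsilon\ll d\ll\gamma$ anyway. Then $\mathbb{E}[e_R(S,T)]-\gamma k^2\geqslant(\gamma-\varepsilon)k^2\geqslant\varepsilon k^2$, and Hoeffding with at most $k^2/4$ random terms gives failure probability at most $e^{-8\varepsilon^2k^2}$. The union bound $3^k e^{-8\varepsilon^2k^2}$ is then negligible once $M'\gg\varepsilon^{-2}$, so $M'$ depends only on $\varepsilon$ as the statement requires.
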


 In fact, in \cite{keevashJLMS79}, condition~(iii) is stated with parameter $d$, but the same argument ensures that condition (iii) remains valid for $\gamma$. The oriented graph $R$ given by Lemma \ref{LEM-reducedori} is called the \emph{reduced oriented graph} with parameters $\varepsilon$ and $d$.

The rest of this section is devoted to introducing the ``dependent random choice'' which will be frequently used in the following embedding process.
In fact, we will apply a 
variant of the dependent random choice method, as mentioned in \cite{debiasioCPC35}.
    
\begin{lemma}[Dependent random choice variant, \cite{debiasioCPC35}]\label{LEM:dependentrandom} 
    Let $k\in \mathbb{N}$, $0<d\leqslant 1$, and define $r=d^k/\sqrt[k]{2}$.  Let $G[A,B]$ be a bipartite graph with $e(G) \geqslant d|A||B|$. For all $0 < \varepsilon < 1$, there exists $U \subseteq A$ with $|U| \geqslant r|A|$ such that all but at most $(\varepsilon|U|)^k$ of the $k$-tuples in $U$ have at least $\varepsilon r |B|$ common neighbors in $B$.
 \end{lemma}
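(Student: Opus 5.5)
The plan is the standard dependent random choice argument, run so that a single first-moment computation gives both conclusions at once. Choose $k$ vertices $b_1,\dots,b_k\in B$ independently and uniformly at random, with repetition allowed, and let $U=N(\{b_1,\dots,b_k\},A)$ be their common neighbourhood in $A$. Throughout, ``$k$-tuple in $U$'' means an ordered $k$-tuple of vertices of $U$, repetitions allowed. With this convention the total number of $k$-tuples is exactly $|U|^k$, which is the quantity the bound $(\varepsilon|U|)^k$ is compared against.

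First I bound $\mathbb{E}|U|$ from below. A vertex $a\in A$ lies in $U$ exactly when every $b_i$ lands in $N(a)$, which happens with probability $(d(a)/|B|)^k$. Since $\sum_{a\in A}d(a)=e(G)\geqslant d|A||B|$, convexity of $x\mapsto x^k$ (Jensen) gives
\[
\mathbb{E}|U|=\sum_{a\in A}\Bigl(\frac{d(a)}{|B|}\Bigr)^k\geqslant d^k|A|.
\]
Applying Jensen once more, $\mathbb{E}\bigl[|U|^k\bigr]\geqslant (\mathbb{E}|U|)^k\geqslant d^{k^2}|A|^k$.

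Next I bound the bad tuples from above. Call a $k$-tuple $(a_1,\dots,a_k)\in A^k$ \emph{bad} if its members have fewer than $\varepsilon r|B|$ common neighbours in $B$, and let $Y$ be the number of bad tuples lying entirely in $U$. A bad tuple lies in $U$ only if every $b_i$ is a common neighbour of $a_1,\dots,a_k$. That event has probability less than $(\varepsilon r)^k$, so
\[
\mathbb{E}Y<|A|^k(\varepsilon r)^k=\varepsilon^k|A|^k\,\frac{d^{k^2}}{2}.
\]

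Now set $Z=|U|^k-\varepsilon^{-k}Y$. By linearity, $\mathbb{E}Z\geqslant d^{k^2}|A|^k/2$, so some choice of $b_1,\dots,b_k$ achieves $Z\geqslant d^{k^2}|A|^k/2$. For that choice two things follow. Since $Y\geqslant 0$, we get $|U|^k\geqslant d^{k^2}|A|^k/2$, that is, $|U|\geqslant d^k|A|/\sqrt[k]{2}=r|A|$. Since $Z\geqslant 0$, we get $Y\leqslant \varepsilon^k|U|^k=(\varepsilon|U|)^k$. Hence all but at most $(\varepsilon|U|)^k$ of the $k$-tuples in $U$ have at least $\varepsilon r|B|$ common neighbours in $B$, which is exactly the statement.

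The main point needing care is the choice of the combined variable $Z$. Averaging $|U|^k$ rather than $|U|$ is what allows the single threshold $\varepsilon r|B|$ to yield the required ratio $\varepsilon^k$. The specific value $r=d^k/\sqrt[k]{2}$ is exactly what makes the expected number of bad tuples at most half of $\varepsilon^k\,\mathbb{E}|U|^k$, which is what keeps $\mathbb{E}Z$ positive. Beyond this, only the tuple-counting convention needs to be consistent throughout; no regularity input is required.
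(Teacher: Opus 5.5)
Your argument is correct. It is the standard dependent random choice proof: sample $k$ vertices of $B$, use Jensen to get $\mathbb{E}|U|^k\geqslant d^{k^2}|A|^k$, and average $|U|^k-\varepsilon^{-k}Y$. That is the argument behind this lemma, which the paper does not prove itself but quotes from \cite{debiasioCPC35}.

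Counting ordered $k$-tuples with repetition is the strongest reading, because the number of bad unordered $k$-sets in $U$ is at most $Y$. So your conclusion holds under any convention for ``$k$-tuples in $U$''.
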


\section{Proofs of Theorems \ref{THM-abc}, \ref{THM-1bc}, and \ref{THM:factor-extremal}}\label{SEC-main results}

In this section, we first state all the necessary lemmas and then use them to complete the proofs of our main theorems. The proofs of these lemmas will be presented in Sections \ref{SEC-5.1}-\ref{SEC-5.5}.

Let $G$ and $H$ be two (di)graphs with $|G|=n$ and $|H|=h$. We say that a set $S\subseteq V(G)$ is an \emph{$(H,x,y)$-linking $(ht-1)$-set} if $|S|=ht-1$ and both $G[S\cup\{x\}]$ and $G[S\cup\{y\}]$ contain an $H$-factor. Two vertices $x,y\in V(G)$ are \emph{$(H,\beta,t)$-reachable} if there exist at least $\beta n^{ht-1}$ $(H,x,y)$-linking $(ht-1)$-sets in $G$. A set $U\subseteq V(G)$ is \emph{$(H,\beta,t)$-closed} if every pair of distinct vertices in $U$ is $(H,\beta,t)$-reachable. A partition $\mathcal{P}=(U_1,U_2,\ldots,U_s)$ of $V(G)$ is $(H,\beta,t)$-closed if each $U_i$ is $(H,\beta,t)$-closed.

\begin{lemma}\label{LEM:closed-factor}
Given $a,b,c\in \mathbb{N}$ with $1\leqslant a\leqslant b\leqslant c$, let
$0<1/n \ll \eta \ll \beta \ll 1$, where $n\in (a+b+c)\mathbb{N}$.
Suppose that $G$ is an $n$-vertex oriented graph with
$\delta^0(G)\geqslant (1/2-\eta)n$.
If $V(G)$ is $(D_{a,b,c},\beta,t)$-closed for some $t\in\mathbb{N}$, then
$G$ contains a $D_{a,b,c}$-factor.
\end{lemma}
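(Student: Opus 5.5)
This follows the standard absorbing method of Lo and Markström. We build a small absorbing set, cover almost all remaining vertices by an almost-perfect tiling, and then absorb the leftover. Write $h=a+b+c$.

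\textbf{Step 1 (absorbing set).} Fix a vertex $v$. We first show that $v$ has many absorbing $h^2t$-sets. These are sets $A$ such that both $G[A]$ and $G[A\cup\{v\}]$ minus a suitable vertex structure have $D_{a,b,c}$-factors; more precisely, $|A|=h^2t$ and both $G[A]$ and $G[A\cup S]$ have $D_{a,b,c}$-factors for every $h$-set $S$ spanning a copy of $D_{a,b,c}$ that contains $v$. The construction is the usual one. We pick a copy $F$ of $D_{a,b,c}$ containing $v$, with other vertices $u_1,\dots,u_{h-1}$. Then for each $i$ we pick a fresh vertex $w_i$ and an $(D_{a,b,c},w_i,u_i)$-linking $(ht-1)$-set $L_i$, together with a copy of $D_{a,b,c}$ through $w_i$ that uses $v$'s other vertices appropriately.

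To make counting easy, we use the simpler variant for single vertices. A set $A$ absorbs $v$ if $A$ and $A\cup\{v\}\setminus\{\text{a set of } h-1 \text{ further vertices}\}$... Cleanest is to absorb $h$-sets $S$ of leftover vertices. For this we need many copies of $D_{a,b,c}$ through any vertex. We get these from $\delta^0(G)\geqslant(1/2-\eta)n$: an oriented graph with this semi-degree has $\Omega(n^2)$ directed triangles through each vertex. We then blow these up into $\Omega(n^{h-1})$ copies of $D_{a,b,c}$ using supersaturation and the dependent random choice variant, Lemma \ref{LEM:dependentrandom}.

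Combined with closedness, which gives $\beta n^{ht-1}$ linking sets per pair, each $h$-set $S$ has $\Omega(n^{h^2t})$ absorbing sets. A random selection of $\Theta(\beta' n)$ disjoint $h^2t$-tuples, followed by deleting overlapping and non-absorbing ones via Chernoff/Markov, gives a family $\mathcal A$. Its union $A$ has $|A|\leqslant \beta' n$, $G[A]$ has a $D_{a,b,c}$-factor, and every $h$-set $S$ has at least $\beta'' n$ disjoint absorbers inside $\mathcal A$.

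\textbf{Step 2 (almost tiling).} The graph $G'=G-A$ still satisfies $\delta^0(G')\geqslant(1/2-2\beta')n$. We need a $D_{a,b,c}$-tiling of $G'$ covering all but $\beta'' n/2$ vertices. We apply the Diregularity Lemma \ref{LEM:reguler} and Lemma \ref{LEM-reducedori} to obtain a reduced oriented graph $R$ with $\delta^0(R)\geqslant(1/2-\varepsilon')|R|$. By Keevash--Sudakov, $R$ has a $C_3$-tiling covering all but at most $3$ clusters.

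Inside each triangle of clusters $V_i\to V_j\to V_k\to V_i$, the three pairs are $\varepsilon$-regular. We greedily embed copies of $D_{a,b,c}$ while the remaining parts have size at least $\varepsilon m$. To find one copy, we use Lemma \ref{LEM:dependentrandom} inside the pairs to get $TT_a,TT_b,TT_c$ with all required cross edges. Since $G[V_i]$ is an oriented graph and not necessarily dense, we need a suitable argument here. We may instead embed each $TT$ in the original $G$ using the fact that any set of size $\geqslant \varepsilon m$ spans a tournament-like part. Alternatively, we take a triangle-factor in $R$ and use the multipartite version: replace $TT_a$ by an independent set, which is not possible since $D_{a,b,c}$ needs edges inside.

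So the main obstacle is finding transitive subtournaments inside clusters. We resolve it with a Ramsey-type fact: a dense oriented graph contains $TT_s$ in any linear-sized subset. For this we first refine the regularity partition so that $G[V_i]$ is dense, or apply the lemma to $R$ with clusters of a blown-up reduced graph. The cleanest approach is to apply regularity once more, and to use that $\delta^0\approx n/2$ forces $R$ to contain $TT_s$-blow-ups within triangles, namely $D_s$ in $R$ itself for fixed $s$ (Bollobás--Häggkvist style). We then embed $D_{a,b,c}$ by mapping each vertex to a distinct cluster and balancing usage over a fractional tiling of $R$ by $D_s$.

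\textbf{Step 3 (absorbing).} The leftover $U$ has $|U|\leqslant\beta''n/2$ and $|U|\equiv 0\pmod h$, since $n$, $|A|$ and the tiled part are all divisible by $h$. We split $U$ into $h$-sets arbitrarily and absorb them one by one using distinct absorbers from $\mathcal A$. This yields a $D_{a,b,c}$-factor of $G$.
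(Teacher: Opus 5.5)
Your overall architecture (absorbing set, almost-perfect tiling of $G-A$, then absorb the leftover) is the same as the paper's. Steps 1 and 3 are fine in substance. You do not need copies of $D_{a,b,c}$ through every vertex: the standard absorber for an $h$-set $S=\{s_1,\dots,s_h\}$ is a copy $F$ disjoint from $S$, with vertices $f_1,\dots,f_h$, together with disjoint linking sets $L_i$ for the pairs $(s_i,f_i)$. So $\Omega(n^h)$ copies overall plus closedness give $\Omega(n^{h^2t})$ absorbers per $h$-set, and Lo--Markstr\"om random selection then works. The paper packages this as Lemma \ref{LEM-findvtxab} together with the Nenadov--Pehova Lemma \ref{LEM:Absorb}.

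\textbf{The gap is Step 2.} You never settle on a valid argument for the almost-perfect tiling, and the route you call ``cleanest'' does not work. Embedding $D_{a,b,c}$ with each vertex in a distinct cluster and ``balancing usage over a fractional tiling of $R$ by $D_s$'' requires an almost-perfect fractional $D_s$-tiling of the reduced graph $R$. That is a statement of the same type as the one you are proving, so the argument is circular. Bollob\'as--H\"aggkvist only gives a single copy, and only in regular tournaments.

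The underlying misstep is your claim that $G[V_i]$ is ``not necessarily dense''. It is dense. Since $\delta^0(G)\geqslant(1/2-\eta)n$, every vertex has at most $2\eta n$ non-neighbours. Hence any $X$ with $|X|\geqslant\xi n$ and $\eta\ll\xi$ induces, in the original $G$, an oriented graph with $\delta(G[X])\geqslant(1-\sqrt\eta)|X|$. This is the original $G$, not the regularity subgraph, whose clusters are empty. Such a $G[X]$ contains $\Omega(|X|^s)$ copies of $TT_s$, by $\overrightarrow{r}(s)\leqslant 2^{s-1}$ and a counting argument (Lemma \ref{LEM:findmanyTTk}). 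Applying this to clusters only needs $m\geqslant n/(2M)\gg\eta n$, i.e.\ $\eta\ll 1/M$, which the hierarchy $\eta\ll\beta$ permits.

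Combine this with dependent random choice along the regular pairs $(V_i,V_j)$, $(V_j,V_k)$, $(V_k,V_i)$. This gives a copy of $D_{a,b,c}$ of type $(V_i,V_j,V_k)$ inside any subsets of size at least $\varepsilon' m$ of the three clusters (Lemma \ref{LEM:findmanyDabc}). For each triangle in the Keevash--Sudakov $C_3$-tiling of $R$, you then embed greedily. You must rotate through the types $(i,j,k)$, $(j,k,i)$, $(k,i,j)$ so that each round uses exactly $h$ vertices from each cluster. Without rotation, the cluster hosting $TT_c$ runs out while the others keep a linear leftover. You mention the Ramsey-type fact in passing, but without noting that the clusters are already near-tournaments in $G$, and without the rotation, Step 2 as written does not produce the required tiling.
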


 \begin{definition}\label{DEF:extremal}
Let $G$ be an $n$-vertex oriented graph. A partition $(V_1,V_2,V_3)$ of $V(G)$ is called \emph{$\gamma$-extremal} for some $\gamma>0$, if the following condition holds for each $i\in[3]$, where the indices are taken modulo $3$.
\begin{enumerate}[label=\upshape \textbf{(EP\arabic{enumi})},ref=\upshape (EP\arabic{enumi}),series=mylist]
    \setlength{\itemindent}{1.5em}
    \item $|V_i|=n/3\pm O(\gamma n)$ and $e(V_{i+1},V_i)=O(\gamma n^2)$.\label{EP1}
\end{enumerate}
Moreover, the partition is called \emph{$\gamma$-superextremal} if it is $\gamma$-extremal and also satisfies
\begin{enumerate}[label=\upshape \textbf{(EP\arabic{enumi})},ref=\upshape (EP\arabic{enumi}),resume=mylist]
    \setlength{\itemindent}{1.5em}
    \item $d^+(v,V_{i+1}),\,d^-(v,V_{i-1})\geqslant n/6-O(\gamma n)$ for each $v\in V_i$.\label{EP2}
\end{enumerate}
\end{definition}

For convenience, when no ambiguity arises, we say that $G$ is \emph{$\gamma$-(super)extremal}.

\begin{lemma}\label{LEM:nonext}
Given $a,b,c \in \mathbb{N}$ with $1\leqslant a\leqslant b\leqslant c$, let
$0<1/n\ll \eta \ll \beta \ll \gamma \ll 1$.
Suppose that $G$ is an $n$-vertex oriented graph with
$\delta^0(G)\geqslant (1/2-\eta)n$.
If $V(G)$ is not $(D_{a,b,c},\beta,t)$-closed for any $t\in \mathbb{N}$, then
$G$ is $\gamma$-extremal and  neither $a+1=b=c$ nor $a+1=b+1=c$ holds.
\end{lemma}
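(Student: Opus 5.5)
We argue by contrapositive and use the absorbing-method lattice framework. Suppose $G$ is not $\gamma$-extremal, or that $a+1=b=c$ or $a+1=b+1=c$. We show that $V(G)$ is $(D_{a,b,c},\beta,t)$-closed for some bounded $t$. Write $h=a+b+c$. There are three steps.

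\textbf{Step 1: local reachability and a closed partition.} Every vertex $v$ is $(D_{a,b,c},\beta',1)$-reachable to linearly many other vertices. To see this, count copies of $D_{a,b,c}$ through $v$ and vertices $u$ that can replace $v$ in such a copy. Apply Lemma \ref{LEM:reguler} and Lemma \ref{LEM-reducedori} to get a reduced oriented graph $R$ with $\delta^0(R)\geqslant(1/2-2\eta)|R|$. Since $\delta^0(R)$ is close to $|R|/2$, any vertex set of size more than roughly $|R|/2$ spans a $C_3$ in $R$ (by Keevash--Sudakov type counting). Lemma \ref{LEM:dependentrandom} then embeds $TT_a,TT_b,TT_c$ into the three clusters of that triangle, giving $\Omega(n^{h-1})$ copies of $D_{a,b,c}$ containing $v$. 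By the standard merging argument (reachability is transitive with $t$ increasing, and pigeonhole on neighbourhoods of size about $n/2$), we obtain a partition $\mathcal P=(U_1,\dots,U_s)$ with $s\leqslant 3$, each part of linear size and $(D_{a,b,c},\beta'',t_0)$-closed.

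\textbf{Step 2: the reachability lattice.} Consider the lattice $L\subseteq\mathbb Z^s$ generated by the index vectors $\mathbf i_{\mathcal P}(D)$ of copies $D$ of $D_{a,b,c}$ that occur $\Omega(n^h)$ times. The standard criterion (Han, Lo–Markström) says parts $U_i,U_j$ merge into one closed set once $\mathbf u_i-\mathbf u_j\in L$. If $s=1$ we are done. If $s\geqslant2$, since the parts are not mutually reachable, $L$ is a proper sublattice of the sum-$h$ vectors. We first show that copies of $D_{a,b,c}$ inside a single part $U_i$ (type-$(i)$) exist in abundance whenever $G[U_i]$ has semi-degree density bounded away from below $1/2$ relative to $|U_i|$. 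So non-closedness forces each $G[U_i]$ to be sparse-ish in the $C_3$ sense, and forces edges between parts to be mostly oriented cyclically. Combined with $\delta^0\geqslant(1/2-\eta)n$, this yields $s=3$ with $|U_i|\approx n/3$ and $e(U_{i+1},U_i)=O(\gamma n^2)$, which is the $\gamma$-extremal structure.

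\textbf{Step 3: excluding $a+1=b=c$ and $a+1=b+1=c$.} Once $G$ is extremal, the available types are the three rotations $(i,i+1,i+2)$, whose vectors are rotations of $(a,b,c)$. In near-extremal $G$ with $\delta^0\approx n/2$, each $G[V_i]$ has semi-degree about $n/6$ internally, hence contains $TT_k$'s, so mixed configurations also appear. An example is $TT_a\cup TT_b$ in $V_i$ together with $TT_c$ in $V_{i+1}$: the block $TT_a\cup TT_b$ inside $V_i$ must make $TT_a\to TT_b$ complete, which is available by dependent random choice inside $G[V_i]$. These add vectors such as $(a+b,c,0)$. We compute $L$ from the rotations of $(a,b,c)$ plus these extra vectors. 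The difference of two rotations is $(a-b,b-c,c-a)$. When $b=c=a+1$ this equals $(-1,0,1)$, and when $c=b+1=a+2$ it is $(-1,-1,2)$; combined with the rotations, these generate all differences $\mathbf u_i-\mathbf u_j$. Hence the three parts merge, contradicting non-closedness. The lattice index in general is governed by $\gcd(h,c^2-ab)$, consistent with the theorems.

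\textbf{Main obstacle.} The hardest part is Step 2: showing that failure to merge forces the exact cyclic three-part structure rather than some other two-part configuration. We would try first to rule out $s=2$, via a Turán-type count showing that a part of size at least $0.4n$ with $\delta^0$ near $n/2$ contains a type-$(i)$ copy together with a copy that crosses the two parts.
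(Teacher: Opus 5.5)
Your Step 2 is the entire content of the lemma, and you only assert it. Worse, the mechanism you suggest cannot work. Copies of $D_{a,b,c}$ inside a single part $U_i$ contribute only the vector $h\mathbf{e}_i$ and never yield a transferral, so their abundance says nothing about closedness.

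Concretely, take the extremal example: a blow-up of $C_3$ by near-regular tournaments on $V_1,V_2,V_3$, with $a=b=c\geqslant 2$. Here $(V_1,V_2,V_3)$ is a closed partition and each $G[V_i]$ is full of directed triangles and of copies of $D_{a,b,c}$. Yet $V(G)$ is not closed. So ``non-closedness forces each $G[U_i]$ to be sparse in the $C_3$ sense'' is false.

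What you are missing is a configuration giving two robust index vectors that differ by exactly $\mathbf{e}_i-\mathbf{e}_j$. The paper obtains it from a \emph{cross $K_4^-$} in the reduced graph $R$: directed triangles $v_1v_2v_3$ and $v_2v_3v_4$ with $v_1\in R_i$ and $v_4\notin R_i$. Both $v_1$ and $v_4$ receive edges from $v_3$ and send edges to $v_2$. So you can compare robust copies of $D_{a,b,c}$ on the clusters of $v_1v_2v_3$ with copies in which one vertex of the block in $X_{v_1}$ is replaced by a typical vertex of $X_{v_4}$, embedding the rest in its neighbourhoods via Lemma~\ref{LEM:findmanyDabc}. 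This gives a 2-transferral.

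Hence after merging, $R$ has no cross $K_4^-$. The paper then analyses the classes $N_R^{\pm,\pm}(uv)$ for an edge $uv$ from $A=R_1$ to $B=\overline{R_1}$, showing that $\mathrm{cyc}(A,A,B)$ or $\mathrm{cyc}(B,B,A)$ is $o(k^3)$. The Li--Molla stability lemma (Lemma~\ref{LEM:divide lemma}), applied to $A$ and then to its strong out-neighbourhood, yields three parts of size about $n/3$ with few backward edges. Your proposed Tur\'an-type count for $s=2$ does not supply any of this.

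The reduction to $s\leqslant 3$ in Step 1 is also unjustified. Reachability to linearly many vertices only bounds the number of parts by $\lceil 1/\alpha\rceil$ with $\alpha$ tiny, so the argument must handle any bounded number of parts.

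Step 3 misreads the hypothesis: $a+1=b+1=c$ means $a=b$ and $c=a+1$, not $c=b+1=a+2$. For $(a,a+1,a+2)$, rotation differences such as $(-1,-1,2)$ are not transferrals. Your claim that, together with ``mixed'' copies, they generate all $\mathbf{u}_i-\mathbf{u}_j$ is unsupported. A copy with $TT_a\cup TT_b$ in $V_i$ and $TT_c$ in $V_{i+1}$ needs $TT_c\Rightarrow TT_a$, i.e.\ edges from $V_{i+1}$ to $V_i$, which the exact extremal graph lacks.

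Moreover, this step needs neither extremality nor an identification of the closed partition with the extremal one. Take any partition with at least two parts of linear size. Lemma~\ref{LEM:cross-edge-triangle}(ii) applied to $R$ gives a triangle $123$ with $1\in R_1$ and $2\notin R_1$, and all three rotations of $D_{a,b,c}$ on $X_1,X_2,X_3$ are robust.
\begin{itemize}
\item If $b=c=a+1$, types $(1,2,3)$ and $(2,3,1)$ differ by moving one vertex between $X_1$ and $X_2$; the counts on $X_3$ agree.
\item If $a=b$ and $c=a+1$, types $(2,3,1)$ and $(3,1,2)$ differ in the same way.
\end{itemize}
Since $X_1$ and $X_2$ lie in different parts, this is a 2-transferral. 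So merging never stops before a single part remains, and $V(G)$ is closed.
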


The above lemma implies that if $a+1=b=c$ or $a+1=b+1=c$, then
$V(G)$ is $(D_{a,b,c},\beta,t)$-closed for some $t\in\mathbb{N}$, and hence
$G$ contains a $D_{a,b,c}$-factor by Lemma~\ref{LEM:closed-factor}.

Our next lemmas explore global structural properties of $\gamma$-extremal oriented graphs.

\begin{lemma}\label{LEM:abcbalance}
Given $a,b,c\in \mathbb{N}$ with  $1\leqslant a\leqslant b\leqslant c$, let
$0<1/n \ll \eta \ll \gamma\ll 1$.
Suppose that $G$ is an $n$-vertex oriented graph with
$\delta^0(G)\geqslant(1/2-\eta)n$.
If $G$ is $\gamma$-extremal, then there exist a set $S\subseteq V(G)$ with
$|S|\leqslant 2(a+b+c)-3$ and a constant-sized $D_{a,b,c}$-tiling $\mathcal{H}$ in $G-S$
such that $V(G)\backslash (S\cup V(\mathcal{H}))$ admits a $\gamma^{1/3}$-superextremal partition
$(V_1,V_2,V_3)$ with $|V_i|\equiv 0\pmod{a+b+c}$ for all $i\in[3]$.

In particular, if $\gcd(a+b+c,c^2-ab)=1$, then $|S|\leqslant a+b+c-1$.
\end{lemma}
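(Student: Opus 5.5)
The plan has three stages: (1) clean the $\gamma$-extremal partition into a superextremal one by absorbing the few bad vertices into a constant-size $D_{a,b,c}$-tiling; (2) record which copies of $D_{a,b,c}$ are always available and how they shift the residues of $|V_1|,|V_2|,|V_3|$ modulo $h:=a+b+c$; (3) solve a small number-theoretic problem to fix all three residues, discarding at most $2h-3$ vertices into $S$.

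\textbf{Cleaning.} Start from a $\gamma$-extremal partition $(V_1,V_2,V_3)$. Call $v\in V_i$ \emph{bad} if $d^+(v,V_{i+1})<n/6-\gamma^{1/2}n$ or $d^-(v,V_{i-1})<n/6-\gamma^{1/2}n$. Since $e(V_{i+1},V_i)=O(\gamma n^2)$ and $\delta^0(G)\geqslant(1/2-\eta)n$, a counting argument shows there are only $O(\gamma^{1/2}n)$ bad vertices.
\begin{itemize}
\item Each bad vertex whose degrees are large into/from some other pair $(V_{j+1},V_{j-1})$ is moved to $V_j$.
\item Each remaining bad vertex $v$ still has about $n/2$ out- and in-neighbours. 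Hence it has linearly many out- and in-neighbours among good vertices, in a cyclic pattern, so $v$ can be completed to a copy of $D_{a,b,c}$ using good vertices.
\item These completions are made greedily and pairwise disjoint. The copies inside the good part are built from dense regular pairs via Lemma~\ref{LEM:dependentrandom}, which yields common neighbourhoods of linear size.
\end{itemize}
All copies together use only $O(\gamma^{1/2}n)$ vertices, so every remaining vertex keeps degree $n/6-O(\gamma^{1/3}n)$ in the relevant sets. This gives \ref{EP2} with $\gamma^{1/3}$.

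A sublinear number of copies is not literally "constant-sized". To meet the statement, I will either absorb these copies into the later factor argument, or note that the bad set is placed into $\mathcal H$, which must stay of size $o(n)$. The real constant-size part of $\mathcal H$ is the divisibility correction below.

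\textbf{Available moves.} In the superextremal remainder, almost all edges between classes go $V_i\to V_{i+1}$. So every copy of type-$(i,i+1,i+2)$ exists in abundance: take $TT_a,TT_b,TT_c$ inside common neighbourhoods obtained by dependent random choice. Copies of type-$(i)$ also exist, since $G[V_i]$ has density about $1/2$ and is close to regular; by Theorem~\ref{THM:bollobasJCTB50} and a standard supersaturation or regularity argument it contains $D_{a,b,c}\subseteq D_c$.

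Removing a type-$(1,2,3)$ copy changes $(|V_1|,|V_2|,|V_3|)$ by $-(a,b,c)$, and the cyclic types give $-(c,a,b)$ and $-(b,c,a)$. Type-$(i)$ copies change nothing modulo $h$. Deleting a vertex of $V_i$ into $S$ subtracts $e_i$.

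\textbf{Number theory.} The task becomes: reach $(0,0,0)$ modulo $h$, starting from the residue vector with coordinate sum $\equiv n$, using the lattice $L$ generated by the three cyclic shifts, plus as few unit deletions as possible. The circulant determinant is
\[
a^3+b^3+c^3-3abc=h\,(a^2+b^2+c^2-ab-bc-ca),
\]
and modulo $h$ the second factor reduces to $3(c^2-ab)$ (use $a+b\equiv -c$). So when $\gcd(h,c^2-ab)=1$, $L$ modulo $h$ essentially fills the sum-$\equiv 0\pmod h$ sublattice, up to the factor $3$, which is handled by the sum condition or by one more shift. Then one first deletes $R_h(n)\leqslant h-1$ vertices from a single class so that $h$ divides the total, and corrects the rest with a bounded number of copies of $D_{a,b,c}$. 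This gives $|S|\leqslant h-1$.

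In general, the greedy choice is to delete $r_1,r_2$ vertices from $V_1,V_2$ (each $\leqslant h-1$) to zero those residues. Then $V_3$ is automatically $\equiv n\pmod h$ after removing $R_h(n)$ further vertices, which is too costly. To save, one copy of type-$(1,2,3)$ (or a cyclic shift) is used first so that one residue becomes $h-1$ or smaller in a favourable way. A short case check then bounds the total number of deletions by $2h-3$.

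\textbf{Main obstacle.} I expect this final case analysis to be the hardest step: showing that with one or two cyclic copies one can always arrange the total number of deleted vertices to be at most $2h-3$, rather than the naive $3h-3$ or $2h-2$. I would try to prove this by first fixing $V_1$ with deletions and then using the freedom to choose which cyclic shift to apply to make the $V_2$ residue nonzero-but-small.
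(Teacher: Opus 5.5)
There is a genuine gap, and it is exactly the step you flag as the main obstacle: the general bound $|S|\leqslant 2h-3$ (with $h=a+b+c$), which is the real content of the lemma, is never proved. The route you sketch would also not work. Every cyclic copy removes $a$, $c$ or $b\geqslant 1$ vertices from $V_1$, so ``first fixing $V_1$ with deletions'' and then applying a shift destroys the residue you just paid for.

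The missing idea is to pay for $V_1$ with copies instead of deletions.
\begin{itemize}
\item Copies of type-$(1,2,3)$ take exactly $a=\min\{a,b,c\}$ vertices from $V_1$. Removing at most $\lfloor (h-1)/a\rfloor$ of them (available by Lemma~\ref{LEM:greedily Dabc}) brings $s_1:=R_h(|V_1|)$ down to at most $a-1$ at no cost. Let $s_2,s_3\leqslant h-1$ be the current residues of $V_2,V_3$.
\item If $\min\{s_2,s_3\}\leqslant b+c-1$, simply delete the residues. This gives $|S|\leqslant (a-1)+(b+c-1)+(h-1)=2h-3$.
\item Otherwise $s_2,s_3\geqslant b+c$. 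One more copy of type-$(1,2,3)$ and one of type-$(2,3,1)$ remove $a+c$, $a+b$, $b+c$ vertices from $V_1,V_2,V_3$. The new residues are $s_1+b\leqslant a+b-1$, $s_2-(a+b)$ and $s_3-(b+c)$, with total at most $(a+b-1)+(a+c-2)\leqslant 2h-3$.
\end{itemize}

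Your coprime case is also unfinished precisely when $3\mid h$ (e.g.\ $(a,b,c)=(1,2,3)$): ``up to the factor $3$, handled by one more shift'' is an assertion, not an argument. It is cleaner to note that $(1,1,1)$ lies in the kernel modulo $h$, so you may take $z=0$. The first two congruences in $(x,y)$ then have determinant $a^2-bc\equiv c^2-ab\pmod h$, since $(c^2-ab)-(a^2-bc)=(c-a)h$, and this is invertible. The third congruence follows from $r_1+r_2+r_3\equiv 0$.

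The cleaning step has a separate problem. The statement requires $\mathcal H$ to be constant-sized. Your fallback of embedding ``remaining'' bad vertices into copies of $D_{a,b,c}$ makes $\mathcal H$ of size $\Theta(\gamma^{1/2}n)$. That is linear, not sublinear, so it violates the statement. It also breaks the later use, where one needs $\delta^0(G-V(\mathcal H))\geqslant(1/2-2\eta)|G-V(\mathcal H)|$ with $\eta\ll\gamma$.

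In fact no bad vertex remains, because every vertex can be reassigned (Lemma~\ref{LEM:fexbad}):
\begin{itemize}
\item For $v\in V(G)$, let $P$ (resp.\ $M$) be the set of $i$ with $d^+(v,V_i)$ (resp.\ $d^-(v,V_i)$) at least $n/6-O(\gamma n)$.
\item Since $v$ has at most $2\eta n$ non-neighbours and $|V_i|=n/3\pm O(\gamma n)$, we get $P\cup M=[3]$. The semi-degree condition gives $P,M\neq\emptyset$.
\item If no $j$ had $j\in P$ and $j+1\in M$, then $P$ would be closed under $j\mapsto j+1$. This forces $P=[3]$ and $M=\emptyset$, a contradiction.
\item So such a $j$ exists, and $v$ can be moved to $V_{j-1}$, where it satisfies \ref{EP2}.
\end{itemize}
Thus the $\gamma^{1/3}$-superextremal partition is obtained by reassignment alone, before any tiling is chosen.
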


\begin{lemma}\label{LEM:1bcbalance}
Given $b,c\in \mathbb{N}$ with  $1\leqslant b\leqslant c$ and $3\mid(1+b+c)$, let
$0<1/n\ll \gamma\ll 1$, where $n\in(1+b+c)\mathbb{N}$.
Suppose that $G$ is an $n$-vertex semi-regular tournament.
If $G$ is $\gamma$-extremal, then $G$ contains a constant-sized $D_{1,b,c}$-tiling $\mathcal{H}$ such that
$V(G)\setminus V(\mathcal{H})$ admits a $\gamma^{1/3}$-superextremal partition $(V_1,V_2,V_3)$ with
$|V_i|\equiv 0\pmod{1+b+c}$ for each $i\in[3]$.
\end{lemma}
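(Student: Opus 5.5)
The plan is to first pass from the given $\gamma$-extremal partition to a superextremal one, and then fix the residues of the part sizes modulo $h:=1+b+c$. To do the fixing we delete a bounded number of disjoint copies of $D_{1,b,c}$: rotated ``crossing'' copies where possible, and copies using backward edges for the residual obstruction. Semi-regularity of $G$ is what guarantees those backward edges exist.

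\textbf{Step 1 (cleaning).} Start from a $\gamma$-extremal partition $(V_1,V_2,V_3)$. Call $v\in V_i$ \emph{bad} if $d^+(v,V_{i+1})<n/6-\gamma^{1/2}n$ or $d^-(v,V_{i-1})<n/6-\gamma^{1/2}n$. Since $e(V_{i+1},V_i)=O(\gamma n^2)$ and $G$ is a semi-regular tournament, a double-counting argument shows there are only $O(\gamma^{1/2}n)$ bad vertices. Each bad vertex $v$ has $d^\pm(v)=(n-1)/2\pm 1$. By comparing these degrees with the part sizes, we find an index $j$ such that $v$ is typical for $V_j$, i.e.\ $d^+(v,V_{j+1}),d^-(v,V_{j-1})\geqslant n/6-O(\gamma^{1/3}n)$. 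If no such $j$ exists, $v$ has huge in- and out-neighbourhoods inside a single part. Such vertices are few, and we cover them greedily by disjoint type-$(i)$ or crossing copies of $D_{1,b,c}$ containing them, using the dependent random choice Lemma~\ref{LEM:dependentrandom} inside the dense parts. Every other bad vertex is moved to the part where it is typical. The result is a $\gamma^{1/3}$-superextremal partition of what remains. The edges already used are $O(\gamma n)$, so (EP2) survives with the weaker constant.

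\textbf{Step 2 (residue arithmetic).} Let $r_i\equiv|V_i|\pmod h$, so $r_1+r_2+r_3\equiv 0$. Superextremality lets us embed, in any bounded number, disjoint copies of type $(i,i+1,i+2)$ for each $i$ (take common out-/in-neighbourhoods via Lemma~\ref{LEM:dependentrandom}). Removing such copies subtracts the vectors $(1,b,c)$, $(c,1,b)$, $(b,c,1)$. Inside each $V_i$ we also have plenty of copies of type $(i)$, since $G[V_i]$ is a nearly complete tournament; these subtract $h e_i$ and are harmless. The reachable residue set is therefore the subgroup $L\subseteq\{x\in\mathbb Z_h^3:\sum x_i=0\}$ generated by the rotations. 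A determinant computation, using $1+b^3+c^3-3bc=h(1+b^2+c^2-b-c-bc)$, identifies the index of $L$ in terms of $\gcd(h,c^2-b)$. So if $(r_1,r_2,r_3)\in L$ we are done with $O(h^2)$ copies.

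\textbf{Step 3 (main obstacle: residues outside $L$).} Here we must use copies of $D_{1,b,c}$ that are not of the three crossing types. These contain backward edges, i.e.\ edges of $E(V_{i+1},V_i)$, and give different distribution vectors. For example, a backward edge $uw$ with $u\in V_{i+1}$, $w\in V_i$ lets the singleton $TT_1$ sit in $V_{i+1}$ while the $TT_b$ and $TT_c$ lie in $V_i$ and $V_{i-1}$ (or similar rearrangements). The key point is that semi-regularity forces enough disjoint backward edges. Summing out-degrees over $V_i$ gives
\[
\binom{|V_i|}{2}+e(V_i,V_{i+1})+e(V_i,V_{i-1})=|V_i|\frac{n-1}{2}\pm|V_i|,
\]
and together with $3\mid h$ (so $n\equiv 0 \pmod 3$ and the ideal sizes are $n/3$) this shows the following. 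Whenever the sizes are not already balanced modulo the obstruction, the deviation $\bigl||V_i|-n/3\bigr|$ is matched by $\Omega(\text{deviation}\cdot n)$ backward edges. We expect this to yield $h$ disjoint backward edges whose endpoints are typical. From these we build the needed non-standard copies, each extending greedily by superextremality, and use them to shift $(r_1,r_2,r_3)$ into $L$. Then we finish as in Step 2.

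When the sizes are already exactly $n/3$-balanced, we check directly that $3\mid h$ puts the residue vector into $L$: it is a multiple of $(h/3,h/3,h/3)$ up to $L$, and $(h/3,h/3,h/3)$ is $\tfrac{h}{3}$ times the sum of the three rotation vectors divided by $h$. This case analysis, showing that the backward-edge copies generate exactly the missing cosets, is the step I expect to need the most care.
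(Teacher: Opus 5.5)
Your architecture matches the paper's. You reassign bad vertices to get a superextremal partition, note that crossing copies realise the subgroup $L$ generated by $(1,b,c)$, $(c,1,b)$, $(b,c,1)$, and use backward edges forced by semi-regularity to supply the missing transferrals. But Step~3 is the entire content of the lemma: when $\gcd(h,c^2-b)>1$, Lemma~\ref{PRO-abc system} with $a=1$ shows $(1,0,-1)\notin L$. That step is not carried out, and the claim it rests on is false. Semi-regularity does not give $h$ disjoint backward edges.

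Here is a counterexample. For $n=6m$, take the blow-up of $C_3$ with regular tournaments on $V_1$ ($2m+1$ vertices) and $V_3$ ($2m-1$ vertices). Let $G[V_2]$ be a regular tournament on $2m-1$ vertices plus a sink $u$. Reorient the edges between $u$ and some $m$ vertices of $V_1$ so that they leave $u$. This is a semi-regular tournament and the partition is superextremal. Every backward edge contains $u$. For $h=3$, or $h=6$ with $(b,c)=(1,4)$, the residue vector $(2m+1,2m,2m-1)$ lies outside $L$. So exactly one disjoint backward edge exists, and it is needed.

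What makes the paper's proof work is a count that is sharp up to an additive constant. Write $|V_1|=n/3+g$, $|V_2|=n/3+f$, $|V_3|=n/3-g-f$, with $V_1$ largest and $|V_2|\geqslant|V_3|$.
\begin{itemize}
\item Summing out-degrees over $V_2$ gives $e(V_2,V_1)\geqslant|V_2|(g+f/2-1/2)$.
\item (EP2) caps degrees in the bipartite graph of backward edges at $n/6+O(\gamma^{1/3}n)$.
\item K\"onig's theorem then gives a matching of size at least $2g+f-1$ in $E(V_2,V_1)$, and similarly at least $g-f-1$ in $E(V_1,V_3)$.
\item Only $R_h(g)+R_h(f)$ transfers are used (resp.\ $R_h(-f)+R_h(g+f)$ if $f<0$), namely those bringing every residue to $n/3 \bmod h$. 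These bounds cover exactly that.
\end{itemize}
An $\Omega(Dn)$ edge bound for deviation $D$, with unspecified constant, cannot deliver this. Aiming directly at $(n/3)(1,1,1)$ also makes your index computation for $L$ unnecessary.

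The second missing ingredient is the transfer construction itself. An arbitrary backward edge does not ``extend greedily by superextremality'' to a copy with shifted index vector. The configuration you sketch ($TT_1$ in $V_{i+1}$, $TT_b$ and $TT_c$ in $V_i$ and $V_{i-1}$) needs many backward edges in three directions at once. The paper's Lemma~\ref{LEM-edgegood} shows each $xy\in E(V_2,V_1)$ is $\gamma$-transfer (Definition~\ref{DEF:robust}) via three cases:
\begin{itemize}
\item If $N^-(x)\cap N^+(y)\cap V_1$ is linear, then $\{x\}$, $\{y\}\cup Y$ with $Y$ a $TT_{b-1}$ in $V_3$, and a $TT_c$ $X\subseteq V_1$ form a copy. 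Its index vector is $(c+1,1,b-1)$, i.e.\ type-$(2,3,1)$ shifted by $\mathbf{e}_1-\mathbf{e}_3$. One then adds a type-$(1,2,3)$ and a type-$(3,1,2)$ copy.
\item If $d^+(y,V_3)$ is linear, $y$ is the $TT_1$ of a copy with $TT_b,TT_c\subseteq V_3$.
\item Otherwise $d^{\pm}(x,V_1)=n/6\pm O(\gamma^{1/4}n)$ is forced. Then $y$ is replaced by a vertex $y^*$ of small out-degree inside $N^+(x,V_1)$, which reduces to the first case.
\end{itemize}
All of this must work while avoiding an arbitrary forbidden set of size $O(\gamma n)$, so that matching edges can be processed one after another.

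Finally, your argument for $(h/3)(1,1,1)\in L$ is invalid. ``Dividing the sum $(h,h,h)$ of the rotations by $h$'' is a rational combination with coefficient $1/3$, not an element of $L$. The claim is true, but it needs $3\mid h$ and a split on $b\bmod 3$: take $(h/3)(1,b,c)$ if $b\equiv 1$, $(h/3)\bigl[(1,b,c)+2(b,c,1)\bigr]$ if $b\equiv 0$, and $(h/3)\bigl[(1,b,c)+2(c,1,b)\bigr]$ if $b\equiv 2\pmod 3$.

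Separately, your ``no typical index'' case in Step~1 is vacuous but harmless. Since $\sum_i d^+(v,V_i)\approx n/2$, there is always a cyclic index $i$ with $d^+(v,V_i)\geqslant n/6-O(\gamma n)$ and $d^+(v,V_{i+1})\leqslant n/6+O(\gamma n)$.
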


\begin{lemma}\label{LEM:balancefactor}
Given $a,b,c\in \mathbb{N}$ with  $1\leqslant a\leqslant b\leqslant c$, let
$0<1/n\ll \eta \ll \gamma\ll 1$.
Suppose that $G$ is an $n$-vertex oriented graph with
$\delta^0(G)\geqslant(1/2-\eta)n$.
If $V(G)$ admits a $\gamma$-superextremal partition $(V_1,V_2,V_3)$ with
$|V_i|\equiv 0\pmod{a+b+c}$ for each $i\in[3]$, then $G$ contains a $D_{a,b,c}$-factor.
\end{lemma}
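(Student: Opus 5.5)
We prove Lemma \ref{LEM:balancefactor} with the standard extremal-case strategy: clean up the few atypical vertices, then tile the balanced remainder using type-$(i,i+1,i+2)$ copies of $D_{a,b,c}$ with a prescribed number of vertices in each part, and finish with a blow-up/regularity embedding.

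\textbf{Setup.} Write $h=a+b+c$ and $|V_i|=hm_i$. By \ref{EP1}, $m_i=n/(3h)\pm O(\gamma n)$. By \ref{EP2}, every vertex $v\in V_i$ has at least $n/6-O(\gamma n)$ out-neighbours in $V_{i+1}$ and at least $n/6-O(\gamma n)$ in-neighbours in $V_{i-1}$.

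Combining \ref{EP1} with $\delta^0(G)\geqslant (1/2-\eta)n$ and counting edges shows that:
\begin{itemize}
\item each $G[V_i]$ has density close to $1/2$, so it is almost a tournament; this is forced because $e(V_{i+1},V_i)$ is small while degrees are about $n/2$;
\item $G[V_i,V_{i+1}]$ is almost complete, oriented from $V_i$ to $V_{i+1}$.
\end{itemize}
A copy of type-$(i,i+1,i+2)$ consumes $a$ vertices of $V_i$, $b$ of $V_{i+1}$ and $c$ of $V_{i+2}$. The three cyclic types together use $h$ vertices from each part. Hence a factor made of $x$ copies of each type would use $xh$ vertices of every part, which requires $m_1=m_2=m_3$. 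This is generally false, so the part sizes are not automatically balanced.

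\textbf{Key step 1: using type-$(i)$ copies.} Each $G[V_i]$ is almost a tournament of linear size, and we have $\delta^0(G[V_i])$-type control. By Theorem \ref{THM:bollobasJCTB50}, or directly by dependent random choice (Lemma \ref{LEM:dependentrandom}) inside $G[V_i]$, each $V_i$ contains many disjoint copies of $D_{a,b,c}$ of type-$(i)$, each using $h$ vertices of $V_i$ only. Since $h\mid |V_i|$, removing such copies preserves the divisibility of every part. We remove $(m_i-\min_j m_j)$ copies of type-$(i)$ from each $V_i$, which is $O(\gamma n)$ copies in total, and obtain $|V_1'|=|V_2'|=|V_3'|=hm$.

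Before doing this, we first cover the \emph{bad vertices}, meaning those with many non-neighbours inside their own part or atypical degrees. There are only $O(\gamma^{1/2}n)$ of them. Each bad vertex $v\in V_i$ goes into a copy of type-$(i,i+1,i+2)$, $(i-1,i,i+1)$ or $(i)$, whichever its neighbourhood permits. \ref{EP2} guarantees that $v$ has linear out-neighbourhood in $V_{i+1}$ and in-neighbourhood in $V_{i-1}$. Inside these sets, the near-tournament structure and dependent random choice give the needed $TT$'s with complete connections. Any imbalance these copies create is then corrected by the type-$(i)$ removals, while keeping all part sizes divisible by $h$.

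\textbf{Key step 2: the remaining balanced structure.} The remainder satisfies $|V_i'|=hm$, and every vertex has almost full degree in the right directions. We split each $V_i'$ into pieces $A_i,B_i,C_i$ of sizes $am,bm,cm$ and aim for the cyclic assignment: $A_1,B_2,C_3$, then $A_2,B_3,C_1$, then $A_3,B_1,C_2$.

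We would like to apply the Blow-up Lemma to the triples. However, $TT_a$ must be embedded inside a part, and $G[A_i]$ is only almost a tournament, not complete. Instead we apply the regularity lemma (Lemma \ref{LEM:reguler}) inside each $V_i'$. The reduced oriented graph of an almost-tournament of density about $1/2$ contains an almost-perfect $TT_h$-type structure. We then use the standard fact that a tournament-like regular cluster structure together with super-regular cross pairs admits a perfect packing via the Blow-up Lemma, where the $TT_k$ are embedded into $\varepsilon$-regular $k$-tuples of clusters forming a transitive tournament in the reduced graph. Leftover vertices are absorbed using the high cross degrees.

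\textbf{Main obstacle.} We expect the last step to be the main difficulty: producing an exact factor when the inner parts are only near-tournaments. If the cluster approach proves awkward, the fallback is an absorbing argument. In that approach we reserve a small absorbing family of type-$(i,i+1,i+2)$ copies, cover all but $o(n)$ vertices greedily, and finally use the almost-complete cross edges to swap in the remaining vertices while preserving the balance between parts.
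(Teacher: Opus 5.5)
Your three-stage plan (cover the bad vertices, equalize the parts with type-$(i)$ copies, then split each $V_i'$ into pieces of sizes $am,bm,cm$ and tile the three cyclic triples) matches the paper's skeleton. However, the first and last stages do not work as written.

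\textbf{The bad-vertex stage.} You claim that any imbalance created by the covering copies is ``corrected by the type-$(i)$ removals, while keeping all part sizes divisible by $h$''. This is false. A type-$(i)$ copy removes exactly $h$ vertices from $V_i$ and none elsewhere, so it never changes any $|V_j| \bmod h$. Suppose a bad vertex of $V_1$ is covered by a single copy of type-$(1,2,3)$. Then the part sizes become $\equiv -a,-b,-c \pmod h$, all nonzero since $0<a,b,c<h$. No type-$(i)$ removals can then reach $|V_1'|=|V_2'|=|V_3'|=hm$, which your last stage needs. Cross-type copies cannot simply be avoided, since a bad $v\in V_i$ may have almost no out-neighbours (or in-neighbours) inside $V_i$.

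The paper's fix is to cover each bad $v\in V_j$ by a balanced triple. Let $\widetilde V_i$ denote $V_i$ minus bad and already used vertices. Set $V^\ast_{j+1}=N^+(v,\widetilde V_{j+1})$ and $V^\ast_{j-1}=N^-(v,\widetilde V_{j-1})$, and let $V^\ast_j$ be the larger of $N^{+}(v,\widetilde V_j)$ and $N^{-}(v,\widetilde V_j)$. All three have size at least $n/7$, by \ref{EP2} and because $v$ misses at most $2\eta n$ adjacencies. Lemma~\ref{LEM:findmanyDabc} then gives three disjoint copies in $G[V^\ast_1\cup V^\ast_2\cup V^\ast_3]$, one of each cyclic type. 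Swapping $v$ for a vertex of the type-$(1,2,3)$ copy lying in $V^\ast_j$ still yields a $D_{a,b,c}$, because $v$ is complete to or from $V^\ast_j$. Each triple uses exactly $h$ vertices of every part, so divisibility is never disturbed.

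\textbf{The last stage.} Here you leave precisely the hard part to an unspecified regularity/Blow-up/absorbing argument: exactness, i.e.\ how leftover vertices of $A_1$ get into copies of $TT_a$ while the proportions $a:b:c$ across $A_1,B_2,C_3$ are kept. So this is not yet a proof. The missing observation is that no regularity is needed.

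Every vertex of $G$ misses at most $2\eta n$ adjacencies. Hence each piece $V_1^a$, $V_2^b$, $V_3^c$ has minimum total degree $(1-O(\eta))$ times its (linear) size. By Yuster's minimum total degree theorem for transitive tournament factors, these pieces have a $TT_a$-, $TT_b$- and $TT_c$-factor respectively. Contract each of these transitive tournaments to a vertex. Join $T_u$ to $T_v$, in the cyclic order $a\to b\to c\to a$, when every vertex of $T_u$ dominates every vertex of $T_v$. All remaining vertices are $\gamma^{1/2}$-good, so every contracted vertex is adjacent to all but $O(\gamma^{1/2}n)\leq\gamma^{1/3}t$ of the $t$ vertices in each other class. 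Theorem~\ref{LEM:blowup} then yields a triangle factor, which is exactly a $D_{a,b,c}$-factor of $V_1^a\cup V_2^b\cup V_3^c$, and likewise for the other two cyclic triples.

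\textbf{A minor point.} Theorem~\ref{THM:bollobasJCTB50} concerns regular tournaments and does not apply to $G[V_i]$. For the type-$(i)$ copies the paper uses that $\delta^0(G[V_i^0])\geq(1/2-O(\gamma^{1/3}))|V_i^0|$ once bad vertices are removed. It then finds many disjoint directed triangles in the reduced oriented graph via Lemma~\ref{LEM:cross-edge-triangle}, and embeds copies in them with Lemma~\ref{LEM:findmanyDabc}.
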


Equipped with the above lemmas, we are now ready to prove Theorems \ref{THM-abc}, \ref{THM-1bc}, and \ref{THM:factor-extremal}.

\begin{proof}[\textbf{Proof of Theorem \ref{THM-abc}}]
Let $a,b,c\in \mathbb{N}$ with $2\leqslant a \leqslant b \leqslant c$ and $\gcd(a+b+c,c^2-ab)=1$. Set $h=a+b+c$. Choose
$$
0< 1/n\ll\eta\ll \beta\ll\gamma\ll 1
\text{ and }
n\in h\mathbb{N}.
$$
Let $G$ be an $n$-vertex oriented graph with $\delta^0(G)\geqslant (1/2-\eta)n$.

We claim that $G$ contains a $D_{a,b,c}$-factor. By Lemma \ref{LEM:closed-factor}, it suffices to consider the case where $V(G)$ is not $(D_{a,b,c},\beta,t)$-closed for any $t\in\mathbb{N}$. Then Lemma \ref{LEM:nonext} implies that $G$ is $\gamma$-extremal. By Lemma \ref{LEM:abcbalance}, there exist a set $S\subseteq V(G)$ with $|S|\leqslant h-1$ and a constant-sized $D_{a,b,c}$-tiling $\mathcal{H}$ in $G-S$ such that $V(G)\setminus (S\cup V(\mathcal{H}))$ admits a $\gamma^{1/3}$-superextremal partition $(V_1,V_2,V_3)$ with $|V_i|\equiv 0\pmod h$ for all $i\in[3]$. Since $n\in h\mathbb{N}$, it follows that $|S|\equiv 0\pmod h$, and hence $|S|=0$. Let $G^\prime=G-V(\mathcal{H})$. Since $\mathcal{H}$ is a constant-sized tiling, we have
$$
\delta^0(G^\prime)\geqslant (1/2-\eta)n-|V(\mathcal{H})|
\geqslant (1/2-2\eta)|G^\prime|.
$$
Applying Lemma \ref{LEM:balancefactor} to $G^\prime$ with parameters $2\eta$ and $\gamma^{1/3}$, we conclude that $G^\prime$ contains a $D_{a,b,c}$-factor $\mathcal{H}^\prime$. Then $\mathcal{H}\cup\mathcal{H}^\prime$ is a $D_{a,b,c}$-factor of $G$, as desired.
\end{proof}

The proof of Theorem \ref{THM-1bc} is essentially the same as that of Theorem \ref{THM-abc}. For completeness, we include it here. Note that, in Theorem \ref{THM-1bc}, the oriented graph $G$ is assumed to be a semi-regular tournament.

\begin{proof}[\textbf{Proof of Theorem \ref{THM-1bc}}]
Let $b,c\in \mathbb{N}$ with $1\leqslant b\leqslant c$, and set $h=1+b+c$. Choose
$$
0<1/n\ll \eta\ll \beta\ll\gamma\ll 1
\text{ and } 
n\in h\mathbb{N}.
$$
Let $G$ be an $n$-vertex semi-regular tournament. Assume that either $\gcd(1+b+c,c^2-b)=1$ or $3\mid (1+b+c)$. If the former holds, then $G$ contains a $D_{1,b,c}$-factor by the same argument as in the proof of Theorem \ref{THM-abc}. Thus we may assume that $3\mid (1+b+c)$. Moreover, by Lemma \ref{LEM:closed-factor}, it suffices to consider the case where $V(G)$ is not $(D_{1,b,c},\beta,t)$-closed for any $t\in \mathbb{N}$.

By Lemma \ref{LEM:nonext}, $G$ is $\gamma$-extremal. Then Lemma \ref{LEM:1bcbalance} implies that there exists a constant-sized $D_{1,b,c}$-tiling $\mathcal{H}$ such that $V(G)\setminus V(\mathcal{H})$ admits a $\gamma^{1/3}$-superextremal partition $(V_1,V_2,V_3)$ with $|V_i|\equiv 0\pmod{1+b+c}$ for each $i\in[3]$. Let $G^\prime=G-V(\mathcal{H})$. Since $\mathcal{H}$ is a constant-sized tiling, we have
$$
\delta^0(G^\prime)\geqslant \lfloor (n-1)/2\rfloor-|V(\mathcal{H})|
\geqslant (1/2-\eta)|G^\prime|.
$$
Applying Lemma \ref{LEM:balancefactor} to $G^\prime$ with parameters $\eta$ and $\gamma^{1/3}$, we conclude that $G^\prime$ contains a $D_{1,b,c}$-factor $\mathcal{H}^\prime$. Then $\mathcal{H}\cup\mathcal{H}^\prime$ is a $D_{1,b,c}$-factor of $G$, as desired.
\end{proof}

The rest of this section is devoted to the proof of Theorem \ref{THM:factor-extremal}.

\begin{proof}[\textbf{Proof of Theorem \ref{THM:factor-extremal}}]
Let $a,b,c\in \mathbb{N}$ with $1\leqslant a\leqslant b\leqslant c$. Set $h=a+b+c$. Choose
$$0< 1/n\ll\eta\ll \beta \ll \gamma \ll 1.$$ 
Suppose that $G$ is an $n$-vertex oriented graph with $\delta^0(G)\geqslant(1/2-\eta)n$. Choose $U\subseteq V(G)$ with $|U|\leqslant h-1$ such that $|V(G)\backslash U|\equiv 0\pmod h$. Set $G^{\ast}=G-U$.

If $V(G^{\ast})$ is $(D_{a,b,c},\beta,t)$-closed in $G^{\ast}$ for some $t\in\mathbb{N}$, then, by Lemma \ref{LEM:closed-factor}, $G^{\ast}$ has a $D_{a,b,c}$-factor, and hence $G$ has the desired tiling since $|U|\leqslant h-1$.  
Thus we may assume that $V(G^{\ast})$ is not $(D_{a,b,c},\beta,t)$-closed in $G^{\ast}$ for any $t\in \mathbb{N}$. Then $G^{\ast}$ is $\gamma$-extremal by Lemma \ref{LEM:nonext}. Since $G^{\ast}=G-U$ and $U$ is a constant-sized set, $G$ is clearly $\gamma$-extremal as well. By Lemma \ref{LEM:abcbalance}, there exist a set $S\subseteq V(G)$ with $|S|\leqslant 2h-3$ (in particular, $|S|\leqslant h-1$ when $\gcd(a+b+c,c^2-ab)=1$) and a constant-sized $D_{a,b,c}$-tiling $\mathcal{H}$ in $G-S$ such that $V(G)\backslash (S\cup V(\mathcal{H}))$ admits a $\gamma^{1/3}$-superextremal partition $(V_1,V_2,V_3)$ with $|V_i|\equiv 0\pmod h$ for all $i\in[3]$. Let $G^{\prime}=G-(S\cup V(\mathcal{H}))$.
Clearly,
$\delta^{0}(G^{\prime})\geqslant (1/2-\eta)n-|S\cup V(\mathcal{H})|\geqslant (1/2-2\eta)|G^\prime|$.
Applying Lemma \ref{LEM:balancefactor} to $G^\prime$ with parameters $2\eta$ and $\gamma^{1/3}$, we obtain a $D_{a,b,c}$-factor $\mathcal{H}^{\prime}$ in $G^{\prime}$, which together with $\mathcal{H}$ forms a $D_{a,b,c}$-factor of $G-S$. This proves the first statement. 

Suppose now that $G$ has no $D_{a,b,c}$-factor and that $n\in h\mathbb{N}$. It follows from Lemma \ref{LEM:closed-factor} that $V(G)$ is not $(D_{a,b,c},\beta,t)$-closed, and then Lemma \ref{LEM:nonext} implies that $G$ is $\gamma$-extremal. The ``moreover'' part follows immediately from Definition \ref{DEF:extremal} \ref{EP1}. 
\end{proof}

We next show that the constants $2(a+b+c)-3$ and $a+b+c-1$ in Theorem \ref{THM:factor-extremal} cannot be improved. From the above proof, it is easy to see that it suffices to show that the constants in Lemma \ref{LEM:abcbalance} are best possible for some choices of $a,b,c$.

\begin{remark}\label{RE:2h-3}

Let $G$ be a blow-up of directed triangle $C_3$ obtained by replacing its three vertices with semi-regular tournaments on $3st+s-1$, $3st+2s-1$, and $3st+3s-1$ vertices, respectively. Denote the three vertex sets by $V_1$, $V_2$, and $V_3$, where $|V_i|=3st+is-1$.

Clearly, $|G|=9st+6s-3$ and $|V_i|=|G|/3\pm s$. Thus, $(V_1,V_2,V_3)$ is a $\gamma$-superextremal partition of $V(G)$ for any sufficiently small $\gamma>0$. Moreover, any copy of $D_{s,s,s}$ in $G$ can only lie entirely inside some $G[V_j]$ or be of type-$(j,j+1,j+2)$ for some $j\in[3]$. Therefore, one must remove at least $6s-3$ vertices from $V(G)$ in order to make the sizes of the resulting parts congruent to $0$ modulo $3s$. This shows that the constant ``$2h-3$'' in Lemma \ref{LEM:abcbalance} is best possible when $a=b=c=s$. Moreover, the constant ``$h-1$'' in Lemma \ref{LEM:abcbalance} is best possible by considering an $n$-vertex oriented graph  where $n\equiv h-1 \pmod {h}$.
\end{remark}

\section{Proof of Lemma \ref{LEM:closed-factor}}\label{SEC-5.1}

In this section, we prove Lemma \ref{LEM:closed-factor}. One of the key ingredients is the following result, which yields an almost $D_{a,b,c}$-factor in the oriented graph $G$. Its proof will be given in the next subsection.

\begin{lemma}[Almost Covering Lemma]\label{LEM:almostDabc}
Given $a,b,c\in\mathbb{N}$ with $1\leqslant a\leqslant b\leqslant c$, there exists a constant $0<\eta_1\ll 1$ such that the following holds. Let $0<1/n\ll \eta_2\ll \delta\leqslant 1$. If $G$ is an $n$-vertex oriented graph with $\delta^0(G)\geqslant(1/2-\eta_1)n$ and $\delta(G)\geqslant(1-\eta_2)n$, then $G$ contains a $D_{a,b,c}$-tiling covering all but at most $\delta n$ vertices.
\end{lemma}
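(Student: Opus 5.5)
We follow the standard regularity route. First apply the Diregularity Lemma (Lemma \ref{LEM:reguler}) to $G$ with parameters $\varepsilon\ll d\ll \delta$, and pass to the reduced oriented graph $R$ given by Lemma \ref{LEM-reducedori}. Writing $k=|R|$, this gives $\delta^0(R)\geqslant (1/2-\eta_1-d-3\varepsilon)k$ and $\delta(R)\geqslant(1-\eta_2-2d-3\varepsilon)k$. So $R$ is itself an almost-tournament with semi-degree close to $k/2$. The plan is to find in $R$ an almost perfect fractional (or integral) tiling by small ``weighted'' structures, each of which can be blown up into an almost perfect $D_{a,b,c}$-tiling of the corresponding clusters.

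\textbf{Step 1: tiling $R$ by directed triangles.} By the Keevash--Sudakov theorem (the case $a=b=c=1$ of the statement, which may be quoted as \cite{keevashJCTB99}), the oriented graph $R$ contains a $C_3$-tiling covering all but at most $3$ of its vertices. Since $\eta_1$ is a fixed small constant, we need $\eta_1+d+3\varepsilon$ below the Keevash--Sudakov threshold $o(1)$. Here we use the freedom that $\eta_1$ is chosen depending on $a,b,c$ only, and that $k\geqslant M'$ is large. The at most $3$ uncovered clusters, together with $V_0$, contribute at most $3m+\varepsilon n\leqslant \delta n/2$ vertices.

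\textbf{Step 2: blowing up a triangle.} Let $V_iV_jV_\ell$ be a triangle of $R$. Each of $(V_i,V_j)$, $(V_j,V_\ell)$, $(V_\ell,V_i)$ is $(\varepsilon,d)$-regular with edges in the cyclic direction. I would greedily embed copies of $D_{a,b,c}$ using the three clusters in rotating roles: type $(i,j,\ell)$, $(j,\ell,i)$ and $(\ell,i,j)$ in equal numbers. This way each cluster loses $a+b+c$ vertices per round, so the clusters stay balanced.

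To embed one copy in the current leftover sets $A\subseteq V_i$, $B\subseteq V_j$, $C\subseteq V_\ell$ of size at least $\delta m/4$, we need three things:
\begin{itemize}
\item a $TT_a$ in $A$ whose vertices have many common out-neighbours in $B$ and common in-neighbours in $C$;
\item a $TT_b$ inside those common neighbourhoods in $B$;
\item a $TT_c$ in $C$ that is compatible with both.
\end{itemize}
The obstacle is that the clusters are independent in $G'$, so the transitive tournaments must come from edges of $G$ inside clusters. Here the total degree condition $\delta(G)\geqslant(1-\eta_2)n$ is used: $G$ misses at most $\eta_2 n^2$ pairs, so any set of size $\gg \sqrt{\eta_2}n$ spans an almost-tournament. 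Every large almost-tournament contains $TT_s$ for fixed $s$ (a Ramsey/Erd\H{o}s--Moser type count, as almost all $s$-sets induce tournaments, and every tournament on $2^{s-1}$ vertices contains $TT_s$).

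To get common neighbourhoods, apply dependent random choice (Lemma \ref{LEM:dependentrandom}) iteratively, or simply use regularity via Lemma \ref{LEM-fewvxswithsmalld} on $a$-tuples. Concretely:
\begin{enumerate}
\item Find $A'\subseteq A$ of linear size in which almost all $a$-tuples have $\geqslant \varepsilon' |B|$ common out-neighbours in $B$, and almost all have $\geqslant\varepsilon'|C|$ common in-neighbours in $C$. Apply Lemma \ref{LEM:dependentrandom} twice, or use the typical-vertex argument with $\varepsilon\ll d^{a+b+c}$.
\item Since $A'$ spans an almost-tournament, it contains many $TT_a$. Pick one whose vertex set is a good tuple.
\item Inside the common neighbourhoods $B'\subseteq B$ and $C'\subseteq C$, which are still of size $\gg \varepsilon m$ and so regular, repeat with $TT_b$ in $B'$ having many common out-neighbours in $C'$.
\item Finally take $TT_c$ in the resulting common neighbourhood in $C$, again an almost-tournament.
\end{enumerate}
This gives a copy whenever the leftovers have size $\geqslant \delta m/4$. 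So the greedy process covers all but $\delta m$ vertices per triangle.

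\textbf{Step 3: summing up.} Summing over triangles, the uncovered vertices number at most $\delta n/4$ in the triangle clusters, plus $\delta n/2$ from $V_0$ and the leftover clusters. Edges deleted when passing to $G'$ are irrelevant since we embed into $G$. The main obstacle is Step 2, namely ensuring the nested common neighbourhoods stay large enough to be almost-tournaments. This requires the hierarchy $\eta_2\ll \varepsilon\ll d\ll\delta$, with $\varepsilon'$ from dependent random choice exceeding $\sqrt{\eta_2}$. A secondary subtlety is confirming Step 1 tolerates the loss $d+3\varepsilon$ in semi-degree, which fixes how small $\eta_1$ must be.
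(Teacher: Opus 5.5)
Your proposal is correct and takes essentially the same route as the paper. Both apply the Diregularity Lemma, run Keevash--Sudakov on the reduced oriented graph, and blow up each triangle greedily using the three rotated types of $D_{a,b,c}$, embedding each copy via dependent random choice plus a count of transitive tournaments inside clusters from the total-degree condition (the paper packages this as Lemmas \ref{LEM:findmanyDabc} and \ref{LEM:findmanyTTk}). One bookkeeping correction: the nested sets have size of order $\delta n/M$, where $M$ bounds the number of clusters, so the real requirement is $\eta_2\ll 1/M$ rather than just $\varepsilon'>\sqrt{\eta_2}$; this is harmless, since $M$ depends only on $\delta$ and fixed constants.
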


The following result of Nenadov and Pehova \cite{nenadovSJDM34} also plays an important role in the proof of Lemma \ref{LEM:closed-factor}, as it provides a sufficient condition for the existence of an absorbing set. Before stating their result, we introduce some necessary definitions. Let $G$ and $H$ be two digraphs. We say that a set $S \subseteq V(G)$ is an \emph{$(H,\xi)$-absorbing set} for some $\xi$ if, for every subset $R \subseteq V(G) \backslash S$ such that $|H|$ divides $|S| + |R|$ and $|R| \leqslant \xi n$, the induced digraph $G[S \cup R]$ contains an $H$-factor. For a subset $S\subseteq V(G)$ with $|H|\mid|S|$ and an integer $t>0$, a subset $A_S\subseteq V(G)$ is an \emph{$(H,t)$-absorber of $S$} if $|A_S|=t|H|$ and both $G[A_S]$ and $G[A_S\cup S]$ contain an $H$-factor.

The original statements of the following two lemmas in \cite{hanRSA64,nenadovSJDM34} are formulated for graphs. However, their proofs also work for digraphs.

\begin{lemma}[Absorbing Lemma, \cite{nenadovSJDM34}]\label{LEM:Absorb}
Let $t,h \in \mathbb{N}$ with $h \geqslant 3$, and let $0<1/n\ll \eta \ll  \xi\ll \beta \ll 1$. Suppose that $H$ and $G$ are two digraphs on $h$ and $n$ vertices, respectively. If every set $S\in \binom{V(G)}{h}$ has at least $\beta n$ disjoint $(H,t)$-absorbers, then $G$ contains an $(H,\xi)$-absorbing set of size at most $\beta n$.
\end{lemma}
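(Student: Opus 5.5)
The plan is the standard absorbing strategy: select a random reservoir, show that every $h$-set keeps many absorbers inside it, and then use a sparse robust template to make the leftover of the reservoir always tileable. Since the statement is purely combinatorial in the family of absorbers, orientations never enter the argument; this is why the proof for graphs transfers to digraphs.

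\textbf{Step 1: random reservoir.} Fix $p=\beta^{2}$ and choose $W\subseteq V(G)$ by including each vertex independently with probability $p$. For $S\in\binom{V(G)}{h}$, fix $\beta n$ pairwise disjoint $(H,t)$-absorbers $A_S^1,\dots,A_S^{\beta n}$. Disjointness makes the events $\{A_S^j\subseteq W\}$ independent, each of probability $p^{th}$. By Chernoff's bound and a union bound over the $n^h$ choices of $S$, with high probability every $h$-set $S$ has at least $\beta p^{th}n/2$ disjoint absorbers inside $W$, provided no absorber meets $S$. We also have $|W|\le 2pn\le \beta n$.

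\textbf{Step 2: template.} To make the unused part of the reservoir flexible, I would use Montgomery's robust bipartite template. This is a bipartite graph $T$ on $X\cup(Y\cup Y')$ with $|X|=|Y|+\lambda$, $|Y'|=2\lambda$ and maximum degree $O(1)$, such that for every $Y''\subseteq Y'$ with $|Y''|=\lambda$, $T-Y''$ has a perfect matching.

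Inside $W$, reserve disjoint $h$-sets $\{S_x: x\in X\}$, and for $y\in Y\cup Y'$ reserve disjoint $(h-1)$-sets or single vertices $Q_y$. The intent is that for each edge $xy$ of $T$ the union $S_x\cup Q_y$ (or an appropriate $h$-set built from it) is absorbed. For each edge $xy$, pick an absorber $A_{xy}$ for the relevant $h$-set, vertex-disjoint from everything chosen so far.

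This greedy choice is possible because $T$ has $O(\lambda)$ edges and absorbers have bounded size. So at each stage only $O(\lambda)=o(\beta p^{th}n)$ vertices are forbidden, while Step 1 provides $\beta p^{th} n/2$ disjoint absorbers to choose from. Set $S^\ast$ to be the union of all $S_x$, $Q_y$ and $A_{xy}$. A perfect matching of $T-Y''$ then tells us which absorbers to activate, and every non-activated absorber is tiled by its own internal $H$-factor. Hence $G[S^\ast\setminus \bigcup_{y\in Y''}Q_y]$ has an $H$-factor for every admissible $Y''$.

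\textbf{Step 3: absorbing $R$.} Let $S=S^\ast\cup W_0$, where $W_0$ is a further family of disjoint absorbers taken from $W$ greedily so that $|S|\le\beta n$. Given $R$ with $|R|\le\xi n$ and $h\mid |S|+|R|$, I would first use the flexible vertices of $Y'$ to fix divisibility. That is, choose $Y''$ with $|Y''|=\lambda$ and use up to $h-1$ of the released vertices $\bigcup_{y\in Y''} Q_y$ together with $R$, so that the remainder $R'$ satisfies $h\mid |R'|$.

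Next partition $R'$ into $h$-sets $R_1,\dots,R_k$ with $k\le \xi n$. For each $R_i$, greedily choose an absorber inside $W_0$ disjoint from those already used. The cleanest way to ensure such absorbers lie in $W_0$ is to take $W_0$ itself random, as in Step 1, and intersected with the template-free part of $W$. This greedy step works since $\xi\ll\beta p^{th}$.

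\textbf{Main obstacle.} The hard part is the leftover of $W_0$: the vertices of $W_0$ not lying in chosen absorbers must still be covered. I would handle this by making $W_0$ a disjoint union of fixed absorbers from the start, each having its own $H$-factor, rather than an arbitrary vertex set. The difficulty is then to guarantee that every $h$-set still has many absorbers among this fixed disjoint family, which I would attempt via a second random sparsification of the family of all absorbers inside $W$ followed by deletion of intersecting pairs. The required estimate is that the expected number of intersecting pairs is much smaller than the expected number of absorbers per $h$-set; this is where the constant hierarchy $\eta\ll\xi\ll\beta$ must be tuned, and I expect it to be the delicate step.
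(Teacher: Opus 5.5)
The paper does not prove this lemma itself; it quotes Nenadov and Pehova and remarks only that their argument ignores orientations, which matches your opening observation. Your Step~1 is sound and is also how their proof starts: it gives a random reservoir $W$ in which every $h$-set keeps $\Omega(p^{th}\beta n)$ disjoint absorbers, via Chernoff over the disjoint family. The gap is the one you call the main obstacle, and the remedy you propose cannot work. The hypothesis gives each $h$-set only $\beta n$ disjoint absorbers, and different $h$-sets may have entirely different absorbers. So nothing guarantees a family of $O(n)$ pairwise disjoint absorbers containing an absorber of each of the $\binom{n}{h}$ $h$-sets. Your sparsification shows this concretely. To keep even one absorber of a fixed $S$ you must sample at rate $q\gtrsim 1/(p^{th}\beta n)$, which selects on the order of $n^{h}$ absorbers in total, while at most $|W|/(th)$ can be pairwise disjoint. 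No tuning of $\eta\ll\xi\ll\beta$ repairs this polynomial loss. This linear-count regime is exactly where R\"odl--Ruci\'nski--Szemer\'edi-type random selection fails, and it is why a template is needed. In your write-up, however, the template does not do that job:
\begin{itemize}
\item it only fixes divisibility, which padding $R$ with fewer than $h$ vertices of $W$ already achieves;
\item it is disconnected from $W_0$;
\item it is inconsistent as stated, since $S_x$ is an $h$-set and $Q_y$ an $(h-1)$-set or a vertex, so $S_x\cup Q_y$ is not an $h$-set;
\item in Step~3 every released vertex in $\bigcup_{y\in Y''}Q_y$ must be covered, not just $h-1$ of them.
\end{itemize}

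The missing idea is to make the reservoir itself the flexible side of the template. Take a bounded-degree bipartite template with classes $X\cup Y$ and $Z$, where $|X|=(1+\rho)m$, such that deleting any $\rho m$ vertices of $X$ leaves a graph with a perfect matching. Identify $X$ with $W$, $Y$ with fresh single vertices, and $Z$ with fresh disjoint $(h-1)$-sets outside $W$. For each template edge $vz$, greedily choose an absorber of the $h$-set $\{v\}\cup Z_z$ disjoint from $W$ and from all earlier choices. Given $R$:
\begin{itemize}
\item pad it with fewer than $h$ vertices of $W$ and split it into $h$-sets;
\item absorb each $h$-set by a fresh absorber inside $W$, which Step~1 allows since $\xi\ll p^{th}\beta$;
\item add vertex-disjoint copies of $H$, taken from the $H$-factors of further absorbers inside $W$, until exactly $\rho m$ vertices of $W$ are used.
\end{itemize}
The count works modulo $h$ because $h$ divides $|R|$ plus the size of the absorbing set. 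The remaining $m$ vertices of $W$ are then covered via the template's perfect matching, so no fixed family $W_0$ is needed.

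The release ratio also matters. With your ratio $1/2$ you would have to tile half of $W$ internally, which the hypothesis does not provide. So either take $\rho$ small (about $p^{th-1}\beta$), or add to the flexible side a fixed $H$-tiling outside $W$ from which to top up.
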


Lemma \ref{LEM:Absorb} indicates that the key step in constructing an absorbing set is to find $\Omega(n)$ disjoint absorbers for every $S\in \binom{V(G)}{h}$. This can be achieved by the following result.

\begin{lemma}[\cite{hanRSA64}]\label{LEM-findvtxab}
Given $t,h\in \mathbb{N}$ with $h\geqslant 3$, let $0<1/n \ll \eta \ll \beta \ll 1$. Suppose that $H$ and $G$ are two digraphs on $h$ and $n$ vertices, respectively. If $V(G)$ is $(H,\beta,t)$-closed, then every $S\in \tbinom{V(G)}{h}$ has at least $\beta^2 n$ disjoint $(H,ht)$-absorbers.
\end{lemma}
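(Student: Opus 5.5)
The plan is to build the absorbers one at a time by a greedy procedure. At each step we only need \emph{one} new absorber that avoids everything used so far, and closedness gives this directly. Fix $S=\{s_1,\dots,s_h\}\in\binom{V(G)}{h}$. Suppose we have already found disjoint $(H,ht)$-absorbers $A_1,\dots,A_m$ of $S$ with $m<\beta^2 n$. Let $W=S\cup A_1\cup\dots\cup A_m$, so that
$$|W|\leqslant h+\beta^2 n\cdot h^2t\leqslant 2h^2t\beta^2 n .$$
It suffices to construct one more absorber $A_{m+1}$ disjoint from $W$.

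\emph{Step 1: a copy of $H$ avoiding $W$.} Pick any two distinct vertices $x,y\notin W$. By closedness there are at least $\beta n^{ht-1}$ $(H,x,y)$-linking $(ht-1)$-sets. The number of $(ht-1)$-sets meeting $W$ is at most $|W|\,n^{ht-2}\leqslant 2h^2t\beta^2 n^{ht-1}$, and this is less than $\beta n^{ht-1}$ because $\beta\ll 1$. So some linking set $L$ avoids $W$. Then $G[L\cup\{x\}]$ has an $H$-factor entirely outside $W$. Take one copy $H_0$ of $H$ from this factor, and label its vertices $y_1,\dots,y_h$.

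\emph{Step 2: linking sets for the pairs $(s_i,y_i)$.} Successively for $i=1,\dots,h$, choose an $(H,s_i,y_i)$-linking $(ht-1)$-set $L_i$ that avoids $W\cup V(H_0)\cup L_1\cup\dots\cup L_{i-1}$. The forbidden set has size at most $|W|+h^2t\leqslant 3h^2t\beta^2 n$. By the same counting as in Step 1, at most $3h^2t\beta^2 n^{ht-1}<\beta n^{ht-1}$ linking sets meet it, so such an $L_i$ exists.

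\emph{Step 3: the absorber.} Set
$$A_{m+1}=V(H_0)\cup L_1\cup\dots\cup L_h .$$
Its size is $h+h(ht-1)=h^2t=(ht)\cdot h$, as required for an $(H,ht)$-absorber.
\begin{itemize}
\item $G[A_{m+1}]$ has an $H$-factor: use the $H$-factors of $G[L_i\cup\{y_i\}]$ for $i\in[h]$. These partition $A_{m+1}$, since the $y_i$ exhaust $V(H_0)$.
\item $G[A_{m+1}\cup S]$ has an $H$-factor: use $H_0$ itself together with the $H$-factors of $G[L_i\cup\{s_i\}]$ for $i\in[h]$.
\end{itemize}
Also, $A_{m+1}$ is disjoint from $W$, and in particular from $S$ and from all previous absorbers. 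Iterating until $m=\beta^2 n$ yields the required $\beta^2 n$ disjoint absorbers.

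The only delicate point is the parameter bookkeeping. A naive approach would count all absorbers (about $\beta^h n^{h^2t}$ of them) and then pick a disjoint family. That only yields roughly $\beta^h n$ disjoint absorbers, which is too weak for $h\geqslant 3$. Building the absorbers one at a time avoids this loss: each step only requires that the $\beta n^{ht-1}$ linking sets outnumber those meeting an $O(\beta^2 n)$-sized forbidden set. This is exactly the inequality $O(h^2t\beta^2)<\beta$, which holds since $\beta\ll1$ with $h$ and $t$ fixed.
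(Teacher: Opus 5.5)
Your proof is correct and is essentially the standard argument from \cite{hanRSA64}, which the paper cites without reproducing. That argument greedily builds absorbers one at a time: each consists of a fresh copy of $H$ on $\{y_1,\dots,y_h\}$ together with $(H,s_i,y_i)$-linking $(ht-1)$-sets chosen to avoid the union $W$ of everything used so far. The counting $|W|\,n^{ht-2}\leqslant 3h^2t\beta^2 n^{ht-1}<\beta n^{ht-1}$ is legitimate because $\beta$ is chosen small only after $h$ and $t$ are fixed.
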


Equipped with Lemmas \ref{LEM:almostDabc}, \ref{LEM:Absorb}, and \ref{LEM-findvtxab}, we are ready to prove Lemma \ref{LEM:closed-factor}.

\begin{proof}[\textbf{Proof of Lemma \ref{LEM:closed-factor}}]
Given $a,b,c,t\in \mathbb{N}$ with $1\leqslant a\leqslant b \leqslant c$, and set $h=a+b+c$. Choose
$$0<1/n\ll\eta\ll \delta\ll\beta\ll\eta_1\ll1,$$
where $3\eta$ plays the role of $\eta_2$ in Lemma \ref{LEM:almostDabc} with parameters $\eta_1$ and $\delta$.

Suppose that $G$ is an $n$-vertex oriented graph with $\delta^{0}(G)\geqslant (1/2-\eta)n$. Then every vertex of $G$ has at most $2\eta n$ missing edges. Since $V(G)$ is $(D_{a,b,c}, \beta, t)$-closed, Lemma \ref{LEM-findvtxab} implies that every $S\in \binom{V(G)}{h}$ has at least $\beta^2 n$ disjoint $(D_{a,b,c}, ht)$-absorbers in $G$. Applying Lemma \ref{LEM:Absorb} with parameters $\beta^2$, $ht$, and $h$, we obtain a $(D_{a,b,c}, \delta)$-absorbing set $A \subseteq V(G)$ with $|A| \leqslant \beta^2 n$.

Let $G^{\prime}=G-A$. Then
$\delta^0(G^{\prime}) \geqslant(1/2-\eta)n-|A|\geqslant(1/2-\eta_1)|G^\prime|$
and
$\delta(G^{\prime})\geqslant |G^{\prime}|-2\eta n\geqslant (1-3\eta)|G^{\prime}|$,
since $\eta\ll\beta\ll\eta_1\ll 1$.
Applying Lemma \ref{LEM:almostDabc} to $G^{\prime}$ with parameters $\eta_1$ and $\eta_2=3\eta$, we obtain a $D_{a,b,c}$-tiling $\mathcal{H}$ in $G^{\prime}$ covering all but at most $\delta n$ vertices.
Let $R=V(G^{\prime}) \backslash V(\mathcal{H})$ be the set of uncovered vertices. Clearly, $|R|\leqslant \delta n$.
Since $A$ is a $(D_{a,b,c}, \delta)$-absorbing set and $h \mid (|A|+|R|)$, $G[A \cup R]$ contains a $D_{a,b,c}$-factor $\mathcal{H}^\prime$, which together with $\mathcal{H}$ forms a $D_{a,b,c}$-factor of $G$.
\end{proof}

\subsection{Proof of the Almost Covering Lemma}

In this subsection, we present several auxiliary lemmas and explain how they combine to yield the proof of Lemma \ref{LEM:almostDabc}. The proofs of these lemmas are deferred to the next subsection.

\begin{theorem}[\cite{keevashJCTB99}]\label{THM:trianglefactor}
There exists a constant $\eta>0$ such that, for sufficiently large $n$, every $n$-vertex oriented graph $G$ with $\delta^0(G)\geqslant (1/2-\eta)n$ contains a $C_3$-tiling covering all but at most three vertices.
\end{theorem}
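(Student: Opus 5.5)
This statement is quoted from Keevash and Sudakov~\cite{keevashJCTB99}; the paper uses it as a black box and does not reprove it. If I had to prove it, I would follow their strategy: regularity for the almost-tiling, and a stability analysis for the final leftover.

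\textbf{Step 1: reduced graph and almost-tiling.} First I apply the Diregularity Lemma (Lemma~\ref{LEM:reguler}) and pass to the reduced oriented graph $R$ of Lemma~\ref{LEM-reducedori}. Then $R$ satisfies $\delta^0(R)\geqslant(1/2-2\eta)|R|$. Next I take a maximum fractional $C_3$-tiling of $R$ (equivalently, a maximum tiling by blow-ups of $C_3$ and of short cycles). I argue that it covers all but $o(|R|)$ clusters. The argument is by contradiction: if a set $X$ of $\Omega(|R|)$ clusters were uncovered, the semi-degree condition forces many edges between $X$ and the covered part. A local switching argument then enlarges the tiling, exchanging one triangle for two triangles that use uncovered clusters; this works because every vertex lies in $\Omega(n^2)$ directed triangles when $\delta^0\approx n/2$.

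\textbf{Step 2: transfer to $G$.} I convert the fractional tiling into an actual $C_3$-tiling of $G$. Each superregular triple of clusters $(V_i,V_j,V_k)$ that forms a $C_3$ in $R$ can be greedily covered by triangles, up to $\varepsilon m$ vertices. This leaves a $C_3$-tiling of $G$ covering all but $\delta n$ vertices.

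\textbf{Step 3: absorbing the leftover.} This is the main obstacle. To reduce the uncovered set from $\delta n$ to at most three vertices, I use either an absorbing argument or a maximality/switching argument on a maximum $C_3$-tiling $\mathcal{T}$ of $G$. Suppose $\mathcal{T}$ leaves four or more vertices uncovered. Each uncovered vertex $v$ has about $n/2$ out- and in-neighbours, almost all of which lie in $V(\mathcal{T})$. Counting the triangles $T\in\mathcal{T}$ containing both an out-neighbour and an in-neighbour of $v$ yields triangles $T$ such that $v$ together with two vertices of $T$ forms a triangle. Combining the swaps for two uncovered vertices $v,w$ through a pair of tiles, I replace two tiles by three triangles, contradicting maximality. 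Such swaps fail only when the vertex neighbourhoods are rigidly structured. Analysing that failure should reveal that $G$ is close to a blow-up of $C_3$ with parts $V_1,V_2,V_3$. In that near-extremal case I finish directly: triangles inside parts are not available in general, so all but a few triangles are of type $(1,2,3)$. A divisibility argument on $|V_1|,|V_2|,|V_3|$, using a bounded number of triangles inside the parts to balance the part sizes, then shows that at most three vertices need to be left over.

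I expect the switching/stability dichotomy in Step~3 to be the hardest part. In particular, the nonextremal case needs a robust count of swap configurations, and I would try to do this first via triangle counting with the $\delta^0$ bound.
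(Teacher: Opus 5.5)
You are right that the paper does not prove this statement. It is quoted from Keevash--Sudakov and used as a black box, in particular inside the proof of Lemma~\ref{LEM:almostDabc}, where it is applied to the reduced oriented graph $R$. Citing it is therefore exactly what the paper does. As a standalone proof, however, your sketch has genuine gaps.

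\textbf{Step 3.} This is the decisive gap: it carries the entire difficulty and is only asserted (``analysing that failure should reveal\dots'').
\begin{itemize}
\item The swap you describe does not count correctly. Two tiles plus two uncovered vertices give $8$ vertices, not the $9$ needed for three triangles.
\item More seriously, swaps localised around the leftover vertices cannot suffice. Suppose $G$ is close to a blow-up of $C_3$ with parts $V_1,V_2,V_3$. Triangles inside a part and triangles of type-$(1,2,3)$ both preserve every difference $|V_i|-|V_j|\pmod 3$. The only triangles that change these residues use one of the $O(\gamma n^2)$ backward edges (or atypical vertices), and these may lie far from the few leftover vertices.
\item You therefore need a global mechanism. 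One option is absorbers obtained from a reachability/closedness analysis, where non-extremality yields $2$-transferrals via an analysis of cross copies of $K_4^-$, as in Lemmas~\ref{LEM:closed-factor} and \ref{LEM:nonext}. The implication ``no improving configuration $\Rightarrow$ near-extremal'' is precisely the stability theorem you would have to prove.
\item You cannot simply borrow the paper's non-extremal branch here. It runs through Lemma~\ref{LEM:almostDabc}, which invokes the very theorem in question.
\item In your extremal case, the claim that triangles inside parts are ``not available in general'' is wrong. Every typical vertex of $V_i$ has both semi-degrees about $|V_i|/2$ in $G[V_i]$, so such triangles are abundant. They are exactly the balancing tool, once atypical vertices have been reassigned and covered (compare Lemmas~\ref{LEM:fexbad}, \ref{LEM:abcbalance}, and \ref{LEM:balancefactor}; for $a=b=c=1$ the bound $2h-3$ is exactly $3$).
\end{itemize}

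\textbf{Step 1.} This step is also unsupported.
\begin{itemize}
\item ``Every vertex lies in $\Omega(n^2)$ cyclic triangles'' does not imply an almost-perfect tiling. In the blow-up of $C_3$ with independent parts of sizes $n/10,n/10,4n/5$, every vertex lies in $\Omega(n^2)$ cyclic triangles. Yet every $C_3$-tiling covers at most $3n/10$ vertices.
\item Passing to $R$ gains nothing, since $R$ satisfies the same kind of semi-degree condition.
\item The parenthetical ``equivalently, blow-ups of $C_3$ and short cycles'' is false: a $C_4$ in $R$ carries no fractional $C_3$-tiling.
\end{itemize}
What the semi-degree condition actually buys is near-regularity. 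For every $v$,
$$e\bigl(N^+(v),N^-(v)\bigr)=\sum_{y\in N^-(v)}d^-(y)-e\bigl(N^-(v)\bigr)\pm O(\eta n^2)=\bigl(1/8\pm O(\eta)\bigr)n^2,$$
and every pair of vertices lies in at most $n$ cyclic triangles. So the $3$-graph of cyclic triangles is nearly regular with small codegrees. The Frankl--R\"odl/Pippenger--Spencer near-perfect matching theorem then gives a $C_3$-tiling of $G$ itself covering all but $\delta n$ vertices whenever $\eta\ll\delta$. No regularity lemma or switching is needed for this step.
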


Another ingredient in the proof of Lemma \ref{LEM:almostDabc} is the following embedding lemma.

\begin{lemma}\label{LEM:findmanyDabc}
Given $a,b,c\in \mathbb{N}$ with $a\leqslant b \leqslant c$, let
$0< 1/n\ll \eta \ll \beta \ll \xi \ll \varepsilon \ll d<1$.
Suppose that $G$ is an $n$-vertex oriented graph with $\delta(G)\geqslant (1-\eta )n$.
Let $A,B,C$ be three disjoint subsets of $V(G)$ such that $|A|,|B|,|C| \geqslant \xi n$ and $(A,B)$, $(B,C)$, and $(C,A)$ are $(\varepsilon,d)$-regular.
Then $G[A\cup B\cup C]$ contains at least $\beta n^{a+b+c}$ copies of $D_{a,b,c}$ of type-$(A,B,C)$.
\end{lemma}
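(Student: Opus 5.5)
We need at least $\beta n^{a+b+c}$ copies of $D_{a,b,c}$ with a $TT_a$ in $A$, a $TT_b$ in $B$ and a $TT_c$ in $C$, where all edges go $A\to B\to C\to A$. Since $\delta(G)\geqslant(1-\eta)n$, the underlying graph of $G$ is almost complete. So any large subset of $A$, $B$ or $C$ spans a huge number of edges, and hence many transitive tournaments. The plan is to combine dependent random choice (Lemma \ref{LEM:dependentrandom}), to obtain many tuples with large common neighbourhoods, with a counting argument for transitive tournaments inside almost complete oriented graphs.

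\textbf{Step 1 (choosing the $TT_a$ in $A$).} Apply Lemma \ref{LEM:dependentrandom} to the bipartite graph of edges from $A$ to $B$, with $k=a$ and density $d$. This gives $U_A\subseteq A$ with $|U_A|\geqslant r|A|$, $r=d^a/\sqrt[a]{2}$, such that all but $(\varepsilon' |U_A|)^a$ of the $a$-tuples in $U_A$ have at least $\varepsilon' r|B|$ common out-neighbours in $B$. Here $\varepsilon'$ is a small auxiliary constant with $\xi\ll\varepsilon'\ll d$.

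The in-neighbourhoods from $C$ must also be controlled. To handle this, first apply regularity (Lemma \ref{LEM-fewvxswithsmalld}) and keep only vertices of $U_A$ having at least $(d-\varepsilon)|C|$ in-neighbours in $C$. Alternatively, and more cleanly, run dependent random choice on a combined auxiliary bipartite graph between $A$ and $B\times C$, where $x\sim(y,z)$ if $xy, zx\in E(G)$. By regularity of $(A,B)$ and $(C,A)$ this graph has density at least roughly $d^2/2$. The lemma then yields $U_A$ in which most $a$-tuples have at least $\varepsilon' r|B||C|$ common neighbouring pairs. Averaging, for most good $a$-tuples $X$ we obtain $B_X\subseteq B$ and $C_X\subseteq C$, with $|B_X|,|C_X|\geqslant \varepsilon'r|B|,\varepsilon'r|C|$ respectively, such that $X\Rightarrow B_X$ and $C_X\Rightarrow X$.

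To count tuples, note that $G[U_A]$ misses at most $\eta n|U_A|\ll |U_A|^2$ edges, since $|U_A|\geqslant r\xi n$ and $\eta\ll\xi$. A standard supersaturation count then shows that a positive fraction (depending only on $a$) of the $a$-sets in $U_A$ span a $TT_a$. Every tournament on $2^{a-1}$ vertices contains $TT_a$, and almost all $2^{a-1}$-sets in $U_A$ induce tournaments, so double counting gives $\Omega(|U_A|^a)$ copies of $TT_a$. As $\varepsilon'$ is small, most of these are good.

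\textbf{Step 2 (the $TT_b$ and $TT_c$).} Fix a good copy $X$. The pair $(B_X,C_X)$ inherits regularity from $(B,C)$ by Lemma \ref{LEM-regulartosuper}, with density at least $d-\varepsilon$, because $|B_X|\geqslant\varepsilon' r|B|\gg\varepsilon|B|$. Now repeat the argument one level down. Apply dependent random choice to the pair $(B_X,C_X)$ with $k=b$ to get $U_B\subseteq B_X$ such that $\Omega(|U_B|^b)$ copies $Y$ of $TT_b$ in $U_B$ have at least $\varepsilon''|C_X|$ common out-neighbours $C_{X,Y}\subseteq C_X$. Inside $C_{X,Y}$, which has linear size, almost-completeness again gives $\Omega(n^c)$ copies of $TT_c$.

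Each copy $Z$ of $TT_c$ found this way lies in $C_{X,Y}\subseteq C_X$. It therefore satisfies $Z\Rightarrow X$ and $Y\Rightarrow Z$, and $X\Rightarrow Y$ holds because $Y\subseteq B_X$. So $X\cup Y\cup Z$ is a copy of $D_{a,b,c}$ of type-$(A,B,C)$.

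\textbf{Step 3 (counting).} Multiplying gives $\Omega(n^a)\cdot\Omega(n^b)\cdot\Omega(n^c)\geqslant\beta n^{a+b+c}$, with all constants depending only on $a,b,c,d,\xi,\varepsilon$, consistent with $\beta\ll\xi$.

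The main obstacle is Step 1: arranging that the same $a$-tuple has large common out-neighbourhood in $B$ and large common in-neighbourhood in $C$ simultaneously. I would handle this with the product auxiliary graph described above, or, failing that, by two successive applications of Lemma \ref{LEM:dependentrandom}, the second inside $U_A$ with respect to $C$. The second option works after checking via Lemma \ref{LEM-regulartosuper} that $(C,U_A)$ is still regular, which holds since $|U_A|\geqslant r|A|\gg\varepsilon|A|$. A further point to check is that the excluded bad tuples, at most $(\varepsilon'|U|)^k$ of them, are negligible compared with the $\Omega(|U|^k)$ transitive tournaments. This holds provided $\varepsilon'$ is chosen small relative to the supersaturation constant for $TT_k$.
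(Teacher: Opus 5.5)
Your proposal is correct and is essentially the paper's argument run in mirror order. The paper applies dependent random choice twice to $C$, obtaining $C''\subseteq C'\subseteq C$ whose typical $c$-tuples have a large common out-neighbourhood in $A$ and a large common in-neighbourhood in $B$; it counts nice copies of $TT_c$, applies dependent random choice once more to the pair $(A',B')$ of common neighbourhoods, counts $TT_b$ in $B''$, and finishes by counting $TT_a$. You start from $A$ instead: your product graph between $A$ and $B\times C$ handles both first-level choices at once, and your fallback of two successive applications is exactly what the paper does.

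There are two minor differences. The paper counts transitive tournaments via Lemma~\ref{LEM:findmanyTTk} (regularity plus a counting lemma), whereas your Erd\H{o}s--Moser double count is a more elementary substitute. Also, the paper's ordering handles the degenerate case $a=0$ (used later for $D_{a-1,b,c}$) automatically; in your ordering you should simply take $X=\emptyset$ rather than invoke dependent random choice with $k=0$.
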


We now prove Lemma \ref{LEM:almostDabc} using Theorem \ref{THM:trianglefactor} and Lemma \ref{LEM:findmanyDabc}.

\begin{proof}[\textbf{Proof of Lemma \ref{LEM:almostDabc}}]
Given $a,b,c\in \mathbb{N}$ with $1\leqslant a\leqslant b \leqslant c$, let
$$0<1/n \ll \eta_2 \ll \xi\ll 1/M \ll \delta \ll \eta_1=\eta/3 \ll 1,$$
where $\eta$ is the constant from Theorem \ref{THM:trianglefactor}, and $M$ is the constant in Lemma \ref{LEM:reguler} with $\varepsilon=\delta^3$, $d=\eta_1$, $M^\prime =1/\delta^3$, and $M^{\prime\prime}=1$.

Suppose that $G$ is an $n$-vertex oriented graph with $\delta^0(G)\geqslant (1/2-\eta_1)n$ and $\delta(G)\geqslant(1-\eta_2)n$. Applying Lemma \ref{LEM:reguler} to $G$ with parameters $\delta^3,\eta_1, M^\prime =1/\delta^3,M^{\prime\prime}=1$, we obtain a partition $V_0,V_1,\ldots,V_k$ of $V(G)$ satisfying  $1/\delta^3\leqslant k\leqslant M$, $|V_0|\leqslant \delta ^3 n$ and $|V_i|=m$ for each $i\in[k]$, where $n/(2k) \leqslant m\leqslant n/k$.

Let $R$ be the reduced oriented graph. Then $|R|=k$ and
$\delta^0(R)\geqslant (1/2-2\eta_1-3\delta^3)k\geqslant (1/2-\eta)k$
by Lemma \ref{LEM-reducedori}. Applying Theorem \ref{THM:trianglefactor} to $R$, we obtain a $C_3$-tiling $\mathcal{T}$ in $R$ covering all but at most three vertices. Let $123$ be any fixed triangle in $R$. We next claim that $G[V_1\cup V_2\cup V_3]$ contains a $D_{a,b,c}$-tiling covering all but at most $3\delta^2m$ vertices. 

To see this, it suffices to show that whenever $V_i^{\ast}\subseteq V_i$ satisfies $|V_i^{\ast}|\geqslant \delta^2m$ for  $i\in[3]$, the graph $G[V_1^{\ast}\cup V_2^{\ast}\cup V_3^{\ast}]$ contains a $D_{a,b,c}$-tiling $\mathcal{F}$ such that $|V(\mathcal{F})\cap V_i^\ast|=a+b+c$ for each $i$. Let $(V_i^A,V_i^B,V_i^C)$ be an almost balanced partition of $V_i^{\ast}$, that is, the sizes of $V_i^A$, $V_i^B$, and $V_i^C$ differ by at most one. Clearly,
$|V_i^A|, |V_i^B|, |V_i^C|\geqslant |V_i^{\ast}|/4\geqslant \delta^2 m/4$.
Since $123$ is a triangle in $R$, each of $(V_1,V_2)$, $(V_2,V_3)$, and $(V_3,V_1)$ is $(\delta^3,\eta_1)$-regular. Then Lemma~\ref{LEM-regulartosuper} implies that each of $(V_i^A,V_{i+1}^B)$, $(V_{i+1}^B,V_{i+2}^C)$, and $(V_{i+2}^C,V_i^A)$ is $(4\delta,\eta_1/2)$-regular for each $i\in[3]$. By Lemma \ref{LEM:findmanyDabc}, for each $i\in[3]$ there exists a copy $H_i$ of $D_{a,b,c}$ of type-$(V_i^A, V_{i+1}^B,V_{i+2}^C)$ in $G[V_i^A\cup V_{i+1}^B\cup V_{i+2}^C]$. Then $\{H_1,H_2,H_3\}$ is the desired $D_{a,b,c}$-tiling of $G[V_1^{\ast}\cup V_2^{\ast}\cup V_3^{\ast}]$. Therefore, for every triangle $123$ of $R$, the graph $G[V_1\cup V_2\cup V_3]$ contains a $D_{a,b,c}$-tiling covering all but at most $3\delta^2m$ vertices.

Recall that $\mathcal{T}$ is a $C_3$-tiling of $R$ covering all but at most three vertices of $R$. Hence, by combining the tilings corresponding to the triangles in $\mathcal{T}$, we obtain the desired tiling of $G$, covering all but at most
$|V_0|+k\delta^2m + 3m \leqslant \delta^3n + k\delta^2 n/k + 3n/k\leqslant \delta n$
vertices, since $|V_0|\leqslant \delta^3 n$ and $m\leqslant n/k \leqslant \delta^3 n$. This completes the proof.
\end{proof}

\subsection{Proof of Lemma \ref{LEM:findmanyDabc}}

Before proving Lemma \ref{LEM:findmanyDabc}, we first introduce a lemma showing that every oriented graph $G$ with $\delta(G)\geqslant(1-\eta)n$ contains $\Omega(n^s)$ copies of $TT_s$.

\begin{lemma} \label{LEM:findmanyTTk}
Given $s \in \mathbb{N}$, let $0< 1/n\ll \eta\ll  \mu \ll 1$.
If $G$ is an $n$-vertex oriented graph with $\delta(G) \geqslant (1-\eta) n$, then $G$ contains at least $\mu n^s$ copies of $TT_s$.
\end{lemma}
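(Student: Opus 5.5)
We argue by induction on $s$, proving the slightly stronger statement that every $n$-vertex oriented graph $G$ with $\delta(G)\geqslant(1-\eta)n$ contains at least $\mu_s n^s$ copies of $TT_s$. The constants are chosen so that $\eta\ll\mu_s\ll\mu_{s-1}\ll\cdots\ll\mu_1\ll 1$. The cases $s=1$ and $s=2$ are trivial, since $G$ has $n$ vertices and at least $(1-\eta)n^2/2$ edges.

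The key observation is that $G$ is almost a tournament. Each vertex $v$ misses at most $\eta n$ other vertices, so $d^+(v)+d^-(v)\geqslant(1-\eta)n$. Hence one of $N^+(v)$, $N^-(v)$ has size at least $(1-\eta)n/2\geqslant n/3$. Call this set $W_v$.

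For the induction step, take $H=G[W_v]$ with $|W_v|=n'\geqslant n/3$. Its minimum total degree is at least $n'-\eta n\geqslant(1-3\eta)n'$, so $H$ satisfies the hypothesis with $3\eta$ in place of $\eta$. This is harmless because $\eta\ll\mu_{s-1}$ and the induction uses only a bounded number of steps. By induction, $H$ contains at least $\mu_{s-1}(n')^{s-1}\geqslant\mu_{s-1}3^{1-s}n^{s-1}$ copies of $TT_{s-1}$.

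Each such copy $T$ extends to a copy of $TT_s$:
\begin{itemize}
\item if $W_v=N^+(v)$, we add $v$ as the source of $T$;
\item if $W_v=N^-(v)$, we add $v$ as the sink of $T$.
\end{itemize}
Summing over all $v\in V(G)$ gives at least $n\cdot\mu_{s-1}3^{1-s}n^{s-1}$ pairs $(v,T)$. Each copy of $TT_s$ arises from at most $s$ such pairs, since $v$ must be its source or sink. So $G$ contains at least $\mu_{s-1}3^{1-s}n^s/s\geqslant\mu_s n^s$ copies of $TT_s$, which is the claim with $\mu=\mu_s$.

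The only delicate point is the bookkeeping of constants: the degradation of $\eta$ and the choice of the hierarchy $\mu_s\ll\mu_{s-1}$. Since $s$ is fixed, there are only $s$ rounds, and the error bound becomes $3^{s}\eta$ at worst. Choosing $\eta\ll 3^{-s}$ keeps the minimum-degree hypothesis valid throughout. We also need $n'$ to be large enough for the induction hypothesis to apply, and this follows from $n'\geqslant n/3$ and $1/n\ll\eta$.

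Alternatively, one could cite the fact that every tournament on $2^{s-1}$ vertices contains $TT_s$ and apply supersaturation to a completion of $G$, discarding the copies that use missing pairs. However, the direct induction above avoids the Ramsey/averaging step and is self-contained.
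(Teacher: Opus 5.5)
Your proof is correct, and it takes a genuinely different route from the paper.

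\textbf{The paper's route.} It applies the Diregularity Lemma to $G$ and observes that the reduced oriented graph $R$ satisfies $\delta(R)\geqslant(1-3d)k$. It then greedily finds a tournament on $2^{s-1}$ clusters in $R$ and invokes the Erd\H{o}s--Moser bound $\overrightarrow{r}(s)\leqslant 2^{s-1}$ to obtain a copy of $TT_s$ in $R$. Finally, the Counting Lemma lifts this copy to $\Omega(m^s)$ copies of $TT_s$ in $G$.

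\textbf{Your route.} At each step you pass to the larger of $N^+(v)$ and $N^-(v)$. This set has size at least $n/3$ and inherits near-completeness with $\eta$ replaced by $3\eta$. In effect this is a robust, counting version of the Erd\H{o}s--Moser argument itself, so you avoid both regularity and the Counting Lemma.

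\textbf{What each approach buys.} Yours is self-contained and gives explicit, far better constants: $\mu_s=3^{-\binom{s}{2}}/s!$ exactly as you set it up. Moreover, $\eta$ only needs to be below roughly $3^{-s}$, independently of $\mu$. The paper's hierarchy $\eta\ll\mu\ll 1/M$ instead forces tower-type dependencies. The paper's approach gains nothing here beyond consistency with the regularity framework it uses elsewhere.

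\textbf{Two small presentational fixes.} First, state the induction with explicit thresholds. For example, the level-$s$ claim holds for all $\eta\leqslant\eta_s$ and $n\geqslant N_s$, with $\eta_s=\eta_{s-1}/3$ and $N_s=3N_{s-1}$. That is what actually licenses applying the hypothesis with $3\eta$; your stated justification ``$\eta\ll\mu_{s-1}$'' is not the relevant one. Second, each copy of $TT_s$ arises from at most $2$ pairs $(v,T)$, since $v$ is its source or sink. So your factor $1/s$ can be improved to $1/2$, though your bound is of course still valid.
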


The proof of Lemma \ref{LEM:findmanyTTk} will be given at the end of this subsection.
We now complete the proof of Lemma \ref{LEM:findmanyDabc}.

\begin{proof}[\textbf{Proof of Lemma \ref{LEM:findmanyDabc}}]
Given $a,b,c\in \mathbb{N}$ with $a\leqslant b \leqslant c$, let
$$0<1/n\ll \eta \ll \beta\ll \xi \ll \varepsilon\ll \varepsilon_0\ll d,\mu <1,$$
where $\sqrt \eta$ is the minimum of the constants given by Lemma \ref{LEM:findmanyTTk} for $s=a,b,$ and $c$ with $\mu$. Set $r_x=(d-\varepsilon)^x/\sqrt[x]{2}$ for each $x\in\{a,b,c\}$. If $b=0$, then there is nothing to prove by applying Lemma \ref{LEM:findmanyTTk} to $G[C]$. Thus we may  assume that $1\leqslant b\leqslant c$.
 
Since $(C,A)$ is $(\varepsilon,d)$-regular, we have $e(C,A) \geqslant d|A||C|$. Applying Lemma \ref{LEM:dependentrandom} to the underlying graph of $G[C,A]$ with parameters $d$ and $\varepsilon=\varepsilon_0$, there exists $C^{\prime}\subseteq C$ with $|C^{\prime}| \geqslant r_c |C|$ such that all but at most $(\varepsilon_0 |C^{\prime}|)^c$ of the $c$-tuples in $C^{\prime}$ have at least $\varepsilon_0 r_c |A|$ common out-neighbors in $A$. By Lemma \ref{LEM-regulartosuper} and the fact that $(B,C)$ is $(\varepsilon,d)$-regular, we have $e(B,C^{\prime})\geqslant (d-\varepsilon)|B||C^{\prime}|$. Applying Lemma \ref{LEM:dependentrandom} in the same way to the underlying graph of $G[B,C^\prime]$, there exists $C^{\prime\prime}\subseteq C^{\prime}$ with $|C^{\prime\prime}| \geqslant r_c |C^{\prime}|$ such that all but at most $(\varepsilon_0 |C^{\prime\prime}|)^c$ of the $c$-tuples in $C^{\prime\prime}$ have at least $\varepsilon_0 r_c |B|$ common in-neighbors in $B$.

Since $\delta(G)\geqslant (1-\eta)n$, every vertex has at most $\eta n$ non-neighbors in $G$. As $|C|\geqslant \xi n$ and $|C^{\prime\prime}|\geqslant r_c^2|C|$, we have $\delta(G[C^{\prime\prime}]) \geqslant |C^{\prime\prime}|-\eta n \geqslant |C^{\prime\prime}|-\eta|C|/\xi \geqslant (1-\eta/(\xi r_c^2))|C^{\prime\prime}| \geqslant (1-\sqrt \eta)|C^{\prime\prime}|.$ 
Then Lemma \ref{LEM:findmanyTTk} shows that $G[C^{\prime\prime}]$ contains at least $\mu|C^{\prime\prime}|^c$ copies of $TT_c$. Moreover, there are at least
$$\mu |C^{\prime\prime}|^c-\varepsilon_0^c |C^{\prime}|^c-\varepsilon_0^c |C^{\prime\prime}|^c  \geqslant \mu |C^{\prime\prime}|^c/2$$
``nice'' copies of $TT_c$ in $G[C^{\prime\prime}]$ with the following property: all vertices of each such $TT_c$ have many common out-neighbors in $A$ and many common in-neighbors in $B$.
    
For any fixed ``nice'' copy of $TT_c$ in $G[C^{\prime\prime}]$, let $A^\prime\subseteq A$ be the set of its common out-neighbors and let $B^\prime \subseteq B$ be the set of its common in-neighbors. Since $(A,B)$ is $(\varepsilon,d)$-regular and
$|A^\prime| \geqslant \varepsilon_0 r_c |A|\geqslant \varepsilon |A|$ and
$|B^\prime| \geqslant \varepsilon_0 r_c |B|\geqslant \varepsilon |B|$,
the definition of regular pair implies that
$e(A^\prime,B^\prime) \geqslant (d-\varepsilon) |A^\prime||B^\prime|$.
Applying Lemma \ref{LEM:dependentrandom} to the underlying graph of $G[A^\prime,B^\prime]$ with $\varepsilon =\varepsilon_0$, there exists $B^{\prime\prime}\subseteq B^\prime$ with $|B^{\prime\prime}| \geqslant r_b |B^\prime|$ such that all but at most $(\varepsilon_0 |B^{\prime\prime}|)^b$ of the $b$-tuples in $B^{\prime\prime}$ have at least $\varepsilon_0 r_b|A^\prime|$ common in-neighbors in $A^\prime$. Since $\delta(G)\geqslant (1-\eta)n$ and $|B|\geqslant\xi n$, we have
$$\delta(G[B^{\prime\prime}])\geqslant  |B^{\prime\prime}|- \eta n\geqslant |B^{\prime\prime}|- \eta|B|/\xi \geqslant (1-\eta /(\xi\varepsilon_0 r_br_c))|B^{\prime\prime}|\geqslant (1-\sqrt \eta)|B^{\prime\prime}|.$$
By Lemma \ref{LEM:findmanyTTk}, $G[B^{\prime\prime}]$ contains at least
$\mu |B^{\prime\prime}|^b- (\varepsilon_0|B^{\prime\prime}|)^b \geqslant \mu |B^{\prime\prime}|^b/2$
``nice'' copies of $TT_b$ such that all vertices of each such copy have at least $\varepsilon_0 r_b|A^\prime|$ common in-neighbors in $A^\prime$.
    
For any fixed ``nice'' copy of $TT_b$, let $A^{\prime\prime}\subseteq A^\prime$ be the set of common in-neighbors of its vertices. In the same way, we have
$\delta(G[A^{\prime\prime}])\geqslant |A^{\prime\prime}|-\eta n \geqslant (1-\eta /(\xi \varepsilon_0^2 r_br_c))|A^{\prime\prime}|\geqslant (1-\sqrt \eta)|A^{\prime\prime}|$.
By Lemma \ref{LEM:findmanyTTk} again, there are at least $\mu |A^{\prime\prime}|^a$ copies of $TT_a$ in $G[A^{\prime\prime}]$. Recall that there are at least $\mu |C^{\prime\prime}|^c/2$ choices for $TT_c$ and $\mu |B^{\prime\prime}|^b/2$ choices for $TT_b$. After a simple calculation, $G$ contains at least
$$\mu |C^{\prime\prime}|^c/2 \cdot\mu |B^{\prime\prime}|^b/2 \cdot\mu |A^{\prime\prime}|^a \geqslant \beta n^{a+b+c}$$
copies of $D_{a,b,c}$, which completes the proof.
\end{proof} 

The remainder of this subsection is devoted to the proof of Lemma \ref{LEM:findmanyTTk}. In that proof, we will use a classical result from oriented Ramsey theory due to Erd\H{o}s and Moser \cite{erdosKMIHAS9}. For an oriented graph $H$, the oriented Ramsey number $\overrightarrow{r}(H)$ is the smallest integer $n$ such that every $n$-vertex tournament contains a copy of $H$. For simplicity, we write $\overrightarrow{r}(TT_k)$ as $\overrightarrow{r}(k)$. As noted in \cite{debiasioSJDM35,treglownJGT69}, the factorization problem for oriented graphs is closely related to oriented Ramsey numbers. Moreover, Erd\H{o}s and Moser \cite{erdosKMIHAS9} proved that
$2^{k/2-1}\leqslant \overrightarrow{r}(k)\leqslant 2^{k-1}$ for all $k\geqslant 2$.

The proof of Lemma \ref{LEM:findmanyTTk} also requires the following Counting Lemma.

\begin{lemma}[Counting Lemma, \cite{nagleRSA28}]\label{LEM-count}
Given $s\in \mathbb{N}$, let $0<1/m\ll \varepsilon\ll \delta \ll d<1$. Suppose that $G$ is a graph with a vertex partition $(V_1,V_2,\ldots,V_s)$, where $|V_i|=m$ for  $i\in[s]$. If $(V_i,V_j)$ is $(\varepsilon,d)$-regular for every $1\leqslant i < j \leqslant s$, then $G$ contains $(1\pm \delta)d^{s(s-1)/2}m^s$ copies of $K_s$.
\end{lemma}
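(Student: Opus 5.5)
We argue by induction on $s$, proving a slightly more flexible statement so the induction closes. We read the hypothesis as each pair $(V_i,V_j)$ being $\varepsilon$-regular with density $d\pm\varepsilon$, since an exact count of the form $(1\pm\delta)d^{s(s-1)/2}m^s$ can only hold if the densities are close to $d$. If the densities are merely at least $d$, the same argument gives the lower bound $(1-\delta)d^{s(s-1)/2}m^s$.

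The flexible statement allows unequal part sizes $|V_i|=m_i$ with $m_i\geqslant \alpha m$ for a fixed $\alpha>0$. It asserts that $G$ contains $(1\pm\delta)d^{s(s-1)/2}\prod_{i}m_i$ copies of $K_s$ with one vertex in each part, under the hierarchy $1/m\ll\varepsilon\ll\delta,\alpha\ll d$. The cases $s=1$ and $s=2$ are immediate: for $s=2$ the count is $e(V_1,V_2)=(d\pm\varepsilon)m_1m_2$, and $\varepsilon\ll\delta d$.

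For the inductive step, fix a vertex $v\in V_s$ and put $N_i=N(v)\cap V_i$ for $i<s$. Applying Lemma \ref{LEM-fewvxswithsmalld} to each pair $(V_i,V_s)$ gives both bounds. At most $\varepsilon m_s$ vertices of $V_s$ have fewer than $(d-2\varepsilon)m_i$ neighbours in $V_i$. Symmetrically, at most $\varepsilon m_s$ vertices have more than $(d+2\varepsilon)m_i$ neighbours there, which follows by comparing densities of the exceptional set with the whole pair. Hence all but at most $2s\varepsilon m_s$ vertices $v\in V_s$ are \emph{typical}, meaning $|N_i|=(d\pm2\varepsilon)m_i$ for every $i<s$.

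Now fix a typical $v$. Each $N_i$ has size at least $(d/2)|V_i|$. By Lemma \ref{LEM-regulartosuper}, every pair $(N_i,N_j)$ is $(2\varepsilon/d)$-regular. Its density is also $d\pm 2\varepsilon$, because $|N_i|\geqslant\varepsilon|V_i|$ and regularity controls the density of large subpairs from both sides. The parts $N_i$ have sizes at least $(d/2)\alpha m$. We therefore apply the induction hypothesis for $s-1$ with regularity parameter $\varepsilon'=2\varepsilon/d$, size ratio $\alpha'=d\alpha/2$ and accuracy $\delta'=\delta/4$. This gives
\[
(1\pm\delta')\,d^{(s-1)(s-2)/2}\prod_{i<s}|N_i|=(1\pm\delta')(1\pm 2\varepsilon/d)^{s-1}d^{(s-1)(s-2)/2}\,d^{s-1}\prod_{i<s}m_i
\]
copies of $K_{s-1}$ in the common neighbourhood of $v$. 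Each such copy extends to a $K_s$ through $v$. Summing over the at least $(1-2s\varepsilon)m_s$ typical vertices gives the main term $(1\pm\delta/2)\,d^{s(s-1)/2}\prod_i m_i$, using $(s-1)(s-2)/2+(s-1)=s(s-1)/2$. The atypical vertices contribute at most $2s\varepsilon m_s\prod_{i<s}m_i$ further copies, and this is at most $(\delta/2)d^{s(s-1)/2}\prod_i m_i$ since $\varepsilon\ll\delta,d$. Combining the two estimates gives the claimed count.

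The main obstacle is the bookkeeping of constants through the induction. At each level the regularity parameter degrades from $\varepsilon$ to roughly $2\varepsilon/d$, and the size ratio shrinks by a factor $d/2$. We handle this by fixing the hierarchy in advance, choosing $\varepsilon$ small enough relative to $(d/2)^{s}$, $\alpha$ and $\delta/4^{s}$ that all $s$ levels of the induction remain valid. This is exactly the role of the assumption $1/m\ll\varepsilon\ll\delta\ll d$ with $s$ fixed. The equal-size statement of the lemma is then the special case $\alpha=1$.
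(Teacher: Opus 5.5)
Your proof is correct, but there is no competing argument to compare it with. The paper gives no proof of this lemma: it imports it from Nagle, R\"odl and Schacht, whose counting lemma for regular $k$-uniform hypergraphs contains the graph case as a very special instance. What you have written is the standard elementary proof of the graph counting lemma. You restrict to typical vertices of $V_s$, pass to their neighbourhoods via Lemma \ref{LEM-regulartosuper}, and induct on $s$, letting the regularity parameter degrade to $2\varepsilon/d$ per level.

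What your route adds over the citation is that it is self-contained, and it exposes a flaw in the paper's statement. Under the paper's definition of $(\varepsilon,d)$-regular (density \emph{at least} $d$), the upper bound is already false for $s=2$. So reading the hypothesis as density $d\pm\varepsilon$ is the right repair. The paper's only application, in Lemma \ref{LEM:findmanyTTk}, uses just the lower bound anyway.

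For that one-sided version, ``the same argument'' needs a small bookkeeping change. If the target density drops from $d$ to $d-\varepsilon$ at each step, the induction hypothesis is invoked with a density parameter that depends on $\varepsilon$. That makes the hierarchy $\varepsilon\ll d$ circular. There are two easy fixes:
\begin{itemize}
\item track the actual densities $d_{ij}\geqslant d_0$ and prove a count of $(1\pm\delta)\prod_{i<j}d_{ij}\prod_i m_i$, inducting with threshold $d_0/2$; or
\item fix a target density $(1-\theta)d$ with $\theta\ll\delta$ in advance.
\end{itemize}

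Finally, your argument never uses the size ratio $\alpha$ or the assumption $1/m\ll\varepsilon$, since nothing depends on absolute part sizes. So the mismatch between $\alpha\ll d$ in your hierarchy and taking $\alpha=1$ at the end is harmless.
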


\begin{proof}[\textbf{Proof of Lemma \ref{LEM:findmanyTTk}}]
Observe that we may assume that $s \geqslant 2$. Let
$$0<1/n\ll\eta\ll\mu\ll 1/M\ll\varepsilon\ll \delta\ll d\ll 2^{1-s},$$
where $M$ is the constant obtained from Lemma \ref{LEM:reguler} with parameters $\varepsilon,d,M^\prime=1/\varepsilon,M^{\prime\prime}=1$. 

Suppose that $G$ is an $n$-vertex oriented graph with $\delta(G)\geqslant (1-\eta)n$. Applying Lemma \ref{LEM:reguler} with parameters $\varepsilon,d,M^\prime=1/\varepsilon,M^{\prime\prime}=1$, there is a partition $V_0,V_1,\ldots,V_{k}$ of $V(G)$ with $k\geqslant 1/\varepsilon$ and $|V_i|=m>n/(2M)$ for each $i\in [k]$. Let $R$ be the reduced oriented graph of this partition. It follows from Lemma \ref{LEM-reducedori} that
$\delta(R)\geqslant (1-\eta-2d-3\varepsilon)k\geqslant (1-3d)k$.
Hence, we can greedily find a tournament on $2^{s-1}$ vertices as $k\geqslant 1/\varepsilon\gg 2^{s-1}$. 
Since $2^{s/2-1}\leqslant \overrightarrow{r}(s)\leqslant 2^{s-1}$, it follows that $R$ contains a copy of $TT_s$, say $H$.
We may assume w.l.o.g. that $V(H)=\{1,2,\ldots, s\}$ and that $ij\in E(H)$ for all $i<j$. Applying Lemma \ref{LEM-count} to the underlying graph of $G[\cup_{i\in[s]}V_i]$ with parameters $d$ and $\delta$, we obtain at least $d^{s(s-1)/2}m^s/2$ copies of $TT_s$ in $G$. Since $m\geqslant n/(2M)$, it follows that $G$ contains at least $\mu n^s$ copies of $TT_s$, completing the proof.
\end{proof}


\section{Proof of Lemma \ref{LEM:nonext}}\label{SEC-5.2}

In this section, we show that if the minimum semi-degree of $G$ is sufficiently large, then either $G$ is $(D_{a,b,c},\beta,t)$-closed or $G$ has an extremal structure.

Let $\mathcal{P}=(U_1,U_2,\ldots,U_p)$ be a partition of $V(G)$. The index vector of a set $S\subseteq V(G)$ with respect to $\mathcal{P}$, denoted by $i_{\mathcal{P}}(S)$, is the vector in $\mathbb{Z}^p$ whose $i$th coordinate is $|S\cap U_i|$ for each $i\in [p]$. For $j\in [p]$, let $\mathbf{e}_j\in \mathbb{Z}^p$ be the $j$th unit vector, that is, the vector whose $j$th coordinate is $1$ and whose other coordinates are $0$. A \emph{transferral} is a vector of the form $\mathbf{e}_i-\mathbf{e}_j$ for some distinct $i,j\in [p]$. A vector $\mathbf{v}\in \mathbb{Z}^p$ is called an $h$-vector if all its coordinates are non-negative and their sum is $h$. An $h$-vector $\mathbf{v}$ is called \emph{$(H,\beta)$-robust} if there are more than $\beta n^h$ copies of $H$ in $G$ whose vertex sets have index vector $\mathbf{v}$. Let $I^\beta_{\mathcal{P}}(H)$ be the set of all $(H,\beta)$-robust $h$-vectors, and let $L^\beta_{\mathcal{P}}(H)$ be the lattice generated by $I^\beta_{\mathcal{P}}(H)$, that is, the additive subgroup of $\mathbb{Z}^p$ generated by $I^\beta_{\mathcal{P}}(H)$. A \emph{$2$-transferral} is a transferral $\mathbf{v}\in L^\beta_{\mathcal{P}}(H)$ such that $\mathbf{v}=\mathbf{v}_1-\mathbf{v}_2$ for some $\mathbf{v}_1,\mathbf{v}_2\in I^\beta_{\mathcal{P}}(H)$. The lattice $L^\beta_{\mathcal{P}}(H)$ is called \emph{$2$-transferral-free} if it contains no $2$-transferral.

The following lemma can be used to construct a partition in which each part is closed.

\begin{lemma} \label{LEM-degreeclosepart}
    Given $a,b,c \in \mathbb{N}$, let $0<1/n\ll\eta\ll  \beta \ll \alpha \ll 1$. Suppose that $G$ is an $n$-vertex oriented graph with $\delta^0(G)\geqslant (1/2-\eta)n$. Then there exists an integer $t>0$ such that $V(G)$ admits a $(D_{a,b,c},\beta,t)$-closed partition $\mathcal{P}=(U_1,U_2,\ldots,U_p)$ with $p \leqslant \lceil 1/\alpha \rceil$ and $|U_i|\geqslant \alpha n/2$ for each $i\in [p]$.
\end{lemma}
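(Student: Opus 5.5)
This follows the lattice-based absorbing framework of Han (and Han--Treglown type partition lemmas). The standard route is a lemma of the form: if every vertex $v$ is $(H,\beta_0,1)$-reachable to at least $\alpha n$ other vertices, then $V(G)$ has an $(H,\beta,t)$-closed partition into at most $\lceil 1/\alpha\rceil$ parts, each of size at least $\alpha n/2$. Here $\beta\ll\beta_0$ and $t$ is bounded in terms of $\alpha$. So the plan is to prove this reachability condition for $H=D_{a,b,c}$ under $\delta^0(G)\geqslant(1/2-\eta)n$, and then run the partition argument, adapted verbatim to digraphs.

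\textbf{Step 1: a single vertex has many reachable partners.} Set $h=a+b+c$ and fix $v$. We want $\Omega(n)$ vertices $u$, each having $\Omega(n^{h-1})$ sets $S$ of size $h-1$ with both $S\cup\{v\}$ and $S\cup\{u\}$ spanning $D_{a,b,c}$.

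First find many copies of $D_{a,b,c}$ containing $v$, with $v$ in the $TT_c$ part, say as a source of that $TT_c$ (any fixed position works). Since $d^\pm(v)\geqslant(1/2-\eta)n$ and $G$ has total degree at least $(1-2\eta)n$, the density of $G$ between $N^+(v)$ and $N^-(v)$, or inside them, is large. We apply the regularity-based counting of Lemma \ref{LEM:findmanyDabc} inside $N^+(v)\cup N^-(v)$ together with Lemma \ref{LEM:findmanyTTk} to obtain $\Omega(n^{h-1})$ sets $S$ such that $S\cup\{v\}$ spans a $D_{a,b,c}$ with $v$ playing a fixed role.

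By averaging, there is a role pattern $(X_1,X_2)$ with $X_1\subseteq N^+(v)$ and $X_2\subseteq N^-(v)$ for which $\Omega(n^{h-1})$ such $S$ exist. Then count pairs $(S,u)$ with $u\neq v$ such that $u$ has the same adjacency pattern to $S$ as $v$ does. A double-counting argument gives $\Omega(n)$ such $u$. The reason is that each $S$ has $h-1$ vertices, each with in- and out-degree about $n/2$, so a positive fraction of vertices satisfy the required common neighbourhood condition. To make this precise, apply dependent random choice (Lemma \ref{LEM:dependentrandom}) to pick $S$ inside sets where typical tuples have linearly many common out- or in-neighbours as needed. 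Every such $u$ lies in at least $\beta_0 n^{h-1}$ linking sets, so it is $(D_{a,b,c},\beta_0,1)$-reachable to $v$.

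\textbf{Step 2: the partition argument.} Take the reachability graph and choose a maximal family of vertices that pairwise are not $(\beta_1,t_1)$-reachable. Such a family has size at most $1/\alpha$, since each member has $\alpha n$ reachable vertices and these neighbourhoods must be nearly disjoint, or else concatenating linking sets would make two members reachable with parameters $(\beta_1^2,2t_1)$. Grow parts around the family members, using the transitivity of reachability: if $x\sim y$ via $(\beta,t)$ and $y\sim z$ via $(\beta',t')$, then $x\sim z$ via $(\beta\beta'/2,\,t+t')$. This uses disjoint linking sets joined with a copy of $D_{a,b,c}$ through $y$. Iterate, reducing $\beta$ at each of at most $\lceil 1/\alpha\rceil$ rounds, and move small parts into larger ones so that each part has size at least $\alpha n/2$. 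This part follows Han's lemma and should be routine.

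\textbf{Main obstacle.} The hard part is Step 1, namely guaranteeing linearly many partners $u$ with the identical attachment pattern. I would first try to restrict $S$ so that its $TT_a$, $TT_b$ and remaining $TT_{c-1}$ parts lie in large common neighbourhood sets produced by dependent random choice. Then $u$ only needs to lie in the common out-neighbourhood of the $TT_b$ part (restricted appropriately) and the common in-neighbourhood of the $TT_a$ part. A further averaging argument shows that typical such common neighbourhoods have linear size, which is what Step 1 requires.
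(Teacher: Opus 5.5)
\paragraph{Verdict.} Your Step 2 matches the paper, which simply cites Han's partition lemma (Lemma \ref{LEM-closed-partition}). So everything rests on Step 1, i.e.\ Lemma \ref{LEM:Dabcs-contain-v and reachable}, and there your sketch has two genuine gaps.

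\paragraph{First gap: no crossing triangle.} Lemma \ref{LEM:findmanyDabc} only gives copies of $D_{a,b,c}$ through $v$ if you have three sets forming a \emph{directed} triangle of dense regular pairs that meets both $N^+(v)$ and $N^-(v)$ with the right orientation. You need two of the sets in $N^+(v)$ if $v$ is to be a source of its part, and two in $N^-(v)$ if it is to be a sink. High density between or inside $N^{\pm}(v)$ says nothing about orientations, and you give no argument that such a crossing triangle exists. This is exactly the nontrivial input the paper supplies. It regularises $G[N^+(v)\cup N^-(v)]$ with respect to the partition $(N^+(v),N^-(v))$ and applies Lemma \ref{LEM:cross-edge-triangle}(ii) to the reduced oriented graph. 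That lemma yields a crossing triangle of unknown type, so your remark that `any fixed position works' for $v$ is also unjustified: the triangle dictates whether $v$ is a source or a sink.

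\paragraph{Second gap: the partner count.} The fact that each vertex of $S$ has in- and out-degree about $n/2$ does not give linearly many vertices lying simultaneously in the common out-neighbourhood of one part of $S$ and the common in-neighbourhood of the other parts. Degree conditions alone give no lower bound on such mixed intersections: in a round regular tournament, $N^+(v_0)\cap N^-(v_1)=\emptyset$. Dependent random choice on a single bipartite pair does not by itself control them either.

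\paragraph{How the paper gets the partners.} The paper never counts extenders of a fixed $S$; it counts the other way round. Take the crossing triangle $12k$ with $V_1,V_2\subseteq N^+(v)$ and $V_k\subseteq N^-(v)$.
\begin{enumerate}
\item Choose $u$ among the at least $(1-2\varepsilon)m$ vertices of $V_1$ with $d^+(u,V_2),d^-(u,V_k)\geqslant dm/2$.
\item Set $N_1=N^{\sigma}(u,V_1)$, which has size at least $m/3$ by near-completeness, and set $N_2=N^+(u,V_2)$ and $N_k=N^-(u,V_k)$.
\item Apply Lemma \ref{LEM:findmanyDabc} to these slices, which remain regular, to obtain $\Omega(n^{h-1})$ copies of $D_{a-1,b,c}$.
\end{enumerate}
Each such copy extends to a $D_{a,b,c}$ both with $u$ and with $v$, because $v$ dominates $V_1\cup V_2$ and is dominated by $V_k$. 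The idea you are missing is to draw the partner $u$ from the very cluster that hosts $v$'s part, and to embed $S$ inside $u$'s neighbourhoods. Then $S$ is compatible with both $u$ and $v$ by construction.
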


The original statements of the following lemma and of Lemma \ref{LEM-closed-partition} later in \cite{hanRSA64} are phrased for graphs. However, their proofs also work for digraphs.  Lemma \ref{LEM-transclosed} gives a sufficient condition for merging two closed parts into a single part that remains closed.

\begin{lemma} [\cite{hanRSA64}] \label{LEM-transclosed}
Given $t,h\in \mathbb{N}$ with $h\geqslant 3$, let $0<1/n\ll\beta\ll 1$. Suppose that $H$ and $G$ are digraphs on $h$ and $n$ vertices, respectively. Let $\mathcal{P}=(U_1,U_2,\ldots,U_p)$ be an $(H,\beta,t)$-closed partition of $V(G)$. If there exists a $2$-transferral $\mathbf{v}=\mathbf{e}_i-\mathbf{e}_j\in L^\beta_{\mathcal{P}}(H)$, then $U_i\cup U_j$ is $(H,\beta',t')$-closed for some $\beta'\leqslant \beta$ and $t'\geqslant t$.
\end{lemma}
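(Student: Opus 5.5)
The statement to prove is Lemma \ref{LEM-transclosed}: a $2$-transferral $\mathbf{e}_i-\mathbf{e}_j$ allows the closed parts $U_i$ and $U_j$ to be merged. The plan is to show that every $u\in U_i$ and $w\in U_j$ are $(H,\beta',t')$-reachable, and then glue reachability within $U_i\cup U_j$ by transitivity.

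\textbf{Step 1: reachability across the two parts.} Write $\mathbf{e}_i-\mathbf{e}_j=\mathbf{v}_1-\mathbf{v}_2$ with $\mathbf{v}_1,\mathbf{v}_2\in I^\beta_{\mathcal{P}}(H)$. Then $\mathbf{v}_1$ and $\mathbf{v}_2$ agree except that $\mathbf{v}_1$ has one more vertex in $U_i$ and one fewer in $U_j$.

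Fix $u\in U_i$ and $w\in U_j$. Take a copy $F_1$ of $H$ with index vector $\mathbf{v}_1$ and a copy $F_2$ with index vector $\mathbf{v}_2$, chosen vertex-disjoint from each other and from $\{u,w\}$. Removing $O(n^{h-1})$ copies still leaves $\Omega(\beta n^h)$ choices for each, so there are $\Omega(\beta^2 n^{2h})$ such pairs.

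Next, pair the vertices of $F_1$ with those of $F_2$ bijectively so that paired vertices lie in the same part, except for one pair $(x,y)$ with $x\in F_1\cap U_i$ and $y\in F_2\cap U_j$. Now pair up the relevant vertices with their partners:
\begin{itemize}
\item pair $u$ with $x$, which lies in the same part $U_i$;
\item pair $y$ with $w$, which lies in the same part $U_j$;
\item pair every other vertex $z\in F_1$ with its partner $z'\in F_2$, which lies in the same part.
\end{itemize}
For each of these pairs, closedness of that part gives $\beta n^{ht-1}$ linking $(ht-1)$-sets. Choose all these linking sets greedily and pairwise disjoint; each step loses only $O(n^{ht-2})$ options, so the choices remain plentiful.

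\textbf{Step 2: assemble the linking set.} Let $S$ be the union of $F_1$, $F_2\setminus\{y\}$, all the chosen linking sets, and $y$ itself, arranged so that the sizes work out. Its size should be $h t'-1$ with $t'=2+(h+1)t$ or similar. Now verify the two required factors:
\begin{itemize}
\item $S\cup\{u\}$ has an $H$-factor: use $F_2$ intact, cover $u$ together with its link set $L_{u,x}$, and cover each $z\in F_1$ with its link set $L_{z,z'}$.
\item $S\cup\{w\}$ has an $H$-factor: use $F_1$ intact, and cover every other vertex with its link set on the opposite side, including $w$ with $L_{y,w}$.
\end{itemize}
Getting these exchanges consistent is mostly bookkeeping. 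It is easiest to see as swapping $F_1$ for $F_2$: the set $S\cup\{u\}$ corresponds to $\mathbf{v}_2$ being realised as a copy, and $S\cup\{w\}$ to $\mathbf{v}_1$. Counting the choices gives at least $\beta' n^{ht'-1}$ linking sets, where $\beta'$ is polynomial in $\beta$.

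\textbf{Step 3: reachability within $U_i\cup U_j$.} It remains to handle pairs inside the same part and to unify the parameter $t'$ across all pairs. Use the standard chaining fact: if $x\sim y$ with parameter $t_1$ and $y\sim z$ with parameter $t_2$, then $x\sim z$ with parameter $t_1+t_2$. To see this, take disjoint linking sets $S_1$ for $(x,y)$ and $S_2$ for $(y,z)$, and use $S_1\cup S_2\cup\{y\}$. Padding with disjoint copies of $H$, of which there are $\Omega(n^h)$ by robustness, lets every pair use the same $t'$. Hence all pairs in $U_i\cup U_j$ are reachable with a common $t'\geqslant t$ and $\beta'\leqslant\beta$.

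The main obstacle is the precise exchange bookkeeping in Step 2. Each vertex must be matched with a same-part partner so that both $S\cup\{u\}$ and $S\cup\{w\}$ split into copies of $H$ and linking-set-plus-vertex blocks. The fix is to apply the chaining: take $u\sim x$ inside $U_i$, take $y\sim w$ inside $U_j$, and show $x\sim y$ via the swap of $F_1$ and $F_2$.
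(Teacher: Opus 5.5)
Your overall strategy (two robust copies $F_1,F_2$, a part-preserving pairing, one linking set per pair) is the standard argument behind this lemma; the paper gives no proof and simply cites \cite{hanRSA64}. The set you build, $S=V(F_1)\cup V(F_2)\cup L_{u,x}\cup L_{y,w}\cup\bigcup_{z}L_{z,z'}$, is the right one. Its size is $2h+(h+1)(ht-1)=h\bigl((h+1)t+1\bigr)-1$, so $t'=(h+1)t+1$. However, the two $H$-factors you exhibit in Step 2 do not exist as written, because you have swapped the roles of $F_1$ and $F_2$. If you keep $F_2$ intact in $S\cup\{u\}$, it uses up $y$ and every $z'$. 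After $u$ takes $L_{u,x}$ and each $z\neq x$ takes $L_{z,z'}$, you are left with $\{x\}\cup L_{y,w}$, which has no reason to span an $H$-factor. Symmetrically, keeping $F_1$ intact in $S\cup\{w\}$ leaves $\{y\}\cup L_{u,x}$.

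The correct bookkeeping comes from noting that your pairing is a part-preserving bijection between $V(F_2)\cup\{u\}$ and $V(F_1)\cup\{w\}$; both have index vector $\mathbf{v}_2+\mathbf{e}_i=\mathbf{v}_1+\mathbf{e}_j$. So in $S\cup\{u\}$ you keep $F_1$ intact and use the blocks $\{u\}\cup L_{u,x}$, $\{y\}\cup L_{y,w}$ and $\{z'\}\cup L_{z,z'}$. In $S\cup\{w\}$ you keep $F_2$ intact and use $\{x\}\cup L_{u,x}$, $\{w\}\cup L_{y,w}$ and $\{z\}\cup L_{z,z'}$. With this correction the count goes through. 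There are $\Omega(\beta^2n^{2h})$ choices of $(F_1,F_2)$ and $\Omega(\beta n^{ht-1})$ choices for each of the $h+1$ disjoint linking sets. Each $S$ arises from $O(1)$ tuples, which gives $\beta' n^{ht'-1}$ linking sets.

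Your proposed fix via $u\sim x$, $x\sim y$, $y\sim w$ cannot be run as a black-box chaining of reachability relations. First, $x$ and $y$ depend on $F_1,F_2$, so there is no fixed pair $(x,y)$ with many linking sets. Second, the chaining fact you quote in Step 3 is not justified as stated. Every set $S_1\cup S_2\cup\{y\}$ contains the fixed vertex $y$, so there are only $O(n^{h(t_1+t_2)-2})$ of them, one power of $n$ short; the usual transitivity lemma needs $\Omega(n)$ middle vertices. Concatenating concrete sets does work: take $L_{u,x}\cup\{x\}\cup S_{x,y}\cup\{y\}\cup L_{y,w}$ with $S_{x,y}=(V(F_1)\setminus\{x\})\cup(V(F_2)\setminus\{y\})\cup\bigcup_z L_{z,z'}$. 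But this just reproduces the corrected $S$ above, with the count taken over all $(F_1,F_2)$.

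Finally, no chaining is needed in Step 3. For two vertices in the same part, pad each $(H,\beta,t)$-linking set with $ht+1$ disjoint copies of $H$ to reach the common $t'=(h+1)t+1$. There are more than $\beta n^h$ such copies because $\mathbf{v}_1$ is robust.
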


\begin{lemma}\label{LEM:transferral}
Given $a,b,c \in \mathbb{N}$ with $1\leqslant a\leqslant b\leqslant c$, let $0<1/n\ll \eta\ll \beta \ll\gamma \ll \alpha\ll 1$. Suppose that $G$ is an $n$-vertex oriented graph with $\delta^0(G)\geqslant(1/2-\eta)n$ and that $\mathcal{P}=(U_1,U_2,\ldots,U_p)$ is a partition of $V(G)$ with $p\geqslant 2$ and $|U_i|\geqslant \alpha n$ for each $i\in [p]$.
    
    \rm{(i)} If $a+1=b=c$ or $a+1=b+1=c$, then there exists a $2$-transferral $\mathbf{v} \in L_\mathcal{P}^{\beta}(D_{a,b,c})$.
    
    \rm{(ii)} If $L_\mathcal{P}^{\beta}(D_{a,b,c})$ is $2$-transferral-free, then $G$ is $\gamma$-extremal.
\end{lemma}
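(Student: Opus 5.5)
The plan is to work in a reduced oriented graph whose clusters respect $\mathcal{P}$. I will then read off robust index vectors from directed triangles of $R$, using the fact that the three cyclic rotations of a triangle give three different robust vectors.

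\textbf{Setup.} Apply Lemma~\ref{LEM:reguler} with $M''=p$ and the partition $U_1,\dots,U_p$, so that every cluster lies inside a single part. Let $R$ be the reduced oriented graph from Lemma~\ref{LEM-reducedori}; it satisfies $\delta^0(R)\geqslant(1/2-2\eta-3\varepsilon)k$. Colour each cluster by the index of the part containing it. Since $\delta^0(G)\geqslant(1/2-\eta)n$, we have $\delta(G)\geqslant(1-2\eta)n$. Hence Lemma~\ref{LEM:findmanyDabc} applies to every directed triangle $XYZ$ of $R$ and gives at least $\beta n^{h}$ copies of $D_{a,b,c}$ of each of the types $(X,Y,Z)$, $(Y,Z,X)$ and $(Z,X,Y)$.

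If $X,Y,Z$ have colours $i,j,l$, this makes the following three vectors robust:
\[
a\mathbf e_i+b\mathbf e_j+c\mathbf e_l,\qquad a\mathbf e_j+b\mathbf e_l+c\mathbf e_i,\qquad a\mathbf e_l+b\mathbf e_i+c\mathbf e_j .
\]
Their pairwise differences are 2-transferral candidates. For three distinct colours, the first minus the second is $(a-c)\mathbf e_i+(b-a)\mathbf e_j+(c-b)\mathbf e_l$. When $a+1=b=c$ this equals $\mathbf e_j-\mathbf e_i$, and when $a+1=b+1=c$ it equals $\mathbf e_l-\mathbf e_i$. For colours $(i,i,j)$ the differences are $(b-c)(\mathbf e_i-\mathbf e_j)$, $(a-b)(\mathbf e_i-\mathbf e_j)$ and $(c-a)(\mathbf e_i-\mathbf e_j)$. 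One of these is a transferral exactly when two of $a,b,c$ differ by $1$, which holds in both cases of (i).

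\textbf{Part (i).} By the computation above, it suffices to find a single directed triangle of $R$ that is not monochromatic. Every part has at least $\alpha n$ vertices and $p\geqslant 2$, so each colour class has size between roughly $\alpha k$ and $(1-\alpha)k$.

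Suppose every triangle were monochromatic. Then for every cross edge $xy$, with $x$ of colour $i$ and $y$ of colour $j\neq i$, we would have $N^+(y)\cap N^-(x)=\emptyset$. Since both sets have size about $k/2$, they would essentially partition $V(R)$. I would combine this with the Keevash--Sudakov tiling (Theorem~\ref{THM:trianglefactor}) applied to $R$: almost all vertices lie in disjoint triangles, which would all be monochromatic. Each colour class would then be nearly closed under out- and in-neighbourhoods. That forces $e_R(U_i,\overline{U_i})$ and $e_R(\overline{U_i},U_i)$ to be tiny, and then the semi-degree condition can only hold if the class has size near $k/2+$ something, or can be iterated to a contradiction. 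I expect this to be the main obstacle. The first thing I would try is a counting argument: take a cross edge $xy$ and show that semi-degree $\approx k/2$ together with the disjointness $N^+(y)\cap N^-(x)=\emptyset$ for all cross edges forces a $C_3$-blow-up structure. That structure itself contains cross triangles, which is a contradiction.

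\textbf{Part (ii).} Assume $L^\beta_{\mathcal P}(D_{a,b,c})$ is $2$-transferral-free. By the computations above, the colour patterns of triangles of $R$ are severely restricted. Concretely, I would show that at most one rotation pattern of three distinct colours can be present. Then I would apply the stability version of the Keevash--Sudakov argument to $R$: an oriented graph with $\delta^0\approx k/2$ in which the triangles are constrained in this way must be close to a blow-up of $C_3$. Finally, I would lift the structure to $G$ using property (iii) of Lemma~\ref{LEM-reducedori}. Few edges of $R$ from $V_{i+1}$-clusters to $V_i$-clusters give $e_G(V_{i+1},V_i)=O(\gamma n^2)$, and the semi-degree condition then gives $|V_i|=n/3\pm O(\gamma n)$. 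Together these are \ref{EP1}.
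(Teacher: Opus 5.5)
\textbf{Part (i).} Your reduction is correct and is the one the paper uses: once $R$ has a directed triangle whose clusters lie in two different parts, comparing two of its rotations gives a transferral in both cases $a+1=b=c$ and $a+1=b+1=c$. The gap is that you never show such a triangle exists. You flag this as the main obstacle and list strategies. The one concrete inference you make, that a near-tiling of $R$ by monochromatic triangles makes each colour class nearly closed under neighbourhoods, does not follow from the tiling.

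The missing step is short and elementary; it is Lemma~\ref{LEM:cross-edge-triangle}, applied to $R$ with the partition $(R_1,\overline{R_1})$. First, if $\delta^0\geqslant(1/2-\eta)n$, then any set of size at least $(1/4+2\eta)n$ sends an edge to, and receives an edge from, any disjoint set $B'$ of size at least $(1/2-\eta)n$. To see this, take $b\in B'$ with $d^-(b,B')\leqslant|B'|/2$ (resp.\ $d^+(b,B')\leqslant|B'|/2$). Now let $(A,B)$ be a partition with $|A|\leqslant|B|$.
\begin{itemize}
\item If some $x\in A$ has $d^+(x,B)\geqslant(1/4+2\eta)n$, an edge from $N^+(x,B)$ to $N^-(x)$ closes a cross triangle; the case of $d^-$ is symmetric.
\item Otherwise $d^\pm(x,A)>(1/4-3\eta)n$ for all $x\in A$, so $|A|,|B|=n/2\pm6\eta n$. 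Fix $u\in A$. Either some edge goes from $N^+(u,B)$ to $N^-(u,B)$, which gives a cross triangle through $u$. Or a vertex $w\in N^-(u,B)$ with $d^-(w,N^-(u,B))\leqslant|N^-(u,B)|/2$ has more than $(1/4+2\eta)n$ in-neighbours in $A$. Then the first fact gives an edge from $N^+(w)$ to $N^-(w,A)$, i.e.\ a cross triangle through $w$.
\end{itemize}

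\textbf{Part (ii).} This part has a more serious gap. You claim that 2-transferral-freeness severely restricts the colour patterns of triangles. This does not follow from your computations and is false in general.
\begin{itemize}
\item If $\gcd(a,b,c)\geqslant2$ (e.g.\ $a=b=c\geqslant2$), every index vector coming from a triangle of $R$ has all coordinates divisible by $\gcd(a,b,c)$. So no comparison of triangle vectors is ever a transferral, and the hypothesis says nothing about which coloured triangles occur.
\item For $p=2$ there are no three-coloured triangles at all, so your target of \emph{at most one rotation pattern} would carry no information anyway.
\item The \emph{stability version of Keevash--Sudakov} you then invoke is not an available tool; it is essentially what has to be proved.
\end{itemize}

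The missing idea is a configuration that moves exactly \emph{one} vertex of a $D_{a,b,c}$-copy between parts, which works for every $a,b,c$. This is a cross $K_4^-$ in $R$: triangles $X_1X_2X_3$ and $X_2X_3X_4$ with $X_1\subseteq U_i$ and $X_4\subseteq U_j$, $j\neq i$.
\begin{itemize}
\item A typical $v\in X_4$ has many out-neighbours in $X_2$, many in-neighbours in $X_3$, and at least $m/3$ neighbours of one fixed direction in $X_1$.
\item Embed $D_{a,b,c-1}$ of type-$(2,3,1)$ into these neighbourhoods and add $v$ to the $TT_{c-1}$.
\item The resulting robust vector differs by exactly $\mathbf e_i-\mathbf e_j$ from that of the type-$(2,3,1)$ copies in $X_1\cup X_2\cup X_3$.
\end{itemize}
So 2-transferral-freeness forbids cross $K_4^-$. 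With $A=R_1$ and $B=\overline{R_1}$, the paper then analyses the neighbourhoods of an edge $uv\in E_R(A,B)$ to show that $\operatorname{cyc}(A,A,B)$ or $\operatorname{cyc}(B,B,A)$ is at most $\sqrt d\,k^3$. Lemma~\ref{LEM:triangle to extremal}, built on the Li--Molla Lemma~\ref{LEM:divide lemma}, then converts \emph{few cross triangles and no cross $K_4^-$} into the $C_3$-blow-up structure. Your final lifting step via Lemma~\ref{LEM-reducedori}(iii) is fine.
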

We are now ready to prove Lemma \ref{LEM:nonext}.
\begin{proof}[\textbf{Proof of Lemma \ref{LEM:nonext}}]
Given $a,b,c\in\mathbb{N}$ with $1\leqslant a\leqslant b\leqslant c$, let
$$
0<1/n\ll\eta\ll\beta\ll\beta^\prime\ll\gamma\ll \alpha\ll1.
$$

Suppose that $G$ is an $n$-vertex oriented graph with $\delta^0(G)\geqslant(1/2-\eta)n$. By Lemma \ref{LEM-degreeclosepart}, there exists an integer $t^\prime>0$ such that $V(G)$ admits a $(D_{a,b,c},\beta^\prime,t^\prime)$-closed partition $\mathcal{P}^\prime=(U_1,U_2,\ldots,U_p)$ with $p\leqslant \lceil 1/\alpha\rceil$ and $|U_i|\geqslant \alpha n/2$ for each $i\in [p]$. 

If $L^{\beta^\prime}_{\mathcal{P}^\prime}(D_{a,b,c})$ contains a $2$-transferral $\mathbf{v}=\mathbf{e}_i-\mathbf{e}_j$ for some distinct $i,j\in [p]$, then by Lemma \ref{LEM-transclosed} we may merge the corresponding parts $U_i$ and $U_j$, that is, replace $\mathcal{P}^\prime$ with the partition
$$
\mathcal{P}^{\prime\prime}=(\mathcal{P}^\prime\setminus \{U_i,U_j\})\cup \{U_i\cup U_j\}.
$$
Repeating this procedure for all $2$-transferrals, we eventually obtain a $(D_{a,b,c},\beta,t)$-closed partition $\mathcal{P}$ with $t\geqslant t^\prime$ such that $L^\beta_{\mathcal{P}}(D_{a,b,c})$ is $2$-transferral-free. Since $V(G)$ is not $(D_{a,b,c},\beta,t)$-closed, the partition $\mathcal{P}$ must contain at least two parts. The lemma now follows immediately from Lemma \ref{LEM:transferral}, applied with $\alpha/2$ in place of $\alpha$. 
\end{proof}

\subsection{Proof of Lemma \ref{LEM-degreeclosepart}}

Note that Lemma \ref{LEM-degreeclosepart} follows directly from the following lemmas.

\begin{lemma}[\cite{hanRSA64}]\label{LEM-closed-partition}
Given an integer $h\geqslant 3$, let $0<1/n \ll \beta^\prime \ll \beta \ll \alpha \ll 1$. Suppose that $H$ and $G$ are digraphs on $h$ and $n$ vertices, respectively. If every vertex in $V(G)$ is $(H,\beta,1)$-reachable to at least $\alpha n$ vertices, then there exists $t\in\mathbb{N}$ such that $V(G)$ admits an $(H,\beta^\prime,t)$-closed partition $\mathcal{P}=(U_1,U_2,\ldots,U_p)$ with $p\leqslant \lceil 1/\alpha\rceil$ and $|U_i|\geqslant \alpha n/2$ for each $i\in [p]$.
\end{lemma}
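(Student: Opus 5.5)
This lemma is cited from \cite{hanRSA64}, and its proof there is a purely combinatorial argument about the reachability relation. Nothing in it uses undirectedness: it uses only counts of linking sets and of $H$-factors of induced subgraphs. So the plan is to re-run the Han argument verbatim for digraphs. Write $\tilde N(v)$ for the set of vertices that are $(H,\beta,1)$-reachable from $v$; by hypothesis $|\tilde N(v)|\geqslant \alpha n$.

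\textbf{Transitivity with loss.} If $x$ is $(H,\beta_1,t_1)$-reachable to $y$, and $y$ is $(H,\beta_2,t_2)$-reachable to $z$, then $x$ is $(H,\beta_3,t_1+t_2)$-reachable to $z$ for some $\beta_3=\beta_3(\beta_1,\beta_2,h,t_1,t_2)>0$. To see this, take an $(H,x,y)$-linking set $S_1$ and a disjoint $(H,y,z)$-linking set $S_2$ with $y\notin S_1\cup S_2$. Then $S_1\cup S_2\cup\{y\}$ has size $h(t_1+t_2)-1$. Moreover $G[S_1\cup\{x\}]\cup G[S_2\cup\{y\}]$ gives an $H$-factor of it together with $x$, and $G[S_1\cup\{y\}]\cup G[S_2\cup\{z\}]$ gives one together with $z$. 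Counting the choices, and dividing by the bounded number of ways each resulting set arises, gives $\Omega(n^{h(t_1+t_2)-1})$ such sets.

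\textbf{Building the parts.} Run Han's iterative procedure with a hierarchy $\beta\gg\beta_1\gg\cdots\gg\beta_{\lceil1/\alpha\rceil}\gg\beta'$. Call $v,w$ \emph{$\ell$-close} if they are $(H,\beta_\ell,\ell)$-reachable.
\begin{enumerate}
\item Since every $\tilde N(v)$ has size at least $\alpha n$, any $\lceil1/\alpha\rceil+1$ vertices contain two, $u$ and $w$, with $|\tilde N(u)\cap\tilde N(w)|\geqslant \alpha^2 n/2$ (or one can argue by a greedy pigeonhole). By the transitivity step, $u$ and $w$ are then $2$-close. Hence there is no ``independent set'' of size greater than $1/\alpha$ in the closeness relation.
\item Take a maximal family $v_1,\dots,v_p$ of pairwise non-close vertices, so $p\leqslant\lceil1/\alpha\rceil$. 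Each remaining vertex is close to some $v_i$; put it in the corresponding part.
\item Apply transitivity twice so that any two vertices in the same part are close with a weaker constant and a larger $t$, and repeat the merging a bounded number of times. To make all $t$ equal, use the fact that $(H,\beta,t)$-reachability implies $(H,\beta'',t+1)$-reachability: append a disjoint copy of $H$, of which there are $\Omega(n^h)$ because $H$-robustness is implied by the existence of many linking sets.
\end{enumerate}

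\textbf{Main obstacle: part sizes.} The hard step is guaranteeing $|U_i|\geqslant\alpha n/2$. Han handles it by choosing the representatives so that each $\tilde N(v_i)$ lies essentially inside $U_i$, and by reassigning the few vertices of small parts, showing that they have many reachable partners in some large part. Because every vertex reaches at least $\alpha n$ others and the number of parts is bounded, each part must absorb at least roughly $\alpha n-o(n)$ vertices. I expect this bookkeeping, with the constant degradations $\beta\to\beta'$ controlled by the bounded number $\lceil 1/\alpha\rceil$ of iterations, to be the only delicate point. Since no step refers to orientation, the digraph version follows.
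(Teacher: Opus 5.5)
Your plan of re-running Han's argument for digraphs is exactly what the paper does. However, your sketch of that argument has a genuine gap, and it starts in the \emph{transitivity step}, which is false as stated. Every set $S_1\cup S_2\cup\{y\}$ contains the fixed vertex $y$, so there are at most $O(n^{h(t_1+t_2)-2})$ of them, one power of $n$ short of what reachability requires. No other argument rescues the statement. Take $H=K_3$ (as a symmetric digraph) and disjoint cliques $A,B,C$ of size about $n/3$ with no edges between them. Join $x$ to all of $A$, $z$ to all of $B$, and $y$ to all of $A\cup B$. Then $x,y$ and $y,z$ are $(H,\beta,1)$-reachable, via pairs in $A$ and in $B$ respectively. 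But every $(x,z)$-linking set $T$ contains $y$. Indeed, the components of $G-y$ are $A\cup\{x\}$, $B\cup\{z\}$ and $C$, so triangle-factors of $G[T\cup\{x\}]$ and $G[T\cup\{z\}]$ would force both $|T\cap A|\equiv 2$ and $|T\cap A|\equiv 0 \pmod 3$. Hence $x,z$ are not $(H,\beta',t)$-reachable for any $t$. The true statement needs linearly many intermediates: if $|\tilde N_{\beta,t}(u)\cap\tilde N_{\beta,t}(w)|\geqslant\varepsilon n$, then $u,w$ are $(H,\beta'',2t)$-reachable. Your Step 1 only needs this version, since you do have a linear intersection there, so it survives. (Pigeonhole gives roughly $\alpha^3 n$ there, not $\alpha^2 n/2$, which is harmless.)

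Step 3, however, relies on the invalid single-intermediate transitivity: two vertices put into the part of $v_i$ are linked only through $v_i$. The example above satisfies the hypothesis with, say, $\alpha=1/4$. In it, $\{y,c\}$ with $c\in C$ is an inclusion-maximal family of pairwise non-reachable vertices at every level. The part of $y$ then contains $x$ and $z$, which are never reachable, and no re-merging can repair this.

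The missing idea is a replacement argument using a family of \emph{maximum} size together with level-shifted reachability.
\begin{enumerate}
\item Let level $\ell$ mean $(H,\beta_\ell,2^\ell)$-reachability with $\beta_\ell$ decreasing, so level $\ell$ implies level $\ell+1$. Take $r$ maximal such that some $v_1,\dots,v_r$ are pairwise not reachable at level $c-r+2$, where $c=\lceil 1/\alpha\rceil$. Your Step 1 gives $r\leqslant c$. Put $L=c-r+1$.
\item By maximality, every vertex is level-$L$ reachable to some $v_i$. Let $N_i$ be $v_i$ together with its level-$L$ reachable set.
\item Linear transitivity gives $|N_i\cap N_j|=o(n)$ for $i\neq j$.
\item The set $N_i'=N_i\setminus\bigcup_{j\neq i}N_j$ is closed at level $L$. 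Otherwise two non-reachable $u,w\in N_i'$ could replace $v_i$, giving $r+1$ vertices pairwise non-reachable at level $L$ and contradicting maximality.
\item Each of the $o(n)$ overlap vertices has at least $\alpha n$ reachable vertices, which meet some $N_i'$ linearly. Add it to that part; linear transitivity, applied once or twice more, makes all parts closed at level $L+2$.
\end{enumerate}
Part sizes then come for free from $|N_i'|\geqslant\alpha n-o(n)$. So the delicate point is closedness, not part size as you expected.
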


\begin{lemma} \label{LEM:Dabcs-contain-v and reachable}
Given $a,b,c\in \mathbb{N}$ with $1\leqslant a\leqslant b\leqslant c$, let $0<1/n\ll \eta\ll \beta \ll \alpha \ll 1$. 
If $G$ is an $n$-vertex oriented graph with $\delta^0(G)\geqslant (1/2-\eta)n$, then every vertex in $G$ is $(D_{a,b,c},\beta,1)$-reachable to at least $\alpha n$ other vertices of $G$.
\end{lemma}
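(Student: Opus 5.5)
We need to prove Lemma \ref{LEM:Dabcs-contain-v and reachable}: each vertex $v$ is $(D_{a,b,c},\beta,1)$-reachable to at least $\alpha n$ other vertices. Set $h=a+b+c$. The standard strategy is a double-counting argument. First we show that every vertex $v$ lies in at least $\beta_0 n^{h-1}$ copies of $D_{a,b,c}$. We then deduce that many vertices $u$ share many $(h-1)$-sets $S$ with $v$, in the sense that both $S\cup\{v\}$ and $S\cup\{u\}$ span copies of $D_{a,b,c}$.

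\textbf{Step 1 (many copies through $v$).} Fix $v$, and let $N^+=N^+(v)$ and $N^-=N^-(v)$. Each has size at least $(1/2-\eta)n$, and every vertex misses at most $2\eta n$ others.

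The aim is to put $v$ in the role of a source vertex of the $TT_a$ part, and to find $TT_{a-1}\subseteq N^+\cup N^-$ together with $TT_b$ and $TT_c$ parts so that the whole forms a $D_{a,b,c}$. A simpler route is to place $v$ as the sink of $TT_c$, say. Then the $TT_a$ part must lie in $N^+(v)$ (since $C\to A$), the $TT_b$ part in $N^-(v)$ (since $B\to C$), and the rest of $TT_c$ in $N^-(v)$ before $v$.

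So it suffices to find many triples $(X_a\subseteq N^+, X_b\subseteq N^-, X_{c-1}\subseteq N^-)$ with $X_b\to X_{c-1}$, $X_{c-1}\to X_a$ and $X_a\to X_b$.

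\textbf{Key counting step.} Within $N^+$, vertices have out-degree into $N^-$ that is on average roughly $|N^-|/2$; this follows from the semidegree condition, since the edges between $N^+$ and $N^-$ balance up to $O(\eta n^2)$ (a standard argument, as in Keevash–Sudakov). Hence $e(N^+,N^-)\ge (1/4-O(\eta))n^2$.

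Once one direction of each of the three bipartite pairs $(N^+\to N^-)$, $(N^-\to N^-)$ and $(N^-\to N^+)$ has positive density, we apply the Diregularity Lemma (Lemma \ref{LEM:reguler}) with $U_1=N^+$, $U_2=N^-\setminus N^+$ and $U_3$ the rest. This lets us find clusters $V_1\subseteq N^+$ and $V_2,V_3\subseteq N^-$ such that $(V_1,V_2)$, $(V_2,V_3)$ and $(V_3,V_1)$ are regular of positive density.

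Finding these clusters amounts to finding a directed triangle in the reduced oriented graph with one vertex in the $N^+$ part and two in the $N^-$ part. The existence of such a triangle follows because $R[N^-]$ is nearly complete and oriented. We pick an edge $23$ inside $R[N^-]$ such that $3$ has many out-neighbours in the $N^+$ part and $2$ has many in-neighbours there. We then need a common vertex $1$ with $3\to1\to2$, which we obtain by averaging using $\delta^0(R)\ge(1/2-O(\eta))k$. Once the triangle is found, Lemma \ref{LEM:findmanyDabc}, applied with $(a,b,c-1)$, gives $\beta_0 n^{h-1}$ copies, each completing to a $D_{a,b,c}$ containing $v$. The averaging argument just described is the main obstacle. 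If direct averaging fails, the fallback is a case analysis using the fact that $N^+$ and $N^-$ each contain nearly half of all out- and in-edges.

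\textbf{Step 2 (reachability).} Let $\mathcal{S}_v$ be the family of $(h-1)$-sets $S$ with $G[S\cup\{v\}]\supseteq D_{a,b,c}$, so $|\mathcal{S}_v|\ge\beta_0 n^{h-1}$. Count pairs $(S,u)$ with $S\in\mathcal{S}_v$, $u\ne v$ and $S\cup\{u\}$ spanning a $D_{a,b,c}$. By Step 1 applied to every vertex, together with double counting over all copies of $D_{a,b,c}$, the total number of such pairs is at least $\beta_0^2 n^h/2$; roughly, each copy in which $v$ appears swaps $v$ out for other vertices of the same role. More concretely, fix $S\in\mathcal{S}_v$ coming from Step 1. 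The number of vertices $u$ that can also play the role of $v$ is $\Omega(n)$, because the sink of $TT_c$ needs only to be an in-neighbour of $X_a$, an out-neighbour of $X_b\cup X_{c-1}$, and there are $\Omega(n)$ such vertices by regularity.

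If fewer than $\alpha n$ vertices $u$ were $(D_{a,b,c},\beta,1)$-reachable from $v$, the number of pairs would be at most $\alpha n\cdot n^{h-1}+n\cdot\beta n^{h-1}$. This contradicts the lower bound $\Omega(\beta_0^2 n^h)$, since $\beta\ll\alpha\ll\beta_0$. For this we must choose $\alpha$ after $\beta_0$, and $\beta_0$ depends only on $a,b,c$.
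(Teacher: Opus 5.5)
Your Step 2 has a genuine gap, and it is the step that carries the content of the lemma. The closing comparison is fine: if fewer than $\alpha n$ vertices are reachable, there are at most $\alpha n\cdot n^{h-1}+\beta n^{h}$ pairs. What is never proved is the lower bound $\Omega(n^h)$ on the number of pairs $(S,u)$ with $S\in\mathcal S_v$.

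That every vertex lies in $\Omega(n^{h-1})$ copies gives $\Omega(n^h)$ (set, completing vertex) pairs summed over \emph{all} $(h-1)$-sets. It says nothing about the sets in $\mathcal S_v$, each of which might a priori be completed only by $v$. Your concrete justification does not close this either. The vertices $u$ that can replace $v$ in $S=X_a\cup X_b\cup X_{c-1}$ are those dominating all of $X_a$ and dominated by all of $X_b\cup X_{c-1}$. Regularity of $(V_1,V_2)$, $(V_2,V_3)$, $(V_3,V_1)$ controls only edges between these three clusters, so it says nothing about this common neighbourhood; the only vertex known to lie in it is $v$. Even for candidates $u\in V_3$, the condition on $X_{c-1}$ concerns edges inside a cluster, where regularity gives no information.

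The missing idea is to fix $u$ first and then build the linking sets inside $u$'s neighbourhoods. In your configuration this goes as follows.
\begin{itemize}
\item By Lemma~\ref{LEM-fewvxswithsmalld}, all but $2\varepsilon m$ vertices $u\in V_3$ satisfy $d^+(u,V_1),d^-(u,V_2)\geqslant dm/2$.
\item Since $u$ misses at most $2\eta n$ edges, $d^{\sigma}(u,V_3)\geqslant m/3$ for some $\sigma\in\{+,-\}$.
\item Lemmas~\ref{LEM-regulartosuper} and~\ref{LEM:findmanyDabc}, applied to $N^+(u,V_1)$, $N^-(u,V_2)$, $N^{\sigma}(u,V_3)$, give $\Omega(n^{h-1})$ copies of $D_{a,b,c-1}$.
\item Each such copy forms a $D_{a,b,c}$ both with $v$ (as sink of the $TT_c$ part) and with $u$ (as its source or sink).
\end{itemize}
This makes $v$ reachable to $(1-2\varepsilon)m\geqslant\alpha n$ vertices with no double counting. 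It is the paper's argument, except that there $v$ is the source of the $TT_a$ part and $u$ ranges over the cluster hosting the $TT_{a-1}$.

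Step 1 is also incomplete. You insist on a triangle of $R$ with one vertex in the $N^+(v)$-part $R_A$ and two in the $N^-(v)$-part $R_B$. You propose to find it from an edge $y\to z$ in $R_B$ for which $N^+_R(z)\cap R_A$ and $N^-_R(y)\cap R_A$ are forced to meet by their sizes. Degree counting cannot force this. If every vertex of $R_B$ has about $|R_A|/2$ out-neighbours in $R_A$, then for every such edge these two sets have sizes summing to about $|R_A|$. You also leave the fallback unspecified.

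The specific configuration is unnecessary. Any directed triangle meeting both $R_A$ and $R_B$ suffices, and one exists by Lemma~\ref{LEM:cross-edge-triangle}(ii). Take $v$ as the source of the $TT_a$ part when the triangle has two vertices in $R_A$, and as the sink of the $TT_c$ part when it has two in $R_B$.

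A minor slip: $e(N^+(v),N^-(v))$ is about $n^2/8$, not $n^2/4$. This is harmless, since only positive density is used.
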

We remark that the proof of Lemma \ref{LEM-closed-partition} is the same as that of Lemma 4.1 in \cite{hanRSA64}, whose statement is phrased for graphs. Before proving Lemma \ref{LEM:Dabcs-contain-v and reachable}, we introduce a useful auxiliary lemma. Its proof will be given at the end of this subsection.

\begin{lemma}\label{LEM:cross-edge-triangle}
    Let $\eta$ be a constant with $0<\eta\leqslant 1/100$. Then the following statements hold for every $n$-vertex oriented graph $G$ with $\delta^0(G)\geqslant (1/2-\eta)n$. 

   (i)  For any two disjoint subsets $A,B\subseteq V(G)$, if $|A|\geqslant (1/4+2\eta)n$ and $|B|\geqslant (1/2-\eta)n$, then there is an edge from $A$ to $B$ and an edge from $B$ to $A$ in $G$.
   
   (ii) For any partition $A,B$ of $V(G)$, there exists a directed triangle  $C_3$ such that $V(C_3)\cap A\neq \emptyset$ and $V(C_3)\cap B\neq \emptyset$.
\end{lemma}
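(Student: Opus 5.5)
The two parts call for different arguments. Part (i) will follow from a direct edge count inside $A$. Part (ii) will follow from showing that, if no crossing triangle exists, every vertex has almost no neighbours on the other side, which is impossible once both sides are nonempty. Throughout I use only that $\delta^0(G)\geqslant(1/2-\eta)n$ with $\eta\leqslant 1/100$. Since $G$ is oriented, this forces every vertex to have at most $2\eta n$ non-neighbours.

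For (i), suppose for contradiction that there is no edge from $A$ to $B$. Set $C=V(G)\setminus(A\cup B)$, so $|C|\leqslant n-|A|-|B|$. Then every out-neighbour of a vertex of $A$ lies in $A\cup C$. Summing out-degrees over $A$, and using that $G[A]$ is oriented, gives
\[
|A|(1/2-\eta)n\leqslant e(A)+|A||C|\leqslant |A|^2/2+|A|(n-|A|-|B|).
\]
Dividing by $|A|$ yields $(1/2-\eta)n\leqslant n-|A|/2-|B|$. The hypotheses bound the right-hand side by $(1/2+\eta)n-(1/8+\eta)n=3n/8$, a contradiction since $\eta\leqslant 1/100$. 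An edge from $B$ to $A$ is obtained by the identical computation with in-degrees in place of out-degrees.

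For (ii), assume both $A$ and $B$ are nonempty (otherwise the statement is vacuous) and that no directed triangle meets both parts. Fix $v\in B$ and write $S=N^+(v)$, so $(1/2-\eta)n\leqslant|S|\leqslant(1/2+\eta)n$. The key observation is about a vertex $x\in S\cap A$. If $x$ had an out-neighbour $y\in N^-(v)$, then $vxy$ would be a triangle meeting both $A$ and $B$. Hence $N^+(x)$ misses $N^-(v)$ and $v$. Since $V(G)\setminus(S\cup N^-(v))$ has at most $2\eta n$ vertices, we get $d^+(x,S)\geqslant(1/2-3\eta)n\geqslant|S|-4\eta n$. Now let $T=S\cap A$ and sum these out-degrees within $G[S]$ over $T$:
\[
|T|(|S|-4\eta n)\leqslant e(T)+|T|(|S|-|T|)\leqslant|T|^2/2+|T|(|S|-|T|),
\]
which gives $|T|\leqslant 8\eta n$. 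Symmetrically, using in-neighbours and edges from $N^+(v)$ into $x\in N^-(v)\cap A$, we get $|N^-(v)\cap A|\leqslant 8\eta n$. Therefore $d(v,A)\leqslant 16\eta n$ for every $v\in B$, and by the same argument with the roles swapped, $d(u,B)\leqslant16\eta n$ for every $u\in A$.

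To finish, pick $v\in B$ and $u\in A$. Since $v$ has at least $(1-2\eta)n-1$ neighbours, at most $16\eta n$ of which lie in $A$, we get $|B|\geqslant(1-18\eta)n-1$. Likewise $|A|\geqslant(1-18\eta)n-1$. Then $|A|+|B|>n$ because $\eta\leqslant1/100$, a contradiction. The main obstacle is the middle step of (ii): the first idea, applying (i) to $N^\pm(v)\cap A$, only yields $d(v,A)<(1/2+4\eta)n$, which is too weak. The fix is the observation above, which shows that crossing vertices inside $N^+(v)$ must send almost all of their out-edges back into $N^+(v)$, and an oriented graph cannot support this for more than $O(\eta n)$ vertices.
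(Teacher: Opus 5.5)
Your proof is correct, and it takes a genuinely different route from the paper in part (ii), with a mild variant in part (i).

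For (i), the paper picks a single vertex $b\in B$ with $d^-(b,B)\leqslant|B|/2$ and shows it has an in-neighbour in $A$. You instead sum out-degrees over $A$. Your count is just as short, and it actually only needs $|A|>4\eta n$.

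For (ii), the paper uses (i) as a black box, in three steps.
\begin{enumerate}
\item Assume $|A|\leqslant|B|$. Any $a\in A$ with $d^{\pm}(a,B)\geqslant(1/4+2\eta)n$ closes a crossing triangle, via (i) applied to $N^{\pm}(a,B)$ and $N^{\mp}(a)$.
\item So one may assume $d^{\pm}(a,B)<(1/4+2\eta)n$ for all $a\in A$. This is precisely the bound you dismissed as too weak, and it forces $|A|,|B|=(1/2\pm 6\eta)n$.
\item Fix $u\in A$ and split $B$ into $B_1=N^+(u,B)$ and $B_2=N^-(u,B)$. An edge from $B_1$ to $B_2$ closes a triangle with $u$. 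Otherwise pick $w\in B_2$ with $d^-(w,B_2)\leqslant|B_2|/2$; then $w$ has more than $(1/4+2\eta)n$ in-neighbours in $A$, and (i) yields a triangle through $w$.
\end{enumerate}

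So the paper rescues the weak bound with one extra, asymmetric step and a short case analysis. Your argument is self-contained, symmetric in $A$ and $B$, and does not invoke (i). It rests on one observation: a crossing vertex of $N^+(v)$ cannot send an edge into $N^-(v)$. Combined with the fact that an oriented graph cannot have many vertices of almost full out-degree inside a set, this gives a stronger structural conclusion: with no crossing triangles, every vertex has at most $16\eta n$ neighbours on the other side.

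The paper's version has the advantage of recycling (i), which it needs elsewhere anyway, for example in Step 2 of the proof of Lemma \ref{LEM:balancefactor}.
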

Equipped with Lemma \ref{LEM:cross-edge-triangle}, we are ready to prove Lemma \ref{LEM:Dabcs-contain-v and reachable}.

\begin{proof}[\textbf{Proof of Lemma \ref{LEM:Dabcs-contain-v and reachable}}] 
Given $a,b,c\in \mathbb{N}$ with $1\leqslant a\leqslant b\leqslant c$, choose
$$
0<1/n\ll \eta \ll\beta \ll 1/M\ll \varepsilon \ll d\leqslant 1/400,
$$
where $M$ is given by Lemma \ref{LEM:reguler} with parameters $\varepsilon,d$, $M^\prime=1/\varepsilon$, and $M^{\prime\prime}=2$. Set $\alpha=1/(4M)$.

Fix a vertex $v\in V(G)$, and set $A=N^{+}(v)$, $B=N^{-}(v)$, and $C=V(G)\setminus (A\cup B)$. Since $\delta^{0}(G)\geqslant (1/2-\eta)n$, we have $|C|\leqslant 2\eta n$. Hence
$$
\delta^{0}(G[A\cup B])\geqslant (1/2-\eta)n-|C|
\geqslant (1/2-3\eta)|A\cup B|.
$$
Applying Lemma \ref{LEM:reguler} to $G[A\cup B]$ with parameters $\varepsilon,d$, $M^\prime=1/\varepsilon$, and $M^{\prime\prime}=2$, we obtain a partition $V_0,V_1,\ldots,V_k$ 
of $A\cup B$, where  $1/\varepsilon \leqslant k\leqslant M$, $|V_j|=m\geqslant n/(2M)$ for each $j\in[k]$, $V_j\subseteq A$ for each $j\in[i]$, and $V_j\subseteq B$ for each $i+1\leqslant j\leqslant k$. Let $R$ be the reduced oriented graph. By Lemma \ref{LEM-reducedori}, we have 
$$
\delta^0(R)\geqslant (1/2-3\eta-d-3\varepsilon)k\geqslant (1/2-2d)k.
$$

We may  assume w.l.o.g. that $R_A=\{1,2,\ldots,i\}$ and $R_B=\{i+1,i+2,\ldots,k\}$. Then $(R_A,R_B)$ is a partition of $V(R)$. Applying Lemma \ref{LEM:cross-edge-triangle} (ii) to this partition, we find a triangle $T$ in $R$ such that $V(T)\cap R_A\neq \emptyset$ and $V(T)\cap R_B\neq \emptyset$. We  may assume that $|V(T)\cap R_A|=2$ and write $T=12k$. In particular, $1,2\in R_A$ and $k\in R_B$. Let $S$ be the set of vertices $u\in V_1$ such that $
d^{+}(u,V_2)\geqslant dm/2
 \text{ and } 
d^{-}(u,V_k)\geqslant dm/2$. 
We claim that $v$ is $(D_{a,b,c},\beta,1)$-reachable to every vertex in $S$.

Fix any $u\in S$, and set $N_2=N^{+}(u,V_2)$ and $N_k=N^{-}(u,V_k)$. By the definition of $S$, we have $|N_i|\geqslant dm/2\geqslant dn/(4M)$ for each $i\in\{2,k\}$. Since $\delta^0(G)\geqslant (1/2-\eta)n$, every vertex of $G$ misses at most $2\eta n$ edges. Hence, for some $\sigma\in\{+,-\}$, we have $
|N^\sigma(u,V_1)|\geqslant (m-2\eta n)/2\geqslant m/3$, 
where we used $m\geqslant n/(2M)$ and $\eta\ll 1/M$. Set $N_1=N^\sigma(u,V_1)$.

Since $12k$ is a triangle in $R$, each of $(V_1,V_2)$, $(V_2,V_k)$, and $(V_k,V_1)$ is $(\varepsilon,d)$-regular. By Lemma~\ref{LEM-regulartosuper} and the fact that $|N_1|,|N_2|,|N_k|\geqslant dm/2$ with $\varepsilon\ll d$, it follows that $(N_1,N_2)$, $(N_2,N_k)$, and $(N_k,N_1)$ are $(\varepsilon^{1/2},d/2)$-regular. Applying Lemma \ref{LEM:findmanyDabc} with $\xi=d/(4M)$, we conclude that $G[N_1\cup N_2\cup N_k]$ contains at least $\beta n^{a+b+c-1}$ copies of $D_{a-1,b,c}$ of type $(N_1,N_2,N_k)$ (where possibly $a-1=0$). Note that $
N_1\subseteq V_1\subseteq N^+(v), 
N_2\subseteq V_2\subseteq N^+(v), 
N_k\subseteq V_k\subseteq N^-(v)$.
Therefore, every such copy of $D_{a-1,b,c}$ together with $v$ and together with $u$, respectively, forms a copy of $D_{a,b,c}$. This proves the claim.

Finally, since both $(V_1,V_2)$ and $(V_k,V_1)$ are $(\varepsilon,d)$-regular, Lemma \ref{LEM-fewvxswithsmalld} together with $m\geqslant n/(2M)$ and $\alpha=1/(4M)$ implies that $
|S|\geqslant (1-2\varepsilon)m\geqslant \alpha n$.  
Hence, $v$ is $(D_{a,b,c},\beta,1)$-reachable to every vertex in $S$, and this  completes the proof.
\end{proof}
At the end of this subsection, we give a proof of Lemma \ref{LEM:cross-edge-triangle}.
\begin{proof}[\textbf{Proof of Lemma \ref{LEM:cross-edge-triangle}}] 
Let $0<\eta\leqslant 1/100$, and let $G$ be an $n$-vertex oriented graph with $\delta^{0}(G)\geqslant (1/2-\eta)n$. Suppose that $A$ and $B$ are two disjoint subsets of $V(G)$.

To prove (i), we may assume that $|A|=(1/4+2\eta)n$ and $|B|=(1/2-\eta)n$. Since $G[B]$ is oriented, there exists a vertex $b\in B$ with $d^-(b,B)\leqslant |B|/2$. It follows from $\delta^0(G)\geqslant (1/2-\eta)n$ that $d^-(b,A)\geqslant (1/2-\eta)n-(n-|A|-|B|)-|B|/2>0$. 
Thus, there is an edge from $A$ to $b$, and hence from $A$ to $B$. Similarly, there exists  $b^\prime\in B$ with $d^+(b^\prime,B)\leqslant |B|/2$, and therefore $G$ contains an edge from $B$ to $A$.

Now we prove (ii). Let $A,B$ be a partition of $V(G)$, and assume w.l.o.g. that $|A|\leqslant |B|$. If there exists  $a\in A$ with $d^+(a,B)\geqslant (1/4+2\eta)n$, then Lemma \ref{LEM:cross-edge-triangle} (i) implies that there is an edge from $N^+(a,B)$ to $N^-(a)$, which together with $a$ forms the desired triangle. Thus, we may assume that $d^+(a,B)<(1/4+2\eta)n$ for every $a\in A$. 
By symmetry, $d^-(a,B)<(1/4+2\eta)n$ for every $a\in A$. 
Since $V(G)=A\cup B$ and $\delta^0(G)\geqslant (1/2-\eta)n$, it follows that $d^\pm(a,A)>(1/4-3\eta)n$ for every $a\in A$, and hence
$$
(1/2-6\eta)n<|A|\leqslant |B|<(1/2+6\eta)n.
$$

Fix any $u\in A$, and let $B_1=N^+(u,B), B_2=N^-(u,B), B_3=B\setminus(B_1\cup B_2)$. 
Since $d^\pm(u,B)<(1/4+2\eta)n$, we have $|B_1|,|B_2|<(1/4+2\eta)n$. Moreover, $\delta^0(G)\geqslant (1/2-\eta)n$ implies that $|B_3|\leqslant 2\eta n$. We may assume that $e(B_1,B_2)=0$, since otherwise any edge in $E(B_1,B_2)$ together with $u$ forms the desired $C_3$. Now $B_2$ contains a vertex, say $w$, with $d^{-}(w,B_2)\leqslant |B_2|/2$. Therefore,
\begin{align*}
d^{-}(w,A)
&\geqslant (1/2-\eta)n-|B_2|/2-|B_3|\\
&\geqslant (1/2-\eta)n-(n/8+\eta n)-2\eta n\\
&>(1/4+2\eta)n.
\end{align*}
By Lemma \ref{LEM:cross-edge-triangle} (i), there is an edge from $N^+(w)$ to $N^-(w,A)$, which together with $w$ forms the desired triangle. 
\end{proof}

\subsection{Proof of Lemma \ref{LEM:transferral}}

Let $G$ be an $n$-vertex oriented graph. For  $V_1,V_2,V_3\subseteq V(G)$, let $\operatorname{cyc}(V_1,V_2,V_3)$ denote the  number of disjoint directed triangles $v_1v_2v_3$ such that $v_i\in V_i$ for each $i\in[3]$. For any set $A\subseteq V(G)$, let $\overline{A}=V(G)\setminus A$, and define the strong $\beta$-out-neighborhood of $A$ in $G$ by $\operatorname{SN}_{\beta}^+(A)=\{x\in \overline{A}: d_G^-(x,A)\geqslant |A|-\beta n\}$. 
The strong $\beta$-in-neighborhood $\operatorname{SN}_{\beta}^-(A)$ is defined analogously.

\begin{lemma}[\cite{liEJC26}]\label{LEM:divide lemma}
Let $1/n \ll \eta \ll \varepsilon \ll \beta \ll \gamma \ll \alpha < 1,$
and let $G$ be an $n$-vertex oriented graph with  $\delta^0(G) \geqslant(1/2- \eta)n$. For every $A\subseteq V(G)$ such that $|A|\geqslant \alpha n$ and $\mathrm{cyc}(A,A,\overline{A}) \leqslant \varepsilon n^3$, we have 
\begin{itemize}
    \item $|\text{SN}^+_\beta(A)|,\, |\text{SN}^-_\beta(A)| = |\overline{A}|/2 \pm \gamma n$, and
    \item $|A| \leqslant \left(1/3 + \gamma\right)n$.
\end{itemize}
\end{lemma}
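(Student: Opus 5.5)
We would prove the two assertions in order. First we show that almost every vertex of $\overline{A}$ is joined to $A$ almost entirely in one direction. Then we obtain the sizes of $\operatorname{SN}^\pm_\beta(A)$ by double counting degrees. Finally, the bound on $|A|$ comes from looking at out-degrees inside $\operatorname{SN}^+_\beta(A)$. Throughout, $\delta^0(G)\geqslant(1/2-\eta)n$ means every vertex misses at most $2\eta n$ adjacencies, so $G$ is essentially a tournament.

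\emph{Step 1 (sparse triangles force a one-sided structure).} Every edge $a_1a_2\in E(A)$ lies in exactly $|N^+(a_2,\overline{A})\cap N^-(a_1,\overline{A})|$ triangles of type $(A,A,\overline{A})$. By inclusion–exclusion in $\overline{A}$, the hypothesis $\operatorname{cyc}(A,A,\overline{A})\leqslant\varepsilon n^3$ gives
$$d^+(a_2,\overline{A})+d^-(a_1,\overline{A})\leqslant |\overline{A}|+\sqrt{\varepsilon}\,n$$
for all but $\sqrt{\varepsilon}n^2$ edges $a_1a_2$ of $G[A]$. Since $G[A]$ has $|A|^2/2-O(\eta n^2)$ edges, we can combine this with $d^+(a,\overline{A})\geqslant (1/2-\eta)n-d^+(a,A)$ and $d^-(a,\overline{A})\geqslant (1/2-\eta)n-d^-(a,A)$. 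Averaging over the edges of $A$ should force $e(A,\overline{A})+e(\overline{A},A)$ to be nearly $|A||\overline{A}|$ and, for almost every $x\in\overline{A}$, $e(N^+(x,A),N^-(x,A))=O(\sqrt\varepsilon n^2)$. We then want to upgrade this to $\min\{d^+(x,A),d^-(x,A)\}\leqslant\beta n$ for all but $\gamma n/10$ vertices $x\in\overline{A}$. For this we argue by contradiction: if both $O_x=N^+(x,A)$ and $I_x=N^-(x,A)$ have size $\geqslant\beta n$, then the vertices of $I_x$, whose in-degrees are $\geqslant(1/2-\eta)n$ and which receive almost nothing from $O_x$, together with the edge-averaged inequality above, produce $\Omega(\beta^2 n^2)$ triangles through $x$. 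Summing over such $x$ contradicts $\varepsilon\ll\beta$. We expect this step, extracting a \emph{pointwise} dichotomy for vertices of $\overline{A}$ from a \emph{global} triangle count, to be the main obstacle. If the direct argument fails, we would first fall back on an $\varepsilon$-regularity/counting reformulation of the same estimate. The conclusion is that $\overline{A}$ is, up to $\gamma n/10$ vertices, the disjoint union of $\operatorname{SN}^+_\beta(A)$ and $\operatorname{SN}^-_\beta(A)$.

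\emph{Step 2 (sizes of the strong neighbourhoods).} Sum out-degrees over $a\in A$, using $e(A)\leqslant |A|^2/2$ and the fact that, up to the exceptional set, the out-neighbours of $A$ in $\overline{A}$ lie in $\operatorname{SN}^+_\beta(A)$, each receiving at most $|A|$ edges from $A$:
$$|A|(1/2-\eta)n\leqslant e(A)+e(A,\overline{A})\leqslant |A|^2/2+|A|\,|\operatorname{SN}^+_\beta(A)|+\beta n^2+\gamma n^2/10 .$$
Since $|A|\geqslant\alpha n$ and $\beta,\gamma\ll\alpha$, this gives $|\operatorname{SN}^+_\beta(A)|\geqslant |\overline{A}|/2-\gamma n/2$. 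Symmetrically, $|\operatorname{SN}^-_\beta(A)|\geqslant |\overline{A}|/2-\gamma n/2$. The two sets are disjoint, so both equal $|\overline{A}|/2\pm\gamma n$.

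\emph{Step 3 ($|A|\leqslant(1/3+\gamma)n$).} Every $x\in S^+:=\operatorname{SN}^+_\beta(A)$ receives edges from all but $\beta n$ vertices of $A$, so $d^+(x,A)\leqslant\beta n$. Hence $d^+(x,\overline{A})\geqslant(1/2-\eta-\beta)n$. Since $G[S^+]$ is oriented, some $x\in S^+$ has $d^+(x,S^+)\leqslant|S^+|/2$. It follows that
$$(1/2-2\beta)n\leqslant |S^+|/2+|\overline{A}\setminus S^+|=|\overline{A}|-|S^+|/2\leqslant \tfrac34|\overline{A}|+\gamma n/2 ,$$
using Step 2. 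Thus $|\overline{A}|\geqslant(2/3-\gamma)n$, i.e.\ $|A|\leqslant(1/3+\gamma)n$.
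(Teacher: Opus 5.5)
The paper does not prove this lemma (it is imported from Li--Molla), so I am judging your outline on its own terms. The architecture is right and Step 3 is a nice argument. However, Step 1, which carries all the content, is only asserted, and the mechanism you name does not yet give it. The in-degree $(1/2-\eta)n$ of a vertex $i\in I_x$ can be supplied entirely from $\overline{A}$, so by itself it says nothing about $d^-(i,O_x)$.

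The missing ingredient is that $G[A]$ is nearly regular. This is what summing your edge inequality over $E(A)$ actually yields, rather than the trivial statement about $e(A,\overline{A})+e(\overline{A},A)$.
\begin{itemize}
\item For all but $\sqrt{\varepsilon}n^2$ edges $a_1a_2$ of $G[A]$, your inequality and the degree condition give $d^+(a_2,A)+d^-(a_1,A)\geqslant |A|-2\sqrt{\varepsilon}n$.
\item On the other hand, $\sum_{a_1a_2\in E(A)}\bigl(d^+(a_2,A)+d^-(a_1,A)\bigr)=2\sum_{a\in A}d^+(a,A)d^-(a,A)\leqslant |A|^3/2$.
\item Hence $\sum_{a\in A}\bigl(|A|^2/4-d^+(a,A)d^-(a,A)\bigr)=O(\sqrt{\varepsilon}n^3)$, so all but $O(\varepsilon^{1/4}n)$ vertices $a\in A$ satisfy $d^{\pm}(a,A)=|A|/2\pm\varepsilon^{1/8}n$.
\item Now if $|O_x|,|I_x|\geqslant\beta n$, then $e(O_x,I_x)\geqslant\sum_{i\in I_x}d^-(i,A)-e(I_x)-2\eta n|I_x|\geqslant |I_x|(|A|-|I_x|)/2-O(\varepsilon^{1/8}n^2)\geqslant\beta^2n^2/4$.
\item Since $\sum_{x\in\overline{A}}e(O_x,I_x)=\mathrm{cyc}(A,A,\overline{A})\leqslant\varepsilon n^3$, at most $4\varepsilon n/\beta^2$ vertices $x$ are exceptional.
\end{itemize}
An alternative closer to your wording also works: if $e(O_x,I_x)$ were small, applying the edge inequality to the many edges from $I_x$ to $O_x$ would force $d^+(o,O_x)\approx|O_x|$ for typical $o\in O_x$. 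That is impossible in the oriented graph $G[O_x]$.

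Two bookkeeping points follow. Run Step 1 with threshold $\beta/2$, since $d^+(x,A)\leqslant\beta n$ only puts $x$ in $\operatorname{SN}^+_{\beta+2\eta}(A)$. The resulting tiny exceptional set also fixes Step 2. There, your $\gamma n^2/10$ term divided by $|A|\geqslant\alpha n$ gives an error $\gamma n/(10\alpha)$, which is not below $\gamma n/2$ when $\alpha$ is small. Bounding each exceptional vertex's contribution by $|A|$ instead of $n$ also fixes this.

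Second, Step 3 silently needs $\operatorname{SN}^+_\beta(A)\neq\emptyset$, and Step 2 only guarantees this when $|\overline{A}|$ exceeds roughly $\gamma n$. This is not a technicality: the second bullet of the statement, as written, is false. For $A=V(G)$ we have $\mathrm{cyc}(A,A,\overline{A})=0$ but $|A|=n$. More generally, every $A$ with $|\overline{A}|\leqslant\varepsilon n$ satisfies the hypotheses while $|A|>(1/3+\gamma)n$.

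An extra hypothesis such as $|\overline{A}|\geqslant\alpha n$ is needed. It does hold in the paper's applications. There $A$ is ultimately $R_1$ or its complement in the reduced graph, both of order at least $\alpha k/2$. In the second application $A$ is replaced by $S^+$, whose complement contains the original $A$. With that hypothesis added and Step 1 completed as above, your three steps give a full proof.
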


For any vertices $u,v\in V(G)$ and $\sigma,\tau\in\{+,-\}$, let $N^{\sigma,\tau}(uv)=N^\sigma(u)\cap N^\tau(v)$, and for each set $A\subseteq V(G)$, let $N^{\sigma,\tau}(uv,A)=N^{\sigma,\tau}(uv)\cap A$. We write $d^{\sigma,\tau}(uv)=|N^{\sigma,\tau}(uv)|$ and $d^{\sigma,\tau}(uv,A)=|N^{\sigma,\tau}(uv,A)|$.

\begin{lemma}\label{LEM:-+=+-}
    Let $\eta>0$, and let $G$ be an $n$-vertex oriented graph with $\delta^0(G)\geqslant(1/2-\eta)n$. Then, for any vertices $x,y\in V(G)$ and  $\sigma,\tau\in\{+,-\}$, we have
$$
\bigl|d^{\sigma,\tau}(xy)-d^{-\sigma,-\tau}(xy)\bigr|\leqslant 4\eta n.
$$
\end{lemma}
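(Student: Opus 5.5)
The argument is pure double counting, using the partition of $V(G)\setminus\{x\}$ by $x$'s out- and in-neighbourhoods. By symmetry (reversing all edges swaps $+$ and $-$), it suffices to handle a pair of signs $(\sigma,\tau)$ against $(-\sigma,-\tau)$ for a fixed $x$. I would first dispose of the mixed case and then the two "equal sign" cases.

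Fix $x,y$ and set
\[
X^{\pm}=N^{\pm}(x), \qquad Y^{\pm}=N^{\pm}(y).
\]
Since $G$ is oriented and $\delta^0(G)\geqslant (1/2-\eta)n$, we have $|X^{+}|,|X^{-}|\geqslant (1/2-\eta)n$ and $X^+\cap X^-=\emptyset$. The set $M_x=V(G)\setminus(X^+\cup X^-)$ of non-neighbours of $x$ (including $x$ itself) therefore has $|M_x|\leqslant 2\eta n$. The same bounds hold for $y$.

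Now write $d^{\sigma,\tau}(xy)=|X^\sigma\cap Y^\tau|$ and compute $|X^\sigma|$ in two ways. Splitting $X^\sigma$ according to $Y^{\tau}$, $Y^{-\tau}$ and $M_y$ gives
\[
|X^\sigma| = d^{\sigma,\tau}(xy)+d^{\sigma,-\tau}(xy)+|X^\sigma\cap M_y|.
\]
Similarly, splitting $Y^{-\tau}$ according to $x$'s neighbourhoods gives
\[
|Y^{-\tau}| = d^{\sigma,-\tau}(xy)+d^{-\sigma,-\tau}(xy)+|Y^{-\tau}\cap M_x|.
\]
Subtracting the two identities yields
\[
d^{\sigma,\tau}(xy)-d^{-\sigma,-\tau}(xy) = |X^\sigma|-|Y^{-\tau}| - |X^\sigma\cap M_y| + |Y^{-\tau}\cap M_x|.
\]

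It remains to bound each pair on the right. Every degree lies between $(1/2-\eta)n$ and $n-(1/2-\eta)n=(1/2+\eta)n$, because the opposite degree is at least $(1/2-\eta)n$. Hence $\bigl||X^\sigma|-|Y^{-\tau}|\bigr|\leqslant 2\eta n$. The two correction terms are each between $0$ and $2\eta n$, so their difference has absolute value at most $2\eta n$. Adding these gives
\[
\bigl|d^{\sigma,\tau}(xy)-d^{-\sigma,-\tau}(xy)\bigr|\leqslant 4\eta n,
\]
which is the claimed bound.

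This computation is uniform in $\sigma,\tau$, so no case split is actually needed. The only step requiring care is bookkeeping the non-neighbour sets, noting that $M_x\ni x$ and $M_y\ni y$, so $x$ and $y$ themselves are absorbed into these error terms. The main point to double-check is that the cancellation of the middle term $d^{\sigma,-\tau}(xy)$ is exact, which it is since it appears in both identities.
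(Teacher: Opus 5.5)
Your argument is correct and is essentially the paper's own proof: the paper writes exactly your two identities, $d^{\sigma,\tau}(xy)=d^\sigma(x)-d^{\sigma,-\tau}(xy)-|N^\sigma(x)\setminus N(y)|$ and $d^{-\sigma,-\tau}(xy)=d^{-\tau}(y)-d^{\sigma,-\tau}(xy)-|N^{-\tau}(y)\setminus N(x)|$, and cancels the common term $d^{\sigma,-\tau}(xy)$. Your explicit bounds fill in what the paper leaves as immediate: the degree difference is at most $2\eta n$, and the two non-neighbour correction terms each lie in $[0,2\eta n]$.
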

    \begin{proof} Observe that $d^{\sigma,\tau}(xy)=d^\sigma(x)-d^{\sigma,-\tau}(xy)-|N^\sigma(x)\setminus N(y)|$ and $d^{-\sigma,-\tau}(xy)=d^{-\tau}(y)-d^{\sigma,-\tau}(xy)-|N^{-\tau}(y)\setminus N(x)|$. The claim therefore follows immediately from the fact that $G$ is an oriented graph with $\delta^0(G)\geqslant(1/2-\eta)n$.
    \end{proof}

Before proving Lemma \ref{LEM:transferral}, we first introduce the following definition.

\begin{definition}
    Let $K^-_4$ be the oriented graph on vertex set $\{v_1,v_2,v_3,v_4\}$ such that both $v_1v_2v_3$ and $v_2v_3v_4$ are directed triangles. 
    Given four sets $A,B,C,D$, a copy of $K^-_4$ is said to be of type-$(A,B,C,D)$ if $v_1\in A$, $v_2\in B$, $v_3\in C$, and $v_4\in D$.
\end{definition}

Let $k_4^-(A,B,C,D)$ denote the number of copies of $K^-_4$ of type-$(A,B,C,D)$.

\begin{lemma}\label{LEM:triangle to extremal}
    Given $1/n\ll\eta\ll\varepsilon \ll \gamma \ll \alpha< 1$, let $G$ be an $n$-vertex oriented graph with $\delta^0(G)\geqslant(1/2-\eta)n$. For any subset $A\subseteq V(G)$ with $|A|\geqslant\alpha n$, if $\text{cyc}(A,A,\overline{A})\leqslant\varepsilon n^3$ and $k_4^-(A,V(G),V(G),\overline{A})\leqslant \varepsilon n^4$, then $G$ is $\gamma$-extremal.
\end{lemma}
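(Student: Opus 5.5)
Set $V_1=A$ and use Lemma~\ref{LEM:divide lemma} to split $\overline{A}$ into $V_2$ and $V_3$, the strong out- and in-neighbourhoods of $A$. Then check \ref{EP1}, using the $K_4^-$-sparseness hypothesis to control the edges between $V_2$ and $V_3$.

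\textbf{Step 1: the neighbourhoods of $A$.} Choose $\eta\ll\varepsilon\ll\beta\ll\gamma'\ll\gamma$. Lemma~\ref{LEM:divide lemma} gives
\[
|\mathrm{SN}^\pm_\beta(A)|=|\overline{A}|/2\pm\gamma' n,\qquad |A|\leqslant(1/3+\gamma')n .
\]
A vertex $x\in\overline A$ lying in both strong neighbourhoods would have $d(x,A)\geqslant 2|A|-2\beta n>|A|$, which is impossible. So $\mathrm{SN}^+_\beta(A)$ and $\mathrm{SN}^-_\beta(A)$ are disjoint. Put $V_2=\mathrm{SN}^+_\beta(A)$, $V_3=\mathrm{SN}^-_\beta(A)$, and let $X=\overline A\setminus(V_2\cup V_3)$, so $|X|\leqslant 2\gamma' n$. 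Each $x\in V_2$ sends at most $\beta n$ edges to $A$, so $e(V_2,A)\leqslant\beta n^2$. Likewise $e(A,V_3)\leqslant\beta n^2$. Reassign $X$ arbitrarily, say to $V_2$; this costs $O(\gamma' n^2)$ edges. This gives two of the three sparse directions: $e(V_2,V_1)$ and $e(V_1,V_3)$ are small.

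\textbf{Step 2: sizes.} Next I show $|A|\geqslant(1/3-O(\gamma'))n$. Then $|V_2|,|V_3|=(n-|A|)/2\pm\gamma' n=n/3\pm O(\gamma' n)$.

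For the lower bound, use semi-degrees inside $A$. Almost every $a\in A$ sends almost all its $A$-edges toward $V_2$ and receives almost all from $V_3$. Hence
\[
d^+(a)\lesssim d^+(a,A)+|V_2|,\qquad d^-(a)\lesssim d^-(a,A)+|V_3|.
\]
Averaging over $A$, $d^\pm(a,A)$ is at most about $|A|/2$ on average. This yields $(1/2-\eta)n\lesssim |A|/2+|\overline A|/2$, which is trivial and gives nothing. So I need the third edge set.

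\textbf{Step 3: $e(V_3,V_2)$ is small (the main obstacle).} Take $x\in V_2$ and $y\in V_3$ with $yx\in E(G)$. For most $a\in A$ we have $ax$ and $ya$, so $axy$ is a directed triangle. Now pick $w\in A$ with $xw\in E$ and $wy\in E$ together with such an $a$. Then $a x y$ and $x y w$ are both triangles, so $(a,x,y,w)$ is a copy of $K_4^-$.

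The difficulty is the type. The hypothesis requires $v_4\in\overline A$, while here $w\in A$. So instead look for $w\in\overline A$ with $xw,wy\in E$, i.e.\ $w\in N^{+,-}(xy)$.

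By Lemma~\ref{LEM:-+=+-},
\[
d^{+,-}(xy)\approx d^{-,+}(xy).
\]
The common neighbourhoods of $x$ and $y$ cover about $n$ vertices, and $N^{-}(x)\cap N^{+}(y)\supseteq$ most of $A$. Hence $d^{+,-}(xy)\gtrsim|A|-O(\beta n)$. This set lies mostly in $\overline A$, because $x$ has few out-neighbours in $A$. That gives $\Omega(n)$ choices of $w\in\overline A$.

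Each such edge $yx$ therefore yields $\Omega(n^2)$ copies of $K_4^-$ of type $(A,V,V,\overline A)$. The hypothesis then forces $e(V_3,V_2)\leqslant O(\varepsilon/\alpha^2)\,n^2\leqslant\gamma' n^2$.

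\textbf{Step 4: conclude.} With all three backward densities small, count degrees on $V_2$. We have $e(V_2)\leqslant|V_2|^2/2$, and out-edges of $V_2$ go almost only to $V_2\cup V_3$, so
\[
|V_2|(1/2-\eta)n\leqslant |V_2|^2/2+e(V_2,V_3)+o(n^2).
\]
Combined with the symmetric inequality for in-edges into $V_3$ and with $|V_2|\approx|V_3|$, this should force $|A|\geqslant(1/3-O(\gamma'))n$. Hence \ref{EP1} holds with $\gamma$.
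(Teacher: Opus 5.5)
Your overall plan is workable: bound $e(V_3,V_2)$ directly from the $K_4^-$ hypothesis, then read off the part sizes by degree counting. However, two steps fail as written.

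\textbf{Step 3: the claimed $K_4^-$ does not exist.} With $yx\in E(G)$, $ax\in E(G)$ and $ya\in E(G)$, the triple $\{a,x,y\}$ is the \emph{transitive} triangle $y\to a\to x$, $y\to x$. So $(a,x,y,w)$ is not a $K_4^-$. In fact, a vertex of $A$ lies on a directed triangle through the edge $yx$ only if $x\to a\to y$, and at most $\beta n$ vertices of $A$ satisfy this.

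Your set $W=N^{+,-}(xy)\cap\overline{A}$ is the right one, and your bound $|W|\geqslant |A|-O(\beta n)$ via Lemma~\ref{LEM:-+=+-} is correct. The fix is to attach the vertex of $A$ to a different edge of the directed triangle $y\to x\to w\to y$, which needs a case split (take $V_2,V_3$ to be the strong neighbourhoods before $X$ is reassigned):
\begin{itemize}
\item If $w\in V_2$, then $(a,w,y,x)$ is a $K_4^-$, with triangles $a\to w\to y\to a$ and $w\to y\to x\to w$. This holds for all but $2\beta n$ vertices $a\in A$.
\item If $w\in V_3$, then $(a,x,w,y)$ is a $K_4^-$, with triangles $a\to x\to w\to a$ and $x\to w\to y\to x$.
\end{itemize}
In both cases $v_4\in\{x,y\}\subseteq\overline{A}$. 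Vertices $w\in X$ are discarded, at a cost of $|X|\leqslant 2\gamma' n\ll\alpha n$. A labelled $K_4^-$ recovers the edge $yx$ as $v_3v_4$ or $v_4v_2$, so each copy is counted at most twice. This gives $e(V_3,V_2)=O(\varepsilon n^2/\alpha^2)$ for edges avoiding $X$.

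\textbf{Step 4: the wrong degrees are summed.} Summing out-degrees over $V_2$ gives $(1/2-\eta)n\lesssim |V_2|/2+|V_3|$, i.e.\ $|\overline{A}|\gtrsim 2n/3$. That is only the upper bound $|A|\leqslant(1/3+\gamma')n$ you already have from Lemma~\ref{LEM:divide lemma}. It never uses that $e(V_3,V_2)$ is small, and the in-degree count on $V_3$ is just its mirror image.

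The lower bound comes from \emph{in}-degrees into $V_2$ (equivalently, out-degrees from $V_3$):
\[
(1/2-\eta)n|V_2|\leqslant |A||V_2|+\binom{|V_2|}{2}+e(V_3,V_2)+|X|n .
\]
Together with $|V_2|\leqslant(n-|A|)/2+\gamma' n$, this yields $|A|\geqslant(1/3-O(\gamma'))n$.

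\textbf{Comparison with the paper.} Once repaired, your argument takes a genuinely different route. The paper uses the $K_4^-$ hypothesis only to show that $\mathrm{cyc}(S^+,S^-,S^+)$ is small, via copies of type $(A,S^+,S^-,S^+)$. It deduces that $\mathrm{cyc}(S^+,S^+,\overline{S^+})$ is small and applies Lemma~\ref{LEM:divide lemma} a second time, to $S^+$, to get $|S^+|\leqslant(1/3+\gamma)n$. Only then does it bound $e(S^-,S^+)$ by degree counting. Your version bounds $e(V_3,V_2)$ directly and uses the divide lemma only once, at the price of the case analysis above.
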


\begin{proof} 
We choose $\beta$ such that $\varepsilon \ll \beta \ll \gamma$. For simplicity, we write $\text{SN}^\sigma_\beta(A)$ as $S^\sigma$ for  $\sigma\in\{+,-\}$. Let $S=V(G) \backslash(S^+\cup S^-\cup A)$. Applying Lemma \ref{LEM:divide lemma} with $\gamma=\beta$ and using the fact that $\text{cyc}(A,A,\overline{A})\leqslant \varepsilon n^3$, we obtain $|S^-|=|S^+|\pm 2\beta n$, $|A|\leqslant(1/3+\beta)n$, and $|S|\leqslant2\beta n$.

Next we claim that $|A\cup S|$, $|S^+|$, and $|S^-|$ are all approximately equal. Observe that $\text{cyc}(S^+, S^+, A)\leqslant \beta n \cdot \binom{|S^+|}{2} \leqslant \beta n^3$ and $\text{cyc}(S^+, S^+, S)\leqslant 2\beta n^3$ since $|S|\leqslant 2\beta n$. For any vertices $x\in S^+$ and $y\in S^-$, we have $d^-(x,A),d^+(y,A)\geqslant |A|-\beta n$ by the definition. Thus, at least $|A|-2\beta n$ vertices in $A$ form a directed triangle of type-$(A,S^+,S^-)$ together with $x$ and $y$. Therefore,
$$
(|A|-2\beta n)\cdot\text{cyc}(S^+,S^-,S^+)\leqslant k_4^-(A,S^+,S^-,S^+)\leqslant k_4^-(A,V(G),V(G),\overline{A})\leqslant \varepsilon n^4,
$$
which implies that $\text{cyc}(S^+,S^+,S^-)=\text{cyc}(S^+,S^-,S^+)\leqslant\varepsilon n^3$ since $|A|\geqslant \alpha n$. Thus, 
$$
\text{cyc}(S^+, S^+, \overline{S^+})=\text{cyc}(S^+,S^+,S^-)+\text{cyc}(S^+, S^+, A)+\text{cyc}(S^+, S^+, S)\leqslant 4\beta n^3.
$$
Applying Lemma \ref{LEM:divide lemma} with $(A,\varepsilon,\gamma)=(S^+,4\beta,\gamma)$, we obtain $|S^+|\leqslant (1/3+\gamma)n$. Moreover, since $|S^-|=|S^+|\pm 2\beta n$, we also have $|S^-|\leqslant (1/3+2\gamma)n$. Hence, $(1/3-\gamma)n\leqslant |A\cup S|\leqslant (1/3+3\beta)n$. 
It is then easy to check that $|S^+|,|S^-|,|A\cup S|=n/3\pm O(\gamma n)$.

We now claim that $G$ is $\gamma$-extremal by showing that $e(S^+,A\cup S),\ e(A\cup S,S^-),\ e(S^-, S^+) = O(\gamma n^2)$. 
Recall that for each $x\in S^+$ and $y\in S^-$, we have $d^-(x,A),d^+(y,A)\geqslant |A|-\beta n$. Thus, $e(S^+,A\cup S)\leqslant (\beta n+|S|)\cdot |S^+|\leqslant 3\beta n|S^+|=O(\gamma n^2)$. 
Similarly, we have $e(A\cup S,S^-)=O(\gamma n^2)$. The degree condition $\delta^0(G)\geqslant (1/2-\eta)n$ implies that every vertex has at most $2 \eta n$ non-neighbors in $G$. Hence,
\begin{align*}
    e(S^-,S^+)&\leqslant (n/2+\eta n) |S^+|- e(A\cup S,S^+)-e(S^+)\\
    &\leqslant (n/2+\eta n -|A\cup S|-|S^+|/2)|S^+|  + O(\gamma n^2)\\
    &= O(\gamma n^2),
\end{align*}
which completes the proof.
\end{proof}

Now we are ready to prove Lemma \ref{LEM:transferral}. 

\begin{proof}[\textbf{Proof of Lemma \ref{LEM:transferral}}]

Given $a,b,c\in \mathbb{N}$ with $1\leqslant a\leqslant b\leqslant c$, let $h=a+b+c$. Choose
$$0<1/n\ll \eta \ll\beta \ll 1/M \ll\varepsilon\ll d\ll \gamma 
\ll\alpha\ll  1/100. $$ 
    
Let $\mathcal{P}=(U_1,U_2,\ldots,U_p)$ be a partition of $V(G)$ with $p\geqslant 2$ and $|U_i|\geqslant \alpha n$ for each $i\in[p]$.
Applying Lemma \ref{LEM:reguler} to $G$ with parameters $\varepsilon$, $d$, $M^\prime=1/\varepsilon$, and $M^{\prime\prime}=p$, we obtain a refined partition $X_0,X_1,\ldots,X_k$ of $V(G)$, where $1/\varepsilon \leqslant k\leqslant M$ and $|X_0|\leqslant \varepsilon n$. Moreover, $|X_i|=m$ for each $i\in [k]$, and hence $n=km+|X_0|$ implies that
$k\leqslant n/m\leqslant 2k\leqslant 2M$.
Let $R^{\prime}$ be the reduced digraph of this partition, with vertex set $[k]$.
For each $i\in [p]$, set
$R_i=\{j\in [k]: X_j \subseteq U_i\}$.
Observe that $\mathcal{P}_R=(R_1,R_2,\ldots,R_p)$ is a partition of $[k]$. Furthermore, we have
$$|R_i|\geqslant|U_i\backslash X_0|/m\geqslant(\alpha n-\varepsilon n)/m\geqslant\alpha k/2.$$
By Lemma \ref{LEM-reducedori}, $R^{\prime}$ contains a spanning oriented graph $R$ with
$\delta ^0(R)\geqslant (1/2-\eta-d-3\varepsilon )k\geqslant (1/2-2d)k$.

We now complete the proof of part (i) of Lemma \ref{LEM:transferral}. Applying Lemma \ref{LEM:cross-edge-triangle} (ii) to $R$, there exists a directed triangle $T$ in $R$ with
$V(T)\cap R_1\neq\emptyset$ and $V(T)\cap \overline{R_1}\neq\emptyset$. We may  assume w.l.o.g. that $T=123$, with $1\in R_1$ and $2\in \overline{R_1}$.
Clearly, $(X_1,X_2)$, $(X_2,X_3)$, and $(X_3,X_1)$ are $(\varepsilon,d)$-regular.
Applying Lemma \ref{LEM:findmanyDabc} three times with parameters $\eta,\beta,\xi=1/(2M),\varepsilon,d$, we see that, for each $j\in[3]$, there are at least $\beta n^{h}$ copies of $D_{a,b,c}$ of type-$(j,j+1,j+2)$ in $G[X_1\cup X_2\cup X_3]$. Let $\mathcal{H}_j$ denote the family of copies of $D_{a,b,c}$ of type-$(j,j+1,j+2)$. Note that if $a+1=b=c$, then $\mathcal{H}_1$ and $\mathcal{H}_2$ yield two vectors $\mathbf{v}_1,\mathbf{v}_2\in I^{\beta}_\mathcal{P}(D_{a,b,c})$ such that
$\mathbf{v}_2-\mathbf{v}_1=\mathbf{e}_1-\mathbf{e}_2$.
If $a+1=b+1=c$, then $\mathcal{H}_2$ and $\mathcal{H}_3$ yield two vectors $\mathbf{v}_2,\mathbf{v}_3\in I^{\beta}_\mathcal{P}(D_{a,b,c})$ such that
$\mathbf{v}_2-\mathbf{v}_3=\mathbf{e}_1-\mathbf{e}_2$.
In either case, there exists a $2$-transferral in $L^{\beta}_{\mathcal{P}}(D_{a,b,c})$, as required.

\medskip

We now prove part (ii) of Lemma \ref{LEM:transferral}. Suppose that $L_\mathcal{P}^\beta(D_{a,b,c})$ is $2$-transferral-free. We first claim that $R$ contains no ``cross'' $K_4^-$.
     
\begin{claim}\label{CLM-K4-}
There is no $K_4^-$ of type-$(R_i,V(R),V(R),\overline{R_i})$ in $R$ for any $i\in [p]$.
\end{claim}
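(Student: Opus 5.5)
Suppose for a contradiction that $R$ contains a copy of $K_4^-$ of type-$(R_i,V(R),V(R),\overline{R_i})$ for some $i\in[p]$. Relabel the clusters so that this copy is $v_1=1$, $v_2=2$, $v_3=3$, $v_4=4$, with $1\in R_i$ and $4\in R_j$ for some $j\neq i$. Since $123$ and $234$ are directed triangles in $R$, the pairs $(X_1,X_2)$, $(X_2,X_3)$, $(X_3,X_1)$, $(X_3,X_4)$ and $(X_4,X_2)$ are all $(\varepsilon,d)$-regular. The key observation is that $X_1$ and $X_4$ play the same role with respect to $X_2,X_3$: every vertex of either set "wants" to send edges to $X_2$ and receive edges from $X_3$. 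So I would build copies of $D_{a,b,c}$ whose $TT_b$ lies in $X_2$, whose $TT_c$ lies in $X_3$, and whose $TT_a$ is split between $X_1$ and $X_4$. Comparing the split $(a,0)$ with the split $(a-1,1)$ gives two index vectors whose difference is exactly $\mathbf{e}_i-\mathbf{e}_j$. If both vectors are $(D_{a,b,c},\beta)$-robust, this is a $2$-transferral, contradicting the assumption that $L^\beta_{\mathcal{P}}(D_{a,b,c})$ is $2$-transferral-free. (The $X_k$ are contained in the $U_\ell$, so these index vectors are computed with respect to $\mathcal{P}$.)

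The case of split $(a,0)$ is immediate from Lemma \ref{LEM:findmanyDabc} applied to $(X_1,X_2,X_3)$, with $\xi=1/(2M)$. For the split $(a-1,1)$ I would repeat the dependent random choice argument from the proof of Lemma \ref{LEM:findmanyDabc}, with one change in the first step.

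\textbf{Step 1.} Apply Lemma \ref{LEM:dependentrandom} successively to $G[X_3,X_1]$, $G[X_3,X_4]$ and $G[X_2,X_3]$, shrinking $X_3$ each time. This yields $C''\subseteq X_3$ of linear size in which almost all $c$-tuples have linearly many common out-neighbours in $X_1$, linearly many common out-neighbours in $X_4$, and linearly many common in-neighbours in $X_2$.

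\textbf{Step 2.} Since $\delta(G)\geqslant(1-2\eta)n$, Lemma \ref{LEM:findmanyTTk} gives $\Omega(n^c)$ nice copies of $TT_c$ in $C''$. Fix one, and let $A_1\subseteq X_1$, $A_4\subseteq X_4$ and $B'\subseteq X_2$ be its common neighbourhoods in the appropriate directions.

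\textbf{Step 3.} Apply Lemma \ref{LEM:dependentrandom} to $G[A_1,B']$ and then to $G[A_4,B'']$, with Lemma \ref{LEM-regulartosuper} justifying the densities. This gives $B'''\subseteq B'$ in which almost all $b$-tuples have linearly many common in-neighbours in both $A_1$ and $A_4$. Lemma \ref{LEM:findmanyTTk} then gives $\Omega(n^b)$ such copies of $TT_b$.

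\textbf{Step 4.} Fix such a $TT_b$, with common in-neighbourhoods $A_1'\subseteq A_1$ and $A_4'\subseteq A_4$. Pick any $y\in A_4'$. Since $y$ misses at most $2\eta n$ vertices, some direction $\sigma\in\{+,-\}$ satisfies $|N^\sigma(y,A_1')|\geqslant |A_1'|/3$. Lemma \ref{LEM:findmanyTTk} gives $\Omega(n^{a-1})$ copies of $TT_{a-1}$ in $N^\sigma(y,A_1')$. Each of these, together with $y$, is a $TT_a$, because $y$ is a source or a sink relative to it.

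Every vertex of this $TT_a$ sends edges to the whole $TT_b$ and receives edges from the whole $TT_c$, so together they form a copy of $D_{a,b,c}$. Multiplying the counts from the four steps gives at least $\beta n^{h}$ copies (after adjusting $\beta$ in the hierarchy), each with $a-1$ vertices in $U_i$, one vertex in $U_j$, $b$ vertices in the part containing $X_2$, and $c$ vertices in the part containing $X_3$. The only-$X_1$ copies have the same index vector except with $a$ vertices in $U_i$ and none in $U_j$. The difference is $\mathbf{e}_i-\mathbf{e}_j$, which is the desired contradiction.

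The main obstacle is the bookkeeping in Steps 1 and 3. The sets shrink through several rounds of dependent random choice, so $\varepsilon$ and the error bound $(\varepsilon_0|\cdot|)^k$ must be chosen so that all common neighbourhoods stay above $\varepsilon m$, keeping regularity available. In addition, the near-completeness bound $\delta(G[\cdot])\geqslant(1-\sqrt{\eta})|\cdot|$ must still hold on the final small sets, which needs $\eta\ll\beta\ll 1/M$. The case $a=1$ is handled by the same scheme with $TT_{a-1}$ empty, so the $TT_a$ is just the single vertex $y$ in $X_4$.
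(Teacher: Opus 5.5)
Your proof is correct and rests on the same idea as the paper's. Because $X_1$ and $X_4$ both send edges to $X_2$ and receive edges from $X_3$, a vertex of $X_4$ can replace one vertex on the $X_1$-side of a copy of $D_{a,b,c}$. This gives two robust index vectors differing by $\mathbf{e}_i-\mathbf{e}_j$.

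Where you differ is in how the mixed copies are counted. The paper fixes the $X_4$-vertex $v$ first:
\begin{itemize}
\item By Lemma~\ref{LEM-fewvxswithsmalld}, all but $2\varepsilon m$ vertices $v\in X_4$ satisfy $d^+(v,X_2),d^-(v,X_3)\geqslant dm/2$, and $d^\sigma(v,X_1)\geqslant m/3$ for some $\sigma$.
\item Lemma~\ref{LEM-regulartosuper} makes these three neighbourhoods pairwise $(\sqrt{\varepsilon},d/2)$-regular.
\item Lemma~\ref{LEM:findmanyDabc} then applies as a black box, giving $\Omega(n^{h-1})$ copies of a blow-up with one fewer vertex, each of which extends by $v$.
\item One multiplies by the $(1-2\varepsilon)m$ choices of $v$.
\end{itemize}
That route is shorter. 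It avoids exactly the bookkeeping you flag as the main obstacle: nested dependent random choice, accumulated error terms, and near-completeness on the shrunken sets. Its price is invoking Lemma~\ref{LEM:findmanyDabc} for a blow-up with one part reduced by one, possibly empty when $a=1$. Your approach of choosing $y$ last inside the chain is self-contained and costs only extra constants in the hierarchy.

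One point in your favour is that you compare two families of the same type. In both the split $(a,0)$ and the split $(a-1,1)$, the $TT_b$ lies in $X_2$ and the $TT_c$ in $X_3$, which is exactly what makes the difference a transferral. The paper's write-up instead extends copies of $D_{a,b,c-1}$ of type-$(2,3,1)$ by $v$ and compares them with type-$(1,2,3)$ copies. For the difference to be $\mathbf{e}_1-\mathbf{e}_2$ there, one must compare with type-$(2,3,1)$ copies in $X_1\cup X_2\cup X_3$, or else use $D_{a-1,b,c}$ of type-$(1,2,3)$, as you effectively do.
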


\begin{proof}[\textbf{Proof of Claim \ref{CLM-K4-}}]
Suppose to the contrary that $H$ is a copy of $K_4^-$ of type-$(R_i,V(R),V(R),\overline{R_i})$ in $R$ for some $i\in [p]$. For simplicity, we may assume w.l.o.g. that $i=1$ and that $V(H)=\{1,2,3,4\}$, where $1\in R_1$ and $4\in R_2$, and $E(H)=\{12,23,31,34,42\}$. Then $(X_1,X_2)$, $(X_2,X_3)$, $(X_3,X_1)$, $(X_3,X_4)$, and $(X_4,X_2)$ are $(\varepsilon,d)$-regular.

By Lemma \ref{LEM:findmanyDabc}, there are at least $\beta n^{h}$ copies of $D_{a,b,c}$ of type-$(1,2,3)$ in $G[X_1\cup X_2\cup X_3]$. These copies of $D_{a,b,c}$ give rise to an index vector, say $\mathbf{s}\in I_\mathcal{P}^\beta(D_{a,b,c})$. Moreover, since $(X_3,X_4)$ and $(X_4,X_2)$ are $(\varepsilon,d)$-regular, Lemma \ref{LEM-fewvxswithsmalld} implies that all but at most $2\varepsilon m$ vertices of $X_4$ have at least $(d-\varepsilon)|X_2|$ out-neighbors in $X_2$ and at least $(d-\varepsilon)|X_3|$ in-neighbors in $X_3$. Let $v$ be any such good vertex in $X_4$. Then there are at least $(1-2\varepsilon)m$ possible choices for $v$, and moreover $d^+(v,X_2),\ d^-(v,X_3)\geqslant (d-\varepsilon)m\geqslant dm/2$.

Since $\delta^0(G)\geqslant (1/2-\eta)n$ and $n/m\leqslant 2M$, we have $d(v,X_1)\geqslant m-2\eta n > 2m/3$, 
and hence $d^{\sigma}(v,X_1)\geqslant m/3$ for some $\sigma\in\{+,-\}$. Let $X^\ast_1=N^{\sigma}(v,X_1), 
X^\ast_2=N^+(v,X_2), 
X^\ast_3=N^-(v,X_3)$.
Clearly, each $X_i^\ast$ with $i\in[3]$ has size at least $dm/2$. By Lemma \ref{LEM-regulartosuper}, $(X^\ast_1,X^\ast_2)$, $(X^\ast_2,X^\ast_3)$, and $(X^\ast_3,X^\ast_1)$ are $(\sqrt\varepsilon,d/2)$-regular. Furthermore, since $\varepsilon\ll d$, Lemma \ref{LEM:findmanyDabc} shows that $G[X^\ast_1\cup X^\ast_2\cup X^\ast_3]$ contains at least $\sqrt{\beta}\, n^{h-1}$ copies of $D_{a,b,c-1}$ of type-$(2,3,1)$. By the choice of $X^\ast_1,X^\ast_2$, and $X^\ast_3$, each such copy together with $v$ forms a copy of $D_{a,b,c}$.

Recall that there are at least $(1-2\varepsilon)m$ choices for the vertex $v$. Therefore, there are at least
$$
(1-2\varepsilon)m\sqrt{\beta}\, n^{a+b+c-1}\geqslant \beta n^{a+b+c}
$$
copies of $D_{a,b,c}$, since $m\geqslant n/(2M)$ and $\beta\ll 1/M$. These copies yield an index vector $\mathbf{t}\in I_\mathcal{P}^\beta(D_{a,b,c})$. Then $\mathbf{s}-\mathbf{t}=\mathbf{e}_1-\mathbf{e}_2$ is a $2$-transferral in $L_\mathcal{P}^\beta(D_{a,b,c})$, contradicting our assumption. This completes the proof.
\end{proof}

For convenience, let $A=R_1$ and $B=V(R)\setminus R_1$. Then
$|A|,|B|\geqslant |R_i|\geqslant \alpha k/2$.
We first show that either $\text{cyc}(A,A,B)\leqslant \sqrt d\, k^3$ or $\text{cyc}(B,B,A)\leqslant \sqrt d\, k^3$ in $R$. Using this fact, we then prove that $G$ is $\gamma$-extremal. We may  assume w.l.o.g. that $\text{cyc}(A,A,B)\leqslant \sqrt d\, k^3$, since the argument in the case $\text{cyc}(B,B,A)\leqslant \sqrt d\, k^3$ is identical. Combining the fact that there is no $K^-_4$ of type-$(A,V(R),V(R),\overline{A})$ with Lemma \ref{LEM:triangle to extremal} applied with $\varepsilon=\sqrt d$ and $\alpha=\alpha/2$, we conclude that $R$ admits a $\gamma$-extremal partition $(V_1,V_2,V_3)$. Let $V^\ast_i=\bigcup_{j\in V_i}X_j$ for each $i\in[3]$, and put all vertices of $X_0$ into $V^\ast_1$. Clearly, $(V^\ast_1,V^\ast_2,V^\ast_3)$ is a partition of $V(G)$. A simple calculation shows that
$|V^\ast_i|=(k/3\pm O(\gamma k))m\pm |X_0|=n/3\pm O(\gamma n)$,
since $|V_i|=k/3\pm O(\gamma k)$. Combining
$e(V_{i+1},V_i)=O(\gamma k^2)$ with (iii) of Lemma \ref{LEM-reducedori}, we obtain
$e(V^\ast_{i+1},V^\ast_i)=O(\gamma n^2)$,
using the fact that $|X_0|\leqslant \varepsilon n\ll \gamma n$. Hence $(V^\ast_1,V^\ast_2,V^\ast_3)$ is a $\gamma$-extremal partition of $V(G)$, proving part (ii) of Lemma \ref{LEM:transferral}.

Thus, it remains to show that either $\text{cyc}(A,A,B)\leqslant \sqrt d\, k^3$ or $\text{cyc}(B,B,A)\leqslant \sqrt d\, k^3$ holds in $R$. If $\text{cyc}(A,A,B)\leqslant \sqrt d\, k^3$, then there is nothing to prove. Thus, we may assume w.l.o.g. that $\text{cyc}(A,A,B)> \sqrt d\, k^3$. Then there exists an edge $uv\in E_R(A,B)$ such that
$|N_R^{-,+}(uv,A)|\geqslant \sqrt d\, k^3/(|A||B|)\geqslant \sqrt d\, k$.
Moreover, we have $N_R^{-,+}(uv,B)=\emptyset$, since otherwise there would exist a cross $K_4^-$, contradicting Claim \ref{CLM-K4-}. For each $S\in\{A,B\}$, let
$S_1=N_R^{-,+}(uv,S)$, $S_2=N_R^{+,-}(uv,S)$, $S_3=N_R^{+,+}(uv,S)$, and $S_4=N_R^{-,-}(uv,S)$, and set
$S^r=S\setminus(S_1\cup S_2\cup S_3\cup S_4)$.
Note that $u\in A^r$ and $v\in B^r$. Clearly,
$|A_1|=|N_R^{-,+}(uv,A)|\geqslant \sqrt d\, k$,
$|B_1|=|N_R^{-,+}(uv,B)|=0$, and
$|A^r|+|B^r|\leqslant 8dk+2$ since $\delta^0(R)\geqslant(1/2-2d)k$.

For disjoint sets $X,Y\subseteq V(R)$, we write $X \Rightarrow Y$ to mean that all edges between $X$ and $Y$ are directed from $X$ to $Y$. We now determine the adjacency relations among several of these sets. It is easy to check that
$A_4\cup B_4\Rightarrow A_3\cup B_3$,
since otherwise $R$ would contain a cross $K_4^-$ of type-$(u,A_3\cup B_3,A_4\cup B_4,v)$. Similarly, we have
$A_1\Rightarrow A_2\cup A_3$
and
$B_2\cup B_4\Rightarrow A_1$,
since otherwise $R$ would contain a cross $K_4^-$ of type-$(A_2\cup A_3,A_1,u,v)$ or type-$(B_2\cup B_4,v,A_1,u)$. Note that for any pair of vertices $x,y\in V(R)$ (possibly $x=y$), all but at most $8dk$ vertices of $R$ are common neighbors of $x$ and $y$. Thus $x$ and $y$ have common neighbors in $A_1$, since $|A_1|\geqslant \sqrt d\, k>8dk$. This implies that $A_2\Rightarrow B_3$. Indeed, suppose that $b_3a_2$ is an edge from $B_3$ to $A_2$. Then there exists $a_1\in N_R^{-}(a_2,A_1)$, since $d(a_2,A)>0$ and $A_1 \Rightarrow A_2$. It follows that $R[\{a_1,a_2,v,b_3\}]$ is a cross $K_4^-$, contradicting Claim \ref{CLM-K4-}. By the same argument, we also have
$B_2\Rightarrow A_3$
and
$B_4\Rightarrow B_2$,
since otherwise there would exist a cross $K_4^-$ of type-$(A_1,A_3,B_2,v)$ or type-$(A_1,u,B_2,B_4)$. In summary, we have 

\smallskip
\begin{tasks}[label=\textbullet](3)
    \task $A_4\cup B_4\Rightarrow A_3\cup B_3$;
    \task $A_1\Rightarrow A_2\cup A_3$;
    \task $B_2\cup B_4\Rightarrow A_1$;
    \task $A_2\Rightarrow B_3$;
    \task $B_2\Rightarrow A_3$;
    \task $B_4\Rightarrow B_2$.
\end{tasks}
\smallskip

Next we claim that $\text{cyc}(B,B,A)\leqslant \sqrt d\, k^3$ in $R$. To prove this, we need the following result.

\begin{claim}\label{CLAIM:type of BBA}
\text{ }
\settasks{label=\upshape(\arabic*)}
\begin{tasks}(2)
    \task $\text{cyc}(B_2\cup B_3,B_4,A)=0$\rm{;}
    \task $\text{cyc}(B_2\cup B_4,B,A_1\cup A_3)=0$\rm{;}
    \task $\text{cyc}(B,B_3,A_2\cup A_4)=0$\rm{;}
    \task $\text{cyc}(B_3,B_2,A)=0$\rm{;}
    \task $\text{cyc}(B,B_2\cup B_3,A_1\cup A_4)=0$\rm{;}
    \task $\text{cyc}(B_2\cup B_4,B,A_2\cup A_3)=0$.
\end{tasks}
\end{claim}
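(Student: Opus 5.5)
All six statements follow from Claim~\ref{CLM-K4-}, the six adjacency relations listed just before the claim, and the definitions of $S_1,\dots,S_4$. Recall what these definitions say for a vertex $x$:
\begin{itemize}
\item $x\in S_1$ means $v\to x\to u$;
\item $x\in S_2$ means $u\to x\to v$;
\item $x\in S_3$ means $u\to x$ and $v\to x$;
\item $x\in S_4$ means $x\to u$ and $x\to v$.
\end{itemize}
Throughout, $\operatorname{cyc}(X,Y,Z)>0$ means there is a triangle $x\to y\to z\to x$ with $x\in X$, $y\in Y$, $z\in Z$.

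We will repeatedly use one observation. A copy of $K_4^-$ on $v_1,v_2,v_3,v_4$ consists of two triangles sharing the edge $v_2v_3$. If $v_1$ and $v_4$ lie on different sides of the partition $(A,B)$, it is forbidden by Claim~\ref{CLM-K4-}. If $v_1\in A=R_1$, this is exactly the stated type. If $v_1\in B$, then $v_1\in R_j$ for some $j\neq 1$ and $v_4\in A\subseteq\overline{R_j}$, so Claim~\ref{CLM-K4-} applies with $i=j$. So it suffices to exhibit, for each forbidden triangle, a second triangle on one of its edges whose extra vertex lies on the opposite side from the triangle's vertex not on that edge.

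\textbf{Items (1)--(3).} These are immediate orientation contradictions.
\begin{itemize}
\item In (1), the edge $x\to y$ with $x\in B_2\cup B_3$ and $y\in B_4$ contradicts $B_4\Rightarrow B_2$ and $B_4\Rightarrow B_3$.
\item In (2), the edge $z\to x$ with $z\in A_1\cup A_3$ and $x\in B_2\cup B_4$ contradicts $B_2\cup B_4\Rightarrow A_1$, $B_2\Rightarrow A_3$ and $B_4\Rightarrow A_3$.
\item In (3), the edge $y\to z$ with $y\in B_3$ and $z\in A_2\cup A_4$ contradicts $A_2\Rightarrow B_3$ and $A_4\Rightarrow B_3$.
\end{itemize}

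\textbf{Items (4)--(6).} These need a cross $K_4^-$.
\begin{itemize}
\item For (4), take $x\in B_3$, $y\in B_2$, $z\in A$ forming $x\to y\to z\to x$. Then $v\to x$ and $y\to v$, so $v\to x\to y\to v$ is a triangle on the edge $xy$. Together with $x\to y\to z\to x$ this is a $K_4^-$ with $v_1=v\in B$ and $v_4=z\in A$, a contradiction.
\item For (5), take $x\in B$, $y\in B_2\cup B_3$, $z\in A_1\cup A_4$. Then $u\to y$ and $z\to u$, so $y\to z\to u\to y$ is a triangle on the edge $yz$. This gives a cross $K_4^-$ with $v_1=x\in B$ and $v_4=u\in A$.
\item For (6), take $x\in B_2\cup B_4$, $y\in B$, $z\in A_2\cup A_3$, so $z\to x$. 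We look for $w\in A$ with $x\to w\to z$; then $z\to x\to w\to z$ and $x\to y\to z\to x$ give a cross $K_4^-$ with $v_1=y\in B$ and $v_4=w\in A$.
\begin{itemize}
\item If $x\in B_4$, take $w=u$: we have $x\to u$, and $u\to z$ for $z\in A_2\cup A_3$.
\item If $x\in B_2$ and $z\in A_3$, then $B_2\Rightarrow A_3$ already forbids $z\to x$.
\item If $x\in B_2$ and $z\in A_2$, take any $w\in A_1$, which exists since $|A_1|\geqslant\sqrt d\,k$. Then $x\to w$ by $B_2\Rightarrow A_1$, and $w\to z$ by $A_1\Rightarrow A_2$.
\end{itemize}
\end{itemize}

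The only step needing care is (6) with $x\in B_2$, $z\in A_2$. There $u$ and $v$ do not supply a third vertex on the correct side, and one must route through the nonempty set $A_1$; everything else is a direct case check.
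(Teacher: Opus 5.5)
\textbf{Gap in item (6).} Your overall strategy matches the paper's. Items (1)--(3) are read off directly from the $\Rightarrow$ relations. Items (4)--(6) are reduced to Claim~\ref{CLM-K4-} by completing the forbidden triangle to a cross $K_4^-$. Your (4) and (5) use the same $K_4^-$ as the paper with $v_1,v_4$ swapped, which your symmetry remark correctly justifies. In (6), using $u$ when $x\in B_4$, and $B_2\Rightarrow A_3$ when $x\in B_2$, $z\in A_3$, is a valid variant; the paper instead uses a vertex of $A_1$ throughout. All of these steps rely only on edges that genuinely exist: the triangle edges, and adjacencies to $u,v$ forced by the definitions of the $S_i$.

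The problem is the subcase $x\in B_2$, $z\in A_2$ of (6). You take an arbitrary $w\in A_1$ and conclude $x\to w$ from $B_2\Rightarrow A_1$ and $w\to z$ from $A_1\Rightarrow A_2$. But $X\Rightarrow Y$ only says that every edge \emph{present} between $X$ and $Y$ is directed from $X$ to $Y$. Since $R$ is an oriented graph and not a tournament, an arbitrary $w\in A_1$ need not be adjacent to $x$ or to $z$, so nonemptiness of $A_1$ is not enough. What you need is a \emph{common neighbor} of $x$ and $z$ inside $A_1$, and this is where the size of $A_1$ actually matters. Since $\delta^0(R)\geqslant(1/2-2d)k$, each vertex of $R$ is non-adjacent to fewer than $4dk$ other vertices. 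Hence fewer than $8dk$ vertices of $A_1$ fail to be common neighbors of $x$ and $z$, and $|A_1|\geqslant\sqrt d\,k>8dk$ guarantees a suitable $w\in A_1$. With $w$ chosen this way, your orientation argument goes through and gives the cross $K_4^-$ on $\{y,z,x,w\}$. This is exactly how the paper argues (6).
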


\begin{proof}[\textbf{Proof of Claim \ref{CLAIM:type of BBA}}]
It is easy to verify (1)--(3), since
$B_4\Rightarrow B_2\cup B_3$,
$B_2\cup B_4\Rightarrow A_1\cup A_3$,
and
$A_2\cup A_4\Rightarrow B_3$.
If (4) fails, then there exists a cross $K_4^-$ of type-$(A,B_3,B_2,v)$, since $B_2\Rightarrow v$ and $v\Rightarrow B_3$.
Condition (5) can be verified similarly, for otherwise there exists a cross $K_4^-$ of type-$(u,B_2\cup B_3,A_1\cup A_4,B)$.
Moreover, since $A_1\Rightarrow A_2\cup A_3$, $B_2\cup B_4\Rightarrow A_1$, and $|A_1|>8dk$, condition (6) must also hold; otherwise, some vertex in $A_1$ forms a cross $K_4^-$ of type-$(A_1,A_2\cup A_3,B_2\cup B_4,B)$.
This completes the proof.
\end{proof}

Since $|A^r|+|B^r|\leqslant 8dk+2$, there are at most $9dk^3$ directed triangles containing at least one vertex from $A^r\cup B^r$. Then Claim \ref{CLAIM:type of BBA} implies that
$$
\text{cyc}(B,B,A)\leqslant 9dk^3+\text{cyc}(B_3,B_3,A_3)+\text{cyc}(B_4,B_4,A_4).
$$
It remains to show that
$\text{cyc}(B_3,B_3,A_3),\text{cyc}(B_4,B_4,A_4)\leqslant 17dk^3$.
Recall that
$A_1\cup A_4\cup B_4\Rightarrow A_3$
and
$N_R^{-,+}(uv)\cup N_R^{-,-}(uv)=A_1\cup A_4\cup B_4\subseteq \overline{B_3}$.
Hence, the degree condition $\delta^0(R)\geqslant(1/2-2d)k$ implies that $d^-(x,\overline{B_3})\geqslant d_R^{-,+}(uv)+d_R^{-,-}(uv)-2dk$ 
for each $x\in A_3$.
By Lemma \ref{LEM:-+=+-} and the degree condition $\delta^0(R)\geqslant(1/2-2d)k$, we have
$d_R^{-,+}(uv)=d_R^{+,-}(uv)\pm 8dk$
and
$d_R^{-,-}(uv)=d_R^{+,+}(uv)\pm 8dk$.
Therefore,
\begin{align*}
d^-(x,\overline{B_3})
&\geqslant \bigl(d_R^{-,+}(uv)+d_R^{+,-}(uv)+d_R^{-,-}(uv)+d_R^{+,+}(uv)-16dk\bigr)/2-2dk\\
&\geqslant (k-8dk-2)/2-10dk\\
&=(1/2-15d)k.
\end{align*}
By the degree condition  again, we obtain $d^-(x)\leqslant (1/2+2d)k$, and hence
$d^-(x,B_3)\leqslant 17dk$.
It follows that $e(B_3,A_3)\leqslant 17dk^2$, which in turn implies that
$\text{cyc}(B_3,B_3,A_3)\leqslant 17dk^3$.
By symmetry, considering the out-degree of any vertex $y\in B_4$, we similarly obtain
$\text{cyc}(B_4,B_4,A_4)\leqslant 17dk^3$,
since $B_4\Rightarrow A_1\cup A_3\cup B_3$.
In summary,
$$
\text{cyc}(B,B,A)\leqslant \sqrt d\, k^3,
$$
since $d\ll 1$.
This completes the proof.
\end{proof}


\section{Proof of Lemma \ref{LEM:abcbalance}}\label{SEC-5.3}

Before proving Lemma \ref{LEM:abcbalance}, we introduce some necessary definitions and tools. Their proofs will be given at the end of this section. Given a partition $(V_1,V_2,V_3)$ of $V(G)$, we say that a vertex $v\in V_i$ is \emph{$\gamma$-good} if
$$
d^\pm(v,V_{i\pm 1}) \geqslant |V_{i\pm 1}|-O(\gamma n),
$$
and \emph{$\gamma$-bad} otherwise.

The following lemma shows that every sufficiently large oriented graph $G$ with large minimum semi-degree contains only relatively few $\gamma$-bad vertices in each part of a $\gamma$-extremal partition.

\begin{lemma}\label{LEM:extremal-badvertex}
Let $0<1/n \ll \eta \ll \gamma \ll 1$. 
Suppose that $G$ is an $n$-vertex oriented graph with $\delta^0(G)\geqslant(1/2-\eta)n$.
Then for every $\gamma$-extremal partition of $V(G)$, the number of $\gamma^{1/2}$-bad vertices is at most $\gamma^{1/3}n$.
\end{lemma}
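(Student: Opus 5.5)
We will argue by double counting missing ``forward'' adjacencies. Fix a $\gamma$-extremal partition $(V_1,V_2,V_3)$, so $|V_i|=n/3\pm C\gamma n$ and $e(V_{i+1},V_i)\leqslant C\gamma n^2$ for some absolute constant $C$. The plan is to show that the total deficiency
$$
\sum_{i\in[3]}\sum_{v\in V_i}\Bigl(\bigl(|V_{i+1}|-d^+(v,V_{i+1})\bigr)+\bigl(|V_{i-1}|-d^-(v,V_{i-1})\bigr)\Bigr)
$$
is $O(\gamma n^2)$. Once this holds, Markov's inequality finishes the argument: a $\gamma^{1/2}$-bad vertex contributes at least $K\gamma^{1/2}n$ to the sum (with $K$ the implicit constant in the definition of bad, chosen large relative to $C$). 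Hence the number of bad vertices is at most $O(\gamma n^2)/(K\gamma^{1/2}n)=O(\gamma^{1/2}n)\leqslant\gamma^{1/3}n$, since $\gamma\ll1$.

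To bound the deficiency, split the quantity $|V_{i+1}|-d^+(v,V_{i+1})$ for $v\in V_i$ into two parts: the non-neighbours of $v$ in $V_{i+1}$, and the in-neighbours $N^-(v,V_{i+1})$. Since $\delta^0(G)\geqslant(1/2-\eta)n$, every vertex has at most $2\eta n$ non-neighbours, so the first part sums to at most $2\eta n^2$ in total. The second part sums to exactly $e(V_{i+1},V_i)\leqslant C\gamma n^2$. The in-deficiency $|V_{i-1}|-d^-(v,V_{i-1})$ is handled symmetrically: it is at most $2\eta n$ plus $d^+(v,V_{i-1})$, and these terms sum to $e(V_i,V_{i-1})$, which is again $O(\gamma n^2)$ by (EP1) with the index shifted. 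Summing over $i$ gives a total of $O(\gamma n^2)+6\eta n^2=O(\gamma n^2)$.

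The only genuine subtlety is that the notion of ``$\gamma$-good'' uses an implicit $O(\cdot)$ constant. We must fix it consistently, namely larger than the constant implicit in (EP1), so that bad vertices really do carry deficiency $\Omega(\gamma^{1/2}n)$. We do not expect a real obstacle beyond this bookkeeping: the degree condition enters only to make non-adjacencies negligible, and the bound does not even require the approximate part sizes from (EP1).
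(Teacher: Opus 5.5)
Your proposal is correct and is essentially the paper's argument. The paper likewise combines the $2\eta n$ bound on non-neighbours with \ref{EP1} to get $e(V_i,V_{i+1})\geqslant |V_i||V_{i+1}|-O(\gamma n^2)$, and then counts the vertices missing at least $\gamma^{1/2}n$ forward out-edges (respectively in-edges), which is exactly your Markov step. Your insistence that the implicit constant in ``bad'' exceed the one in \ref{EP1} is unnecessary: any fixed constant works since $\gamma\ll 1$, and the paper simply takes the threshold to be $\gamma^{1/2}n$.
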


Since the minimum semi-degree in our problem is very large, we are able to reassign all $\gamma$-bad vertices while preserving certain desirable properties of the resulting partition.

\begin{lemma}\label{LEM:fexbad}
    Let $0<1/n\ll \eta \ll\gamma \ll 1$. Suppose that $G$ is an $n$-vertex oriented graph with $\delta^0(G)\geqslant(1/2-\eta)n$. If $G$ is $\gamma$-extremal, then $V(G)$ has a $\gamma^{1/3}$-superextremal partition. 
\end{lemma}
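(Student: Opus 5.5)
Starting from a $\gamma$-extremal partition $(V_1,V_2,V_3)$, I would first improve it to one in which almost every vertex is good, and then reassign the bad vertices one at a time. By Lemma \ref{LEM:extremal-badvertex}, all but at most $\gamma^{1/3}n$ vertices are $\gamma^{1/2}$-good. Let $B$ be the set of $\gamma^{1/2}$-bad vertices and set $W_i=V_i\setminus B$. For $w\in W_i$ we have
$$d^+(w,V_{i+1})\geqslant |V_{i+1}|-O(\gamma^{1/2}n) \quad\text{and}\quad d^-(w,V_{i-1})\geqslant |V_{i-1}|-O(\gamma^{1/2}n).$$
Since $|V_j|=n/3\pm O(\gamma n)$, these degrees are about $n/3$, which is far more than the $n/6$ required in \ref{EP2}.

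Now take a bad vertex $v$. I would show that there is an index $j$ with
$$d^+(v,W_{j+1})\geqslant n/6-O(\gamma^{1/3}n) \quad\text{and}\quad d^-(v,W_{j-1})\geqslant n/6-O(\gamma^{1/3}n),$$
and then place $v$ into part $j$. To find $j$, write $x_i=d^+(v,W_i)$ and $y_i=d^-(v,W_i)$ for $i\in[3]$. The semi-degree condition gives $\sum_i x_i,\sum_i y_i\geqslant n/2-O(\gamma^{1/3}n)$. The orientation gives $x_i+y_i\leqslant |W_i|\leqslant n/3+O(\gamma n)$, since $G$ has no 2-cycles. We need some $j$ with $x_{j+1}\geqslant n/6-o$ and $y_{j-1}\geqslant n/6-o$, where $o$ denotes the error term.

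Suppose no such $j$ exists. Then for each of the three cyclic pairs $(j+1,j-1)$, either $x_{j+1}<n/6$ or $y_{j-1}<n/6$. A short case analysis then forces a contradiction with $\sum_i x_i\approx\sum_i y_i\gtrsim n/2$ together with $x_i+y_i\leqslant n/3$. For example, if $x_1,x_2<n/6$, then $x_3>n/6$, so $y_3<n/6$. Walking around the cycle in this way, each case pushes some $x_i$ or $y_i$ above $n/3$, or makes a sum fall short.

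If this pure counting turns out to be insufficient, I would add a second ingredient. The vertices of $W_{i+1}$ send edges almost exclusively into $W_{i+2}$ and receive them from $W_i$. So if $v$ has many out-neighbours in $W_i$ and many in $W_{i+1}$, this structure restricts the remaining configurations. I expect this averaging step to be the main obstacle, because the bound $n/6$ is exactly the threshold at which counting alone becomes tight.

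After placing every bad vertex, call the new parts $V_i'$. We have $|V_i'|=n/3\pm O(\gamma^{1/3}n)$. Moving at most $\gamma^{1/3}n$ vertices changes each $e(V_{i+1}',V_i')$ by at most $\gamma^{1/3}n^2$, so \ref{EP1} holds with $\gamma^{1/3}$. For \ref{EP2}:
\begin{itemize}
\item each good vertex keeps degree about $n/3$ into its neighbouring parts, minus at most $\gamma^{1/3}n$ for the moved vertices;
\item each reassigned vertex has degree at least $n/6-O(\gamma^{1/3}n)$ into the $W$ part of each relevant neighbouring part, and these vertices stay in $V_{j\pm1}'$.
\end{itemize}
Hence $(V_1',V_2',V_3')$ is $\gamma^{1/3}$-superextremal.
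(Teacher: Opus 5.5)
Your route is the paper's: use Lemma~\ref{LEM:extremal-badvertex} to get at most $\gamma^{1/3}n$ bad vertices, then move each bad vertex $v$ into a part $j$ where it has roughly $n/6$ out-neighbours in part $j+1$ and $n/6$ in-neighbours in part $j-1$. Measuring degrees into $W_i=V_i\setminus B$ rather than $V_i$ is a sensible refinement. The $W_i$ never move, so the \ref{EP2} bounds of reassigned vertices survive all the other moves. Your checks of \ref{EP1}, and of \ref{EP2} for good vertices, are fine.

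The gap is the step that carries the lemma: the existence of $j$ is not proved. Your one worked case stops before reaching a contradiction, and you then fall back on an unspecified second ingredient.

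The missing observation is a lower bound for each part separately. Since $v$ has at most $2\eta n$ non-neighbours,
$x_i+y_i\geqslant |W_i|-2\eta n\geqslant n/3-2\gamma^{1/3}n$ for every $i$. This also follows from your own constraints, because $\sum_i(x_i+y_i)\geqslant n-O(\gamma^{1/3}n)$ while each term is at most $n/3+O(\gamma n)$. So, with $L=n/6-\gamma^{1/3}n$, every $i$ has $x_i\geqslant L$ or $y_i\geqslant L$. The cyclic walk then closes:
\begin{itemize}
\item If no $j$ works, then for every $k$ we have $x_k<L$ or $y_{k+1}<L$.
\item If some $x_k\geqslant L$, then $y_{k+1}<L$, hence $x_{k+1}\geqslant L$. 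Going around the cycle, every $y_i<L$, so $\sum_i y_i<n/2-3\gamma^{1/3}n$. This contradicts $\sum_i y_i\geqslant(1/2-\eta)n-|B|$.
\item If no $x_k\geqslant L$, the same computation contradicts the lower bound on $\sum_i x_i$.
\end{itemize}
This is exactly the paper's argument (``for each $i$, either $d^+(v,V_i)$ or $d^-(v,V_i)$ is at least $n/6-O(\gamma n)$''). It uses nothing about $G$ beyond the degrees of $v$, so your structural fallback is unnecessary.

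Note also that the negation must use the slackened threshold $L$, not $n/6$ as you wrote it. With threshold exactly $n/6$, the values $x_i=y_i=n/6-1$ are consistent with all your constraints yet fail every pair, so no contradiction is available.
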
 

To finish the proof of Lemma \ref{LEM:abcbalance}, we still need the following embedding lemma, which can be used to adjust the size of each part in a partition.

\begin{lemma}\label{LEM:greedily Dabc}
    Given $a,b,c\in\mathbb{N}$ with $1\leqslant a\leqslant b\leqslant c$, let $0<1/n \ll \eta \ll \beta \ll \gamma \ll 1$. Suppose that $G$ is an $n$-vertex oriented graph with $\delta^0(G)\geqslant(1/2-\eta)n$. If $V(G)$ has a $\gamma$-extremal partition $(V_1,V_2,V_3)$, then for each $i\in[3]$, $G$ contains at least $\beta n$ disjoint copies of $D_{a,b,c}$ of type-$(i,i+1,i+2)$.
\end{lemma}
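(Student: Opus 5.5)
The plan is to find many copies of type-$(i,i+1,i+2)$ using the regularity-based embedding Lemma~\ref{LEM:findmanyDabc}, and then pick disjoint copies greedily. By cyclic symmetry of Definition~\ref{DEF:extremal} (relabel the parts), it suffices to treat $i=1$: we want at least $\beta n$ pairwise disjoint copies of $D_{a,b,c}$ with $TT_a\subseteq V_1$, $TT_b\subseteq V_2$ and $TT_c\subseteq V_3$, where all edges go from the $TT_a$ part to the $TT_b$ part, from the $TT_b$ part to the $TT_c$ part, and from the $TT_c$ part to the $TT_a$ part.

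\textbf{Step 1: dense forward pairs.} The parts satisfy $|V_j|=n/3\pm O(\gamma n)$ and $e(V_{j+1},V_j)=O(\gamma n^2)$. Every vertex misses at most $2\eta n$ edges, so the number of non-adjacent pairs between $V_j$ and $V_{j+1}$ is at most $2\eta n^2$. Hence
$$e(V_j,V_{j+1})\geqslant |V_j||V_{j+1}|-O(\gamma n^2)\geqslant (1-O(\gamma))|V_j||V_{j+1}|.$$
That is, the forward pairs $(V_1,V_2)$, $(V_2,V_3)$, $(V_3,V_1)$ are nearly complete.

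\textbf{Step 2: find many copies via regular pairs.} I would extract regular pairs in one of two ways.
\begin{itemize}
\item Apply the Diregularity Lemma~\ref{LEM:reguler} with the partition $(V_1,V_2,V_3)$ as $U_1,U_2,U_3$. Averaging shows that for typical clusters $X\subseteq V_1$, $Y\subseteq V_2$, $Z\subseteq V_3$, the pairs $(X,Y)$, $(Y,Z)$, $(Z,X)$ are $(\varepsilon,d)$-regular.
\item Alternatively, use directly that a pair of density $1-O(\gamma)$ is $O(\gamma^{1/3})$-regular with density at least $1/2$. This holds because for subsets of size at least $\gamma^{1/3}|V_j|$ the missing edges are a negligible fraction.
\end{itemize}
The second route is simpler: take $A=V_1$, $B=V_2$, $C=V_3$ with $\varepsilon'=\gamma^{1/3}$ and $d=1/2$, and $\xi=1/4$. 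One must respect the hierarchy $\eta\ll\beta'\ll\xi\ll\varepsilon'\ll d$ of Lemma~\ref{LEM:findmanyDabc}. Since $\gamma\ll 1$, we have $\gamma^{1/3}\ll 1/2$, but $\xi$ would then need to be smaller than $\varepsilon'$.

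\textbf{Step 2 continued: fixing the hierarchy.} To meet this requirement, I would instead take $A,B,C$ to be arbitrary subsets of $V_1,V_2,V_3$ of size exactly $\xi n$, with $\beta\ll\xi\ll\gamma$. I would still need regularity of these subpairs for a fixed $\varepsilon$ independent of $\xi$, and this is the delicate point. Rather than fighting this, I would use the Diregularity Lemma with $\varepsilon\ll d\ll\gamma$ and $M''=3$, and find three clusters $X_1\subseteq V_1$, $X_2\subseteq V_2$, $X_3\subseteq V_3$ forming a triangle in the reduced oriented graph $R$. Such a triangle exists: by Lemma~\ref{LEM-reducedori}(iii) the reduced graph restricted to the forward pairs has density $1-O(\gamma)$, so a greedy or counting argument gives many such triangles. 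Then Lemma~\ref{LEM:findmanyDabc} with $\xi=1/(2M)$ yields at least $\beta' n^{a+b+c}$ copies of type-$(1,2,3)$.

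\textbf{Step 3: pass to disjoint copies.} The key point is robustness: the above holds inside any $G'=G-W$ with $|W|\leqslant h\beta n$, where $h=a+b+c$. Deleting $W$ keeps the partition $\gamma$-extremal with essentially the same parameters, and $G'$ still satisfies $\delta(G')\geqslant(1-3\eta)|G'|$, which is all that Lemma~\ref{LEM:findmanyDabc} needs. So I would repeat the following step: while fewer than $\beta n$ copies have been chosen, delete the vertices of the chosen copies and find a new copy of type-$(1,2,3)$ in the remainder. Alternatively, I can count directly: each vertex lies in at most $n^{h-1}$ copies, so removing at most $h\beta n$ vertices destroys at most $h\beta n^{h}<\beta' n^h$ copies, provided $\beta\ll\beta'$.

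The main obstacle is Step~2: guaranteeing a triangle of regular pairs across the three parts in the reduced graph. I expect this to follow from the near-completeness of the forward pairs combined with Lemma~\ref{LEM-reducedori}(iii).
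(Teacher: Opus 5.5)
The obstacle you identify in Step 2 is not real. The route you set aside in your second bullet is exactly the paper's proof. In Lemma~\ref{LEM:findmanyDabc} the hypothesis $|A|,|B|,|C|\geqslant \xi n$ is only a lower bound, so nothing forces $\xi=1/4$. Choose $\xi$ with $\beta\ll\xi\ll\gamma$; then $|V_j|\geqslant n/4\geqslant \xi n$ holds trivially.

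Your Step 1 estimate gives, for $A'\subseteq V_j$ and $B'\subseteq V_{j+1}$ with $|A'|\geqslant\varepsilon'|V_j|$ and $|B'|\geqslant \varepsilon'|V_{j+1}|$, forward density at least $1-O(\gamma/\varepsilon'^2)$. Take $\varepsilon'=\gamma^{1/4}$ rather than $\gamma^{1/3}$, so that the constant hidden in $e(V_{j+1},V_j)=O(\gamma n^2)$ is absorbed. Then $(V_j,V_{j+1})$ is $(\gamma^{1/4},1/2)$-regular.

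Now apply Lemma~\ref{LEM:findmanyDabc} to $(V_1,V_2,V_3)$, with $2\eta$ in place of $\eta$ (since $\delta(G)\geqslant(1-2\eta)n$) and $(a+b+c)\beta$ in place of $\beta$. This gives $(a+b+c)\beta n^{a+b+c}$ copies of type-$(1,2,3)$. Your Step 3 greedy/counting argument then yields $\beta n$ disjoint ones. This is essentially the paper's argument. The paper also invokes Lemma~\ref{LEM:extremal-badvertex} there, but the regularity check needs only your Step 1.

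The Diregularity detour you actually commit to is unnecessary, and its key step is not supported by what you cite. Lemma~\ref{LEM-reducedori}(iii) only says $e_R(S,T)>\gamma'|R|^2$ whenever $e_G(S^\ast,T^\ast)\geqslant 3\gamma' n^2$. For the forward pairs this gives constant density in $R$, not density $1-O(\gamma)$. Constant density in three bipartite graphs does not by itself force a cross triangle.

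There is a further problem. With $d\ll\gamma$, the $O(\gamma n^2)$ backward edges may give many forward cluster pairs backward density $\geqslant d$ as well. The stated lemma says nothing about which orientation the oriented graph $R$ keeps for such double edges.

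If you want this route, work in $R'$ instead:
\begin{itemize}
\item Every cluster pair is $\varepsilon$-regular in $G'$ with density $0$ or at least $d$.
\item $G'$ loses at most $(d+\varepsilon)n$ edges at each vertex, so at most $O(\gamma+d+\varepsilon)k^2$ cluster pairs from $V_j$ to $V_{j+1}$ have density $0$.
\item Each $V_j$ contains at least $k/4$ clusters, so counting triples gives clusters $X_j\subseteq V_j$ with $X_1X_2X_3$ a triangle of $R'$.
\end{itemize}
Lemma~\ref{LEM:findmanyDabc} then applies to $(X_1,X_2,X_3)$.
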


The following lemma will be used to ``balance'' the size of the sets in the extremal graph. Its proof is given in Appendix \ref{APP:A}.

\begin{lemma}\label{LEM-abc system2}   Given $a,b,c\in \mathbb{N}$ with $1\leqslant a\leqslant b\leqslant c$, let $h=a+b+c$ and $\Delta=c^2-ab$. If $\gcd(h,\Delta)=1$, then the following system of congruences has an integer solution.
    \begin{align}\label{system of abc1}
\begin{cases}
ax + cy + bz \equiv r_1 \pmod h\\
bx + ay + cz \equiv r_2 \pmod h\\
cx + by + az \equiv r_3 \pmod h,
\end{cases}
\end{align} 
where every $r_i$ is an integer and $r_1+r_2+r_3 \equiv 0\pmod h$.
\end{lemma}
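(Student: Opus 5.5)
The plan is to treat the system as a linear map over $\mathbb{Z}_h$ and show that its image is exactly the subgroup $W=\{(r_1,r_2,r_3)\in\mathbb{Z}_h^3: r_1+r_2+r_3\equiv 0\}$. Write $M$ for the circulant-type coefficient matrix with rows $(a,c,b)$, $(b,a,c)$, $(c,b,a)$. Every column of $M$ sums to $a+b+c=h\equiv 0$, so the image of $M$ over $\mathbb{Z}_h$ is contained in $W$. Since $|W|=h^2$, it suffices to show that the image has at least $h^2$ elements, or, more concretely, that it contains two generators of $W$.

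First reduce to two equations. Because the three left-hand sides add up to $h(x+y+z)\equiv 0$ and $r_1+r_2+r_3\equiv 0$, the third congruence follows from the first two. Next use the translation invariance $(x,y,z)\mapsto(x+t,y+t,z+t)$, which changes each left-hand side by $th\equiv 0$. This lets us set $z=0$, and the problem becomes the $2\times 2$ system
\[ ax+cy\equiv r_1,\qquad bx+ay\equiv r_2 \pmod h. \]
Its determinant is $a^2-bc$. This is not $\Delta$, so I will pick the normalization more carefully. Setting $y=0$ instead gives rows $(a,b)$ and $(b,c)$ with determinant $ac-b^2$. Setting $x=0$ gives rows $(c,b)$ and $(a,c)$ with determinant $c^2-ab=\Delta$.

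So we normalize $x=0$ by subtracting $x$ from all three variables. The first two congruences become
\[ cy+bz\equiv r_1,\qquad ay+cz\equiv r_2 \pmod h, \]
with determinant $c^2-ab=\Delta$. Since $\gcd(h,\Delta)=1$, $\Delta$ is invertible modulo $h$, and Cramer's rule (the adjugate times $\Delta^{-1}$) gives a solution $(y,z)$ for every pair $(r_1,r_2)$. The third congruence then holds automatically by the column-sum identity, because $cx+by+az\equiv -(\text{first}+\text{second})\equiv -(r_1+r_2)\equiv r_3$.

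The only delicate step is choosing which variable to eliminate so that the resulting $2\times 2$ determinant is exactly $\Delta$. The rest is checking that the third congruence is implied and that integer lifts of the solutions modulo $h$ are genuine integer solutions, both of which are immediate.
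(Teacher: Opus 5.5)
Your argument is correct, and it takes a more direct route than the paper. The paper first proves an auxiliary lemma that the system with right-hand side $(1,-1,0)$ is solvable. It exhibits $(x_0,y_0,z_0)=\bigl((a+c)\Delta^{-1},-(a+b)\Delta^{-1},0\bigr)$, using the congruences $c^2-ab\equiv a^2-bc\equiv b^2-ac \pmod h$. It then uses the cyclic symmetry of the coefficient matrix: the shift $(z_0,x_0,y_0)$ solves the system with right-hand side $(0,1,-1)$. Setting $x_1\equiv r_1x_0-r_3z_0$, $y_1\equiv r_1y_0-r_3x_0$, $z_1\equiv r_1z_0-r_3y_0$ then realizes any $(r_1,r_2,r_3)$ with $r_1+r_2+r_3\equiv 0$.

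That is exactly the ``two generators of $W$'' strategy you mention at the outset and then set aside. You instead note that the third congruence is implied by the other two (the columns sum to $h$). You then set $x=0$ and invert the $2\times 2$ minor of determinant $\Delta$ for arbitrary $(r_1,r_2)$, which removes the cyclic-shift bookkeeping.

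What the paper's detour buys is the auxiliary lemma together with its converse: the $(1,-1,0)$ system has no solution when $\gcd(h,\Delta)>1$. That converse is reused in the lower-bound construction of Proposition~\ref{PRO-abcexamplegraph}. For the present statement, your proof is shorter.

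Two small remarks. First, translation invariance is not needed for existence, since you may simply choose $x=0$. Second, the normalization $z=0$ would have worked too. Since $(c^2-ab)-(a^2-bc)=(c-a)h$, we have $a^2-bc\equiv\Delta \pmod h$, and likewise $ac-b^2\equiv-\Delta$. The $z=0$ normalization is precisely the one the paper's particular solution uses.
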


Indeed, the systems of congruences in \eqref{system of abc1} are closely related to the different types of $D_{a,b,c}$. The variables $x,y,z$ represent the numbers of copies of the corresponding types in a tiling, while the constants appearing in the systems are closely tied to the balance conditions of the tiling.

\smallskip

Equipped with the above lemmas, we are ready to give the proof of Lemma \ref{LEM:abcbalance}.
Here, we recall that $R_h(x)$ is the remainder of $x$ modulo $h$.

\begin{proof}[\textbf{Proof of Lemma \ref{LEM:abcbalance}}]
Given $a,b,c\in \mathbb{N}$ with $1\leqslant a\leqslant b\leqslant c$, set $h=a+b+c$ and choose
$$0<1/n \ll \eta \ll \gamma \ll 1.$$
Suppose that $G$ is an $n$-vertex oriented graph with $\delta^0(G)\geqslant (1/2-\eta)n$ and that $G$ is $\gamma$-extremal. By Lemma \ref{LEM:fexbad}, there exists a $\gamma^{1/3}$-superextremal partition of $V(G)$, say $(U_1,U_2,U_3)$. Moreover, by Lemma \ref{LEM:greedily Dabc}, $G$ contains linearly many disjoint copies of $D_{a,b,c}$ of type-$(i,i+1,i+2)$ for each $i\in[3]$.

Note that $R_h(|U_1|)\leqslant h-1$, and each copy of $D_{a,b,c}$ of type-$(1,2,3)$ uses exactly $a$ vertices of $U_1$. Thus, after removing a constant-sized family $\mathcal{H}_1$ of disjoint copies of $D_{a,b,c}$ of type-$(1,2,3)$, we have $R_h(|U_1\backslash V(\mathcal{H}_1)|)\leqslant a-1.$ 
For each $i\in[3]$, let $S_i$ be any subset of $U_i\backslash V(\mathcal{H}_1)$ such that
$$|S_i|=R_h(|U_i\backslash V(\mathcal{H}_1)|).$$
Clearly, $|S_1|\leqslant a-1$ and $|S_2|,|S_3|\leqslant h-1$.

If at least one of $|S_2|$ and $|S_3|$ is at most $b+c-1$, then set $\mathcal{H}=\mathcal{H}_1$, let $S=S_1\cup S_2\cup S_3$, and define $V_i=U_i\backslash (S\cup V(\mathcal{H}))$ 
for each $i\in[3]$. We claim that $\mathcal{H}$ and $S$ are the desired tiling and set, respectively. Indeed, $|S|\leqslant (a-1)+(b+c-1)+(h-1)=2h-3$. 
Since $S\cup V(\mathcal{H})$ has constant size, the partition $(V_1,V_2,V_3)$ of
$V(G)\backslash (S\cup V(\mathcal{H}))$ is clearly $\gamma^{1/3}$-superextremal. Moreover, $|V_i|\equiv 0\pmod h$ for each $i\in[3]$. This completes the proof in this case.

Thus we may assume that $|S_2|,|S_3|\geqslant b+c$. Applying Lemma \ref{LEM:greedily Dabc} again, we can find two disjoint copies of $D_{a,b,c}$ in $G-V(\mathcal{H}_1)$, one of type-$(1,2,3)$ and the other of type-$(2,3,1)$. Let $\mathcal{H}_2$ consist of these two copies, and set $\mathcal{H}=\mathcal{H}_1\cup \mathcal{H}_2$. For each $i\in[3]$, let $S_i^{\prime}$ be any subset of $U_i\backslash V(\mathcal{H})$ such that
$|S_i^{\prime}|=R_h(|U_i\backslash V(\mathcal{H})|)$. 
Since $h=a+b+c$ and $a\leqslant b\leqslant c$, we have $|S_1^{\prime}|=h+|S_1|-(a+c)\leqslant a+b-1,|S_2^{\prime}|=|S_2|-(a+b),|S_3^{\prime}|=|S_3|-(b+c)$. 
Moreover, as $|S_2|,|S_3|\leqslant h-1$, it follows that
$|S_2^{\prime}|+|S_3^{\prime}|\leqslant a+c-2$,
 and hence
$$\sum_{i\in[3]}|S_i^{\prime}|\leqslant (a+b-1)+(a+c-2)\leqslant 2h-3.$$
It is straightforward to check that $\mathcal{H}$ and $S_1^{\prime}\cup S_2^{\prime}\cup S_3^{\prime}$ are the desired tiling and set, respectively.

We now prove the ``in particular'' part. Let $S$ be any subset of $V(G)$ with $|S|\leqslant h-1$ such that $|V(G)\backslash S|$ is divisible by $h$, and set
$r_i=R_h(|U_i\backslash S|)$ 
for each $i\in[3]$. Since $\gcd(h,c^2-ab)=1$, the system of congruences \eqref{system of abc1} in Lemma \ref{LEM-abc system2} has an integer solution, say $(x_0,y_0,z_0)$. Note that $(R_h(x_0),R_h(y_0),R_h(z_0))$ is also an integer solution to \eqref{system of abc1}, so we may assume w.l.o.g. that
$0\leqslant x_0,y_0,z_0\leqslant h-1$. 
In particular, $x_0,y_0,$ and $z_0$ are all bounded by a constant. By Lemma \ref{LEM:greedily Dabc}, we can find a $D_{a,b,c}$-tiling $\mathcal{H}$ consisting of $x_0$ copies of $D_{a,b,c}$ of type-$(1,2,3)$, $y_0$ copies of type-$(2,3,1)$, and $z_0$ copies of type-$(3,1,2)$. Let
$V_i=U_i\backslash (S\cup V(\mathcal{H}))$ 
for each $i\in[3]$. It is not difficult to check that
$|U_i\cap V(\mathcal{H})|\equiv r_i\pmod{h}$ 
for each $i\in[3]$, and hence $|V_i|\equiv 0\pmod h$. Therefore, $\mathcal{H}$ and $S$ are the desired $D_{a,b,c}$-tiling and set, which completes the proof.
\end{proof}

As guaranteed, we now complete the proofs of the aforementioned tools in the remainder of this subsection.

\begin{proof}[\textbf{Proof of Lemma \ref{LEM:extremal-badvertex}}]
Let $0<1/n\ll\eta\ll\gamma\ll 1$, and let $G$ be an $n$-vertex oriented graph with $\delta^0(G)\geqslant (1/2-\eta)n$. Fix a $\gamma$-extremal partition $(V_1,V_2,V_3)$ of $V(G)$. Since $\delta^0(G)\geqslant (1/2-\eta)n$, every vertex $v\in V(G)$ misses at most $2\eta n$ edges in $G$. Therefore, for each $i\in[3]$, we have
$$
e(V_i,V_{i+1})\geqslant |V_i|(|V_{i+1}|-2\eta n)-e(V_{i+1},V_i)\overset{\scriptstyle\text{\ref{EP1}}}{\geqslant} |V_i||V_{i+1}|-O(\gamma n^2).
$$
We claim that for each $i\in[3]$, there are at most $\gamma^{1/3}n/6$ vertices $v\in V_i$ with $d^+(v,V_{i+1})\leqslant |V_{i+1}|-\gamma^{1/2}n$. Indeed, this follows immediately from the fact that every such $v$ contributes at least $\gamma^{1/2}n$ missing edges to $V_{i+1}$, together with $\gamma\ll 1$. By symmetry, for each $i\in[3]$, there are at most $\gamma^{1/3}n/6$ vertices in $V_{i+1}$ whose in-degree from $V_i$ is at most $|V_i|-\gamma^{1/2}n$. This completes the proof.
\end{proof}

\begin{proof}[\textbf{Proof of Lemma \ref{LEM:fexbad}}]
Let $0<1/n\ll\eta\ll\gamma\ll 1$, and let $G$ be an $n$-vertex oriented graph with $\delta^0(G)\geqslant (1/2-\eta)n$. Fix a $\gamma$-extremal partition $(V_1,V_2,V_3)$ of $V(G)$. By Lemma \ref{LEM:extremal-badvertex}, the number of $\gamma^{1/2}$-bad vertices with respect to this partition is at most $\gamma^{1/3}n$. Let $v$ be any such bad vertex. 

It follows from the degree condition that $d^+(v,V_i),d^-(v,V_j)\geqslant n/6-O(\gamma n)$ for some $i,j\in[3]$. Moreover, since $v$ has at most $2\eta n$ missing edges in $G$, \ref{EP1} implies that for each $i\in[3]$, either $d^+(v,V_i)$ or $d^-(v,V_i)$ is at least $n/6-O(\gamma n)$. Hence there exists $i\in[3]$ such that $d^-(v,V_{i-1})\geqslant n/6-O(\gamma n)$ and $d^+(v,V_{i+1})\geqslant n/6-O(\gamma n)$. We then reassign $v$ to the set $V_i$.
     
After reassigning all $\gamma^{1/3}n$ bad vertices, we obtain a $\gamma^{1/3}$-superextremal partition of $V(G)$, which proves the lemma.
\end{proof}

\begin{proof}[\textbf{Proof of Lemma \ref{LEM:greedily Dabc}}]
Given $a,b,c\in \mathbb{N}$ with $1\leqslant a\leqslant b\leqslant c$, let $0<1/n\ll\eta\ll \beta\ll\gamma\ll 1$. Suppose that $G$ is an $n$-vertex oriented graph with $\delta^0(G)\geqslant (1/2-\eta)n$ and that $(V_1,V_2,V_3)$ is a $\gamma$-extremal partition of $V(G)$. By \ref{EP1}, we have $|V_i|=n/3\pm O(\gamma n)$ for each $i\in[3]$. 

By Lemma \ref{LEM:extremal-badvertex}, all but at most $\gamma^{1/3}n$ vertices of $V(G)$ are $\gamma^{1/2}$-good with respect to the partition $(V_1,V_2,V_3)$. It is not difficult to check that $(V_i,V_{i+1})$ is $(\gamma^{1/4},1/2)$-regular for each $i\in[3]$. Applying Lemma \ref{LEM:findmanyDabc} to $(V_i,V_{i+1},V_{i+2})$ with parameters $\gamma^{1/4},1/2$ and $(a+b+c)\beta$, there are at least $(a+b+c)\beta n^{a+b+c}$ copies of $D_{a,b,c}$ of type-$(i,{i+1},{i+2})$.
Since each copy of $D_{a,b,c}$ intersects at most $(a+b+c)n^{a+b+c-1}$ other copies of $D_{a,b,c}$, there are at least $\beta n$ disjoint copies of $D_{a,b,c}$ in $G$. This completes the proof.  
\end{proof}

\section{Proof of Lemma \ref{LEM:1bcbalance}}\label{SEC-5.4}

\begin{definition}\label{DEF:robust}
Given $b,c\in\mathbb{N}$ with $1\leqslant b\leqslant c$, let $(V_1,V_2,V_3)$ be a partition of $V(G)$, and let $i,j\in[3]$ be distinct. 
A set $X\subseteq V(G)$ is called  \emph{$\gamma$-transfer for $(V_i,V_j)$} if, for every set $A\subseteq V(G)\setminus X$ with $|A|=O( \gamma n)$, there exists a set $S\subseteq V(G)\setminus A$ such that
\begin{itemize}
\item $|S|\leqslant 3(1+b+c)$, and $G[S]$ contains a $D_{1,b,c}$-factor;
\item $|V_i\setminus S| \equiv |V_i|-1 \pmod{1+b+c}$ and $|V_j\setminus S| \equiv |V_j|+1 \pmod{1+b+c}$.
\end{itemize}
\end{definition}

To prove Lemma \ref{LEM:1bcbalance}, we also need the following result. 

\begin{lemma}\label{LEM-edgegood}
    Given $b,c\in\mathbb{N}$ with $1\leqslant b\leqslant c$, let $0<1/n\ll\gamma\ll 1$. Suppose that $G$ is an $n$-vertex semi-regular tournament. If $(V_1,V_2,V_3)$ is a $\gamma$-superextremal partition of $V(G)$, then the vertex set of every edge in $E(V_{i+1},V_i)$ is $\gamma$-transfer for both $(V_i,V_{i-1})$ and $(V_{i+1},V_{i-1})$.
\end{lemma}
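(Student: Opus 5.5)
The plan is to build, for every forbidden set $A$, a small gadget that uses the backward edge $xy$, where $x\in V_{i+1}$, $y\in V_i$ and $xy\in E(V_{i+1},V_i)$. The gadget will be a copy of $D_{1,b,c}$ whose index vector with respect to $(V_1,V_2,V_3)$ differs from that of a "standard" copy by the desired transferral modulo $h=1+b+c$. Here a standard copy is one of type-$(j,j+1,j+2)$, or a copy lying inside a single part. Copies inside a single part have index vector $h\mathbf{e}_j\equiv \mathbf{0}$. Since $|S|\equiv 0\pmod h$ whenever $G[S]$ has a $D_{1,b,c}$-factor, only the index vector of $S$ modulo $h$ matters. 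So $S$ will consist of at most three copies: one or two gadget copies through $x,y$, plus standard copies chosen to correct the residues. By Lemma \ref{LEM:greedily Dabc} (together with the superextremal degrees) there are linearly many disjoint standard copies of each type, so they can always be chosen to avoid $A\cup\{x,y\}$ when $|A|=O(\gamma n)$.

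The first gadget is as follows. Take $y$ as the $TT_1$-vertex and put $x$ as the source of the $TT_c$-part. The remaining $c-1$ vertices of $TT_c$ are chosen in $N^+(x)\cap N^-(y)\cap V_{i-1}$, and the $TT_b$-part is chosen in $N^+(y)\cap N^-(x)\cap V_{i+1}$, with all of $TT_b$ sending edges to all of $TT_c$. The candidate sets have linear size because of \ref{EP2} and semi-regularity. Indeed, $d^+(y,V_{i+1})$ and $d^-(y,V_{i-1})$ are at least $n/6-O(\gamma n)$; $x$ has out-degree about $|V_{i-1}|$ into $V_{i-1}$; and inside $V_{i+1}$ the vertex $x$ has in-degree roughly $|V_{i+1}|/2$, since $G$ is a tournament with $e(V_{i+2},V_{i+1})$ small. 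After removing $A$ these sets still have size $\Omega(n)$. The pair (candidate $TT_b$-set, candidate $TT_c$-set) lies inside $(V_{i+1},V_{i-1})$, which is almost complete. So dependent random choice (Lemma \ref{LEM:dependentrandom}) plus Lemma \ref{LEM:findmanyTTk} inside each set yields the $TT_b$ and $TT_c$, exactly as in the proof of Lemma \ref{LEM:findmanyDabc}. This copy has index vector $\mathbf{e}_i+(b+1)\mathbf{e}_{i+1}+(c-1)\mathbf{e}_{i-1}$. Compared with a type-$(i,i+1,i+2)$ copy, the difference is $\mathbf{e}_{i+1}-\mathbf{e}_{i-1}$. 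Hence taking $S$ to be this gadget alone gives the transfer for $(V_{i+1},V_{i-1})$ modulo $h$.

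For the transfer for $(V_i,V_{i-1})$, I would look for a second placement of $x,y$ realizing $\mathbf{e}_i-\mathbf{e}_{i-1}$ modulo $h$. If necessary, this gadget can be combined with the first one and with standard copies of other types, using up to three copies in total, consistent with the bound $|S|\leqslant 3h$. A natural attempt is to put $x$ as the $TT_1$-vertex and let $TT_b$ consist of $y$ together with $b-1$ vertices of $N^+(x)\cap N^+(y)\cap V_i$. Then $TT_c$ is taken in $N^+(TT_b)\cap N^-(x)\cap V_{i+1}$, where the in-neighbourhood of $x$ inside $V_{i+1}$ again has linear size. This gives index vector $(c+1)\mathbf{e}_{i+1}+b\,\mathbf{e}_i$. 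One then compares it with a type-$(i-1,i,i+1)$ copy, and adds copies of the other two types so that the total residue vector equals $-(\mathbf{e}_i-\mathbf{e}_{i-1})$ relative to a standard multiset.

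The main obstacle I expect is exactly this residue bookkeeping: finding placements of the single backward edge whose index-vector differences, together with differences of standard types, generate $\mathbf{e}_i-\mathbf{e}_{i-1}$ using at most three copies. I would enumerate the roles $x,y$ can take ($TT_1$, first or last vertex of $TT_b$ or $TT_c$) and the parts the other vertices can occupy given the near-tripartite structure, and search for a combination with the right residues. The embedding steps themselves are routine given \ref{EP2}, semi-regularity and the counting machinery already used for Lemma \ref{LEM:findmanyDabc}.
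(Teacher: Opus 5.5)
There is a genuine gap. Your argument depends on the claim that the gadget's candidate sets have linear size, and this fails for some backward edges.

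\ref{EP2} only gives $d^+(x,V_{i-1}),\,d^-(y,V_{i-1})\geqslant n/6-O(\gamma n)$. The bound ``about $|V_{i-1}|$'' holds only if $x$ is good. The lemma, however, concerns \emph{every} edge of $E(V_{i+1},V_i)$, and its endpoints may be among the up to $\gamma^{1/3}n$ bad vertices; in Lemma~\ref{LEM:1bcbalance} it is applied to arbitrary edges of a maximum matching.

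Likewise, $e(V_{i+2},V_{i+1})$ being small says nothing about a single vertex. Suppose $x$ is dominated by all of $V_i\setminus\{y\}$ and has $d^{\pm}(x,V_{i-1})\approx n/6$; this is compatible with semi-regularity and \ref{EP2}. Then $d^-(x,V_{i+1})=O(\gamma n)$, so the adversarial set $A$ can swallow your entire $TT_b$-candidate set $N^-(x)\cap N^+(y)\cap V_{i+1}$. The $TT_c$ side can be repaired: let $x$ be either the source or the sink of $TT_c$, since one of $N^{\pm}(x)\cap N^-(y)\cap V_{i-1}$ has size at least $n/12-O(\gamma n)$. The $TT_b$ side cannot be repaired this way.

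What is missing is the paper's case analysis.
\begin{enumerate}
\item If an endpoint $w\in V_j$ has at least $2\gamma^{1/4}n$ in- and out-neighbours in $V_{i-1}$, then $w$ alone is the $TT_1$ of a copy with $TT_b\subseteq N^+(w,V_{i-1})$ and $TT_c\subseteq N^-(w,V_{i-1})$. This single copy has residue $\mathbf{e}_j-\mathbf{e}_{i-1}$, and it covers the example above with $w=x$.
\item If the relevant common neighbourhood of $x,y$ has size at least $\gamma^{1/4}n$, an edge gadget works.
\item Otherwise, the degree constraints force one endpoint to have about $n/6$ in- and about $n/6$ out-neighbours in the other endpoint's part. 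One then replaces the \emph{other} endpoint by a suitable vertex of that part whose common neighbourhood with the first is large, e.g.\ $y^*\in N^+(x,V_i)$ with few out-neighbours inside $N^+(x,V_i)$. This replacement is allowed because Definition~\ref{DEF:robust} does not require $X\subseteq S$.
\end{enumerate}

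Second, the transfer for $(V_i,V_{i-1})$ is never established, and your proposed second gadget cannot supply it. Its index vector $b\mathbf{e}_i+(c+1)\mathbf{e}_{i+1}$ differs from that of type-$(i-1,i,i+1)$ by $\mathbf{e}_{i+1}-\mathbf{e}_{i-1}$, which is the same transferral as your first gadget. To obtain $\mathbf{e}_i-\mathbf{e}_{i-1}$, the vertex $y\in V_i$ must fill a role normally taken by $V_{i-1}$. The paper takes $x$ as the $TT_1$ and $TT_b=\{y\}\cup Y$, with $Y$ a $TT_{b-1}$ in $N^+(x)\cap N^{\pm}(y)\cap V_{i-1}$. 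It takes $TT_c$ in $N^-(x)\cap N^+(y)\cap V_i$, dominated by $Y$. This gives $(c+1)\mathbf{e}_i+\mathbf{e}_{i+1}+(b-1)\mathbf{e}_{i-1}$, which is type-$(i+1,i-1,i)$ shifted by $\mathbf{e}_i-\mathbf{e}_{i-1}$; it again needs the case analysis above.

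Finally, there is a bookkeeping slip. A single standard copy is not $\equiv\mathbf{0}\pmod h$; only one copy of each of the three types together is. So your first gadget alone does not have residue $\mathbf{e}_{i+1}-\mathbf{e}_{i-1}$. You must add one copy each of types $(i+1,i-1,i)$ and $(i-1,i,i+1)$, which still respects $|S|\leqslant 3h$.
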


\begin{proof}[\textbf{Proof of Lemma \ref{LEM:1bcbalance}}] 
Given $b,c\in\mathbb{N}$ with $1\leqslant b\leqslant c$, set 
$h=1+b+c$. Let $G$ be a semi-regular tournament on $n\in h\mathbb{N}$ vertices that is $\gamma$-extremal. Since $3\mid h$ and $n\in h\mathbb{N}$, it follows that $3\mid n$. By Lemma \ref{LEM:fexbad}, $V(G)$ has a $\gamma^{1/3}$-superextremal partition, say $(V_1,V_2,V_3)$. Relabeling the parts if necessary, we may assume w.l.o.g. that $V_1$ is the largest part. Moreover, by reversing all edges of $G$ and swapping the labels of $V_2$ and $V_3$ if necessary, we may further assume that $|V_2|\geqslant |V_3|$. Set $|V_1|=n/3+g$, $|V_2|=n/3+f$, and $|V_3|=n/3-g-f$, where $g$ and $f$ are integers since $3\mid n$. It follows from $|V_1|\geqslant |V_2|\geqslant |V_3|$ that $g\geqslant 0$ and $g\geqslant f\geqslant -g-f$.

We denote by $M_{21}$ and $M_{13}$ maximum matchings in $E(V_2,V_1)$ and $E(V_1,V_3)$, respectively. Next we claim that $|M_{21}|\geqslant 2g+f-1$ and $|M_{13}|\geqslant g-f-1$. Considering the out-degrees of vertices in $V_2$, we have 
\begin{align*}
    e(V_2,V_1)&\geqslant \sum_{v\in V_2} d^+(v)-e(V_2)-e(V_2,V_3)\\
    &\geqslant |V_2|\lfloor (n-1)/2\rfloor-|V_2|(|V_2|-1)/2-|V_2||V_3|\\
    &\geqslant |V_2|(g+f/2-1/2).
\end{align*}

Let $H$ be the underlying bipartite graph corresponding to the set of edges from $V_2$ to $V_1$. Clearly, $e(H)\geqslant|V_2|(g+f/2-1/2)$. On the other hand, since $(V_1,V_2,V_3)$ is $\gamma^{1/3}$-superextremal, \ref{EP2} implies that every vertex of $V_2$ has degree at most $n/6 + O(\gamma^{1/3}n)$ in $H$. Since $H$ is bipartite, by K\"{o}nig Theorem, the size of a maximum matching equals the size of a minimum vertex cover in $H$. Thus, we obtain that
$$
e(H)\leqslant |M_{21}|(n/6 + O(\gamma^{1/3}n)).
$$
It follows from $\gamma\ll 1$ that $|M_{21}|>2g+f-2$, and hence $|M_{21}|\geqslant 2g+f-1$. Similarly, by considering the in-degrees of vertices in $V_3$, we obtain $|M_{13}|\geqslant g-f-1$.

Next we claim that there exists a $D_{1,b,c}$-tiling $\mathcal{H}_1$ such that
$$
|V_1\backslash V(\mathcal{H}_1)|\equiv|V_2\backslash V(\mathcal{H}_1)|\equiv|V_3\backslash V(\mathcal{H}_1)|\pmod h.
$$
If $g=0$, then $|V_1|=|V_2|=|V_3|$, and there is nothing to prove. Thus we may assume that $g\geqslant 1$. 

When $f\geqslant 0$, since
$$
|M_{21}|\geqslant 2g+f-1\geqslant g+f\geqslant R_h(g)+R_h(f),
$$
we may first choose $R_h(g)$ edges from $M_{21}$ and then choose another $R_h(f)$ edges from the remaining edges of $M_{21}$. By Lemma \ref{LEM-edgegood}, we obtain that the former $R_h(g)$ edges have vertex sets that are $\gamma$-transfer for $(V_1,V_3)$, while the latter $R_h(f)$ edges have vertex sets that are $\gamma$-transfer for $(V_2,V_3)$.

When $f<0$, we have
$$
|M_{13}|\geqslant g-f-1\geqslant -f\geqslant R_h(-f).
$$
By Lemma~\ref{LEM-edgegood}, we may choose $R_h(-f)$ edges from $M_{13}$ whose vertex sets are $\gamma$-transfer for $(V_1,V_2)$. Moreover,
$$
|M_{21}|\geqslant 2g+f-1\geqslant g=(g+f)+(-f)\geqslant R_h(g+f)+R_h(-f),
$$
so there exist $R_h(g+f)$ edges in $M_{21}$ that are disjoint from the previously chosen $R_h(-f)$ edges in $M_{13}$. The vertex sets of these edges are $\gamma$-transfer for $(V_1,V_3)$.

By Definition~\ref{DEF:robust}, in either case the edges chosen above can be extended to a constant-sized $D_{1,b,c}$-tiling $\mathcal H_1$ such that
$$
|V_1\setminus V(\mathcal H_1)|\equiv |V_2\setminus V(\mathcal H_1)|\equiv |V_3\setminus V(\mathcal H_1)|\equiv kh/3 \pmod h
$$
for some $k\in\{0,1,2\}$. Moreover, $
(V_1\setminus V(\mathcal H_1),\,V_2\setminus V(\mathcal H_1),\,V_3\setminus V(\mathcal H_1))
$
is still a $\gamma^{1/3}$-superextremal partition of $V(G)\setminus V(\mathcal H_1)$.

Recall that $h$ is divisible by $3$. Hence $kh/3$ is an integer. Let
\begin{equation*}
(x,y,z) = \begin{cases}
(kh/3, 0, 2kh/3), & \mbox{if } b\equiv0\pmod{3}; \\
(kh/3, 0, 0), & \mbox{if } b\equiv1\pmod{3}; \\
(kh/3, 2kh/3, 0), & \mbox{if } b\equiv2\pmod{3}.
\end{cases}
\end{equation*}
Note that $x,y$, and $z$ are small integers. By Lemma \ref{LEM:greedily Dabc}, there is a collection $\mathcal{H}_2$ of disjoint copies of $D_{1,b,c}$ in $G- V(\mathcal{H}_1)$, consisting of $x$ copies of $D_{1,b,c}$ of type-$(1,2,3)$, $y$ copies of $D_{1,b,c}$ of type-$(2,3,1)$, and $z$ copies of $D_{1,b,c}$ of type-$(3,1,2)$, respectively. Next we claim that
$$
|V_i\cap V(\mathcal{H}_2)| \equiv kh/3\pmod h
\quad\text{for each } i\in[3].
$$
Here we only consider the case $b\equiv0\pmod{3}$; the other two cases can be verified similarly. 

Since $3\mid(1+b+c)$ and $b\equiv0\pmod{3}$, it follows that $c\equiv2\pmod3$. Recall that in this case $\mathcal{H}_2$ consists of $kh/3$ copies of $D_{1,b,c}$ of type-$(1,2,3)$ and $2kh/3$ copies of $D_{1,b,c}$ of type-$(3,1,2)$. As $b\equiv0\pmod{3}$, we have $2bk/3\in\mathbb{Z}$, and hence $2bkh/3\equiv0\pmod h$. This implies that $
|V_1\cap V(\mathcal{H}_2)|=kh/3+2bkh/3\equiv kh/3\pmod h$. 
Similarly, $|V_2\cap V(\mathcal{H}_2)|=(b+2c)kh/3\equiv kh/3\pmod{h}$ 
and $|V_3\cap V(\mathcal{H}_2)|=(c+2)kh/3\equiv kh/3\pmod{h}$, 
since $b+2c\equiv c+2\equiv1\pmod3$. 

Therefore, $\mathcal{H}=\mathcal{H}_1\cup\mathcal{H}_2$ is the desired $D_{1,b,c}$-tiling. Let $V_i^\prime=V_i\backslash V(\mathcal{H})$. Clearly, $|V_i^\prime|\equiv0 \pmod h$. Moreover, since $|V(\mathcal{H})|$ is constant, it is not difficult to check that $(V_1^\prime,V_2^\prime,V_3^\prime)$ is a $\gamma^{1/3}$-superextremal partition of $V(G)\backslash V(\mathcal{H})$.
\end{proof}

We now turn to the proof of Lemma \ref{LEM-edgegood}.

 \begin{proof}[\textbf{Proof of Lemma \ref{LEM-edgegood}}]

 We may  assume w.l.o.g. that $i=1$. Let $B$ be the set of $\gamma^{1/2}$-bad vertices in $G$. Since $(V_1, V_2, V_3)$ is a $\gamma$-superextremal partition of $V(G)$, we have $|B|\leqslant \gamma^{1/3}n$ by Lemma \ref{LEM:extremal-badvertex}. Moreover, \ref{EP1} shows that $|V_i|=n/3\pm O(\gamma n)$ for each $i\in[3]$. We first show that the vertex set of every edge  in $E(V_2,V_1)$ is $\gamma$-transfer for $(V_1,V_3)$. To see this, we need the following two claims.

\begin{claim}\label{CLAIM:(i,j)-set}
If $w \in V_1$ satisfies $d^{\pm}(w, V_3) \geqslant 2\gamma^{1/4}n$, then every subset $U\subseteq V(G)$ containing $w$ is $\gamma$-transfer for $(V_1,V_3)$.
\end{claim}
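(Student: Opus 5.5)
We need to show: given $w\in V_1$ with $d^\pm(w,V_3)\geqslant 2\gamma^{1/4}n$, for any $A\subseteq V(G)\setminus U$ with $|A|=O(\gamma n)$ we must find $S$ avoiding $A$, with $|S|\leqslant 3h$ and a $D_{1,b,c}$-factor on $S$, that shifts the residues by $-1$ on $V_1$ and $+1$ on $V_3$.

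The plan is to use $w$ as a "$V_3$-vertex" in a copy of $D_{1,b,c}$ of the type that uses $V_3$ for its $TT_1$-part. Consider a copy of type-$(3,1,2)$: the single vertex lies in $V_3$, the $TT_b$ lies in $V_1$, and the $TT_c$ lies in $V_2$. Replacing the $V_3$-vertex by $w\in V_1$ gives a copy $H_w$ using $b+1$ vertices of $V_1$, $c$ of $V_2$ and none of $V_3$. Compare this with a genuine copy $H'$ of type-$(3,1,2)$, which uses $b$ vertices of $V_1$, $c$ of $V_2$ and $1$ of $V_3$. Taking $S=V(H_w)\cup V(H'')$, where $H''$ is a suitable collection of ordinary copies, we can arrange the required residues.

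Concretely, $H_w$ alone changes the counts by $(b+1,c,0)$, while an ordinary $(3,1,2)$-copy changes them by $(b,c,1)$. Their difference is exactly $\mathbf{e}_1-\mathbf{e}_3$. The fix is to also add a copy of type $(1,2,3)$ and one of type $(2,3,1)$, so that $S$ consists of $H_w$ together with one copy of each of the types $(1,2,3)$ and $(2,3,1)$. The removed counts are then $(b+1,c,0)+(1,b,c)+(c,1,b)=(h+1,h,h-1)\equiv(1,0,-1)\pmod h$. This is precisely $|V_1\setminus S|\equiv |V_1|-1$ and $|V_3\setminus S|\equiv|V_3|+1$, with $|S|=3h$.

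Embedding $H_w$ requires a vertex $w$ together with a $TT_b$ in $N^+(w,V_1)\cap \overline{A}$ and a $TT_c$ in $N^-(w,V_2)\cap\overline A$, with all edges from the $TT_b$ to the $TT_c$. This does not use the hypothesis on $V_3$, so the choice of the $V_3$-slot must be reconsidered: the hypothesis $d^\pm(w,V_3)\geqslant 2\gamma^{1/4}n$ suggests $w$ should play a $V_3$-role adjacent to both $V_3$-neighbours. I would therefore instead put $w$ into the $TT_c$-part or the $TT_b$-part lying in $V_3$. Take a type-$(1,2,3)$ copy and let $w$ replace one vertex of the $TT_c\subseteq V_3$, choosing the other $c-1$ vertices inside $N^+(w,V_3)\cup N^-(w,V_3)$ so that they form a transitive tournament with $w$. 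We need these in-neighbours and out-neighbours of $w$ to also be out-neighbours of the $TT_b$ in $V_2$ and in-neighbours of the single vertex in $V_1$. This is possible because most vertices are $\gamma^{1/2}$-good and each of $N^\pm(w,V_3)$ has size at least $2\gamma^{1/4}n\gg|A|+|B|$. The resulting copy has counts $(2,b,c-1)$ instead of $(1,b,c)$, a difference of $\mathbf e_1-\mathbf e_3$. Adding ordinary copies of types $(2,3,1)$ and $(3,1,2)$ then yields total counts $\equiv(1,0,-1)\pmod h$.

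The main obstacle is this embedding. We choose good vertices avoiding $A\cup B\cup U\setminus\{w\}$: a good $x\in V_1$ that is an in-neighbour of $w$, then a $TT_b$ of good vertices in $N^+(x,V_2)\cap N^-(w,V_2)$, then $c-1$ good vertices from $N^-(w,V_3)$ (or a mix) forming with $w$ a $TT_c$. Note $w\in V_1$ need not have $x\to w$ or $TT_b\to w$ automatically, but superextremality gives $d^-(w,V_3)\approx$ and $d^+(w,V_2)\ge n/6$, $d^-(w,V_3)$ etc. I would first try ordering $w$ as the sink of the $TT_c$. Its predecessors are then chosen in $N^-(w,V_3)$, which is a set of size at least $2\gamma^{1/4}n$ of mostly good vertices, and these automatically receive edges from most of $V_2$ and send edges to most of $V_1$. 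The edges from the $TT_b$ to $w$ and from $w$ to $x$ come from the $n/6$-degree bounds in \ref{EP2} via common neighbourhoods. The transitive tournaments inside these linear-size sets come from Lemma~\ref{LEM:findmanyTTk}, and the remaining ordinary copies come from Lemma~\ref{LEM:greedily Dabc} while avoiding the $O(\gamma n)$ forbidden vertices.
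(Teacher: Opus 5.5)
\textbf{Gap in the embedding step.} Every placement of $w$ you consider puts it into a $V_3$-slot of a cyclic-type copy. These are the $TT_1$ of a type-$(3,1,2)$ copy, a vertex of the $TT_c$ of a type-$(1,2,3)$ copy, or a vertex of the $TT_b$ of a type-$(2,3,1)$ copy. In each case the block that must dominate that slot lies in $V_2$, so you need in-neighbours of $w$ inside $V_2$. Your main variant needs a whole $TT_b\subseteq N^-(w,V_2)$, and also an \emph{out}-neighbour $x$ of $w$ in $V_1$, since $TT_c\to TT_1$ (you wrote in-neighbour). Neither the hypothesis nor \ref{EP2} supplies these edges: \ref{EP2} only gives $d^+(w,V_2),\,d^-(w,V_3)\geq n/6-O(\gamma n)$, which are the wrong directions.

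Here is a concrete obstruction. Start from a blow-up of $C_3$ with regular tournaments on $m$ vertices ($m$ odd) and pick $w\in V_1$. Reverse $w$'s out-edges inside $V_1$, and reverse the edges from a set $Z\subseteq V_3$ of $(m-1)/2$ vertices to $w$. Then restore regularity by reversing $(m-1)/2$ disjoint paths $u\to v\to z$, where $u$ is a former out-neighbour of $w$ in $V_1$, $v\in V_2$ and $z\in Z$. The result is a regular tournament for which $(V_1,V_2,V_3)$ is $\gamma$-superextremal and $d^\pm(w,V_3)\approx n/6$. Yet $N^-(w,V_2)=N^+(w,V_1)=\emptyset$, so none of your copies containing $w$ exists. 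A telltale sign is that your plan never uses $d^+(w,V_3)\geq 2\gamma^{1/4}n$: the only $V_3$-neighbours of $w$ you use are in-neighbours, which \ref{EP2} already provides.

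\textbf{Missing idea.} Use both $V_3$-neighbourhoods of $w$ simultaneously, with $w$ as the apex $TT_1$ and both other blocks inside $V_3$. Let $B$ be the set of $\gamma^{1/2}$-bad vertices, put $W=V_3\setminus(A\cup B)$ and $W^\pm=N^\pm(w,W)$. Since $G$ is a tournament, $W^+$ and $W^-$ partition $W$, and each has size at least $\gamma^{1/4}n$ by the hypothesis. All vertices of $W$ are good, so $\delta^0(G[W])\geq(1/2-2\gamma^{1/3})|W|$. Counting out-degrees from $W^+$ then gives $e(W^+,W^-)\geq|W^+||W^-|/4$; this is exactly where the lower bounds on both $|W^+|$ and $|W^-|$ are needed. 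Dependent random choice (Lemma \ref{LEM:dependentrandom}) together with $\overrightarrow{r}(k)\leq 2^{k-1}$ yields $TT_b\subseteq W^+$ and $TT_c\subseteq W^-$ with all edges from the former to the latter. Hence $w\to TT_b\to TT_c\to w$ is a copy of $D_{1,b,c}$. Its vertex set $S$ has $|S\cap V_1|=1$, $|S\cap V_2|=0$ and $|S\cap V_3|=b+c=h-1$, which is exactly the required shift with $|S|=h$. No auxiliary copies from Lemma \ref{LEM:greedily Dabc} are needed.
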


\begin{proof}
Let $A$ be any subset of $V(G)\backslash U$ with $|A|=O(\gamma n)$.  Set $W=V_3\backslash(A\cup B)$ and $W^\pm=N^\pm(w,W)$. Clearly, $\{W^-,W^+\}$ is a partition of $W$ as $G$ is a tournament. Moreover, we have  $|W|= |V_3|-|A\cup B|= n/3\pm\gamma^{1/4}n$ due to \ref{EP1} and $|W^\pm|\geqslant d^{\pm}(w,V_3)-|A\cup B|\geqslant\gamma^{1/4}n$.  Note that every vertex $v\in W\subseteq V_3$ is $\gamma^{1/2}$-good for the partition $(V_1,V_2,V_3)$. Then $d^-(v,V_2)\geqslant|V_2|-O(\gamma^{1/2}n)$ and thus $d^+(v,V_2)=O(\gamma^{1/2}n)$.  By \ref{EP1}, we have 
\begin{align*}
    d^+(v,W)&\geqslant d^+(v)-d^+(v,V_1)-d^+(v,V_2)-|A\cup B|\\
    &\geqslant\lfloor\frac{n-1}{2}\rfloor-(n/3+O(\gamma n))-O(\gamma^{1/2}n)-O(\gamma n)-\gamma^{1/3}n\\
    &\geqslant(1/2-2\gamma^{1/3})|W|.
\end{align*}
Similarly, we have  $d^-(v,W)\geqslant(1/2-2\gamma^{1/3})|W|$ and thus $\delta^0(G[W])\geqslant(1/2-2\gamma^{1/3})|W|$. It follows from $|W^\pm|\geqslant \gamma^{1/4} n$ that 
\begin{align*}
e(W^+,W^-)&\geqslant\sum_{v\in W^+}d^+(v,W)-e(W^+)\\
&\geqslant (1/2-2\gamma^{1/3})|W||W^+|-|W^+|^2/2\\
&\geqslant (|W|-2\gamma^{1/3}n -|W^+|)|W^+|/2\\
&\geqslant |W^+||W^-|/4.
\end{align*}

Applying Lemma \ref{LEM:dependentrandom} with $k=2^{b-1}$ and $d=\varepsilon=1/4$,  there exists a set of $2^{b-1}$ vertices in $W^+$ having  at least  $2^{c-1}$ common out-neighbors in $W^-$.  As $\overrightarrow{r}(k)\leqslant 2^{k-1}$, there is a $TT_b$ copy in $W^+$ and a $TT_c$ copy in $W^-$ such that together with $w$, they form a copy of $D_{1,b,c}$. Let $S$ be the vertex set of this  $D_{1,b,c}$ copy. This completes the proof.
\end{proof}

\begin{claim}\label{Claim:three abc}
    For each $xy\in E(V_2,V_1)$, if $d^{-,+}(xy,V_1) \geqslant \gamma^{1/
    4}n$, then $\{x,y\}$ is $\gamma$-transfer for $(V_1,V_3)$.
\end{claim}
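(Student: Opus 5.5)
The approach is to use the directed triangle given by the hypothesis as the seed of one ``non-standard'' copy of $D_{1,b,c}$. Adding a bounded number of standard copies of types $(1,2,3)$, $(2,3,1)$, $(3,1,2)$ then corrects the residues. Write $h=1+b+c$, and let $A\subseteq V(G)\setminus\{x,y\}$ with $|A|=O(\gamma n)$. Put $W_0=N^{-,+}(xy,V_1)\setminus (A\cup B)$, where $B$ is the set of $\gamma^{1/2}$-bad vertices. Since $|A\cup B|\leqslant 2\gamma^{1/3}n\ll\gamma^{1/4}n$, we have $|W_0|\geqslant \gamma^{1/4}n/2$. Every $w\in W_0$ satisfies $w\to x$ and $y\to w$, so $xyw$ is a directed triangle with $x\in V_2$ and $y,w\in V_1$.

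First, if some $w\in W_0$ has $d^{\pm}(w,V_3)\geqslant 2\gamma^{1/4}n$, then Claim~\ref{CLAIM:(i,j)-set} applied to $U=\{x,y,w\}$ gives the required $S$. Its proof only uses that $w\notin A$, and $A$ may be enlarged by $x,y$ if we need $S$ to avoid them. We may therefore assume that every $w\in W_0$ has almost all of its $V_3$-neighbours on one side. Since $w$ is good, $d^+(w,V_2)\geqslant|V_2|-O(\gamma^{1/2}n)$ and $d^-(w,V_3)\geqslant |V_3|-O(\gamma^{1/2}n)$. Hence this case is exactly the generic one: $w$ behaves like a typical vertex of $V_1$.

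The main step is to embed one copy $D^\ast$ of $D_{1,b,c}$ that contains the triangle $xyw$ in a fixed role, avoids $A$, and has a known index vector $\mathbf{s}=(s_1,s_2,s_3)$. I would take $w$ as the $TT_1$, $x$ inside the $TT_b$, and $y$ inside the $TT_c$. The other $b-1$ vertices of $TT_b$ will be chosen in $V_2$ among common out-neighbours of $w$. The other $c-1$ vertices of $TT_c$ will be chosen in $V_3$ among common in-neighbours of $w$ (these sets are huge because $w$ is good) that are also out-neighbours of $x$ and in-neighbours of $y$.

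The delicate part is the edges between $y$ and $V_2$, and between $x$ and $V_3$. Here I use \ref{EP2}, which holds for $x$ and $y$ even if they are bad. It gives $d^+(x,V_3)\geqslant n/6-O(\gamma n)$ and, for $y\in V_1$, $d^-(y,V_3)\geqslant n/6-O(\gamma n)$. Because $G$ is a tournament and $|V_3|\approx n/3$, these two sets intersect in a linear-size set, or else I swap roles and place the extra $TT_c$ vertices in $V_1$. I will then use Lemma~\ref{LEM:dependentrandom} together with $\overrightarrow{r}(k)\leqslant 2^{k-1}$, exactly as in Claim~\ref{CLAIM:(i,j)-set}, to find the transitive tournaments with all required cross edges. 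This relies on the $(\gamma^{1/4},1/2)$-regularity of the pairs $(V_i,V_{i+1})$ on good vertices noted in the proof of Lemma~\ref{LEM:greedily Dabc}. I expect this compatibility of the neighbourhoods of the possibly bad vertices $x,y$ to be the main obstacle. If the first role assignment fails, I would try the alternative with $x$ as $TT_1$, $y\in TT_b$ and $w\in TT_c$, and pick whichever has linear-size candidate sets.

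Finally, the standard copies have index vectors $(1,b,c)$, $(c,1,b)$, $(b,c,1)$. By construction $\mathbf{s}$ differs from one of these by a vector congruent to $\mathbf{e}_1-\mathbf{e}_3$ modulo the lattice they generate mod $h$, which is the reason $x,y,w$ are placed across parts ``against'' the cyclic order. I would take $D^\ast$ together with at most two further copies chosen via Lemma~\ref{LEM:greedily Dabc}, all disjoint and avoiding $A\cup\{x,y,w\}$. These copies are chosen so that the total index vector is $\equiv(1,0,-1)\pmod h$, which gives $|S|\leqslant 3h$. Checking this congruence explicitly for the chosen role assignment is routine. Then $S=V(\mathcal{H})$ satisfies both conditions of Definition~\ref{DEF:robust}, so $\{x,y\}$ is $\gamma$-transfer for $(V_1,V_3)$.
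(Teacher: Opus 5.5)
There is a genuine gap in your main embedding step, and the construction that works appears only as an unspecified fallback.

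\textbf{Why the primary role assignment fails.} You take $w$ as the $TT_1$, $x\in TT_b$ and $y\in TT_c$. Every vertex of the $TT_b$ must dominate every vertex of the $TT_c$. So the $b-1$ extra $TT_b$-vertices you place in $V_2$ must all be in-neighbours of $y\in V_1$; that is, you need $b-1$ further edges from $V_2$ to $V_1$ at $y$, beyond $xy$. Nothing provides them. \ref{EP2} only bounds $d^+(y,V_2)$ and $d^-(y,V_3)$ from below. If $y$ is $\gamma^{1/2}$-good, then $d^-(y,V_2)=O(\gamma^{1/2}n)$, and this may be exactly $1$.

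Moving these vertices elsewhere does not help either. They must be out-neighbours of the good vertex $w\in V_1$, and they must dominate the extra $TT_c$-vertices. Those $TT_c$-vertices are in-neighbours of $w$ and out-neighbours of $x$, so they essentially have to lie in $V_3$ when $x$ is good. Hence for $b\geqslant 2$ this role assignment can fail for every choice of $w$. Your discussion of the ``delicate part'' never addresses $N^-(y,V_2)$.

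\textbf{The intersection claim is false.} You assert that $N^+(x,V_3)\cap N^-(y,V_3)$ is linear ``because $G$ is a tournament''. Two subsets of size about $n/6$ of a set of size about $n/3$ can be disjoint. The right device is pigeonhole. One of $N^+(x,V_3)\cap N^{+}(y)$ and $N^+(x,V_3)\cap N^{-}(y)$ has at least half the size of $N^+(x,V_3)$, and you then take $y$ as the source or the sink of its transitive tournament accordingly.

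Separately, your first case is vacuous. Since $w\notin B$, we have $d^+(w,V_3)=O(\gamma^{1/2}n)<2\gamma^{1/4}n$.

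\textbf{The construction that works.} It is the one you list only as a fallback, without saying where its extra vertices go; it should be the argument itself, and it is the paper's.
\begin{itemize}
\item Take $x$ as the $TT_1$.
\item Put the entire $TT_c$, say $X$, inside $N^{-,+}(xy,V_1\setminus(A\cup B))$. This set has size at least $\gamma^{1/4}n/2\gg 2^{c-1}$, so $\overrightarrow{r}(c)\leqslant 2^{c-1}$ gives $X$ directly. This is exactly what the hypothesis is for.
\item Form the $TT_b$ from $y$ plus a $TT_{b-1}$ found in $N^+(x,U_3)\cap N^{\sigma}(y)\cap N^-(X,U_3)$. Here $U_3=V_3\setminus(A\cup B)$ and $\sigma$ is chosen by the pigeonhole above.
\item This set is large. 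By \ref{EP2}, $d^+(x,U_3)\geqslant n/7$, and each vertex of $X$ is good, so the set has size at least $n/14-O(\gamma^{1/2}n)$. No dependent random choice or regularity is needed.
\end{itemize}
This copy has index vector $(c+1,1,b-1)$. Add one copy of type-$(1,2,3)$ and one of type-$(3,1,2)$ from Lemma~\ref{LEM:greedily Dabc}, chosen disjoint from it and from $A\cup B$. The total is $(h+1,h,h-1)\equiv(1,0,-1)\pmod h$, as required.
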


\begin{proof}
   Let $A$ be any subset of $V(G)$ with $|A|=O(\gamma n)$ that avoids $x$ and $y$. Set $U_i=V_i\backslash(A\cup B)$ for each $i\in[3]$. Clearly, $d^{-,+}(xy,U_1)\geqslant d^{-,+}(xy,V_1)-|A\cup B|\gg 2^{c-1}$. By $\overrightarrow{r}(c)\leqslant2^{c-1}$ again, there is a $TT_c$ copy in the subgraph of $G$ induced by $N^{-,+}(xy,U_1)$. Let $X$ be the vertex set of the $TT_c$ copy. 
   
   Note that  all vertices in $X$ miss at most $O(\gamma^{1/2} n)$ in-edges from $U_{3}$. Moreover, as $G$ is a tournament, one of $|N^{+,-}(xy,U_3)|$ and $|N^{+,+}(xy,U_3)|$ is at least $|N^{+}(x,U_3)|/2$. We may  assume w.l.o.g. that the former case holds. It follows from  \ref{EP2} that  $d^+(x,U_3)\geqslant n/7$ and thus  $$|N^{+,-}(xy,U_{3})\cap N^{-}(X,U_{3})| \geqslant n/14-O(\gamma^{1/2}n)\gg 2^{b-2}.$$
This implies that there is a copy of $TT_{b-1}$ in $N^{+,-}(xy,U_{3})\cap N^{-}(X,U_{3})$, which forms a $TT_{b}$ with $y$. Let $Y$ be the vertex set of the $TT_{b-1}$ copy. As $Y\subseteq  N^+(x,U_3)\cap N^{-}(X,U_{3})$ and $X\subseteq N^{-,+}(xy,U_1)$, there is a $D_{1,b,c}$ copy, say $T_1$, in the subgraph induced by $X\cup Y\cup \{x,y\}$.

As $|A\cup B|\leqslant 2\gamma^{1/3}n$, the partition $(U_1,U_2,U_3)$ is clearly a $\gamma^{1/3}$-extremal partition of $V(G)\backslash (A\cup B)$. By  Lemma \ref{LEM:greedily Dabc}, there are two disjoint copies $T_2,T_3$ of $D_{1,b,c}$ that avoid $V(T_1)$, where one copy is of type-$(1,2,3)$ and the other is of type-$(3,1,2)$. Set $S=V(T_1\cup T_2\cup T_3)$.  Notice that $x,y\in S$, $|S\cap V_1|=2+b+c$, $|S\cap V_2|=1+b+c$ and $|S\cap V_3|=b+c$. By Definition \ref{DEF:robust}, $\{x,y\}$ is $\gamma$-transfer for $(V_1,V_3)$.
\end{proof}

Consider a fixed edge $xy$ in $E(V_2,V_1)$. Now we claim that $\{x,y\}$ is $\gamma$-transfer for $(V_1,V_3)$.  We may assume that $d^{-,+}(xy,V_1)<\gamma^{1/4}n$ since otherwise there is nothing to prove by Claim~\ref{Claim:three abc}. Furthermore,  if $d^+(y,V_{3})\geqslant2\gamma^{1/4}n$, then Claim \ref{CLAIM:(i,j)-set} and \ref{EP2} show that $\{x,y\}$ is $\gamma$-transfer for $(V_1,V_3)$.  Therefore, we may further assume that $d^+(y,V_{3})<2\gamma^{1/4}n$. Clearly,  $d^+(y,V_1)\geqslant \lfloor(n-1)/2\rfloor-|V_2|-2\gamma^{1/4}n$ and $d^{-,+}(xy,V_1)\geqslant d^-(x,V_1)+d^+(y,V_1)-|V_1|$. Combining \ref{EP1} and $d^{-,+}(xy,V_1)<\gamma^{1/4}n$, we get that $d^-(x,V_1)<n/6+O(\gamma^{1/4}n)$. Thus by \ref{EP1} and \ref{EP2} we have $$d^-(x,V_1)=n/6\pm O(\gamma^{1/4}n) \mbox{ and then } d^+(x,V_1)= n/6\pm O(\gamma^{1/4}n).$$ 

 For any subset $A$ of $V(G)\backslash \{x,y\}$ with $|A|=O(\gamma n)$, set $J=N^+(x,V_1)\backslash (A\cup B)$. It follows from $|A\cup B|\leqslant 2\gamma^{1/3}n$ that $|J|= n/6\pm O(\gamma^{1/4}n)$. As $G[J]$ is oriented, there exists $y^{\ast}\in J$ such that 
$d^+(y^{\ast},J)\leqslant |J|/2\leqslant n/12+O(\gamma^{1/4}n)$. Moreover, since $y^{\ast}\notin B$, we have $d^+(y^{\ast},V_1)\geqslant \lfloor (n-1)/2\rfloor-|V_2|-O(\gamma n)\geqslant n/6-O(\gamma n)$. This  implies that 
\begin{align*}
    d^{-,+}(xy^\ast,V_1)
    &= d^+(y^{\ast},V_1)-d^{+,+}(xy^\ast,V_1)\\
    &\geqslant d^+(y^{\ast},V_1)-d^+(y^{\ast},J)-|A|-|B|\\
    &\geqslant n/6-O(\gamma n)-(n/12+O(\gamma^{1/4}n))-2\gamma^{1/3}n \\
    &\gg \gamma^{1/4}n.
\end{align*}
Applying Claim \ref{Claim:three abc} again, we obtain that $\{x,y^\ast\}$ is $\gamma$-transfer for $(V_1,V_3)$. From Definition \ref{DEF:robust} and the fact that $x, y^\ast \notin A$, there exists a set $S\subseteq V(G)\backslash A$ with the required property. Consequently, $\{x,y\}$ is $\gamma$-transfer for $(V_1,V_3)$.

 Similarly, by computing  $d^\pm(x,V_3)$ and $d^{-,+}(xy,V_2)$, one can prove that $\{x,y\}$ is $\gamma$-transfer for $(V_2,V_3)$, which completes the proof. 
 \end{proof}

\section{Proof of Lemma \ref{LEM:balancefactor}}\label{SEC-5.5}

To prove Lemma \ref{LEM:balancefactor}, we will need the following result.

\begin{theorem}[\cite{johanssonDM211}]\label{LEM:blowup}
Let $G$ be a graph with a vertex partition  $(V_1,V_2,V_3)$  such that $|V_1|=|V_2|=|V_3|=m$. If for every $i\in [3]$ and every $v\in V_i$,
$$
d(v, V_{i+1}),d(v,V_{i-1}) \geqslant \frac{2}{3}m + \sqrt{m},
$$
then $G$ contains a triangle-factor.
\end{theorem}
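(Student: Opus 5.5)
The plan is a maximum-tiling augmentation argument, using the slack $\sqrt m$ only in the final counting step. Fix a triangle-tiling $\mathcal T$ of $G$ in which every triangle meets each of $V_1,V_2,V_3$ exactly once. Among all such tilings, take one of maximum size; if there are ties, also maximize a secondary potential, namely the number of pairs $(x_1,x_2)$ with $x_i\in V_i$ uncovered and $x_1x_2\in E(G)$. Suppose $\mathcal T$ is not a factor. Since the three parts have equal size, the leftover sets $L_i=V_i\setminus V(\mathcal T)$ have a common size $\ell\geqslant 1$. The goal is to find a local switch that either increases $|\mathcal T|$ or increases the potential without decreasing $|\mathcal T|$, which contradicts the choice of $\mathcal T$.

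\textbf{Step 1 (a leftover edge).} I first arrange that some leftover pair is adjacent, say $x_1\in L_1$ and $x_2\in L_2$ with $x_1x_2\in E(G)$. If no such pair exists, take any $x_1\in L_1$ and $x_2\in L_2$. Each of them has at least $2m/3+\sqrt m$ neighbours in the other part, and all of these lie in $V(\mathcal T)$. Count triangles $T=y_1y_2y_3\in\mathcal T$ with $x_1y_2,\,x_2y_1\in E(G)$: there are more than $m/3$ of them. For any such $T$, replace it by $y_1y_2y_3$ with $y_1,y_2$ re-matched so that $x_1y_2$ becomes a leftover edge. More precisely, swap $y_1$ out for $x_1$ if $x_1y_2y_3$ is a triangle; otherwise just record the edge. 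Either way the potential increases. So we may assume $x_1x_2\in E(G)$.

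\textbf{Step 2 (absorbing the third vertex).} Fix $x_3\in L_3$. If $x_3$ is adjacent to both $x_1$ and $x_2$, we have a new triangle and are done. Otherwise, consider the triangles $T=y_1y_2y_3\in\mathcal T$ and the ways of rerouting through them. One move is to form $x_1x_2y_3$ together with $y_1y_2x_3$. Another is to form $x_1y_2y_3$ together with $y_1x_2x_3$, and symmetrically for the other position. Any such move turns two triangles' worth of vertices ($T$ plus $x_1,x_2,x_3$) into two triangles, so it increases $|\mathcal T|$. Call $T$ \emph{useless} if none of these moves works.

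\textbf{Step 3 (counting useless triangles).} For a useless $T$, the total number of edges between $\{x_1,x_2,x_3\}$ and $V(T)$ is at most a fixed bound; I expect it to be $4$ out of the $6$ possible cross edges, by a short case check. On the other hand, summing the degrees of $x_1,x_2,x_3$ into the other two parts gives at least $6(2m/3+\sqrt m)=4m+6\sqrt m$ edges into $V(\mathcal T)$, spread over $m-\ell$ triangles. The bound of $4$ per triangle therefore forces the $x_i$ to have many edges into some triangle, so some $T$ is not useless, unless $\ell$ is small.

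\textbf{Main obstacle.} The obstacle is exactly the case where the naive per-triangle bound of $4$ just matches the average $4m/(m-\ell)$. This happens in the near-extremal configurations where every vertex has degree exactly $2m/3$ into each neighbouring part. Here I would use two-step switchings: use a second tiling triangle $T'$ and the edges between $T$ and $T'$, again of density at least $2/3$, to build an alternating chain of length two. The $\sqrt m$ surplus is what guarantees that among the $\Theta(m^2)$ pairs $(T,T')$ some pair admits such a double switch. This is essentially the argument of Johansson, and I expect the delicate case analysis to sit entirely in this step.
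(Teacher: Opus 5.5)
Your Step~3 rests on a false per-triangle bound, and with the correct bound the averaging yields nothing. A useless triangle can receive five of the six cross edges. Say $x_2x_3\notin E(G)$, and let $T=y_1y_2y_3$ be joined to $\{x_1,x_2,x_3\}$ by every cross edge except $x_3y_2$. Then $y_1y_2x_3$ and $x_1y_2x_3$ are not triangles, which rules out your first and third moves. Also $y_1x_2x_3$ is not a triangle, which rules out the second. If instead $x_1x_3\notin E(G)$, delete $x_3y_1$ rather than $x_3y_2$.

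Your edge count is also too generous. Up to $2\ell$ neighbours of each $x_i$ may be leftover vertices, so you only get $4m+6\sqrt m-6\ell$ cross edges into $V(\mathcal T)$. This beats $4(m-\ell)$ only when $\ell<3\sqrt m$. Since useless triangles may carry $5$ cross edges, a lower bound of about $4$ on the average is no contradiction. Averaging can force a triangle with all six cross edges (which, with $x_1x_2$, gives your first move) only once the degrees reach roughly $5m/6$. So the obstacle is not confined to near-extremal configurations: the one-triangle argument gives nothing on the whole range from $2m/3$ to about $5m/6$.

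Step~1 also needs a small fix: ``otherwise just record the edge'' does not increase your potential. Instead choose $T$ with $x_1y_2,x_1y_3,x_2y_1\in E(G)$; there are at least $3\sqrt m+\ell$ such $T$.

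Everything at the $2/3$ threshold is therefore in your last paragraph. There you specify no two-triangle switching, no quantity that it improves, and no computation showing how the $\sqrt m$ surplus makes it available. Invoking ``essentially the argument of Johansson'' is a citation, not a proof, so the proposal does not establish the statement.

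For context, the paper does not prove this theorem either. It imports it from \cite{johanssonDM211} and uses it once, for the auxiliary graph $H$ in Step~3 of the proof of Lemma~\ref{LEM:balancefactor}, where every cross degree is at least $(1-\gamma^{1/3})t$. In that regime, and indeed whenever all cross degrees are at least $3m/4$, a short argument suffices:
\begin{itemize}
\item Hall's theorem gives a perfect matching $M$ of $G[V_1,V_2]$.
\item Join $uv\in M$ to $w\in V_3$ if $w$ is adjacent to both $u$ and $v$. Both sides of this auxiliary bipartite graph have minimum degree at least $2\cdot\frac{3}{4}m-m=m/2$, so Hall gives a perfect matching between $M$ and $V_3$, i.e.\ a triangle-factor.
\end{itemize}
The case you defer is exactly the part of the statement the paper never needs, but it is the entire content of the statement as written.
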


Now we are ready to prove Lemma \ref{LEM:balancefactor}. 

\begin{proof}[\textbf{Proof of Lemma \ref{LEM:balancefactor}}]
Set $h=a+b+c$. Choose $\varepsilon,d$ and $\eta$ such that
$$0<1/n\ll \varepsilon,d\ll \eta\ll\gamma \ll 1.$$  
Let $B$ be the set of $\gamma^{1/2}$-bad vertices in $G$. By Lemma \ref{LEM:extremal-badvertex}, we have $|B|\leqslant \gamma^{1/3}n$. We divide the proof into the following three steps.

\medskip
\textbf{Step 1. Cover bad vertices by a $D_{a,b,c}$-tiling.}
\medskip

Let $S$ be any subset of $V(G)\backslash B$ with $|S|\leqslant \gamma^{1/4}n$, and let $v$ be any vertex in $B$. In this step, we show that $G$ has a $D_{a,b,c}$-tiling $\mathcal{H}_v$ satisfying the following properties.

   (i) $v\in V(\mathcal{H}_v)$ and $V(\mathcal{H}_v)\backslash\{v\}\subseteq V(G)\backslash (S\cup B)$;
   
   (ii) $\mathcal{H}_v$ consists of three disjoint copies of $D_{a,b,c}$;

   (iii) $|V(\mathcal{H}_v)\cap V_1|=|V(\mathcal{H}_v)\cap V_2|=|V(\mathcal{H}_v)\cap V_3|$.
   
Set $\widetilde{V}_i= V_i\backslash (S\cup B)$ for each $i\in [3]$. It follows from \ref{EP1} and $|S\cup B|\leqslant 2 \gamma^{1/4}n$ that $|\widetilde{V}_i|=n/3\pm O(\gamma^{1/4}n)$. Suppose that $v$ belongs to $V_j$ for some $j\in [3]$. Since $(V_1,V_2,V_3)$ is $\gamma$-superextremal, by \ref{EP2} we have
$$
d^+(v,\widetilde{V}_{j+1}),d^-(v,\widetilde{V}_{j-1})\geqslant n/6-O(\gamma n)-|S\cup B|\geqslant n/7.
$$
Set $V^\ast_{j+1}=N^+(v,\widetilde{V}_{j+1})$ and $V^\ast_{j-1}=N^-(v,\widetilde{V}_{j-1})$. By the degree condition, $v$ misses at most $2\eta n$ edges in $G$. Then one of $|N^+(v,\widetilde{V}_{j})|$ and $|N^-(v,\widetilde{V}_{j})|$ is at least $(|\widetilde{V}_j|-2\eta n)/2\geqslant n/7$ by \ref{EP1}. Let $V^\ast_j$ be the larger of these two neighborhoods. 

Note that every vertex in $V^\ast_1\cup V^\ast_2\cup V^\ast_3$ is $\gamma^{1/2}$-good. Thus, for each $u\in V^\ast_i$ and $i\in[3]$, we have
$$
d^+(u,V^\ast_{i+1})\geqslant |V^\ast_{i+1}|-O(\gamma^{1/2} n)
 \text{ and } 
d^-(u,V^\ast_{i-1})\geqslant |V^\ast_{i-1}|-O(\gamma^{1/2} n).
$$
It is easy to verify that $(V_i^\ast,V_{i+1}^\ast)$ is $(\gamma,1/2)$-regular for  $i\in[3]$. Applying Lemma \ref{LEM:findmanyDabc} to $(V_i^\ast,V_{i+1}^\ast,V_{i+2}^\ast)$ for each $i$, there exists $\beta>0$ such that it contains at least $\beta n^{a+b+c}$ copies of $D_{a,b,c}$ of type-$(i,i+1,i+2)$. Since each copy of $D_{a,b,c}$ intersects at most $(a+b+c)n^{a+b+c-1}$ others, we can choose three pairwise disjoint copies $H_1,H_2,H_3$ in $G[V^\ast_1\cup V^\ast_2\cup V^\ast_3]$ such that $H_i$ is of type-$(i,i+1,i+2)$ for each $i\in[3]$.

Recall that $v\in V_j\cap B$ for some $j\in[3]$. Thus, $v\notin V_j^{\ast}$, and hence $v$ does not belong to $H_1\cup H_2\cup H_3$. By construction, $v$ either dominates all vertices in $V_j^{\ast}$ or is dominated by all of them. Hence, for any $x\in V(H_1)\cap V_j^{\ast}$, there is a copy $H_1^\ast$ of $D_{a,b,c}$ in $G[(H_1-x)\cup \{v\}]$. Then $\{H^\ast_1,H_{2},H_{3}\}$ is the desired $D_{a,b,c}$-tiling $\mathcal{H}_v$ satisfying (i)-(iii). 

We carry out the above process sequentially for each $v\in B$, defining $S$ to be the set of vertices in $V(G)\backslash B$ that are used in the previously constructed $D_{a,b,c}$-tilings. Clearly, $|S|=O(\gamma^{1/3}n)\ll\gamma^{1/4}n$, since $|B|\leqslant \gamma^{1/3}n$ and $|V(\mathcal{H}_v)|=3h$ for each $v\in B$. Therefore, we can find a $D_{a,b,c}$-tiling $\mathcal{H}_0$ of size $3h|B|$ consisting of the three tiles in $\mathcal{H}_v$ for each $v\in B$. Note that each $\mathcal{H}_v$ uses exactly $h$ vertices from each $V_i$. Thus $|V_i\backslash V(\mathcal{H}_0)|\equiv 0 \pmod {h}$ for each $i\in [3]$. 

\medskip
    
Set $V_i^0=V_i\backslash V(\mathcal{H}_0)$ for each $i\in [3]$. It follows from \ref{EP1} and $|V(\mathcal{H}_0)|=O(\gamma^{1/3}n)$ that $|V_i^0|=n/3\pm O(\gamma^{1/3}n)$. Moreover, every vertex in $V_i^0$ is $\gamma^{1/2}$-good for each $i\in[3]$.
   
\medskip
\textbf{Step 2. Balance $|V_1^0|,|V_2^0|$, and $|V_3^0|$ by a $D_{a,b,c}$-tiling.}
\medskip

By relabeling the sets if necessary, we may assume w.l.o.g. that $|V_3^0|\leqslant |V_1^0|,|V_2^0|$. Set $k_i=(|V_i^0|-|V_3^0|)/h$ for $i\in [2]$. Since $|V^0_{i}|=n/3\pm O(\gamma^{1/3} n)$ and $|V^0_{i}|\equiv 0\pmod h$ for each $i\in[3]$, we have $k_1,k_2 = O(\gamma^{1/3} n)$, and both $k_1$ and $k_2$ are integers. Next we embed a $D_{a,b,c}$-tiling $\mathcal{H}_i$ of size $k_i$ in $G[V_i^0]$ for each $i\in[2]$ such that
$$
|V^0 _1\backslash V(\mathcal{H}_1)|=|V^0_2\backslash V(\mathcal{H}_2)|=|V^0_3|\equiv 0\pmod h.
$$

For each $v\in V_1^0$, since it is $\gamma^{1/2}$-good, we have $d^-(v,V_2), d^+(v,V_3)=O(\gamma^{1/2} n)$. Combining \ref{EP1} with the degree condition, we obtain
$$
d^\pm(v,V_1)\geqslant (n/2-\eta n)- O(\gamma^{1/2}n)-(n/3+O(\gamma n))\geqslant n/6-O(\gamma^{1/2}n).
$$
Thus $d^\pm(v,V^0_1)\geqslant d^\pm(v,V_1)-|V(\mathcal{H}_0)|
\geqslant |V^0_1|/2-O(\gamma ^{1/3}n)$ as $|V(\mathcal{H}_0)|=O(\gamma^{1/3}n)$ and $|V_1^0|=n/3\pm O(\gamma^{1/3}n)$. We apply the Diregularity Lemma to $G[V^0_1]$ with $\varepsilon$, $M^\prime = 1/\varepsilon$, $M^{\prime\prime}=1$, and $d$. By Lemma \ref{LEM-reducedori}, there exists a reduced oriented graph $R$ with vertex set $[k]$ and
$
\delta^0(R)\geqslant (1/2-O(\gamma^{1/3})-d-3\varepsilon)k.
$
Applying Lemma \ref{LEM:cross-edge-triangle} (i) to $R$, there is an edge from $N^+_R(1)$ to $N^-_R(1)$, which together with $1$ forms a directed triangle in $R$. Note that Lemma \ref{LEM:cross-edge-triangle} holds for all sufficiently large oriented graphs with $\delta^0(G)\geqslant (1/2-1/100)|G|$. Thus we can greedily find at least $\gamma^{1/4}k$ disjoint triangles in $R$, since $\eta \ll \varepsilon \ll d\ll \gamma \ll 1/100$.

Let $123$ be one such fixed triangle in $R$, and let $X_1,X_2,X_3$ be the corresponding sets in $G[V_1^0]$. Then for each $i\in[3]$, $(X_i, X_{i+1})$ is $(\varepsilon, d)$-regular and $|X_i|=m$ for some integer $m$. Next we claim that there is a $D_{a,b,c}$-tiling $\mathcal{D}$ in $G[X_1\cup X_2\cup X_3]$ covering at least $2m$ vertices. Moreover, $\mathcal{D}$ uses an equal number of vertices from $X_1,X_2$, and $X_3$. Indeed, let $\mathcal{D}$ be initially empty and set $X_i^{\prime}=X_i\backslash V(\mathcal{D})$. By the same argument as in Step 1, as long as $|X_i^{\prime}|> m/3$, Lemma \ref{LEM:findmanyDabc} implies that there are three disjoint copies of $D_{a, b, c}$ in $G[X^\prime_1\cup X^\prime_2\cup X^\prime_3]$ such that they use exactly $h$ vertices from each $X_i^{\prime}$, where $i\in[3]$. Note that this is possible because $(X^\prime_i, X^\prime_{i+1})$ is $(3\varepsilon,d/2)$-regular by $|X_i^{\prime}|> m/3$ and Lemma \ref{LEM-regulartosuper}.

Therefore, $G[V_1^0]$ has a $D_{a,b,c}$-tiling covering at least $\gamma^{1/4}k\cdot 2m$ vertices. Recall that $km\geqslant |V_1^0|-\varepsilon |V_1^0|\geqslant n/4$ and that $k_1 = O(\gamma^{1/3} n)$. Then one can find the desired tiling $\mathcal{H}_1$ of size $k_1$ in $G[V_1^0]$. Similarly, there is a $D_{a,b,c}$-tiling $\mathcal{H}_2$ of size $k_2$ in $G[V_2^0]$.

Set $\mathcal{H}=\mathcal{H}_0\cup \mathcal{H}_1\cup \mathcal{H}_2$. Let $V_i^{\prime}=V_i\backslash V(\mathcal{H})$ for each $i\in[3]$. By the construction above, we have $|V_1^{\prime}|=|V_2^{\prime}|=|V_3^{\prime}|\equiv0\pmod h$ and $|V(\mathcal{H})|=O(\gamma^{1/3}n)$.

\medskip
\textbf{Step 3. Find a $D_{a,b,c}$-factor in the resulting graph $G[V_1^{\prime}\cup V_2^{\prime}\cup V_3^{\prime}]$.}
\medskip

Since $|V^\prime _1|=|V^\prime _2|=|V^\prime _3|\equiv0\pmod h$, there exists an integer $t$ such that $|V^\prime _i|=ht$ for each $i\in[3]$. Moreover, it follows from $|V(\mathcal{H})|=O(\gamma^{1/3}n)$ that
$$
t=(n-|V(\mathcal{H})|)/(3h)\geqslant n/(4h).
$$
For each $i\in[3]$, let $\{V^a_i,V^b_i,V^c_i\}$ be any partition of $V^\prime _i$ with $|V^j_i|=jt$ for each $j\in\{a,b,c\}$. Set $W_i=V^a_i\cup V_{i+1}^b\cup V^c_{i+2}$, where the indices are taken modulo $3$. Then $(W_1,W_2,W_3)$ is a partition of $V(G)\backslash V(\mathcal{H})$. By symmetry, it suffices to show how to find a $D_{a,b,c}$-factor in $G[W_1]$. 

Recall that Yuster \cite{yusterorder20} showed that every $jt$-vertex oriented graph $G$ with $\delta(G)\geqslant(1-4^{-j})jt+4^j$ contains a $TT_j$-factor. Since $\delta^0(G)\geqslant (1/2-\eta)n$ and $|V^a_1|=at\geqslant an/(4h)$, we have
$
\delta(G[V_1^a])\geqslant |V_1^a|-2\eta n\geqslant (1-8\eta h/a )|V_1^a|.
$
It follows from $\eta \ll 1$ that $G[V_1^a]$ contains a $TT_a$-factor $\mathcal{T}_a$. By a similar argument, there is a $TT_b$-factor $\mathcal{T}_b$ in $G[V_2^b]$ and a $TT_c$-factor $\mathcal{T}_c$ in $G[V_3^c]$. Now we construct an auxiliary graph $H$ as follows. Let
$
V(H)=X_a\cup X_b\cup X_c,
$
where each vertex of $X_i$ corresponds to a copy of $TT_i$ in $\mathcal{T}_i$ for each $i\in\{a,b,c\}$. Clearly, $|X_a|=|X_b|=|X_c|=t$. For simplicity, for each $u\in X_i$ with $i\in\{a,b,c\}$, let $T_u$ be the copy of $TT_i$ to which $u$ corresponds. For each $u\in X_i$ and $v\in X_j$ with $(i,j)\in\{(a,b),(b,c),(c,a)\}$, we put $uv\in E(H)$ if and only if every vertex of $T_u$ dominates every vertex of $T_v$ in $G$.

Recall that every vertex in $V(G)\backslash V(\mathcal{H})$ is $\gamma^{1/2}$-good. Then every copy of $TT_i$ with $i\in\{a,b,c\}$ has at most $O(\gamma^{1/2} n)$ missing edges in $G$. This shows that for each $u\in X_i$, both its in-degree and out-degree in $H$ are at least
$
t-O(\gamma^{1/2} n)\geqslant (1-\gamma^{1/3})t,
$
since $t\geqslant n/(4h)$. Applying Lemma \ref{LEM:blowup} to the underlying graph of $H$, we obtain a triangle-factor in $H$, which corresponds to a $D_{a,b,c}$-factor in $G[W_1]$ by the construction of $H$. By the same argument, each of $G[W_2]$ and $G[W_3]$ contains a $D_{a,b,c}$-factor. Together with the former tiling $\mathcal{H}$, this yields a $D_{a,b,c}$-factor of $G$. This completes the proof.
\end{proof}

\section{Concluding remarks}\label{SEC-remark}

In \cite{keevashJCTB99}, Keevash and Sudakov raised the problem of finding $C_3$-tilings in oriented graphs. In this paper, we extend their result by determining when an oriented graph contains a $D_{a,b,c}$-factor or a $D_{a,b,c}$-tiling covering all but at most a constant number of vertices. Our proof combines the Diregularity method, the lattice-based absorption method, and a tailored structural analysis for handling certain problematic vertices.

A natural question is whether the semi-degree conditions considered in this paper can also guarantee the existence of factors for other graphs. Among the most extensively studied problems in this direction is the existence of $TT_k$-factors. For $k=3$, Balogh, Lo, and Molla \cite{baloghJCTB124} showed that $\delta^{0}(G)\geqslant 7n/18$ suffices. The minimum total degree threshold for $TT_4$-factors was asymptotically determined by DeBiasio, Lo, Molla, and Treglown in \cite{debiasioSJDM35}. It would be interesting to determine the exact minimum semi-degree threshold and the exact minimum total degree threshold for $TT_k$-factors with $k\geqslant 5$.

One of the central problems in digraph theory, the Caccetta--H\"aggkvist Conjecture, states that every $n$-vertex oriented graph with minimum out-degree $d$ contains a directed cycle of length at most $n/d$. For the existence of a directed triangle, Shen \cite{shenJCTB74} proved that minimum out-degree $0.355n$ suffices, and Hamburger, Haxell, and Kostochka \cite{hamburgeEJC14} showed that, if one considers minimum semi-degree instead of minimum out-degree, then this constant can be improved slightly: they proved that $\delta^0(G)\geqslant 0.346n$ guarantees a directed triangle. The related natural question of what minimum semi-degree forces $C_l$ for $l\geqslant 4$ was raised in \cite{kellyJCTB100}.

\begin{conjecture}[\cite{kellyJCTB100}]
    Let $l$ be an integer with $l\geqslant 4$, and let $k$ be the smallest integer greater than $2$ that does not divide $l$.
    Then there exists $n_0$ such that every oriented graph $G$ on $n\geqslant n_0$ vertices with $\delta^{0}(G)\geqslant \lfloor n/k\rfloor+1$ contains a copy of $C_l$.
\end{conjecture}

In \cite{kellyJCTB100}, Kelly et al.\ proved the conjecture exactly for $k=3$, and asymptotically for $k=4$ and $l\geqslant 42$, as well as for $k=5$ and $l\geqslant 2550$. They also showed that a bound of $\lfloor n/3\rfloor+1$ suffices for any $l\geqslant 4$. In \cite{kuhnEJC34}, K\"uhn, Osthus, and Piguet proved the conjecture asymptotically in the case where $k\geqslant 7$ and $l$ is sufficiently large compared to $k$. Recently, Grzesik and Volec \cite{rzesikIMRN2023} determined the exact semi-degree threshold for $C_l$ with $l\geqslant 4$ in sufficiently large oriented graphs. In particular, they showed that the threshold in the conjecture is correct only when either $l$ is not divisible by $3$ or $l\equiv 3 \pmod{12}$.

Inspired by the celebrated P\'osa--Seymour Conjecture, we consider the existence of $C_l^k$ for each $l\geqslant 2k+1$\footnote{The definition of an oriented graph requires $l\geqslant 2k+1$.}. The smallest case is $k=2$. When $l\notin 6\mathbb{N}$, we give a construction of an $n$-vertex $4n/9$-regular oriented graph that is $C_l^2$-free.  The case $l=6$ was also raised by DeBiasio et al.\ in \cite{debiasioCPC35}. The currently best lower bound in this case was previously given by Araujo and Xiang. In our opinion, the lower bounds in both cases are correct, and proving the matching upper bounds should be a challenging problem. Before settling this problem completely, it may be easier to determine the minimum semi-degree threshold for $P_l^2$ for each $l\geqslant 4$. We note that $P_3^2=TT_3$. A classical result in oriented Ramsey theory states that every tournament on $4$ vertices contains a copy of $TT_3$. Hence, by a standard calculation, every oriented graph $G$ with $\delta(G)\geqslant 2n/3+1$ contains a copy of $P_3^2$. Meanwhile, the balanced blow-up of $C_3$ shows that this degree condition is tight. It is surprising that there exists an $n$-vertex $3n/8$-regular oriented graph containing no $P_l^2$ for any $l\geqslant 7$. This phenomenon further suggests that determining the minimum semi-degree threshold for $P_l^2$ for every $l\geqslant 4$ is an interesting problem.

\bibliographystyle{plain}

\newpage
\appendix
\section{Proof of Lemma \ref{LEM-abc system2}}
\label{APP:A}

Before proving Lemma \ref{LEM-abc system2}, we formulate and prove the characterization for the existence of integer solutions to the systems of congruences in \eqref{system of abc}.

\begin{lemma}\label{PRO-abc system}
    Given $a,b,c\in \mathbb{N}$ with $1\leqslant a\leqslant b\leqslant c$, let $h=a+b+c$ and $\Delta=c^2-ab$. Then the following system of congruences has an integer solution if and only if $\gcd(h,\Delta)=1$. 
    \begin{align}\label{system of abc}
\begin{cases}
ax + cy + bz \equiv 1 \pmod h\\
bx + ay + cz \equiv-1 \pmod h\\
cx + by + az \equiv 0 \pmod h
\end{cases}
\end{align}
\end{lemma}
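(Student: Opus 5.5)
The plan is to reduce the $3\times 3$ system modulo $h$ to a $2\times 2$ system whose determinant is congruent to $\Delta$ modulo $h$, and then argue with invertibility for sufficiency and over $\mathbb{F}_p$ for necessity.

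\emph{Two reductions.} First, the three left-hand sides of \eqref{system of abc} sum to $(a+b+c)(x+y+z)\equiv 0 \pmod h$, and the right-hand sides $1,-1,0$ also sum to $0$. Hence the third congruence follows from the first two, and \eqref{system of abc} is equivalent to its first two congruences. Second, replacing $(x,y,z)$ by $(x-z,\,y-z,\,0)$ changes every left-hand side by $-z(a+b+c)\equiv 0\pmod h$. So we may assume $z=0$. The system is therefore solvable if and only if
$$ax+cy\equiv 1,\qquad bx+ay\equiv -1 \pmod h$$
is solvable.

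\emph{The determinant.} The matrix $M=\begin{pmatrix} a & c\\ b & a\end{pmatrix}$ has determinant $a^2-bc$. Using $c\equiv -(a+b)\pmod h$, we get
$$a^2-bc\equiv a^2+ab+b^2 \pmod h,$$
and likewise
$$\Delta=c^2-ab\equiv (a+b)^2-ab=a^2+ab+b^2 \pmod h.$$
Hence $\det M\equiv \Delta\pmod h$.

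\emph{Sufficiency.} If $\gcd(h,\Delta)=1$, then $\det M$ is a unit modulo $h$. Multiplying the vector $(1,-1)^{T}$ by $(\det M)^{-1}\operatorname{adj}(M)$ gives an explicit integer solution $(x,y)$, and $(x,y,0)$ then solves \eqref{system of abc}.

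\emph{Necessity.} This is the only step needing care. Suppose a prime $p$ divides both $h$ and $\Delta$, and suppose a solution exists. Reducing modulo $p$, $M$ becomes singular over the field $\mathbb{F}_p$, so it has a nonzero left-kernel vector $(u,v)$. That is, $ua+vb\equiv 0$ and $uc+va\equiv 0 \pmod p$. Take $u$ times the first congruence plus $v$ times the second. The left-hand side vanishes, so we get $0\equiv u-v\pmod p$. It remains to rule out $u\equiv v$:
\begin{itemize}
\item if $u\equiv v\neq 0$, then $a+b\equiv 0$ and $a+c\equiv 0 \pmod p$;
\item combined with $h=a+b+c\equiv 0 \pmod p$, this forces $a\equiv b\equiv c\equiv 0\pmod p$;
\item but then the first congruence reads $0\equiv 1\pmod p$, which is absurd.
\end{itemize}
So in every case we reach a contradiction, and $\gcd(h,\Delta)=1$ is necessary.
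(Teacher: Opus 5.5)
Your proof is correct and follows essentially the paper's route: the adjugate solution $(\det M)^{-1}(a+c,\,-(a+b))$ with $z=0$ is exactly the paper's explicit solution $\bigl((a+c)\Delta^{-1},-(a+b)\Delta^{-1},0\bigr)$, and for necessity the paper also combines the first two congruences so that the left-hand side vanishes. The only difference is in that necessity step: the paper uses the explicit combinations $a\cdot(\text{first})-c\cdot(\text{second})$ and $c\cdot(\text{first})-b\cdot(\text{second})$ modulo $g=\gcd(h,\Delta)$ to get $a+c\equiv b+c\equiv 0$ directly, whereas you pass to a prime $p$ and use an abstract nonzero left-kernel vector of $M$ over $\mathbb{F}_p$, with your reductions (redundant third congruence, shift to $z=0$) making the $2\times 2$ structure explicit.
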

\begin{proof}
\noindent Note that $(c^2-ab)-(a^2-bc)=(c-a)(a+b+c)=(c-a)h$. Hence
$h\mid\bigl((c^2-ab)-(a^2-bc)\bigr)$, and therefore
$c^2-ab\equiv a^2-bc \pmod h$. Similarly,
$c^2-ab\equiv b^2-ac\pmod h$.

First assume that $\gcd(h,\Delta)=1$. Then there exists a unique integer
$\Delta^{-1}\in [h-1]$ such that
$\Delta\cdot \Delta^{-1}\equiv 1\pmod h$.
We claim that
$(x_0,y_0,z_0)=\bigl((a+c)\Delta^{-1},-(a+b)\Delta^{-1},0\bigr)$
is an integer solution to \eqref{system of abc}. Indeed,
$ax_0+cy_0+bz_0=\Delta^{-1}(a^2-bc)\equiv \Delta^{-1}\Delta\equiv 1\pmod h$,
so the first congruence in \eqref{system of abc} holds. The second and third congruences can be verified similarly.

Now assume that \eqref{system of abc} has an integer solution $(x_1,y_1,z_1)$.
Suppose for a contradiction that $g=\gcd(h,\Delta)>1$.
Since the congruences in \eqref{system of abc} hold modulo $h$ and $g\mid h$,
reducing them modulo $g$ yields
\begin{align*}
\begin{cases}
    ax_1+cy_1+bz_1\equiv 1 \pmod g,\\
    bx_1+ay_1+cz_1\equiv -1 \pmod g,\\
    cx_1+by_1+az_1\equiv 0 \pmod g.
\end{cases}
\end{align*}

Using the first two congruences, we obtain
$(a^2-bc)x_1+(ab-c^2)z_1
= a(ax_1+cy_1+bz_1)-c(bx_1+ay_1+cz_1)
\equiv a+c\pmod g$.
Since $c^2-ab\equiv a^2-bc\pmod h$ and $g=\gcd(h,c^2-ab)$, it follows that
$c^2-ab\equiv a^2-bc \equiv 0\pmod g$.
Hence $a+c\equiv(c^2-ab)(x_1-z_1)\equiv 0\pmod g$.

Similarly,
$c(ax_1+cy_1+bz_1)-b(bx_1+ay_1+cz_1)\equiv b+c\pmod g$,
and together with $c^2-ab\equiv b^2-ac\pmod h$ \and $g=\gcd(h,c^2-ab)$, this implies that
$b+c\equiv 0\pmod g$.
Note that we also have
$h=a+b+c\equiv 0\pmod g$.
Therefore,
$a\equiv b\equiv c\equiv 0\pmod g$.
Substituting these congruences into the first congruence above yields
$0\equiv 1\pmod g$, a contradiction.
Thus $g=\gcd(h,\Delta)=1$, completing the proof.
\end{proof}

Now we are ready to prove Lemma \ref{LEM-abc system2}.

\medskip
\noindent\textbf{Lemma \ref{LEM-abc system2}.}\textit{ Given $a,b,c\in \mathbb{N}$ with $1\leqslant a\leqslant b\leqslant c$, let $h=a+b+c$ and $\Delta=c^2-ab$. If $\gcd(h,\Delta)=1$, then the following system of congruences has an integer solution.
\begin{equation*}\tag{\ref{system of abc1}}\label{eq:A}
\begin{cases}
ax + cy + bz \equiv r_1 \pmod h,\\
bx + ay + cz \equiv r_2 \pmod h,\\
cx + by + az \equiv r_3 \pmod h,
\end{cases}
\end{equation*}
where every $r_i$ is an integer and $r_1+r_2+r_3 \equiv 0\pmod h$.}

\begin{proof} By Lemma \ref{PRO-abc system}, the system \eqref{system of abc} has an integer solution $(x_0,y_0,z_0)$. Let $x_1,y_1,z_1$ be positive integers such that
$x_1\equiv r_1x_0-r_3z_0\pmod h$, $y_1\equiv r_1y_0-r_3x_0\pmod h$, and $z_1\equiv r_1z_0-r_3y_0\pmod h$.
We claim that $(x_1,y_1,z_1)$ is the desired solution to \eqref{system of abc1}. We verify only the second congruence, since the other two can be checked similarly:
\begin{align*}
    bx_1+ay_1+cz_1&\equiv b(r_1x_0-r_3z_0)+a(r_1y_0-r_3x_0)+c(r_1z_0-r_3y_0)\\
    &\equiv r_1(bx_0+ay_0+cz_0)-r_3(ax_0+cy_0+bz_0)\\
    & \equiv r_1\times(-1)-r_3\times 1\\
    &\equiv r_2\pmod h,
\end{align*}
where the last congruence follows from $r_1+r_2+r_3\equiv 0\pmod h$.
\end{proof}

\section{Proofs of Propositions \ref{PROP:manyF},  \ref{Prop:Cl2-lowerbound}, \ref{PRO-abcexamplegraph}, and \ref{PRO-1bcexamplegraph}} \label{APP:B}

  An $n$-vertex digraph is \emph{round} if we can label its vertices $v_1, v_2, \ldots, v_n$ so that for each $i$, we have $N^+(v_i) = \{v_{i+1}, \ldots, v_{i+d^+(v_i)}\}$ and $N^-(v_i) = \{v_{i-d^-(v_i)}, \ldots, v_{i-1}\}$, where all subscripts are taken modulo $n$. All (semi)-regular tournaments in this section can be regarded as (semi)-regular round tournaments. The verification of constructions lacking specific structure relies primarily on meticulous calculation.

\medskip
\noindent\textbf{Proposition \ref{PROP:manyF}.} \textit{There exists an infinite family of Tur\'anable oriented graphs, none of which is a subgraph of $D_s$ for any $s\in\mathbb{N}$.}

\begin{proof}
Given $a, b, c\in\mathbb{N}$, let $H^\prime$ be an oriented graph on $a+b+c$ vertices which is obtained from a blow-up of $C_3$ by replacing its three vertex classes $V_1,V_2,V_3$ with three \emph{acyclic} oriented graphs on $a$, $b$, and $c$ vertices, respectively.  Suppose that $P$ is a (directed) path in $H^\prime[V_3]$ starting from $v$ and ending at $u$. Let $H$ be the oriented graph obtained from $H^\prime$ by adding a new vertex $w$ and two new edges  $uw, wv$. See Figure \ref{FIG-DF} for an illustration. 

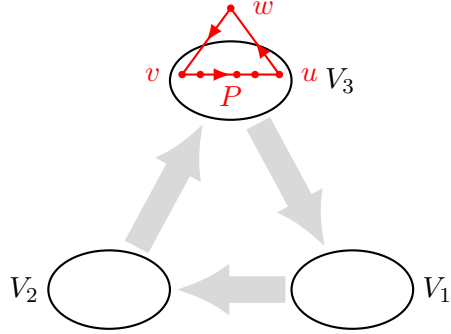
\begin{figure}[!ht]
\centering
    \begin{minipage}{0.45\textwidth}
        
        \begin{tikzpicture}[thick,scale=0.8] 

            \draw (0,0) ellipse (1 and 0.65);
            \draw (60:-4)ellipse (1 and 0.65);
            \draw (120:-4) ellipse (1 and 0.65);
            
              \filldraw[red](0,1.2) circle (1.25pt)node[label=right: $w$](w){};

              \filldraw[red](-0.5,0.1) circle (1.25pt);
              \filldraw[red](0.1,0.1) circle (1.25pt);
              \filldraw[red](0.4,0.1) circle (1.25pt);
              
            \draw[middlearrow={latex}, red] (0,1.2)-- (-0.8,0.1);
             \draw[middlearrow={latex}, red] (0.8,0.1)-- (0,1.2);
             \filldraw[red](0.8,0.1) circle (1.25pt)node[label=right: $u$](u){};
             \filldraw[red](-0.8,0.1) circle (1.25pt)node[label=left: $v$](v){};
 \node at (0,-0.25) [red] {$P$};
        \draw[middlearrow={latex}, red] (-0.8,0.1)-- (0.8,0.1);
        
            \draw[-{Latex[length=7mm,width=7mm]},line width = 10pt,gray!30](0.45135,-0.70855)--(2-0.45135,-3.4641+0.70855);
            \draw[-{Latex[length=7mm,width=7mm]},line width = 10pt,gray!30] (-2+0.45135,-3.4641+0.70855)-- (-0.45135,-0.70855);
            \draw[-{Latex[length=7mm,width=7mm]},line width = 10pt,gray!30] (0.9,-3.4641)--(-0.9,-3.4641);
 \node at (1.8,0) [black] {$V_3$};
            \node at (3.4,-3.4641) [black] {$V_1$};
            \node at (-3.4,-3.4641) [black] {$V_2$};
        \end{tikzpicture}
        \end{minipage}
        \caption{The oriented graph  $H$ in Proposition  \ref{PROP:manyF},  which is Tur\'anable but is not a subgraph of $D_s$ for any $s\geqslant 1$. }\label{FIG-DF}
    \end{figure}

We first claim that $H$ is Tur\'anable, in other words, there is a copy of $H$ in every sufficiently large regular tournament $G$. Note that $H^{\prime}$ (i.e., $H-w$) is a subgraph of $D_{a,b,c}$ and $D_{a,b,c}$ is Tur\'anable by Theorem \ref{THM:bollobasJCTB50}. Therefore,  $H^{\prime}$ is Tur\'anable. Let $H_G^{\prime}$ be a copy of $H^\prime$ in $G$.  Set $U= V(G)\backslash V(H_G^{\prime})$.  To build a copy of $H$ in $G$, we only need to find a vertex in $U$ which plays the role of $w$ in $H$.  Recall that $H[V_3]$ is an acyclic oriented graph and $P$ is a path in $H[V_3]$ starting from $v$ and ending at $u$. Then $d^+(u,V_3)+d^-(v,V_3)\leqslant |V_3|-2$. Let $u_G$ and $v_G$ be the copies of $u$ and $v$ in $G$, respectively. Note that $d^+(u_G, U)=d^+(u_G)-d^+(u_G, H_G^{\prime})=(|G|-1)/2-|V_1|-d^+(u, V_3)$. Similarly, $d^-(v_G, U)=(|G|-1)/2-|V_2|-d^-(v, V_3)$. Therefore,  we have
\begin{equation*}
\begin{aligned}
|N^+(u_G, U)\cap N^-(v_G, U)| &\geqslant d^+(u_G, U)+d^-(v_G,U)-|U| \\
&\geqslant |G|-1-|V_1|-|V_2|-(|V_3|-2)-|U|\\
&= 1.
\end{aligned}
\end{equation*}
This implies that $w$ can be embedded into $U$ and thus $H$ is Tur\'anable.

To finish the proof of Proposition \ref{PROP:manyF}, it remains to prove that $D_s$ does not contain $H$ as subgraph for any $s\in \mathbb{N}$. Suppose to the contrary that $H$ can be embedded into $D_s$ for some $s\in \mathbb{N}$. Let $U_1, U_2$ and $U_3$ be the three vertex classes of $D_s$. Observe that every directed triangle $C_3$ of $D_s$ has exactly one vertex in $U_i$ for each $i\in[3]$.  Let $x, y$ be any two vertices of $H$ with $x\in V_2$ and $y\in V_1$. By the construction of $H$, both $xvy$ and $xuy$ are directed triangles in $H$.  This implies that $u$ and $v$ belong to the same set in $D_s$, say $U_1$. Since $P$ and $w$ form a (directed) cycle of $H$ and $D_s[U_1]$ is acyclic, the vertex $w$ must lie in $U_j$ with $j\neq 1$. Then there is an edge $uw$ from $U_1$ to $U_j$ and an edge $wv$ from $U_j$ to $U_1$ in $D_s$, a contradiction. Therefore, $H$ cannot be contained in any $D_s$.
\end{proof}

Next, we prove Proposition \ref{Prop:Cl2-lowerbound}. The argument starts with two regular oriented graphs of constant orders that exclude the relevant squares of paths and cycles. The desired constructions are then obtained by taking arbitrary blow-ups of these base graphs.

\medskip
\noindent\textbf{Proposition \ref{Prop:Cl2-lowerbound}.} 
\textit{$\kappa^0(C_l^2)\geqslant4/9$ for all $l\not\equiv 0 \pmod 6$, and $\kappa^0(P_l^2)\geqslant3/8$ for all $l\geqslant 7$.}

\textit{Moreover, there exists a $4n/9$-regular $($resp., $3n/8$-regular$)$ oriented graph $G$ containing no $C_l^2$ $($resp., $P_l^2$$)$.}

\begin{proof}

We first prove that $\kappa^0(C_l^2)\geqslant4/9$ for all $l\not\equiv 0 \pmod 6$. Let $F$ be an oriented graph which consists of three disjoint directed triangles $H_1,H_2,H_3$ such that all edges between distinct triangles go from $V(H_1)$ to $V(H_2)$, from $V(H_2)$ to $V(H_3)$ and from $V(H_3)$ to $V(H_1)$. Let $F(t)$ be the $t$-blow-up of $F$. More precisely, $V(F(t))=\{v_j:v\in V(F)\text{ and }j\in[t]\}$ and 
$E(F(t))=\{u_iv_j:uv\in E(F)\text{ and }i,j\in[t]\}$. It is not difficult to check that $F(t)$ is a $4t$-regular oriented graph on $9t$ vertices. For convenience, let $H_i(t)$ be the $t$-blow-up of $H_i$ for each $i\in[3]$. Suppose that there is a copy of $C_l^2$ in $F(t)$. Next we show that $6$ must divide $l$.

  Assume  $V(C_l^2)=\{v_1, v_2, \ldots, v_l\}$ and  let $v_1v_2\cdots v_l$ be the Hamilton cycle of $C_l^2$.  It holds that $\{v_i, v_{i+1}, v_{i+2}\}$ induces a copy of $TT_3$, where the indices are taken modulo $l$. Set $T_i=C_l^2[\{v_i, v_{i+1}, v_{i+2}\}]$. Since every $H_j(t)$ is a $t$-blow-up of a directed triangle, it has at most two vertices of $T_i$. We now claim  that 
$$\mbox{if }v_i\in H_j(t), v_{i+1}\in H_{j+1}(t),\mbox{ then }v_{i+2}\in H_{j+1}(t) \mbox{ and } v_{i+3}\in H_{j+2}(t),$$ 
where the indices $i$ and $j$ are taken modulo $l$ and $3$, respectively. 
Since $T_i=C_l^2[\{v_i, v_{i+1}, v_{i+2}\}]$ is a transitive triangle and all edges between $H_j(t)$ and $H_{j+1}(t)$ go from $H_j(t)$ to $H_{j+1}(t)$, it holds that $v_{i+2}\in H_{j+1}(t)$. 
Meanwhile, as $H_{j+1}(t)$ contains at most two vertices in $V(T_{i+1})=\{v_{i+1},v_{i+2},v_{i+3}\}$, it holds that $v_{i+3}\in H_{j+2}(t)$, which proves the claim. Observe that the claim shows that if $C_l^2\subseteq F(t)$, then $6$ must divide $l$. Thus, 
$\kappa^0(C_l^2)\geqslant4/9$ for all $l\not\equiv 0 \pmod 6$.

Next, we show that $\kappa^0(P_l^2)\geqslant 3/8$ for every $l\geqslant 7$ by constructing a $3t$-regular oriented graph on $8t$ vertices that contains no square of a path of length greater than $6$. Let $H$ be the oriented graph in Figure \ref{FIG-38}. Set $V_1 = \{x_2, x_3, x_4\} $, $V_2 = \{x_5, x_6, x_7\}$ and $V_3 = \{x_1, x_8\}$. It holds that every edge between $V_i$ and $V_{i+1}$ goes from $V_i$ to $V_{i+1}$ for each $i\in[3]$. Note that $V_3$ is an independent set.

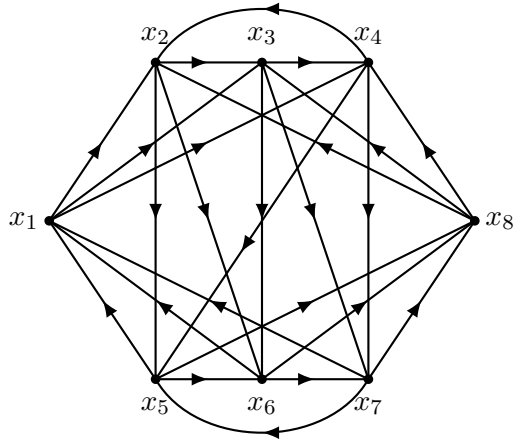
\begin{figure}[!ht]
        \centering
        \begin{tikzpicture}[thick,scale=0.7] 
            
            \coordinate [label=left:$x_1$] (x1) at (-4,0);
            \coordinate  (x2) at (-2,3);
            \coordinate  (x3) at (0,3);
            \coordinate  (x4) at (2,3);
            \coordinate  (x5) at (-2,-3);
            \coordinate  (x6) at (0,-3);
            \coordinate  (x7) at (2,-3);
            \coordinate [label=right:$x_8$] (x8) at (4,0);

            \node at (-2,3.5) [black] {$x_2$};
            \node at (2,3.5) [black] {$x_4$};
            \node at (-2,-3.5) [black] {$x_5$};
            \node at (2,-3.5) [black] {$x_7$};
            \node at (0,3.5) [black] {$x_3$};
            \node at (0,-3.5) [black] {$x_6$};
            
            \foreach \i/\j in {x1/x2,x1/x3,x1/x4,x2/x3,x3/x4,x8/x2,x8/x3,x8/x4,x5/x1,x6/x1,x7/x1,x5/x8,x6/x8,x7/x8,x5/x6,x6/x7}{\draw[middlearrow={latex}, black] (\i)-- (\j);}

            \foreach \i/\j in {x2/x5,x2/x6,x3/x6,x3/x7,x4/x7}{\draw[middlearrow={latex}] (\i)-- (\j);}
                     
            \draw[midddlearrow={latex}] (x4)-- (x5);

            \draw [middlearrow={latex}, black] (x4) [out=120,in=60] to (x2);
            \draw [middlearrow={latex}, black] (x7) [out=-120,in=-60] to (x5); 

            \foreach \i in {x1,x2,x3,x4,x5,x6,x7,x8}{\filldraw[black](\i) circle (2pt);}
        \end{tikzpicture}
    \caption{The oriented graph $H$ in Proposition \ref{Prop:Cl2-lowerbound}.}\label{FIG-38}
    \end{figure}

    We first show that $H$ contains no square of a cycle. Suppose for a contradiction that $C_l^2\subseteq H$ for some $l\in\mathbb{N}$. Since $H$ is oriented, we must have $l\geqslant 5$. Let $V(C_l^2)=\{y_1,y_2,\ldots,y_l\}$, and let $y_1y_2\cdots y_l$ be the Hamilton cycle of $C_l^2$. Since every edge between $V_i$ and $V_{i+1}$ goes from $V_i$ to $V_{i+1}$ for each $i\in[3]$, we have $V(C_l^2)\cap V_i\neq\emptyset$ for each $i\in[3]$. We may w.l.o.g.  assume that $y_1\in V_3$.  Since $V_3$ is independent and all edges between $V_i$ and $V_{i+1}$ are directed from $V_i$ to $V_{i+1}$ for each $i\in[3]$, it follows that $y_2\in V_1$ and $y_l\in V_2$. However, $C_l^2[\{y_l,y_1,y_2\}]$ is a copy of $TT_3$, so in particular $y_ly_2\in E(H)$, giving an edge from $V_2$ to $V_1$, a contradiction. Therefore, $H$ contains no square of a cycle.

    Let $P_s^2$ be a longest square of a path in $H$. Write $V(P_s^2)=\{z_1,z_2,\ldots,z_s\}$, where $z_1z_2\cdots z_s$ is the Hamilton path of $P_s^2$. Since $H[\{x_1,x_2,x_3,x_6,x_7,x_8\}]$ contains a square of a path of length $6$, we have $s\geqslant 6$, and hence $|V(P_s^2)\cap V_1|\geqslant 1$ and $|V(P_s^2)\cap V_2|\geqslant 1$. We now show that $|V(P_s^2)\cap V_i|\leqslant 2$ for each $i\in[2]$. Suppose otherwise. By symmetry, we may assume that $V_1\subseteq V(P_s^2)$.  By the definition of the square of a path, these three vertices cannot appear consecutively on $P_s^2$. Let $z_i,z_j,z_k$ be the three vertices in $V(P_s^2)\cap V_1$. By reversing all edges of $P_s^2$ if necessary, we may assume that $i<j\leqslant k-2$. Recall that $V_3$ is an independent set and that all edges between $V_i$ and $V_{i+1}$ are directed from $V_i$ to $V_{i+1}$ for each $i\in[3]$. Since $z_{k-1}\notin V_1$, it follows that $z_{k-1}\in V_3$ and hence $z_{k-2}\in V_2$. As $P_s^2[\{z_{k-2},z_{k-1},z_k\}]$ is a copy of $TT_3$, we have $z_{k-2}z_k\in E(H)$, giving an edge from $V_2$ to $V_1$, a contradiction. Therefore, $|V(P_s^2)\cap V_i|\leqslant 2$ for each $i\in[2]$, and consequently $s\leqslant 6$. 
     
     Therefore, $H$ contains neither a square of a cycle nor a square of a path of length greater than $6$. Let $G$ be the $t$-blow-up of $H$. Then $G$ is a $3t$-regular oriented graph on $8t$ vertices and still contains no square of a path of length greater than $6$. Hence, $\kappa^0(P_l^2)\geqslant 3/8$ for all $l\geqslant 7$.
\end{proof}

Next, by proving Propositions \ref{PRO-abcexamplegraph} and \ref{PRO-1bcexamplegraph}, we show that the conditions in Theorems \ref{THM-abc} and \ref{THM-1bc} are, in some sense, best possible.

\medskip
\noindent \textbf{Proposition \ref{PRO-abcexamplegraph}.} 
\textit{Let $a,b,c\in\mathbb{N}$ with $2\leqslant a\leqslant b\leqslant c$. If $\gcd(a+b+c,c^2-ab)> 1$, then there exists a semi-regular tournament on $n\in(a+b+c)\mathbb{N}$ vertices  without $D_{a,b,c}$-factors.}

\begin{proof}
Given $a,b,c\in \mathbb{N}$ with $2\leqslant a\leqslant b\leqslant c$, set $h=a+b+c$. Recall that Araujo and Xiang \cite{araujoARXIV2025} showed that for every $s\geqslant 2$, there exists a semi-regular tournament $T$ on $n\in s\mathbb{N}$ vertices with no $D_s$-factor. Hence, we may assume that $c\geqslant 3$.

Let $k\in \mathbb{N}$, and let $T^\prime$ be the blow-up of $C_3$ obtained by replacing its vertices with three semi-regular tournaments $T_1,T_2,T_3$ on $2hk+1$, $2hk-1$, and $2hk$ vertices, respectively. As mentioned at the beginning of this section, we may regard $T_1,T_2$, and $T_3$ as round tournaments. Set $V_i=V(T_i)$ for each $i\in[3]$. Since $T_3$ is a semi-regular tournament on $2hk\in 2\mathbb{N}$ vertices, there is a partition $V_3^+,V_3^-$ of $V_3$ with $|V_3^+|=|V_3^-|=hk$ such that every vertex in $V_3^+$ (resp., $V_3^-$) has out-degree (resp., in-degree) exactly $hk$ in $T_3$. Let $S\subseteq V_1$ with $|S|=hk$, and let $T$ be the oriented graph obtained from $T^\prime$ by reversing a perfect matching from $V_3^+$ to $S$, see Figure~\ref{FIG-prop16}. Let $M$ denote the resulting matching from $V_1$ to $V_3$. Note that $T$ is a semi-regular tournament on $6hk$ vertices.

\begin{figure}[!ht]
\centering
        \begin{tikzpicture}[thick,scale=0.7] 
        \path (-5,0) (5,-5);

            \draw (0,0.3) ellipse (1.6 and 1);
            \draw [rotate=70] (60:-5)ellipse (1.5 and 1);
            \draw [rotate=110](120:5) ellipse (1.5 and 1);

            \draw [rotate=70](-2.5,-4.33+1)--(-2.5,-4.33-1);
            \draw [rotate=110] (-2.5,4.33-1)--(-2.5,4.33+1);

            \draw[-{Latex[length=2mm,width=1.67mm]},line width = 1pt,red] (-3,-4.1)--(3,-4.1);
            \draw[-{Latex[length=2mm,width=1.67mm]},line width = 1pt,red] (-3,-4.5)--(3,-4.5);
            \draw[-{Latex[length=2mm,width=1.67mm]},line width = 1pt,red] (-3,-4.9)--(3,-4.9);
            \node at (0,-5.3) [red] {$M$};
          
            \draw[-{Latex[length=7mm,width=7mm]},line width = 10pt,gray!30](0.75135,-0.90855)--(2.5-0.55135,-4.33+1.00855);
            \draw[-{Latex[length=7mm,width=7mm]},line width = 10pt,gray!30] (-2.5+0.55135,-4.33+1.00855)-- (-0.75135,-0.90855);
            \draw[-{Latex[length=7mm,width=7mm]},line width = 10pt,gray!30] (1.8,-4.33+0.80855)--(-1.8,-4.33+0.80855);
 \node at (2.1,0.3) [black] {$T_2$};
            \node at (4.9,-4) [black] {$T_3$};
            \node at (-4.9,-4) [black] {$T_1$};
        \end{tikzpicture}
        \caption{The semi-regular tournament $T$ in Proposition \ref{PRO-abcexamplegraph}.}
        \label{FIG-prop16}
    \end{figure}
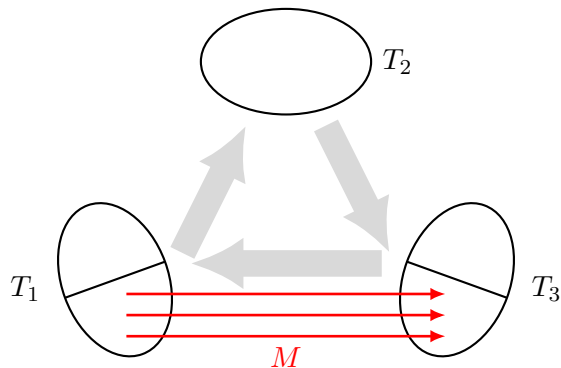

Let $D^\ast$ be any fixed copy of $D_{a,b,c}$ in $T$, and let $A,B,C$ be its three parts, where $|A|=a$, $|B|=b$, and $|C|=c\geqslant 3$. We claim that one of the following holds:

\begin{itemize}
    \item[(i)] $D^\ast$ is of type-$(i)$ for some $i\in[3]$;
    \item[(ii)] $D^\ast$ is of type-$(i,i+1,i+2)$ for some $i\in[3]$.
\end{itemize}

We first show that all vertices of $A$ lie in the same set. Suppose for a contradiction that there exist $a_1\in A\cap V_i$ and $a_2\in A\cap V_{i+1}$ for some $i\in[3]$. Recall that $(V_1,V_2,V_3)$ is obtained from a blow-up of $C_3$ by reversing all edges of a matching, and let $M^+(u)$ (resp., $M^-(u)$) denote the out-neighbor (resp., in-neighbor) of $u$ in $M$, whenever it exists. Since $(A,B,C)$ is a blow-up of $C_3$, every vertex of $C$ must lie in the common in-neighborhood of $a_1$ and $a_2$ in $T$. Hence
$$
C\subseteq N^-(\{a_1,a_2\},T)\subseteq V_i\cup (M^-(a_1)\cap V_{i+1})\cup (M^-(a_2)\cap V_{i+2}).
$$
Since $M$ is a matching between $V_1$ and $V_3$, at least one of $M^-(a_1)\cap V_{i+1}$ and $M^-(a_2)\cap V_{i+2}$ is empty. As $|C|=c\geqslant 3$, it follows that there exist two vertices $c_1,c_2\in C\cap V_i$.

Similarly,
$$
B\subseteq V_{i+1}\cup (M^+(a_1)\cap V_{i+2})\cup (M^+(a_2)\cap V_i).
$$
Since $|B|=b\geqslant 2$ and $M$ is a matching, there exists a vertex $b_0\in B\cap V_{i+1}$. Then $b_0c_1,b_0c_2\in E(T)$, giving two edges from the same vertex of $V_{i+1}$ to $V_i$. This is impossible by the construction of $T$. Therefore, $A\subseteq V_i$ for some $i\in[3]$.

An analogous argument shows that $B\subseteq V_j$ for some $j\in[3]$. It is then straightforward to check that one of (i) and (ii) must hold.

Therefore, every copy of $D_{a,b,c}$ in $T$ is either of type-$(i)$ for some $i\in[3]$, or of type-$(i,i+1,i+2)$ for some $i\in[3]$. Suppose now that $T$ contains a $D_{a,b,c}$-factor $\mathcal H$. Let $x,y,z$ be the numbers of copies in $\mathcal H$ of type-$(1,2,3)$, type-$(2,3,1)$, and type-$(3,1,2)$, respectively, and let $t_i$ be the number of copies of type-$(i)$ for each $i\in[3]$. Counting vertices in $V_1,V_2,V_3$, we obtain
\begin{align*}
\begin{cases}
ax+cy+bz+ht_1=2hk+1,\\
bx+ay+cz+ht_2=2hk-1,\\
cx+by+az+ht_3=2hk.
\end{cases}
\end{align*}
Reducing these equations modulo $h$, we see that $(x,y,z)$ is an integer solution to \eqref{system of abc}. However, by Lemma~\ref{PRO-abc system} and the assumption $\gcd(a+b+c,c^2-ab)> 1$, the system \eqref{system of abc} has no integer solution. This contradiction shows that $T$ contains no $D_{a,b,c}$-factor, which completes the proof.
\end{proof}

\medskip
\noindent \textbf{Proposition \ref{PRO-1bcexamplegraph}.} 
\textit{Let $b,c\in \mathbb{N}$ with $1\leqslant b\leqslant c$.  If $3\nmid (1+b+c)$ and $\gcd(1+b+c,c^2-b)> 1$, then there exists a semi-regular tournament on $n\in(1+b+c)\mathbb{N}$ vertices without $D_{1,b,c}$-factors.}

\begin{proof}
Given $b,c\in \mathbb{N}$ with $1\leqslant b\leqslant c$, set $h=1+b+c$ and
$r_1=\lceil h/3\rceil$, $r_2=\lfloor h/3\rfloor$, and
$r_3=h-\lceil h/3\rceil-\lfloor h/3\rfloor$.
Let $k \in \mathbb{N}$, and let $T$ be a blow-up of directed triangle $C_3$ obtained by replacing its vertices with three semi-regular tournaments $T_1, T_2, T_3$ on $hk+r_1$, $hk+r_2$, and $hk+r_3$ vertices, respectively. Set $V_i=V(T_i)$. It is not difficult to check that $T$ is a semi-regular tournament on $(3k+1)h$ vertices. Moreover, by an argument similar to that used in the proof of Proposition \ref{PRO-abcexamplegraph}, every copy of $D_{1,b,c}$ in $T$ is either of type-$(i)$ or of type-$(i,i+1,i+2)$ with respect to $(V_1,V_2,V_3)$ for some $i\in[3]$. Thus, the existence of a $D_{1,b,c}$-factor $\mathcal{H}$ in $T$ implies the existence of an integer solution to the following system of congruences:
\begin{align}\label{system of 1bc}
\begin{cases}
x+cy+bz\equiv r_1 \pmod h\\
bx+y+cz\equiv r_2 \pmod h\\
cx+by+z\equiv r_3 \pmod h,
\end{cases}
\end{align}
where $x,y,$ and $z$ are the numbers of copies of $D_{1,b,c}$ in $\mathcal{H}$ that have type-$(1,2,3)$, type-$(2,3,1)$, and type-$(3,1,2)$, respectively.

Set $\Delta=c^2-b$. Assume that $3\nmid h$ and $\gcd(h, \Delta)> 1$. Let $p$ be a prime divisor of $\gcd(h,\Delta)$. Clearly, $p>1$, 
and $h,\Delta\equiv 0\pmod p$. Since $h=1+b+c$, we have $c\equiv-1-b\pmod{p}$, and hence
$\Delta=c^2-b\equiv 1+b+b^2\equiv0\pmod{p}$.
This implies that $b^2\equiv-1-b\equiv c\pmod{p}$, and thus $b^3 \equiv bc \pmod{p}$. Moreover, as $b^3=(b-1)(1+b+b^2)+1$, we have
$bc\equiv b^3 \equiv  1 \pmod{p}$. Since $1+b+b^2\equiv 0\pmod{p}$, we must have $b\not\equiv 0,1\pmod{p}$. Indeed, if $b\equiv 0\pmod{p}$, then $1\equiv 0\pmod{p}$, forcing $p=1$, contrary to $p>1$. If $b\equiv 1\pmod{p}$, then $3\equiv 0\pmod{p}$, so $p=3$, which is impossible since $3\nmid h$ while $p\mid h$.

Next we claim that the semi-regular tournament $T$ has no $D_{1,b,c}$-factor by showing that \eqref{system of 1bc} has no integer solution. Suppose to the contrary that $(x_0,y_0,z_0)$ is an integer solution to \eqref{system of 1bc}. Since the congruences in \eqref{system of 1bc} hold modulo $h$ and $p\mid h$, reducing them modulo $p$ yields
\begin{align*}
\begin{cases}
    x_0+cy_0+bz_0\equiv r_1 \pmod p\\
bx_0+y_0+cz_0\equiv r_2 \pmod p\\
cx_0+by_0+z_0\equiv r_3 \pmod p.
\end{cases}
\end{align*}
Since $bc \equiv 1 \pmod p$ and $b^2 \equiv c \pmod p$, we have
$b(x_0+cy_0+bz_0)\equiv bx_0+y_0+cz_0\pmod{p}$,
and thus
$br_1\equiv r_2 \pmod{p}$.
Similarly, we have $br_2\equiv r_3 \pmod{p}$ and $br_3\equiv r_1 \pmod{p}$.

Recall that $r_1=\lceil h/3\rceil$, $r_2=\lfloor h/3 \rfloor$, and
$r_3=h-\lceil h/3\rceil-\lfloor h/3\rfloor$.
Since $3\nmid h$, it follows that $h\equiv1$ or $2\pmod{3}$.
In the former case, we have $r_1-1=r_2=r_3$. As
$br_2\equiv r_3 =r_2 \pmod{p}$,
we obtain
$(b-1)r_2\equiv0\pmod{p}$.
Then $r_2\equiv0\pmod{p}$, since $b\not\equiv1\pmod{p}$ and $p$ is prime. Similarly, from
$br_1\equiv r_2\equiv0\pmod{p}$ and $b\not\equiv0\pmod{p}$,
we obtain
$r_1\equiv0\pmod{p}$.
It follows from $r_1=r_2+1$ that
$1\equiv 0\pmod p$,
a contradiction.
Thus it remains to consider the case $h\equiv2\pmod{3}$.
In this case, we have $r_1=r_2+1=r_3$. As
$br_3\equiv r_1 =r_3 \pmod{p}$,
we have
$(b-1)r_3\equiv0\pmod{p}$.
Then $r_3\equiv0\pmod{p}$, since $b\not\equiv1\pmod{p}$ and $p$ is prime. Again, from
$br_2\equiv r_3\equiv0\pmod{p}$ and $b\not\equiv0\pmod{p}$,
we obtain
$r_2\equiv0\pmod{p}$.
Then $r_3=r_2+1$ implies that
$1\equiv 0\pmod p$,
a contradiction.
Therefore, \eqref{system of 1bc} has no integer solution, and thus $T$ has no $D_{1,b,c}$-factor. This completes the proof.
\end{proof}


\end{document}